\DeclarePairedDelimiter{\floor}{\lfloor}{\rfloor}
\definecolor{uuuuuu}{rgb}{0.27,0.27,0.27}
\definecolor{sqsqsq}{rgb}{0.1255,0.1255,0.1255}
\newtheorem{definition}{Definition} [section]
\newtheorem{theorem}[definition]{Theorem}
\newtheorem{lemma}[definition]{Lemma}
\newtheorem{proposition}[definition]{Proposition}
\newtheorem{corollary}[definition]{Corollary}
\newtheorem{claim}[definition]{Claim}
\newtheorem{fact}[definition]{Fact}
\newcommand{\uproduct}{\mathbin{\;{\rotatebox{90}{\textnormal{$\small\Bowtie$}}}}}
\newcommand{\etal}{\textit{et al}. }
\newcommand{\ie}{\textit{i}.\textit{e}., }
\newcommand{\RomanNumeralCaps}[1]{\MakeUppercase{\romannumeral #1}}
\begin{document}
\title{\bf\Large Exact results for some extremal problems on expansions \RomanNumeralCaps{1}}

\date{\today}
\author[1]{Xizhi Liu\thanks{Research was supported by ERC Advanced Grant 101020255 and Leverhulme Research Project Grant RPG-2018-424. Email: \texttt{xizhi.liu.ac@gmail.com}}}
\author[2]{Jialei Song\thanks{Research was supported by Science and Technology Commission of Shanghai Municipality (No. 22DZ2229014). Email: \texttt{jlsong@math.ecnu.edu.cn}}}
\author[3]{Long-Tu Yuan\thanks{Research was supported by National Natural Science
Foundation of China (No. 12271169 and 12331014)  and Science and Technology Commission of Shanghai Municipality (No. 22DZ2229014). Email: \texttt{ltyuan@math.ecnu.edu.cn}}}

\affil[1]{Mathematics Institute and DIMAP,
            University of Warwick, 
            Coventry, CV4 7AL, UK}
\affil[2,3]{
School of Mathematical Sciences,  Key Laboratory of Mathematics and Engineering Applications (Ministry of Education) \& Shanghai Key Laboratory of PMMP,  East China Normal University, Shanghai 200241, China
}
\maketitle
\begin{abstract}
The expansion of a graph $F$, denoted by $F^3$, is the $3$-graph obtained from $F$ by adding a new vertex to each edge such that different edges receive different vertices. 
We establish a stability version of a theorem by Kostochka--Mubayi--Verstra\"{e}te~\cite{KMV17b} and demonstrate two applications of it by establishing tight upper bounds for large $n\colon$
\begin{itemize}
    \item The maximum number of edges in an $n$-vertex $3$-graph that does not contain $T^3$ for certain class $\mathcal{T}$ of trees, thereby (partially) sharpening the asymptotic result of Kostochka--Mubayi--Verstra\"{e}te.  
    \item The minimum number of colors needed to color the complete $n$-vertex $3$-graph to ensure the existence of a rainbow copy of $F^3$ when $F$ is a graph obtained from some tree $T\in \mathcal{T}$ by adding a new edge, thereby extending anti-Ramsey results on $P_{2t}^3$ by Gu--Li--Shi and $C_{2t}^3$ by Tang--Li--Yan.
\end{itemize}

We introduce a framework that utilizes tools from Extremal Set Theory for solving certain generalized Tur\'{a}n problems. 
More specifically, we establish a parallel of the stability theorem above in generalized Tur\'{a}n problems. 
Using this stability theorem, we determine, for large $n$, the maximum number of triangles in an $n$-vertex graph that does not contain the shadow of $C_{k}^3$ or $T^3$ for $T\in \mathcal{T}$, thus answering a question of Lv \etal on generalized Tur\'{a}n problems. 

\medskip

\noindent\textbf{Keywords:}  hypergraph Tur\'{a}n problem, anti-Ramsey problem, generalized Tur\'{a}n problem, expansion of trees, linear cycles, triple systems, stability.
\end{abstract}
\section{Introduction}\label{SEC:Introduction}
Fix an integer $r\ge 2$, an $r$-graph $\mathcal{H}$ is a collection of $r$-subsets of some finite set $V$. We identify a hypergraph $\mathcal{H}$ with its edge set and use $V(\mathcal{H})$ to denote its vertex set. The size of $V(\mathcal{H})$ is denoted by $v(\mathcal{H})$. 
Given a family $\mathcal{F}$ of $r$-graphs, we say $\mathcal{H}$ is \textbf{$\mathcal{F}$-free}
if it does not contain any member of $\mathcal{F}$ as a subgraph.
The \textbf{Tur\'{a}n number} $\mathrm{ex}(n,\mathcal{F})$ of $\mathcal{F}$ is the maximum
number of edges in an $\mathcal{F}$-free $r$-graph on $n$ vertices.
The study of $\mathrm{ex}(n,\mathcal{F})$ and its variant has been a central topic in extremal graph and hypergraph theory since the seminal work of Tur\'{a}n~\cite{T41}.  
 
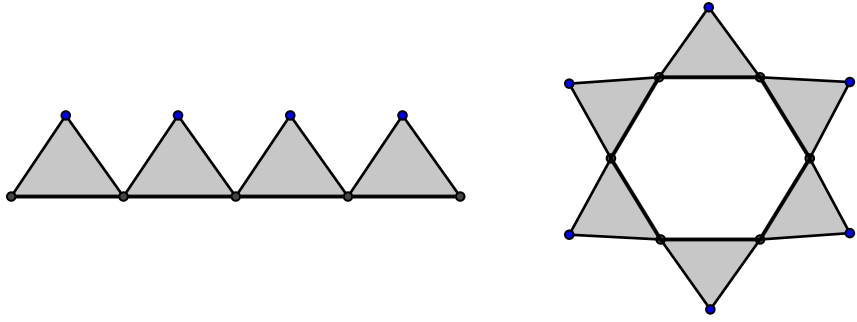
\begin{figure}[htbp]
\centering
\tikzset{every picture/.style={line width=0.75pt}} 
\begin{tikzpicture}[x=0.75pt,y=0.75pt,yscale=-1,xscale=1,scale=0.8]

\draw[line width=1.5pt,color=sqsqsq]  (44,163) -- (114,163) -- (184,163) -- (254,163) -- (324,163) ;
\draw[line width=1pt, fill=sqsqsq,fill opacity=0.25]   (78,112) -- (114,163) -- (44,163) -- cycle ;
\draw[line width=1pt, fill=sqsqsq,fill opacity=0.25]  (148,112) -- (184,163) -- (114,163) -- cycle ;
\draw[line width=1pt, fill=sqsqsq,fill opacity=0.25]   (218,112) -- (254,163) -- (184,163) -- cycle ;
\draw[line width=1pt, fill=sqsqsq,fill opacity=0.25]  (288,112) -- (324,163) -- (254,163) -- cycle ;
\draw [fill=uuuuuu] (44,163) circle (2pt);
\draw [fill=uuuuuu]  (114,163) circle (2pt);
\draw [fill=uuuuuu]  (184,163) circle (2pt);
\draw [fill=uuuuuu]  (254,163) circle (2pt);
\draw [fill=uuuuuu]  (324,163)  circle (2pt);
\draw [fill=blue]   (78,112)  circle (2pt);
\draw [fill=blue]   (148,112) circle (2pt);
\draw [fill=blue]  (218,112)  circle (2pt);
\draw [fill=blue]   (288,112)  circle (2pt);

\draw[line width=1.5pt,color=sqsqsq]  (542,139) -- (511,190) -- (449,190) -- (418,139) -- (448,88) -- (511,88) -- cycle ;

\draw [fill=uuuuuu] (542,139) circle (2pt);
\draw [fill=uuuuuu]  (511,190) circle (2pt);
\draw [fill=uuuuuu]  (449,190) circle (2pt);
\draw [fill=uuuuuu]  (418,139) circle (2pt);
\draw [fill=uuuuuu]  (448,88)  circle (2pt);
\draw [fill=uuuuuu] (511,88)  circle (2pt);

\draw[line width=1pt, fill=sqsqsq,fill opacity=0.25]   (479,44) -- (511,88) -- (448,88) -- cycle ;
\draw[line width=1pt, fill=sqsqsq,fill opacity=0.25]   (567,91) -- (542,139) -- (511,88) -- cycle ;
\draw[line width=1pt, fill=sqsqsq,fill opacity=0.25]   (480,234) -- (449,190) -- (511,190) -- cycle ;
\draw[line width=1pt, fill=sqsqsq,fill opacity=0.25]   (567,186) -- (511,190) -- (542,139) -- cycle ;
\draw[line width=1pt, fill=sqsqsq,fill opacity=0.25]   (392,92) -- (448,88) -- (418,139) -- cycle ;
\draw[line width=1pt, fill=sqsqsq,fill opacity=0.25]   (392,187) -- (418,139) -- (449,190) -- cycle ;
\draw [fill=blue]  (479,44)  circle (2pt);
\draw [fill=blue]   (567,91)  circle (2pt);
\draw [fill=blue]  (480,234)   circle (2pt);
\draw [fill=blue]   (567,186)  circle (2pt);
\draw [fill=blue]  (392,92)  circle (2pt);
\draw [fill=blue]   (392,187)  circle (2pt);

\end{tikzpicture}
\caption{Expansions of $P_4$ and $C_6$ for $r=3$.}
\label{fig:Path-Cycle-Expansion}
\end{figure}

Given a graph $F$, 
the \textbf{expansion} $F^{r}$ of $F$ is the $r$-graph obtained from $F$ by adding a set of $r-2$ new vertices to each edge such that different edges receive disjoint sets (see Figure~\ref{fig:Path-Cycle-Expansion}).
Expansions are important objects in Extremal set theory and Hypergraph Tur\'{a}n problems. 
It was introduced by Mubayi in~\cite{M06} as a way to extend Tur\'{a}n's theorem to hypergraphs. 
There has been a significant amount of progress in the study of expansion over the last few decades, and we refer the reader to the survey~\cite{MV15Survey} by Mubyai--Verstra\"{e}te for more related results.

In the present work, our primary focus will be extremal problems related to the expansion of trees and cycles when $r\in \{2,3\}$. 
The following parameters will be crucial for us. 

Following Frankl--F\"{u}redi~\cite{FF87}, the \textbf{crosscut number} $\sigma(F)$ of graph $F$ is 
\begin{align*}
    \sigma(F) := \min\left\{|I|+|F-I| \colon \text{$I$ is independent in $F$}\right\}. 
\end{align*}
Recall that the \textbf{covering number} $\tau(F)$ of a graph $F$ is 
    \begin{align*}
                \tau(F) & := 
                \min\left\{|S| \colon S\subseteq V(F) \mathrm{\ such\ that\ } |S\cap e| \ge 1 \mathrm{\ for\ all\ } e\in F\right\}. 
    \end{align*}
Following the definition in~\cite{Yuan22}, the \textbf{independent covering number} $\tau_{\mathrm{ind}}(F)$ of a bipartite graph $F$ is
            \begin{align*}
                \tau_{\mathrm{ind}}(F) & := 
                \min\left\{|S| \colon S\subseteq V(F) \mathrm{\ such\ that\ } |S\cap e| = 1 \mathrm{\ for\ all\ } e\in F\right\}. 
            \end{align*}
It follows trivially from the definitions that 
$\tau(F) \le \sigma(F)$ holds for all graphs $F$, 
and $\tau(F) \le \sigma(F) \le \tau_{\mathrm{ind}}(F)$ holds for all bipartite graphs $F$. 

Extending a definition in~\cite{FJ15}, we say a tree $T$ is \textbf{strongly edge-critical} if
    \begin{itemize}
        \item $\tau(T) = \sigma(T) = \tau_{\mathrm{ind}}(T)$, and 
        \item $T$ contains a \textbf{critical edge}, i.e. there exists an edge $e\in T$ such that $\sigma(T\setminus e) \le \sigma(T)-1$. 
    \end{itemize}
Path and cycle of length $k$ will be denoted by $P_k$ and $C_k$, respectively. 
\subsection{Tur\'{a}n problems for the expansion of trees}
Determining $\mathrm{ex}(n,T)$ when $T$ is a tree or hypertree is a central topic in Extremal combinatorics. 
Two well-known conjectures in this regard, the Erd{\H o}s--S\'{o}s Conjecture on trees~\cite{ES64} and Kalai's conjecture on hypertrees (see~\cite{FF87}), are still open in general. 

For $r \ge 4$ and large $n$, the exact value of $\mathrm{ex}(n,F^r)$ when $F$ is a path, a cycle, or in a special class of trees, is now well-understood due to series works of F\"{u}redi~\cite{F14tree}, F\"{u}redi--Jiang~\cite{FJ15cycle,FJ15}, F\"{u}redi--Jiang--Seiver~\cite{FJS14path}. 
Using methods significantly different from theirs, Kostochka--Mubyai--Verstra\"{e}te~\cite{KMV15a,KMV15c,KMV17b} extended these results to $r=3$,  determining the exact value of $\mathrm{ex}(n,F^3)$ when $F$ is a path or a cycle, and proving the following  asymptotic result for trees.
\begin{theorem}[Kostochka--Mubyai--Verstra\"{e}te~\cite{KMV17b}]\label{THM:KMV-tree}
    Suppose that $T$ is a tree. 
    Then 
    \begin{align*}
        \mathrm{ex}(n,T^3) = \left(\frac{\sigma(T)-1}{2} +o(1)\right)n^2. 
    \end{align*}
\end{theorem}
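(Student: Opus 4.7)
I would prove the theorem in two parts: a matching lower bound via a (slightly modified) explicit construction, and an upper bound via a cleaning-then-embedding argument that exploits the structure of the expansion $T^3$.

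For the lower bound, take $S\subseteq V$ with $|S|=\sigma(T)-1$ and consider $\mathcal{H}_S=\{e\in\binom{V}{3}: e\cap S\ne\emptyset\}$, which has $\binom{n}{3}-\binom{n-\sigma(T)+1}{3}=\bigl(\tfrac{\sigma(T)-1}{2}+o(1)\bigr)n^2$ edges. Any embedding $\phi\colon V(T^3)\hookrightarrow V$ forces every $3$-edge to meet $S$; setting $X=\{v\in V(T):\phi(v)\in S\}$, each edge $e=uv\in T$ with $X\cap e=\emptyset$ requires $\phi(w_e)\in S$. Since the extra vertices are distinct,
\[
|X|+\#\{e\in T:X\cap e=\emptyset\}\le|S|=\sigma(T)-1.
\]
If $X$ can be taken independent in $T$ (say by a leaf-peeling reduction in the induced forest $T[X]$), this contradicts the definition of $\sigma(T)$. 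For trees where $\mathcal{H}_S$ itself still admits embeddings (via non-independent $X$), deleting the $O_T(n)$ edges through a small family of ``critical'' pairs destroys the residual copies of $T^3$ while preserving the asymptotic edge count.

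For the upper bound, let $\mathcal{H}$ be $T^3$-free with $|\mathcal{H}|\ge\bigl(\tfrac{\sigma(T)-1}{2}+\varepsilon\bigr)n^2$, aiming for contradiction. I would proceed in three steps. First, clean $\mathcal{H}$ by iteratively deleting $3$-edges through a pair of codegree below $\delta n$, for a suitable $\delta=\delta(\varepsilon,T)>0$; this loses at most $o(n^2)$ edges and leaves a $3$-graph $\mathcal{H}'$ in which every pair in the shadow $\partial\mathcal{H}'$ has codegree at least $\delta n$. Second, observe that any copy of $T$ in $\partial\mathcal{H}'$ lifts to a copy of $T^3\subseteq\mathcal{H}'$: for each edge $e$ of $T$, greedily pick an extra vertex $w_e$ from its $\ge\delta n$ codegree-neighbors, avoiding the $v(T^3)-1=O_T(1)$ vertices already used. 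Since $T^3\not\subseteq\mathcal{H}'$, the shadow $\partial\mathcal{H}'$ must be $T$-free. Third, use the $T$-freeness of $\partial\mathcal{H}'$ together with the codegree lower bound to conclude $|\mathcal{H}'|\le\bigl(\tfrac{\sigma(T)-1}{2}+\varepsilon/2\bigr)n^2$, yielding the desired contradiction.

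The main obstacle is step three: translating $T$-freeness of the shadow graph into an edge bound on $\mathcal{H}'$ with the precise constant $(\sigma(T)-1)/2$. The full Erd\H{o}s--S\'os conjecture is open, so the bound cannot come from an edge count on the shadow alone. The approach is rather structural: a stability argument shows that a $T$-free graph whose edge count approaches the threshold must be close to the extremal construction---a graph with a small vertex cover of size $\sigma(T)-1$---and the codegree lower bound then translates this structural information into the desired bound on $|\mathcal{H}'|$. Verifying that the correct threshold is $\sigma(T)-1$ (as opposed to $\tau(T)-1$ or $\tau_{\mathrm{ind}}(T)-1$) is the combinatorial heart of the argument and is precisely where the tree parameters introduced earlier interact with the flexibility afforded by the extra vertices $w_e$ of the expansion.
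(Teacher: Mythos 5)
Your lower bound is in the right spirit, but as you partly sense, $\mathcal{H}_S$ itself is not $T^3$-free for every tree — only for those with $\tau(T)=\sigma(T)$. The clean repair is to pass to $\{e\in\binom{V}{3}:|e\cap S|=1\}$, which deletes only $O_T(n)$ triples and then makes $X$ independent automatically: no triple has two vertices in $S$, so no tree edge can be mapped inside $S$, and the inequality $|X|+|T-X|\le|S|=\sigma(T)-1$ directly contradicts the definition of $\sigma$. Your ``leaf-peeling'' fallback does not achieve this, since removing a non-leaf vertex of $T[X]$ from $X$ can increase $\#\{e:X\cap e=\emptyset\}$ by more than the unit you save from $|X|$.

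The upper bound is where the proposal breaks, and the flaw is in step one, not step three. Cleaning to codegree $\ge\delta n$ with $\delta$ a constant does \emph{not} lose $o(n^2)$ edges: each deleted shadow pair can carry up to $\delta n$ triples and there can be $\Theta(n^2)$ such pairs, for a loss of order $\delta n^3$. Concretely, in the near-extremal $3$-graph $\{e:|e\cap[t]|=1\}$ with $t=\sigma(T)-1$, every triple contains a pair $\{v,w\}\subseteq[n]\setminus[t]$ of codegree exactly $t=O(1)<\delta n$, so your cleaning deletes every edge and $\mathcal{H}'=\emptyset$; the $T$-freeness of $\partial\mathcal{H}'$ then says nothing about $|\mathcal{H}|$. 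Lowering the threshold to a constant does not help: the greedy lift of a copy of $T$ needs codegree about $3|T|$, while cleaning to codegree $\ge c$ already costs $(c-1)\binom{n}{2}$ edges, which exceeds the target $(\tfrac{\sigma(T)-1}{2}+o(1))n^2$ as soon as $c>\sigma(T)$. The paper does not reprove this theorem (it is quoted from Kostochka--Mubayi--Verstra\"ete), but the ingredient it extracts from that proof, Lemma~\ref{LEMMA:l+1-full-tree}, is exactly the statement you cannot get from shadow $T$-freeness: a $\sigma(T)$-full, $T^3$-free $3$-graph has $o(n^2)$ edges. Cleaning to codegree $\ge\sigma(T)$ costs only $(\sigma(T)-1)\binom{n}{2}$, precisely the right budget, and then this lemma finishes. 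Crucially, the shadow of such a $3$-graph can still contain many copies of $T$ whose edges all have codegree exactly $\sigma(T)<3|T|$, so greedy expansion fails and the shadow is not $T$-free; your step three, framed as a stability statement for $T$-free graphs, is aimed at the wrong object, and the genuine difficulty lives in the gap between $\sigma(T)$ and $3|T|$.
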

It is worth mentioning that the main term in the theorem above comes from the following construction. 
Let $n \ge t \ge 0$ be integers. Define the $3$-graph
    \begin{align*}
        \mathcal{S}(n,t)
        := \left\{e\in \binom{[n]}{3} \colon |e\cap [t]| \ge 1\right\}.
    \end{align*}
It is easy to see that $\mathcal{S}(n,t)$ is $T^3$-free for all trees with $\tau(T) \ge t+1$. Hence, $\mathrm{ex}(n,T^3) \ge |\mathcal{S}(n,t)|$ holds for all trees with $\tau(T) \ge t+1$. 
Our main result in this subsection is an exact determination of $\mathrm{ex}(n,F^3)$ when $F$ is a strongly edge-critical tree, thus sharpening (partially) Theorem~\ref{THM:KMV-tree}. 
\begin{theorem}\label{THM:Hypergraph-Tree-Exact}
    Suppose that $T$ is a tree satisfying $\sigma(T) = \tau_{\mathrm{ind}}(T)$ and containing a critical edge.
    Then 
    \begin{align*}
        \mathrm{ex}(n, T^{3})
        \le |\mathcal{S}(n,\sigma(T)-1)|
        \quad\text{for all sufficiently large $n$}.
    \end{align*}
    In particular, $T$ is a strongly edge-critical tree.
    Then 
    \begin{align*}
        \mathrm{ex}(n, T^{3})
        = |\mathcal{S}(n,\sigma(T)-1)|
        \quad\text{for all sufficiently large $n$}. 
    \end{align*}
\end{theorem}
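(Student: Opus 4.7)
The plan is to use the stability version of Theorem~\ref{THM:KMV-tree} established earlier in this paper together with the critical-edge hypothesis to upgrade the known asymptotic bound into an exact one. Let $\mathcal{H}$ be a $T^3$-free $3$-graph on $n$ vertices with $|\mathcal{H}|\ge|\mathcal{S}(n,\sigma(T)-1)|$. The stability theorem supplies a set $S\subseteq V(\mathcal{H})$ with $|S|=\sigma(T)-1$ such that all but $o(n^2)$ edges of $\mathcal{H}$ meet $S$. A routine cleaning step -- delete a bounded number of vertices with abnormally small link and absorb their contribution into a negligible error term -- then produces a set $U\supseteq S$ of size $(1-o(1))n$ in which, for every $v\in U\setminus S$ and every $s\in S$, the co-link $\{w\in U:\{v,s,w\}\in\mathcal{H}\}$ has size $(1-o(1))n$, with an analogous lower bound for pairs of vertices inside $S$.

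I now aim to show that every edge of $\mathcal{H}$ meets $S$; this gives $|\mathcal{H}|\le|\mathcal{S}(n,\sigma(T)-1)|$. Suppose, for contradiction, that $f=\{x,y,z\}\in\mathcal{H}$ with $f\cap S=\emptyset$ and $f\subseteq U$ (edges through deleted vertices are absorbed separately). Let $e=u_0v_0$ be a critical edge of $T$, so $\sigma(T\setminus e)\le\sigma(T)-1=|S|$, and let $T_1,T_2$ be the components of $T\setminus e$ containing $u_0$ and $v_0$ respectively. The strategy is to realise $f$ as the expansion edge of $e$ via $u_0\mapsto x$, $v_0\mapsto y$, apex of $e\mapsto z$, and then embed $(T\setminus e)^3$ into $\mathcal{H}$ with both roots pinned. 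Using the bipartition of $T$ and the hypothesis $\sigma(T)=\tau_{\mathrm{ind}}(T)$, select in each $T_i$ an independent set $J_i\subseteq V(T_i)$ containing the appropriate root, with $V(T_i)\setminus J_i$ also independent and as large as possible; the critical-edge inequality then guarantees that $V(T\setminus e)\setminus(J_1\cup J_2)$ together with any apex vertices that must lie in $S$ fits injectively into $S$. Place those vertices into $S$, map $u_0\mapsto x$ and $v_0\mapsto y$, and complete the embedding by choosing the remaining apex vertices greedily in $U\setminus S$: the co-link bounds from the cleaning step leave $(1-o(1))n$ valid extensions at each step, easily avoiding $z$ and previously chosen vertices. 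This exhibits a copy of $T^3$ in $\mathcal{H}$, a contradiction.

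The ``in particular'' equality uses the matching lower bound $|\mathcal{S}(n,\sigma(T)-1)|\le\mathrm{ex}(n,T^3)$ recorded just before the theorem, which holds whenever $\tau(T)\ge\sigma(T)$; combined with the general bound $\tau(T)\le\sigma(T)$, this is exactly the remaining clause in the definition of a strongly edge-critical tree. The main obstacle is the rooted embedding of $(T\setminus e)^3$ with both endpoints of $e$ pinned outside $S$: one must simultaneously force the ``non-$S$ side'' of each component to contain its root while remaining independent, and fit all necessary apex vertices into the tight budget $|S|=\sigma(T)-1$. The two hypotheses dovetail here -- $\sigma(T)=\tau_{\mathrm{ind}}(T)$ controls the bipartition-based placement and the critical edge provides the extra slack of one in the $S$-budget -- and verifying that these constraints can be met simultaneously, for an appropriately chosen critical edge, is the technical heart of the argument.
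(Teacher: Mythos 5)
Your plan correctly identifies the stability theorem as the entry point and the critical edge as the device that turns an asymptotic bound into an exact one, but there are two substantive gaps, each of which the paper handles with a specific tool you haven't reproduced.

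First, and most critically, using an \emph{arbitrary} critical edge $e=u_0v_0$ and pinning \emph{both} roots outside $S$ does not work. Suppose $v_0\in I$ (the minimum independent cover) and $v_0$ is not a leaf. Then $T_2$ is a nontrivial tree, and pinning $v_0$ outside $S$ forces the entire $J$-side of $T_2$ (the side of its bipartition containing $v_0$) out of $S$ as well, while the $I$-side of $T_1$ and the other side of $T_2$ must be squeezed into $S$. The budget requirement becomes $|I\cap V(T_1)| + |J\cap V(T_2)| \le \sigma(T)-1 = |I|-1$, i.e.\ $|J\cap V(T_2)| < |I\cap V(T_2)|$, which is simply false for general trees and critical edges. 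Moreover, the edges of $T_2$ incident to $v_0$ map to pairs both of whose endpoints lie in $U\setminus S$; the cleaning step only controls codegrees of pairs $\{v,s\}$ with $s\in S$, so you have no way to find apex vertices for these expansions, and $S$ is already full so they cannot be placed there either. The paper resolves all of this with Proposition~\ref{PROP:tree-R-empty}: under the hypotheses $\sigma(T)=\tau_{\mathrm{ind}}(T)$ and existence of a critical edge, there is a \emph{pendant} critical edge $e^\ast$ whose leaf endpoint lies in $I$. With that edge, $T_2$ degenerates to a single vertex, $v_0$ has no other incident edges to expand, $I\setminus\{v_0\}$ fits exactly into $S$, and every remaining tree edge maps to a pair with one endpoint in $S$, where the codegree bound applies. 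Without something equivalent to Proposition~\ref{PROP:tree-R-empty}, your embedding step fails.

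Second, ``absorb their contribution into a negligible error term'' does not yield an exact bound. Even after showing that no edge of $\mathcal{H}$ inside $U$ avoids $S$, edges through the $O(\delta n)$ deleted vertices (the paper's $\overline{D}$) could in principle number $\Theta(n^2)$, and $o(n^2)$ slack is precisely what the theorem is trying to eliminate. The paper's proof is structured around a comparison that you do not make: it shows that every vertex of $\overline{D}$ creates at least $\Omega(n)$ missing edges in $\mathcal{M}=\mathcal{S}\setminus\mathcal{H}$ (so $m\ge n|\overline{D}|/6$), while the bad edges $\mathcal{B}=\mathcal{H}-L$ are confined to $\overline{D}$ (this is where the pendant-edge embedding is used, in Claim~\ref{CLAIM:Turan-size-b-1}), giving $b\le C|\overline{D}|^2$. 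Combining $|\overline{D}|\le 3\delta t n$ with $\delta\ll C^{-1}$ yields $b<m$, hence $|\mathcal{H}|=|\mathcal{S}|+b-m<|\mathcal{S}(n,t)|$ unless $\overline{D}=\emptyset$. You need this kind of quantitative missing-vs-bad accounting; simply claiming every edge meets $S$ does not survive the deleted vertices, and that is the technical heart of converting the stability result into an exact theorem.
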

\textbf{Remark.}
\begin{itemize} 
    \item Note that one cannot hope that Theorem~\ref{THM:Hypergraph-Tree-Exact} holds for all trees, as paths of even length would be counterexamples (see~\cite{KMV15a}). 
    \item For $r\ge 4$ F\"{u}redi--Jiang~\cite{FJ15} determined $\mathrm{ex}(n, T^{r})$ for all edge-critical trees $T$, where a tree $T$ is edge-critical if $\tau(T) = \sigma(T)$ and there exists an edge $e\in T$ such that $\sigma(T\setminus e) \le \sigma(T)-1$.
It would be interesting to see if the conclusion of Theorem~\ref{THM:Hypergraph-Tree-Exact} still holds under this weak constraint.
\end{itemize}

Theorem~\ref{THM:Hypergraph-Tree-Exact} is proved by using the stability method developed by Siminovits~\cite{S68}, and one of the key steps is to prove the following stability theorem for trees. 

We say an $n$-vertex $3$-graph $\mathcal{H}$ is \textbf{$\delta$-close} to $\mathcal{S}(n,t)$ if there exists a set $L\subseteq V(\mathcal{H})$ of size $t$ such that
\begin{align*}
    |\mathcal{H}-L| \le \delta n^2 \quad\text{and}\quad
    d_{\mathcal{H}}(v) \ge \left(1/2 -\delta\right)n^2 \text{ for all $v\in L$}. 
\end{align*}
\begin{theorem}[Stability]\label{THM:Hypergraph-Tree-Stability}
    Let $T$ be a tree with $\sigma(T) = \tau_{\mathrm{ind}}(T)$.  
    For every $\delta>0$ there exist $\varepsilon>0$ and $n_0$ such that the following holds for all $n\ge n_0$. 
    Suppose that $\mathcal{H}$ is an $n$-vertex $T^3$-free $3$-graph with
    \begin{align*}
        |\mathcal{H}|
        \ge \left(\frac{\sigma(T) - 1}{2}- \varepsilon\right) n^2.  
    \end{align*}
    Then $\mathcal{H}$ is $\delta$-close to $\mathcal{S}(n,\sigma(T)-1)$. 
\end{theorem}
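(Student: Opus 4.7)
I plan to follow the standard stability template. Write $s := \sigma(T) = \tau_{\mathrm{ind}}(T)$ and choose $\varepsilon$ sufficiently small in terms of $\delta$ and $|V(T)|$. Let $L \subseteq V(\mathcal{H})$ consist of the $s - 1$ vertices of largest $\mathcal{H}$-degree; I aim to verify both conditions defining ``$\delta$-close to $\mathcal{S}(n,s-1)$'' with this $L$, namely (A) $|\mathcal{H} - L| \leq \delta n^{2}$, and (B) $d_{\mathcal{H}}(v) \geq (1/2 - \delta) n^{2}$ for every $v \in L$. Condition (B) reduces cleanly to (A): once (A) is known, counting edges meeting $L$ gives
\[
\sum_{v \in L} d_{\mathcal{H}}(v) \;\geq\; |\mathcal{H}| - |\mathcal{H} - L| \;\geq\; \Bigl(\tfrac{s-1}{2} - \varepsilon - \delta\Bigr) n^{2},
\]
and since each $d_{\mathcal{H}}(v) \leq \binom{n-1}{2}$, averaging across the $s-1$ vertices of $L$ forces every degree there to be at least $(1/2 - O(\varepsilon + \delta))n^{2}$, from which (B) follows after rescaling.

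The main work is thus proving (A), which I approach by contradiction: assume $|\mathcal{H} - L| > \delta n^{2}$ and construct a copy of $T^{3}$ in $\mathcal{H}$. A preliminary cleanup that iteratively discards vertices of degree below a small threshold, combined with the asymptotic bound in Theorem~\ref{THM:KMV-tree}, lets me assume additionally that each $v\in L$ satisfies $d_{\mathcal{H}}(v) \geq (1/2 - \eta)n^{2}$ for some small $\eta = \eta(\delta)$, since otherwise the remaining $3$-graph would fall below the KMV asymptotic. I next exploit the bipartition guaranteed by $\tau_{\mathrm{ind}}(T) = s$: write $V(T) = X \sqcup Y$ with $|X| = s$, so $X$ is independent in $T$ and each edge of $T$ has exactly one endpoint in $X$. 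Fix a distinguished vertex $x^{*} \in X$, map $X \setminus \{x^{*}\}$ bijectively to $L$, send $x^{*}$ to a vertex $u \in V\setminus L$ chosen to have large link in $\mathcal{H}-L$ (by averaging from $|\mathcal{H}-L| > \delta n^{2}$), and embed $Y$ together with the $|E(T)|$ expansion vertices into $V\setminus L$ greedily. The edges of $T$ not incident to $x^{*}$ extend easily because each $v\in L$ has codegree $\Omega(n)$ with almost every other vertex; the $\deg_{T}(x^{*})$ edges at $x^{*}$ must be realized inside $\mathcal{H}-L$ using the link of $u$.

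The principal obstacle is exactly this last step: finding, at $u$, enough vertex-disjoint pairs in its $(\mathcal{H}-L)$-link together with distinct expansion vertices to carry the star at $x^{*}$. The bulk bound $|\mathcal{H}-L| > \delta n^{2}$ does not suffice on its own, since a $3$-graph can concentrate its edges on a few vertices whose links have bounded matching number. Overcoming this is likely to require passing to a sub-$3$-graph of $\mathcal{H}-L$ with controlled degree sequence (so that many vertices simultaneously host the required star-matching in their links), or bootstrapping via the $T^{3}$-freeness of $\mathcal{H}-L$ itself combined with a supersaturation argument. Throughout, the equality $\sigma(T) = \tau_{\mathrm{ind}}(T)$ is crucial, ensuring both that the bipartition-based embedding is available and that $\mathcal{S}(n,s-1)$, with exactly $s-1$ heavy vertices, is the correct target.
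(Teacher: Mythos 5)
The reduction of condition (B) to condition (A) by the degree-sum/averaging argument is fine, and the high-level plan (contradiction $\Rightarrow$ embed $T^3$ with $X\setminus\{x^*\}\to L$) is the natural one. But there is a genuine gap, and it sits exactly where the real difficulty of the theorem lies: the ``preliminary cleanup'' is not valid. You claim that discarding low-degree vertices and invoking Theorem~\ref{THM:KMV-tree} forces every $v\in L$ (the $s-1$ top-degree vertices) to satisfy $d_{\mathcal{H}}(v)\ge(1/2-\eta)n^2$. That inference does not hold. If only $s-2$ vertices have degree near $n^2/2$, deleting them leaves a $T^3$-free $3$-graph on $\approx n$ vertices with $\approx(\tfrac12-\varepsilon')n^2$ edges, and Theorem~\ref{THM:KMV-tree} permits up to $(\tfrac{s-1}{2}+o(1))n^2$ edges for such a graph; for $s\ge 2$ there is no contradiction. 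Likewise, if one instead removes all vertices of degree below $(1/2-\eta)n^2$, the degree-sum constraint $\sum_v d_{\mathcal{H}}(v)=3|\mathcal{H}|\approx \tfrac{3(s-1)}{2}n^2$ shows only $O(1)$ vertices survive, which is useless. In short, extracting the set $L$ and certifying its uniform high degree is the content of the stability theorem, and it cannot be bootstrapped from the asymptotic Tur\'an bound alone. The step you independently flag as ``the principal obstacle'' (carrying the star at $x^*$ inside $\mathcal{H}-L$) is a second real difficulty, but the cleanup gap is fatal even before you reach it, since without uniformly large degrees on $L$ (equivalently, large codegrees from $L$ into $V\setminus L$) the greedy embedding of $T$ never gets off the ground.

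For comparison, the paper sidesteps both difficulties by switching from vertex degrees to \emph{pair codegrees}: the Cleaning Algorithm (Lemma~\ref{LEMMA:Cleaning-Algo-tree}) passes to a $(t,3k)$-superfull subgraph in which every shadow pair has codegree either exactly $t$ or at least $3k$, Lemma~\ref{LEMMA:l+1-shadows} keeps the set of high-codegree pairs small, the iterated $Z_i$-shrinking concentrates the ``light links'' on a core set $Z_k$ with $\binom{|Z_k|}{t}\le\sqrt{n}$, and Proposition~\ref{PROP:embed-T3-two-t-sets} (which uses $\sigma(T)=\tau_{\mathrm{ind}}(T)$ via Proposition~\ref{PROP:tree-decomposition}) forces the link supports of distinct $t$-subsets of $Z_k$ to be pairwise disjoint (Claim~\ref{CLAIM:Turan-disjoint-support}); convexity then singles out a unique $t$-set $T_*$ with large support, which becomes the target set $L$. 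It is precisely this codegree-level and two-$t$-set argument that replaces your unproved cleanup and resolves your embedding obstacle. If you want to salvage the top-$(s-1)$-degree choice of $L$, you would need some version of this superfull/link machinery (or an equivalent supersaturation input) to justify both the degree lower bound and the embedding step.
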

The proofs for Theorems~\ref{THM:Hypergraph-Tree-Exact} and~\ref{THM:Hypergraph-Tree-Stability} 
will be presented in Sections~\ref{SUBSEC:Proof-Hypergraph-Turan-Exact} and~\ref{SUBSEC:Proof-Hypergraph-Turan-Stability}, respectively. 
\subsection{Anti-Ramsey problems}\label{SUBSEC:Intro:antiRamsey}
Let $F$ be an $r$-graph. 
The \textbf{anti-Ramsey number} $\mathrm{ar}(n,F)$ of $F$ is the minimum number $m$ such that any surjective map $\chi \colon K_n^r \to [m]$ contains a rainbow copy of $F$, i.e. a copy of $F$ in which every edge receives a unique value under $\chi$. 
It is easy to observe from the definition that for every $r$-graph $F$, 
    \begin{align*}
        \mathrm{ex}\left(n, \{F\setminus e\colon e\in F\}\right) + 2 
        \le \mathrm{ar}(n,F)
        \le \mathrm{ex}(n, F)+1. 
    \end{align*}
The study of anti-Ramsey problems was initiated by Erd\H{o}s--Simonovits--S\'{o}s~\cite{ESS75} who proved that $\mathrm{ar}(n,K_{r+1}) = \mathrm{ex}(n, K_{r})+2$ for $r \ge 3$ and sufficiently large $n$. 
There has been lots of progress on this topic since their work and we refer the reader to a survey~\cite{FMO10} by Fujita--Magnant--Ozeki for more details. 
In this subsection we are concerned with $\mathrm{ar}(n,F)$ when $F$ is the expansion of a tree plus one edge.

\begin{figure}[htbp]
\centering
\begin{minipage}{0.15\linewidth}
\centering
\begin{tikzpicture}[x=0.75pt,y=0.75pt,yscale=-1,xscale=1,scale=0.45]
\draw[line width=1pt,color=sqsqsq]   (257.31,70.98) .. controls (270.32,70.88) and (281.09,101.48) .. (281.36,139.32) .. controls (281.63,177.16) and (271.3,207.91) .. (258.29,208.01) .. controls (245.28,208.1) and (234.51,177.5) .. (234.24,139.66) .. controls (233.97,101.82) and (244.3,71.07) .. (257.31,70.98) -- cycle ;
\draw[line width=1pt,color=sqsqsq]   (352.72,44.92) .. controls (369.82,44.8) and (383.98,86.11) .. (384.35,137.19) .. controls (384.71,188.27) and (371.15,229.78) .. (354.04,229.9) .. controls (336.94,230.02) and (322.78,188.71) .. (322.41,137.63) .. controls (322.05,86.55) and (335.62,45.04) .. (352.72,44.92) -- cycle ;
\draw [line width=1pt,color=sqsqsq]   (256,93) -- (350,71) ;
\draw[line width=1pt,color=sqsqsq]    (256,93) -- (350,98) ;
\draw [line width=1pt,color=sqsqsq]   (256,93) -- (350,123) ;
\draw [line width=1pt,color=sqsqsq]   (256,135) -- (350,123) ;
\draw [line width=1pt,color=sqsqsq]   (256,135) -- (350,143) ;
\draw  [line width=1pt,color=sqsqsq]  (256,135) -- (350,167) ;
\draw  [line width=1pt,color=sqsqsq]  (256,160) -- (350,167) ;
\draw[line width=1pt,color=sqsqsq]    (256,160) -- (350,195) ;
\draw [line width=1pt,color=sqsqsq]   (256,187) -- (350,195) ;
\draw [line width=1pt,color=blue]   (256,30) -- (350,30) ;
\draw [fill=uuuuuu] (256,30) circle (2pt);
\draw [fill=uuuuuu]  (350,30) circle (2pt);
\draw [fill=uuuuuu] (256,93) circle (2pt);
\draw [fill=uuuuuu]  (256,135) circle (2pt);
\draw [fill=uuuuuu]  (256,160) circle (2pt);
\draw [fill=uuuuuu]  (256,187) circle (2pt);
\draw [fill=uuuuuu]  (350,71) circle (2pt);
\draw [fill=uuuuuu] (350,98) circle (2pt);
\draw [fill=uuuuuu]  (350,123) circle (2pt);
\draw [fill=uuuuuu]  (350,143)circle (2pt);
\draw [fill=uuuuuu]  (350,167)  circle (2pt);
\draw [fill=uuuuuu]  (350,195)  circle (2pt);
\end{tikzpicture}
\end{minipage}%
\begin{minipage}{0.15\linewidth}
\centering
\begin{tikzpicture}[x=0.75pt,y=0.75pt,yscale=-1,xscale=1,scale=0.45]
\draw[line width=1pt,color=sqsqsq]   (257.31,70.98) .. controls (270.32,70.88) and (281.09,101.48) .. (281.36,139.32) .. controls (281.63,177.16) and (271.3,207.91) .. (258.29,208.01) .. controls (245.28,208.1) and (234.51,177.5) .. (234.24,139.66) .. controls (233.97,101.82) and (244.3,71.07) .. (257.31,70.98) -- cycle ;
\draw[line width=1pt,color=sqsqsq]   (352.72,44.92) .. controls (369.82,44.8) and (383.98,86.11) .. (384.35,137.19) .. controls (384.71,188.27) and (371.15,229.78) .. (354.04,229.9) .. controls (336.94,230.02) and (322.78,188.71) .. (322.41,137.63) .. controls (322.05,86.55) and (335.62,45.04) .. (352.72,44.92) -- cycle ;
\draw [line width=1pt,color=sqsqsq]   (256,93) -- (350,71) ;
\draw[line width=1pt,color=sqsqsq]    (256,93) -- (350,98) ;
\draw [line width=1pt,color=sqsqsq]   (256,93) -- (350,123) ;
\draw [line width=1pt,color=sqsqsq]   (256,135) -- (350,123) ;
\draw [line width=1pt,color=sqsqsq]   (256,135) -- (350,143) ;
\draw  [line width=1pt,color=sqsqsq]  (256,135) -- (350,167) ;
\draw  [line width=1pt,color=sqsqsq]  (256,160) -- (350,167) ;
\draw[line width=1pt,color=sqsqsq]    (256,160) -- (350,195) ;
\draw [line width=1pt,color=sqsqsq]   (256,187) -- (350,195) ;
\draw [line width=1pt,color=blue]   (256,93) -- (350,30) ;
\draw [fill=uuuuuu]  (350,30) circle (2pt);
\draw [fill=uuuuuu] (256,93) circle (2pt);
\draw [fill=uuuuuu]  (256,135) circle (2pt);
\draw [fill=uuuuuu]  (256,160) circle (2pt);
\draw [fill=uuuuuu]  (256,187) circle (2pt);
\draw [fill=uuuuuu]  (350,71) circle (2pt);
\draw [fill=uuuuuu] (350,98) circle (2pt);
\draw [fill=uuuuuu]  (350,123) circle (2pt);
\draw [fill=uuuuuu]  (350,143)circle (2pt);
\draw [fill=uuuuuu]  (350,167)  circle (2pt);
\draw [fill=uuuuuu]  (350,195)  circle (2pt);
\end{tikzpicture}
\end{minipage}%
\begin{minipage}{0.15\linewidth}
\centering
\begin{tikzpicture}[x=0.75pt,y=0.75pt,yscale=-1,xscale=1,scale=0.45]
\draw[line width=1pt,color=sqsqsq]   (257.31,70.98) .. controls (270.32,70.88) and (281.09,101.48) .. (281.36,139.32) .. controls (281.63,177.16) and (271.3,207.91) .. (258.29,208.01) .. controls (245.28,208.1) and (234.51,177.5) .. (234.24,139.66) .. controls (233.97,101.82) and (244.3,71.07) .. (257.31,70.98) -- cycle ;
\draw[line width=1pt,color=sqsqsq]   (352.72,44.92) .. controls (369.82,44.8) and (383.98,86.11) .. (384.35,137.19) .. controls (384.71,188.27) and (371.15,229.78) .. (354.04,229.9) .. controls (336.94,230.02) and (322.78,188.71) .. (322.41,137.63) .. controls (322.05,86.55) and (335.62,45.04) .. (352.72,44.92) -- cycle ;
\draw [line width=1pt,color=sqsqsq]   (256,93) -- (350,71) ;
\draw[line width=1pt,color=sqsqsq]    (256,93) -- (350,98) ;
\draw [line width=1pt,color=sqsqsq]   (256,93) -- (350,123) ;
\draw [line width=1pt,color=sqsqsq]   (256,135) -- (350,123) ;
\draw [line width=1pt,color=sqsqsq]   (256,135) -- (350,143) ;
\draw  [line width=1pt,color=sqsqsq]  (256,135) -- (350,167) ;
\draw  [line width=1pt,color=sqsqsq]  (256,160) -- (350,167) ;
\draw[line width=1pt,color=sqsqsq]    (256,160) -- (350,195) ;
\draw [line width=1pt,color=sqsqsq]   (256,187) -- (350,195) ;
\draw [line width=1pt,color=blue]   (256,30) -- (350,71) ;
\draw [fill=uuuuuu] (256,30) circle (2pt);
\draw [fill=uuuuuu] (256,93) circle (2pt);
\draw [fill=uuuuuu]  (256,135) circle (2pt);
\draw [fill=uuuuuu]  (256,160) circle (2pt);
\draw [fill=uuuuuu]  (256,187) circle (2pt);
\draw [fill=uuuuuu]  (350,71) circle (2pt);
\draw [fill=uuuuuu] (350,98) circle (2pt);
\draw [fill=uuuuuu]  (350,123) circle (2pt);
\draw [fill=uuuuuu]  (350,143)circle (2pt);
\draw [fill=uuuuuu]  (350,167)  circle (2pt);
\draw [fill=uuuuuu]  (350,195)  circle (2pt);
\end{tikzpicture}
\end{minipage}%
\vspace{.5cm}
\begin{minipage}{0.15\linewidth}
\centering
\begin{tikzpicture}[x=0.75pt,y=0.75pt,yscale=-1,xscale=1,scale=0.45]
\draw[line width=1pt,color=sqsqsq]   (257.31,70.98) .. controls (270.32,70.88) and (281.09,101.48) .. (281.36,139.32) .. controls (281.63,177.16) and (271.3,207.91) .. (258.29,208.01) .. controls (245.28,208.1) and (234.51,177.5) .. (234.24,139.66) .. controls (233.97,101.82) and (244.3,71.07) .. (257.31,70.98) -- cycle ;
\draw[line width=1pt,color=sqsqsq]   (352.72,44.92) .. controls (369.82,44.8) and (383.98,86.11) .. (384.35,137.19) .. controls (384.71,188.27) and (371.15,229.78) .. (354.04,229.9) .. controls (336.94,230.02) and (322.78,188.71) .. (322.41,137.63) .. controls (322.05,86.55) and (335.62,45.04) .. (352.72,44.92) -- cycle ;
\draw [line width=1pt,color=sqsqsq]   (256,93) -- (350,71) ;
\draw[line width=1pt,color=sqsqsq]    (256,93) -- (350,98) ;
\draw [line width=1pt,color=sqsqsq]   (256,93) -- (350,123) ;
\draw [line width=1pt,color=sqsqsq]   (256,135) -- (350,123) ;
\draw [line width=1pt,color=sqsqsq]   (256,135) -- (350,143) ;
\draw  [line width=1pt,color=sqsqsq]  (256,135) -- (350,167) ;
\draw  [line width=1pt,color=sqsqsq]  (256,160) -- (350,167) ;
\draw[line width=1pt,color=sqsqsq]    (256,160) -- (350,195) ;
\draw [line width=1pt,color=sqsqsq]   (256,187) -- (350,195) ;
\draw [line width=1pt,color=blue] (350,71) .. controls (370, 100) and (370,130) .. (350,167)  ;
\draw [fill=uuuuuu] (256,93) circle (2pt);
\draw [fill=uuuuuu]  (256,135) circle (2pt);
\draw [fill=uuuuuu]  (256,160) circle (2pt);
\draw [fill=uuuuuu]  (256,187) circle (2pt);
\draw [fill=uuuuuu]  (350,71) circle (2pt);
\draw [fill=uuuuuu] (350,98) circle (2pt);
\draw [fill=uuuuuu]  (350,123) circle (2pt);
\draw [fill=uuuuuu]  (350,143)circle (2pt);
\draw [fill=uuuuuu]  (350,167)  circle (2pt);
\draw [fill=uuuuuu]  (350,195)  circle (2pt);
\end{tikzpicture}
\end{minipage}%
\begin{minipage}{0.15\linewidth}
\centering
\begin{tikzpicture}[x=0.75pt,y=0.75pt,yscale=-1,xscale=1,scale=0.45]
\draw[line width=1pt,color=sqsqsq]   (257.31,70.98) .. controls (270.32,70.88) and (281.09,101.48) .. (281.36,139.32) .. controls (281.63,177.16) and (271.3,207.91) .. (258.29,208.01) .. controls (245.28,208.1) and (234.51,177.5) .. (234.24,139.66) .. controls (233.97,101.82) and (244.3,71.07) .. (257.31,70.98) -- cycle ;
\draw[line width=1pt,color=sqsqsq]   (352.72,44.92) .. controls (369.82,44.8) and (383.98,86.11) .. (384.35,137.19) .. controls (384.71,188.27) and (371.15,229.78) .. (354.04,229.9) .. controls (336.94,230.02) and (322.78,188.71) .. (322.41,137.63) .. controls (322.05,86.55) and (335.62,45.04) .. (352.72,44.92) -- cycle ;
\draw [line width=1pt,color=sqsqsq]   (256,93) -- (350,71) ;
\draw[line width=1pt,color=sqsqsq]    (256,93) -- (350,98) ;
\draw [line width=1pt,color=sqsqsq]   (256,93) -- (350,123) ;
\draw [line width=1pt,color=sqsqsq]   (256,135) -- (350,123) ;
\draw [line width=1pt,color=sqsqsq]   (256,135) -- (350,143) ;
\draw  [line width=1pt,color=sqsqsq]  (256,135) -- (350,167) ;
\draw  [line width=1pt,color=sqsqsq]  (256,160) -- (350,167) ;
\draw[line width=1pt,color=sqsqsq]    (256,160) -- (350,195) ;
\draw [line width=1pt,color=sqsqsq]   (256,187) -- (350,195) ;
\draw [line width=1pt,color=blue]   (256,160) -- (350,71) ;
\draw [fill=uuuuuu] (256,93) circle (2pt);
\draw [fill=uuuuuu]  (256,135) circle (2pt);
\draw [fill=uuuuuu]  (256,160) circle (2pt);
\draw [fill=uuuuuu]  (256,187) circle (2pt);
\draw [fill=uuuuuu]  (350,71) circle (2pt);
\draw [fill=uuuuuu] (350,98) circle (2pt);
\draw [fill=uuuuuu]  (350,123) circle (2pt);
\draw [fill=uuuuuu]  (350,143)circle (2pt);
\draw [fill=uuuuuu]  (350,167)  circle (2pt);
\draw [fill=uuuuuu]  (350,195)  circle (2pt);
\end{tikzpicture}
\end{minipage}%
\begin{minipage}{0.15\linewidth}
\centering
\begin{tikzpicture}[x=0.75pt,y=0.75pt,yscale=-1,xscale=1,scale=0.45]
\draw[line width=1pt,color=sqsqsq]   (257.31,70.98) .. controls (270.32,70.88) and (281.09,101.48) .. (281.36,139.32) .. controls (281.63,177.16) and (271.3,207.91) .. (258.29,208.01) .. controls (245.28,208.1) and (234.51,177.5) .. (234.24,139.66) .. controls (233.97,101.82) and (244.3,71.07) .. (257.31,70.98) -- cycle ;
\draw[line width=1pt,color=sqsqsq]   (352.72,44.92) .. controls (369.82,44.8) and (383.98,86.11) .. (384.35,137.19) .. controls (384.71,188.27) and (371.15,229.78) .. (354.04,229.9) .. controls (336.94,230.02) and (322.78,188.71) .. (322.41,137.63) .. controls (322.05,86.55) and (335.62,45.04) .. (352.72,44.92) -- cycle ;
\draw [line width=1pt,color=sqsqsq]   (256,93) -- (350,71) ;
\draw[line width=1pt,color=sqsqsq]    (256,93) -- (350,98) ;
\draw [line width=1pt,color=sqsqsq]   (256,93) -- (350,123) ;
\draw [line width=1pt,color=sqsqsq]   (256,135) -- (350,123) ;
\draw [line width=1pt,color=sqsqsq]   (256,135) -- (350,143) ;
\draw  [line width=1pt,color=sqsqsq]  (256,135) -- (350,167) ;
\draw  [line width=1pt,color=sqsqsq]  (256,160) -- (350,167) ;
\draw[line width=1pt,color=sqsqsq]    (256,160) -- (350,195) ;
\draw [line width=1pt,color=sqsqsq]   (256,187) -- (350,195) ;
\draw [line width=1pt,color=blue]   (256,93) .. controls (245, 110) and (245,140) .. (256,160)  ;
\draw [fill=uuuuuu] (256,93) circle (2pt);
\draw [fill=uuuuuu]  (256,135) circle (2pt);
\draw [fill=uuuuuu]  (256,160) circle (2pt);
\draw [fill=uuuuuu]  (256,187) circle (2pt);
\draw [fill=uuuuuu]  (350,71) circle (2pt);
\draw [fill=uuuuuu] (350,98) circle (2pt);
\draw [fill=uuuuuu]  (350,123) circle (2pt);
\draw [fill=uuuuuu]  (350,143)circle (2pt);
\draw [fill=uuuuuu]  (350,167)  circle (2pt);
\draw [fill=uuuuuu]  (350,195)  circle (2pt);
\end{tikzpicture}
\end{minipage}%
\caption{Possible positions for the new edge.}
\label{fig:tree-plus}
\end{figure}
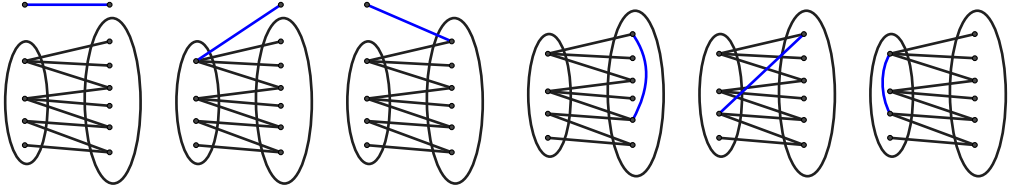
We say a graph $F$ is an  \textbf{augmentation} of a tree $T$ if $F$ can be obtained from $T$ by adding a new edge (we do not require the new edge to be contained in $V(T)$, see Figure~\ref{fig:tree-plus}). 
As another application of the stability theorem (Theorem~\ref{THM:Hypergraph-Tree-Stability}) in the previous subsection, we prove the following anti-Ramsey result for augmentations of trees. 
\begin{theorem}\label{THM:AntiRamsey-Tree-Exact}
    Suppose that $T$ is a tree satisfying $\sigma(T) = \tau_{\mathrm{ind}}(T)$ and containing a critical edge, and $F$ is an augmentation of $T$.  
    Then 
    \begin{align*}
        \mathrm{ar}(n,F^3) 
        \le \binom{n}{3} - \binom{n-\sigma(T)+1}{3} + 2
        \quad\text{for sufficiently large $n$}. 
    \end{align*}
\end{theorem}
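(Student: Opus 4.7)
The plan is to adopt the standard anti-Ramsey framework, combining it with Theorems~\ref{THM:Hypergraph-Tree-Exact} and~\ref{THM:Hypergraph-Tree-Stability}. Set $N:=|\mathcal{S}(n,\sigma(T)-1)|$ and $m:=N+2$. I would assume for contradiction that $\chi\colon K_n^{3}\to [m]$ is a surjective coloring admitting no rainbow copy of $F^{3}$, and fix a \emph{representative rainbow} $3$-graph $\mathcal{H}\subseteq K_n^{3}$ with $|\mathcal{H}|=m$ by selecting one edge from each color class. Every sub-hypergraph of $\mathcal{H}$ is then automatically rainbow.

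The first step is to harvest rainbow copies of $T^{3}$ inside $\mathcal{H}$. Let $\mathcal{H}^{*}\subseteq \mathcal{H}$ be a maximum $T^{3}$-free sub-hypergraph. By Theorem~\ref{THM:Hypergraph-Tree-Exact}, $|\mathcal{H}^{*}|\le N$, so $\mathcal{H}\setminus\mathcal{H}^{*}$ contains at least two edges $e_{1},e_{2}$. By the maximality of $\mathcal{H}^{*}$, each hypergraph $\mathcal{H}^{*}\cup\{e_{i}\}$ contains a copy $\mathcal{T}_{i}\cong T^{3}$ through $e_{i}$, and every such $\mathcal{T}_{i}$ is rainbow.

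Next, I would apply Theorem~\ref{THM:Hypergraph-Tree-Stability} to $\mathcal{H}^{*}$, after handling a preliminary dichotomy. If $|\mathcal{H}^{*}|<\left(\frac{\sigma(T)-1}{2}-\varepsilon\right)n^{2}$ for a small fixed $\varepsilon>0$, then already $\Omega(n^{2})$ edges of $\mathcal{H}$ had to be removed in the greedy destruction of $T^{3}$-copies, yielding many edge-disjoint rainbow copies of $T^{3}$; the sheer abundance then allows one to find a rainbow $F^{3}$ via a direct counting argument. Otherwise, Theorem~\ref{THM:Hypergraph-Tree-Stability} delivers a set $L\subseteq V(\mathcal{H}^{*})$ of size $\sigma(T)-1$ with $|\mathcal{H}^{*}-L|\le \delta n^{2}$ and $d_{\mathcal{H}^{*}}(v)\ge (1/2-\delta)n^{2}$ for every $v\in L$, so $\mathcal{H}$ agrees with $\mathcal{S}(n,\sigma(T)-1)$ outside an $o(n^{2})$-sized set of edges. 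In particular the ``extra'' edges $e_{1},e_{2}$ can be analysed against this backbone, and Theorem~\ref{THM:Hypergraph-Tree-Stability} together with the strongly edge-critical property provides many rainbow copies of $T^{3}$ in $\mathcal{H}$ whose ``anchor'' vertices (the endpoints $\alpha,\beta$ of the would-be augmentation edge) can be chosen in or out of $L$ as needed.

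With this structural picture, the final step is to upgrade a rainbow $T^{3}$ to a rainbow $F^{3}$: the augmentation $F=T+uv$ translates into adjoining a single $3$-edge $f=\{\alpha,\beta,w\}$ to $\mathcal{T}_{i}$, where $w$ is a fresh expansion vertex admitting at least $n-O(1)$ choices. The hard part will be ruling out the possibility that \emph{every} candidate expansion edge $f\in K_n^{3}$ happens to repeat one of the $|E(T)|$ colors already used in $\mathcal{T}_{i}$; naive pigeonhole is too weak, since a single color class may itself contain $\Omega(n^{2})$ edges. To overcome this I would combine two ingredients: (i) the rainbow \emph{uniqueness} of representatives in $\mathcal{H}$ (so each color has exactly one representative, and the position of that representative matters), and (ii) the $\mathcal{S}(n,\sigma(T)-1)$-like structure furnished by the stability theorem, which pins down where the bulk of any color class can sit. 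If every $f$ were forced to reuse a color of $\mathcal{T}_{i}$, I would swap the representative of the dominant color in $\mathcal{H}$, using the critical edge of $T$ and the equality $\sigma(T)=\tau_{\mathrm{ind}}(T)$ to re-anchor $\mathcal{T}_{i}$ into a new rainbow $T^{3}$ whose candidate expansions cover a different part of $K_n^{3}$. Iterating this swap against the $L$-structure should terminate in a contradiction with the hypothesis that no rainbow $F^{3}$ exists, completing the proof.
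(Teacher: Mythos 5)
Your proposal has the right skeleton — pick a rainbow representative $\mathcal{H}$ of the color classes, pass to a large $T^3$-free sub-hypergraph, apply Theorem~\ref{THM:Hypergraph-Tree-Stability} to locate a set $L$ of $\sigma(T)-1$ dominating vertices, and then try to upgrade a rainbow $T^3$ to a rainbow $F^3$ — but there is a genuine gap at exactly the place you flag as ``the hard part.'' Two concrete problems.

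First, your dichotomy (either $|\mathcal{H}^{*}|$ is large and stability applies, or many edge-disjoint $T^3$'s exist ``and a direct counting argument'' finishes) does not work as stated, because an abundance of edge-disjoint rainbow $T^3$'s does not by itself produce a rainbow $F^3$: the coloring may have forced every would-be augmenting $3$-edge to reuse a color of the $T^3$ at hand. The paper's Lemma~\ref{LEMMA:anti-Ramsey-stability} handles this by the decisive choice of working with $\widehat{T}^{3}$ — a copy of $F^3$ \emph{minus the augmenting expansion edge}, keeping both endpoints — rather than with $T^3$. One takes a \emph{maximal} collection $\mathcal{C}$ of edge-disjoint $\widehat{T}^{3}$'s in $\mathcal{H}$, records the ``critical pair'' $\{x_i,y_i\}$ of each, and observes (Claim~\ref{CLAIM:Anti-1}) that the no-rainbow-$F^3$ hypothesis forces every edge $zx_iy_i$ to reuse a color of $\widehat{T}^{3}_i$. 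Claim~\ref{CLAIM:Anti-2} shows the resulting auxiliary graph $G$ of critical pairs is simple, and Claim~\ref{CLAIM:Anti-3} — the key step — shows $|G| \le 2|V(F)|n$ by \emph{embedding a specially built forest} $\widehat{F}$ into $G$ and observing that any such embedding, combined with Claim~\ref{CLAIM:Anti-1}, would yield a rainbow $F^3$. This forest-embedding trick is what replaces your ``direct counting argument,'' and it is entirely absent from your sketch. Without it you cannot conclude that the union of the copies in $\mathcal{C}$ has only $o(n^2)$ edges, which is exactly what lets you apply Theorem~\ref{THM:Hypergraph-Tree-Stability} to $\mathcal{H}\setminus\bigcup\mathcal{C}$.

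Second, your ``swap the representative of the dominant color'' idea for the endgame is a hope, not an argument — the word ``should terminate in a contradiction'' is doing all the work. The paper's endgame (after Lemma~\ref{LEMMA:anti-Ramsey-stability}) instead follows the Simonovits-type template of Theorem~\ref{THM:Hypergraph-Tree-Exact}: define $D$, $\overline{D}$, $\mathcal{S}$, $\mathcal{B}$, $\mathcal{M}$, show $|\mathcal{B}\setminus\mathcal{B}[\overline{D}]|\le 1$ (Claim~\ref{CLAIM:antiRamsey-missing-edge-D-bar}), then count $b<m$. The bound on $|\mathcal{B}\setminus\mathcal{B}[\overline{D}]|$ requires a careful $8$-case analysis depending on where the augmenting edge $f$ sits relative to $I$, $J$, and $e_{\ast}$, and each case needs a bespoke embedding of $F^3$ that uses one ``spare'' edge $e_1$ together with a freely chosen triple $f'$ whose color can be assumed (by symmetry, since there are two spare edges $e_1,e_2$) to differ from $\chi(e_1)$. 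Your proposal does not identify why two spare edges are needed, nor the case structure. In short, the skeleton is right but the two load-bearing arguments — the forest-embedding lemma inside Lemma~\ref{LEMMA:anti-Ramsey-stability} and the case analysis in Claim~\ref{CLAIM:antiRamsey-missing-edge-D-bar} — are missing, and the substitutes you propose (``direct counting'' and ``iterated swaps'') do not close those gaps.
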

\textbf{Remark.} 
In~\cite{GLS20}, Gu--Li--Shi determined $\mathrm{ar}(n,P_k^{r})$ and $\mathrm{ar}(n,C_k^r)$ for certain combinations of $k$ and $r$. 
Later, in~\cite{TLY22}, Tang--Li--Yan extended their result to all $r \ge 3$ and $k \ge 4$. 
Note that odd paths are strongly edge-critical, so Theorem~\ref{THM:AntiRamsey-Tree-Exact} implies (and extends) their results on $P_{2t}^3$  (see {\cite[Theorem~2.1]{GLS20}}) and $C_{2t}^3$ (see {\cite[Theorem~3]{TLY22}}). 

%
Theorem~\ref{THM:AntiRamsey-Tree-Exact}  will be proved in Section~\ref{SEC:Proof-antiRamsey}. 
\subsection{Generalized Tur\'{a}n problems}\label{SUBSEC:Intro:GenTuran}
Let $Q, F$ be two graphs. 
We use $N(Q, G)$ to denote the number of copies of $Q$ in a graph $G$.
The \textbf{generalized Tur\'{a}n number} $\mathrm{ex}(n,Q,F)$ is the maximum number of copies of $Q$ in an $n$-vertex $F$-free graph. 
When $Q = K_2$, this is the ordinary Tur\'{a}n number of $F$. 
The generalized Tur\'{a}n problem was first considered by Erd\H{o}s in~\cite{E62}, in which he determined $\mathrm{ex}(n,K_s, K_t)$ for all $t > s \ge 3$.  
In~\cite{AS16}, Alon--Shikhelman started a systematic study of $\mathrm{ex}(n,Q,F)$, and since then lots of effort has been devoted into this topic. 
Similar to the ordinary Tur\'{a}n problem, determining $\mathrm{ex}(n,Q,F)$ is much harder when $\chi(Q) \ge \chi(F)$, where $\chi(Q)$ and $\chi(F)$ denote the chromatic numbers of $Q$ and $F$, respectively. 
In this subsection, we make some progress in the case $Q=K_3$ and $F$ is the $\triangle$-blowup (defined below) of a cycle or a tree. 

Given two graphs $G_1$ and $G_2$, the \textbf{join} $G_1 \uproduct G_2$ of $G_1$ and $G_2$ is obtained by taking vertex-disjoint copies of $G_1$ and $G_2$ and adding all pairs that have nonempty intersection with both $V(G_1)$ and $V(G_2)$, \ie
\begin{align*}
    G_1 \uproduct G_2
    := \left\{uv \colon u\in V(G_1),\ v\in V(G_2)\right\} \cup G_1 \cup G_2. 
\end{align*}
We use $T(n)$ to denote the balanced complete bipartite graph on $n$ vertices and use $T^{+}(n)$ to denote the $n$-vertex graph obtained from $T(n)$ by adding one edge into the smaller part. 
Given integers $n \ge t \ge 0$ let
\begin{align*}
    S(n,t) := K_{t} \uproduct T(n-t) \quad\text{and}\quad
    S^{+}(n,t) := K_{t} \uproduct T^{+}(n-t). 
\end{align*}
Furthermore, we define two corresponding $3$-graphs as follows:
\begin{align*}
    \mathcal{S}_{\mathrm{bi}}(n,t)
    & := \left\{A \colon \text{$S(n,t)$ induces a copy of $K_3$ on $A$}\right\}, \quad\mathrm{and}\\
    \mathcal{S}^{+}_{\mathrm{bi}}(n,t)
    & := \left\{A \colon \text{$S^{+}(n,t)$ induces a copy of $K_3$ on $A$}\right\}
\end{align*}
%
%
Given a graph $F$ the \textbf{$\triangle$-blowup} $F^{\triangle}$ of $F$ is the graph obtained from $F$ be replacing each edge with a copy of $K_3$ such that no two different copies of $K_3$ use the same new vertex (view Figure~\ref{fig:Path-Cycle-Expansion} as graphs).

Our first result in this subsection is an exact determination of $\mathrm{ex}(n, K_{3}, F^{\triangle})$ when $F$ is a strongly edge-critical tree.
\begin{theorem}\label{THM:GenTuran-Tree-Exact}
     Suppose that $T$ is a tree satisfying $\sigma(T) = \tau_{\mathrm{ind}}(T)$ and containing a critical edge. 
    Then 
    \begin{align*}
        \mathrm{ex}(n, K_{3}, T^{\triangle})
        \le \left|\mathcal{S}_{\mathrm{bi}}\left(n,\sigma(T)-1\right)\right|
        \quad\text{for sufficiently large $n$}.
    \end{align*}
    In particular, if $T$ is a strongly edge-critical tree. Then 
    \begin{align*}
        \mathrm{ex}(n, K_{3}, T^{\triangle})
        = \left|\mathcal{S}_{\mathrm{bi}}\left(n,\sigma(T)-1\right)\right|
        \quad\text{for sufficiently large $n$}. 
    \end{align*}
\end{theorem}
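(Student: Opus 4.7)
Since $\mathcal{S}_{\mathrm{bi}}(n,\sigma(T)-1)\subseteq\mathcal{S}(n,\sigma(T)-1)$, and the latter is $T^{3}$-free by the discussion preceding Theorem~\ref{THM:Hypergraph-Tree-Exact}, the graph $S(n,\sigma(T)-1)$ is itself $T^{\triangle}$-free and realizes exactly $|\mathcal{S}_{\mathrm{bi}}(n,\sigma(T)-1)|$ triangles, which settles the lower bound in the strongly edge-critical case. For the upper bound, set $t:=\sigma(T)-1$ and let $G$ be an extremal $n$-vertex $T^{\triangle}$-free graph. The plan is to mirror the strategy of Theorem~\ref{THM:Hypergraph-Tree-Exact}, exploiting the key observation that the triangle $3$-graph $\mathcal{K}_{3}(G):=\{xyz\in\binom{V(G)}{3}\colon xy,yz,xz\in E(G)\}$ is $T^{3}$-free and satisfies $|\mathcal{K}_{3}(G)|=N(K_{3},G)$.

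The first step is to invoke the generalized-Tur\'{a}n stability theorem promised in Section~\ref{SUBSEC:Intro:GenTuran} (the parallel of Theorem~\ref{THM:Hypergraph-Tree-Stability}), which yields a set $L\subseteq V(G)$ with $|L|=t$ such that every $v\in L$ lies in $\bigl(\tfrac14-o(1)\bigr)n^{2}$ triangles of $G$ and $G[V\setminus L]$ is within $o(n^{2})$ edges of a balanced bipartite graph. A local cleaning argument, comparing triangle counts before and after replacing the neighborhood of one vertex by that of another (in the spirit of the analogous step in the hypergraph case), then upgrades this to: $G[L]$ is a clique $K_{t}$, and every vertex of $L$ is fully joined to $V\setminus L$.

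The heart of the proof is to show that $G':=G[V\setminus L]$ is triangle-free. Suppose for contradiction that $\{x,y,z\}\subseteq V\setminus L$ spans a triangle of $G'$. Fix a critical edge $e^{*}\in T$ with $\sigma(T-e^{*})\le t$, and use the hypothesis $\sigma(T)=\tau_{\mathrm{ind}}(T)=t+1$ to fix a bipartition $V(T)=A\sqcup B$ in which $A$ is an independent vertex cover of $T$ with $|A|=t+1$. We embed $T^{\triangle}$ into $G$ as follows: map the two endpoints of $e^{*}$ (one in $A$, one in $B$) to $\{x,y\}$, map the expansion vertex of $e^{*}$ to $z$, place the remaining $A$-vertices of $T$ into $L$ (they number at most $t$), and place the remaining $B$-vertices together with the expansion vertices of $T-e^{*}$ greedily into $V\setminus L$, using the complete $L$-to-$(V\setminus L)$ join and the near-bipartite structure of $G'$ to supply the remaining triangles. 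This contradicts the $T^{\triangle}$-freeness of $G$.

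Once $G'$ is triangle-free it is bipartite with $|E(G')|\le\lfloor(n-t)^{2}/4\rfloor$, and decomposing the triangle count by the number of vertices in $L$ gives
\[
N(K_{3},G)=\binom{t}{3}+\binom{t}{2}(n-t)+t\,|E(G')|\le|\mathcal{S}_{\mathrm{bi}}(n,t)|,
\]
with equality iff $G'$ is the balanced complete bipartite graph on $V\setminus L$, i.e.\ $G=S(n,t)$. The main technical difficulty is expected to lie in the triangle-freeness step above: Theorem~\ref{THM:Hypergraph-Tree-Exact} permits any forbidden triple to serve as the image of the critical edge's expansion, whereas the graph setting forces the embedding to respect the bipartition of $T$, which is precisely what the hypothesis $\sigma(T)=\tau_{\mathrm{ind}}(T)$ furnishes.
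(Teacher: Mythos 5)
Your lower bound and overall architecture (stability, then exact structure, with $\mathcal{K}_{G}$ serving as the bridge to the hypergraph setting) match the paper's plan, but the middle step is a genuine gap. You assert that a ``local cleaning argument, comparing triangle counts before and after replacing the neighborhood of one vertex by that of another'' upgrades the stability output ($d_{G}(v)\ge(1-\delta)n$ for $v\in L$, approximate bipartiteness of $G-L$) to exact structure: $G[L]\cong K_{t}$ and $L$ fully joined to $V\setminus L$. No such argument appears in the paper, in either the graph or the hypergraph exact theorem, and the mechanism you gesture at does not go through: replacing the neighborhood of a vertex $u$ by that of a vertex $v$ (Zykov-type symmetrization) preserves $K_{r}$-freeness because clones are safe in a complete multipartite target, but there is no reason it preserves $T^{\triangle}$-freeness, and you have not argued it. Yet your entire triangle-freeness step leans on this full-join conclusion. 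Even granting the full join, your embedding of $T^{\triangle}$ places the image $y$ of the non-leaf endpoint of $e^{*}$ at an arbitrary vertex of a triangle in $G-L$, and then needs $y$ to have many neighbors in $V\setminus L$ (to host the expansion vertices of the remaining edges at that endpoint and to maintain distinctness); nothing in your setup controls $d_{G}(y)$.

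The paper avoids both problems by not trying to force exact structure on $L$. Instead it fixes a threshold $\tau$, defines $D\subseteq V\setminus L$ as the set of vertices with codegree at least $\tau$ to every $x\in L$, and sets $\overline{D}:=(V\setminus L)\setminus D$. It then proves that every triangle of $G$ lying in $V\setminus L$ must be contained in $\overline{D}$ (its Claim analogous to Claim~\ref{CLAIM:Turan-size-b-1}): if a bad triangle has a vertex $u_{0}'\in D$, the high codegree of $u_{0}'$ with $L$ is exactly what makes the $T^{\triangle}$-embedding via the pendant critical edge legal, because the image of $u_{0}$ is chosen in $D$, not at an uncontrolled triangle vertex. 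After that there is no need for $L$ to be a clique or fully joined; the proof finishes with a comparison $|\mathcal{M}|\gtrsim n|\overline{D}|$ versus $|\mathcal{B}|\lesssim|\overline{D}|^{2}$ plus max-degree estimates inside the parts $V_{1},V_{2}$, forcing $\overline{D}=\emptyset$ and hence $G\cong S(n,t)$. Your final triangle-count decomposition is correct once $G-L$ is known to be triangle-free, but that fact has to be earned by the $D$/$\overline{D}$ counting, not by the cleaning step you invoke.
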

Note that the lower bound for the 'In particular' part comes from the graph $S(n,t)$. 
In general, we have the following asymptotic bound. 
\begin{theorem}\label{THM:GenTuran-Tree-Asymp}
    Suppose that $T$ is a tree. Then 
    \begin{align*}
        \mathrm{ex}(n, K_{3}, T^{\triangle})
        =
\left|\mathcal{S}_{\mathrm{bi}}\left(n,\sigma(T)-1\right)\right| + o(n^2). 
    \end{align*}
\end{theorem}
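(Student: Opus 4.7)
The lower bound $\mathrm{ex}(n, K_3, T^\triangle) \ge |\mathcal{S}_{\mathrm{bi}}(n, \sigma(T)-1)|$ is witnessed by $S(n, t)$ with $t = \sigma(T)-1$: since $T(n-t)$ is bipartite, every triangle of $S(n,t)$ uses at least one vertex from the clique $K_t$, so $S(n,t)$ contains at most $t$ pairwise vertex-disjoint triangles. As $T$ is bipartite, K\"onig's theorem gives $\nu(T) = \tau(T) = \sigma(T) = t+1$, so $T$ contains a matching $M$ of size $t+1$; hence $T^\triangle$ contains $M^\triangle = (t+1)K_3$ as a subgraph, and thus $T^\triangle \not\subseteq S(n,t)$.

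For the upper bound, let $G$ be a $T^\triangle$-free graph on $n$ vertices, and let $\mathcal{H}$ denote the $3$-graph of triangles of $G$. A copy of $T^3$ in $\mathcal{H}$ is precisely a copy of $T^\triangle$ in $G$, so $\mathcal{H}$ is $T^3$-free. Theorem~\ref{THM:KMV-tree} then gives $|\mathcal{H}| \le ((\sigma(T)-1)/2 + o(1))n^2$, which yields $N(K_3, G) = O(n^2)$ but is a factor of $2$ looser than the target. The plan is to establish the structural lemma: \emph{there exists $L \subseteq V(G)$ with $|L| \le t$ such that $N(K_3, G - L) = o(n^2)$.}

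My strategy for the lemma is to iteratively peel off vertices of high degree in the current $3$-graph. Starting from $\mathcal{H}_0 = \mathcal{H}$ and $L_0 = \emptyset$, at step $i \ge 1$ select any $v_i$ with $d_{\mathcal{H}_{i-1}}(v_i) \ge \varepsilon n$, and set $L_i := L_{i-1} \cup \{v_i\}$, $\mathcal{H}_i := \mathcal{H}_{i-1} - v_i$. If the process terminates with $|L| \le t$, then the stopping condition forces $|\mathcal{H}_{|L|}| \le \varepsilon n^2/3$, proving the lemma. Otherwise it continues past step $t$, producing $v_1, \ldots, v_{t+1}$ each incident to $\Omega(n)$ triangles of $G$ inside $V\setminus\{v_1,\ldots,v_{i-1}\}$. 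A greedy matching argument then yields $t+1$ pairwise vertex-disjoint triangles $\{v_i, a_i, b_i\}$, realizing a copy of $M^\triangle = (t+1)K_3 \subseteq T^\triangle$; extending this partial embedding to a full copy of $T^\triangle$ (by attaching the remaining edges of $T$ together with fresh witness vertices) is carried out via a standard greedy tree-embedding in dense graphs, exploiting that each link $G[N_G(v_i)]$ still contains $\Omega(n)$ edges after peeling, giving the desired contradiction.

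Granting the lemma, the count is routine. Triangles entirely outside $L$ number $N(K_3, G-L) = o(n^2)$. Triangles with exactly one vertex in $L$ number $\sum_{v \in L} e(G[N_G(v)\setminus L]) \le |L|\cdot e(G-L)$, and since $G-L$ has only $o(n^3)$ triangles the Moon--Moser inequality forces $e(G-L) \le (n-|L|)^2/4 + o(n^2)$, so this contribution is at most $tn^2/4 + o(n^2)$. Triangles meeting $L$ in at least two vertices contribute $O(n)$. Summing, $N(K_3, G) \le tn^2/4 + o(n^2) = |\mathcal{S}_{\mathrm{bi}}(n,t)| + o(n^2)$. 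The main obstacle I expect is the tree-embedding step inside the structural lemma: leveraging $t+1$ high-degree witnesses to produce not just a disjoint-triangle packing $(t+1)K_3$ but a genuine copy of $T^\triangle$, for which the $\Omega(n)$-sized link graphs at the $v_i$ are essential.
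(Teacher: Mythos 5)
Your lower bound argument has an error: K\"onig's theorem gives $\nu(T)=\tau(T)$ for bipartite $T$, not $\tau(T)=\sigma(T)$, and these can differ. For the double star $S_{2,2}$ (a central edge $uv$ with two pendant leaves at each of $u$ and $v$), one checks $\tau=2$ but $\sigma=3$, so $T$ has no matching of size $\sigma(T)$ and the claim $(t+1)K_3\subseteq T^\triangle$ fails. Worse, $S(n,\sigma(T)-1)$ is not even $T^\triangle$-free for this tree: send $u,v$ to the two clique vertices and place the leaves and witness vertices on opposite sides of $T(n-2)$.

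The structural lemma you propose for the upper bound is false. Let $G$ be the vertex-disjoint union of two copies of $S(n/2,t)$; concretely take $T=P_3$, $t=1$, so each copy is a single apex joined to a balanced complete bipartite graph, which is $(P_3)^\triangle$-free since the three triangles of $(P_3)^\triangle$ have no common vertex. Then $G$ is $T^\triangle$-free with $\Theta(n^2)$ triangles, yet every $L$ with $|L|\le t$ leaves at least one apex clique essentially intact, so $N(K_3,G-L)=\Omega(n^2)$. The peeling cannot be salvaged either: a triangle-degree threshold of $\varepsilon n$ is only linear, so the $O(t)$ vertices forbidden in your greedy embedding can absorb the entire link, while raising the threshold to $\varepsilon n^2$ only guarantees $o(n^3)$ triangles outside $L$, which is far too weak for the count you need.

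The paper's proof avoids any structural decomposition. Since $\mathcal H:=\mathcal K_G$ is $T^3$-free (Fact~\ref{FACT:shadow-Turan}), Proposition~\ref{PROP:shadow-bound-tree} gives $|\mathcal H|\le(\sigma(T)-1)|\partial\mathcal H|+o(n^2)$, and Proposition~\ref{PROP:shadow-size} gives $|\partial\mathcal H|\le|G|\le n^2/4+o(n^2)$; together these yield $N(K_3,G)\le(\sigma(T)-1)n^2/4+o(n^2)=|\mathcal S_{\mathrm{bi}}(n,\sigma(T)-1)|+o(n^2)$. Your observation that Theorem~\ref{THM:KMV-tree} alone is a factor of $2$ too weak is exactly right, but that factor is recovered through the near-bipartiteness of the shadow graph, not through deleting a small vertex set.
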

Since odd paths are strongly edge-critical, we obtain the following corollary of Theorem~\ref{THM:GenTuran-Tree-Exact}. 
\begin{corollary}\label{CORO:GenTuran-Odd-Path-Exact}
    Suppose that $t \ge 1$ is a fixed integer and $n$ is sufficiently large. 
    Then 
    \begin{align*}
        \mathrm{ex}(n, K_{3}, P_{2t+1}^{\triangle}) 
        = \left|\mathcal{S}_{\mathrm{bi}}\left(n,t\right)\right|. 
    \end{align*}
\end{corollary}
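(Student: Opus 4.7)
The plan is to apply Theorem~\ref{THM:GenTuran-Tree-Exact} directly with $T = P_{2t+1}$. Since the theorem's lower-bound construction $\mathcal{S}_{\mathrm{bi}}(n,\sigma(T)-1)$ must match $\mathcal{S}_{\mathrm{bi}}(n,t)$, the whole task reduces to verifying two things: (a) $P_{2t+1}$ is strongly edge-critical, and (b) $\sigma(P_{2t+1}) = t+1$. Both are routine computations about paths.

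First I would compute $\tau(P_{2t+1}) = \sigma(P_{2t+1}) = \tau_{\mathrm{ind}}(P_{2t+1}) = t+1$. Labeling the vertices $v_0,v_1,\ldots,v_{2t+1}$ along the path, the set $I = \{v_1,v_3,\ldots,v_{2t+1}\}$ is independent, has size $t+1$, and meets every edge in exactly one endpoint; hence $\tau_{\mathrm{ind}}(P_{2t+1}) \le t+1$. The classical lower bound $\tau(P_{2t+1}) \ge \lceil (2t+1)/2\rceil = t+1$ (each vertex in a cover kills at most two edges) together with the chain $\tau \le \sigma \le \tau_{\mathrm{ind}}$ stated in the paper for bipartite graphs forces equality throughout, giving both $\sigma = t+1$ and $\sigma = \tau_{\mathrm{ind}}$.

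Next I would exhibit a critical edge. Removing the pendant edge $v_{2t}v_{2t+1}$ leaves the disjoint union of $P_{2t}$ (on $v_0,\ldots,v_{2t}$) and an isolated vertex. Taking $I' = \{v_1,v_3,\ldots,v_{2t-1}\}$, an independent set of size $t$ that meets every edge of $P_{2t}$, yields
\[
\sigma(P_{2t+1}\setminus v_{2t}v_{2t+1}) \;\le\; |I'| + 0 \;=\; t \;=\; \sigma(P_{2t+1})-1,
\]
so $v_{2t}v_{2t+1}$ is critical. Combined with the preceding paragraph, this shows $P_{2t+1}$ is strongly edge-critical.

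With both conditions verified, the second clause of Theorem~\ref{THM:GenTuran-Tree-Exact} immediately delivers $\mathrm{ex}(n,K_3,P_{2t+1}^{\triangle}) = |\mathcal{S}_{\mathrm{bi}}(n,t)|$ for all sufficiently large $n$. There is no genuine obstacle here beyond bookkeeping: all the content lies inside Theorem~\ref{THM:GenTuran-Tree-Exact}, and the only thing to check is the parameter identity, which is essentially forced by the bipartite structure of odd paths.
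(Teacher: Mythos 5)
Your proposal is correct and matches the paper's approach exactly: the paper derives the corollary from Theorem~\ref{THM:GenTuran-Tree-Exact} by noting that odd paths are strongly edge-critical, and your computations correctly supply the verification that $\tau(P_{2t+1})=\sigma(P_{2t+1})=\tau_{\mathrm{ind}}(P_{2t+1})=t+1$ (via the independent exact cover $\{v_1,v_3,\ldots,v_{2t+1}\}$ and the degree-bound lower estimate) and that the pendant edge $v_{2t}v_{2t+1}$ is critical.
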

For paths of even length and cycles, we prove the following results. 
\begin{theorem}\label{THM:GenTuran-Even-Path-Exact}
    Suppose that $t \ge 1$ is a fixed integer and $n$ is sufficiently large. 
    Then 
    \begin{align*}
        \mathrm{ex}(n, K_{3}, P_{2t+2}^{\triangle}) 
        = \left|\mathcal{S}^{+}_{\mathrm{bi}}\left(n,t\right)\right|. 
    \end{align*}
\end{theorem}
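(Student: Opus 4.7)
The plan is to mirror the stability-plus-augmentation framework that the paper sets up for generalized Tur\'an problems, with a delicate analysis of how many in-part edges the extremal graph is allowed to contain. For the lower bound, I would show that $S^+(n,t)$ is $P_{2t+2}^\triangle$-free. Indeed, for any embedding $\phi\colon P_{2t+2}^\triangle \hookrightarrow S^+(n,t)$, at most one triangle lies entirely in $T^+(n-t)$, because any two triangles of $T^+(n-t)$ share the unique in-part edge (two vertices), whereas in $P_{2t+2}^\triangle$ any two triangles share at most one vertex. Hence at least $2t+1$ triangles of $P_{2t+2}^\triangle$ meet $K := \phi^{-1}(K_t)$, and since each vertex of $P_{2t+2}^\triangle$ lies in at most two triangles, $|K| \ge \lceil (2t+1)/2\rceil = t+1$, contradicting $|K| \le t$. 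Counting triangles in $S^+(n,t)$ then yields the matching lower bound.

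For the upper bound, let $G$ be an $n$-vertex $P_{2t+2}^\triangle$-free graph maximizing $N(K_3,G)$. I would invoke the generalized Tur\'an stability theorem established earlier in the paper (the analogue of Theorem~\ref{THM:Hypergraph-Tree-Stability}) with $T = P_{2t+2}$ and $\sigma(T) = t+1$; this yields, for arbitrarily small $\delta$, a set $L \subseteq V(G)$ of size $t$ whose vertices have degree at least $(1-\delta)n$, and a bipartition $(A,B)$ of $V(G)\setminus L$ so that only $o(n^2)$ edges of $G-L$ lie inside $A$ or inside $B$. A standard symmetrization then lets me assume $L$ is complete to $V(G)\setminus L$ and that the bipartite minimum degree between $A$ and $B$ in $G-L$ is $(1-o(1))|A|$ or $(1-o(1))|B|$; this is valid because duplicating neighborhoods of high-degree vertices does not decrease the triangle count and preserves $P_{2t+2}^\triangle$-freeness.

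The crux, and the step I expect to be the main obstacle, is the claim that $G-L$ contains at most one \emph{bad edge}, where a bad edge is an edge of $G-L$ lying inside $A$ or inside $B$. I would argue by contradiction, embedding $P_{2t+2}^\triangle$ from two bad edges. If two bad edges $e_1 = \{a_1,a_2\}$ and $e_2 = \{a_3,a_4\}$ are vertex-disjoint (say both inside $A$), I use them as the first and last anchor triangles $T_1 = \{a_1,a_2,u_1\}$ and $T_{2t+2} = \{a_3,a_4,u_{2t+2}\}$ with $u_1,u_{2t+2}\in B$, and chain the remaining triangles by setting $x_{2i+1} := l_i$ for $1\le i \le t$ (using the vertices $l_1,\dots,l_t$ of $L$) together with fresh $x_{2i}\in A$ and $u_i\in B$. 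If instead the two bad edges share a vertex, say $e_1=\{a_1,a_2\}$ and $e_2=\{a_2,a_3\}$, then $T_1 := \{a_1,a_2,b\}$ and $T_2 := \{a_2,a_3,b'\}$ serve as two consecutive anchors with shared vertex $x_2 = a_2$, after which $T_3,\dots,T_{2t+2}$ thread through $L$ as above. The mixed case of one bad edge inside $A$ and one inside $B$ is handled symmetrically. In each situation the required edges are present because every chosen vertex is adjacent to all but $o(n)$ of $V(G)$, while we must avoid only $O(t)$ previously used vertices.

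Once the key claim is established, $G-L$ is a bipartite graph plus at most one extra in-part edge, so
\[
N(K_3, G) = \binom{t}{3} + \binom{t}{2}(n-t) + t\cdot |E(G-L)| + N(K_3, G-L),
\]
which is maximized, over all such $G-L$, exactly when $G-L = T^+(n-t)$. This yields the bound $N(K_3,G) \le |\mathcal{S}^+_{\mathrm{bi}}(n,t)|$, matching the lower bound.
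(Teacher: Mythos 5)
Your lower bound argument is correct: the observation that at most one triangle of $P_{2t+2}^{\triangle}$ can land inside $T^{+}(n-t)$, combined with the fact that every vertex of $P_{2t+2}^{\triangle}$ lies in at most two triangles, cleanly shows $S^{+}(n,t)$ is $P_{2t+2}^{\triangle}$-free. For the upper bound, the high-level plan (stability, then at most one in-part edge, then counting) is the right one, but there is a genuine gap at the symmetrization step. Zykov symmetrization --- replacing $N_{G}(u)$ by $N_{G}(v)$ for a non-adjacent pair $u,v$ --- preserves $K_r$-freeness, but it does \emph{not} in general preserve $H$-freeness for a graph $H$ that is not complete multipartite: the symmetrized graph may contain a copy of $H$ using both $u$ and $v$, and there is no automatic way to retract such a copy to $G$. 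Since $P_{2t+2}^{\triangle}$ has many non-adjacent vertex pairs with overlapping neighborhoods, the assertion that cloning preserves $P_{2t+2}^{\triangle}$-freeness is not a routine fact and is left unproved. The same unjustified cleaning is also implicit in your last display, which presumes $L$ is a clique completely joined to $V(G)\setminus L$.

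The paper avoids symmetrization entirely. After the stability theorem it works in the near-extremal $G$ itself: it defines the set $D$ of vertices whose codegree with each $x\in L$ is at least $\tau$, refines $D$ into $D'$ by cross-degree, and proves the embedding claim --- that the bad-triple $3$-graph $\mathcal{B}[V']$ cannot contain two triples in certain intersection patterns, the path analogue of Claim~\ref{CLAIM:GenTuran-cycle-P_2} --- using Fact~\ref{FACT:partial-embedding} and only the codegree threshold $\tau$, not near-complete bipartiteness. The threading you propose (running a copy of $P_{2t+2}^{\triangle}$ through the $t$ vertices of $L$ between the two bad edges, with a case split on whether the bad edges share a vertex) is essentially the correct mechanism and matches the paper's. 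But in the paper the final step is not an exact count over a cleaned graph; instead it bounds the contributions of the deficiency sets $\overline{D}$ and $\overline{D'}$ to the bad and missing families $\mathcal{B}$ and $\mathcal{M}$, and verifies the penalty outweighs the gain whenever $|\overline{D}|+|\overline{D'}|\ge 1$, exactly as in the proof of Theorem~\ref{THM:GenTuran-Cycle-Exact} in Section~\ref{SEC:Proof-GenTuran-even-cycle-exact}. Replacing your symmetrization step by that accounting is what is needed to close the argument.
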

\begin{theorem}\label{THM:GenTuran-Cycle-Exact}
    Suppose that $t \ge 2$ is a fixed integer and $n$ is sufficiently large. 
    Then 
    \begin{align*}
        \mathrm{ex}(n, K_{3}, C_{2t+1}^{\triangle})
         = \left|\mathcal{S}_{\mathrm{bi}}\left(n,t\right)\right|
         \quad\mathrm{and}\quad
        \mathrm{ex}(n, K_{3}, C_{2t+2}^{\triangle})
         = \left|\mathcal{S}^{+}_{\mathrm{bi}}\left(n,t\right)\right|. 
     \end{align*}
\end{theorem}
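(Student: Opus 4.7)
The plan is to mirror the stability--plus--exact refinement used for strongly edge-critical trees, adapted to the cycle setting. Both cycles in question satisfy $\sigma(C_{2t+1}) = \sigma(C_{2t+2}) = t+1$, which pins down the candidate extremal structures $S(n,t)$ and $S^{+}(n,t)$ respectively; these coincide with the extremal structures for the corresponding paths $P_{2t+1}$ and $P_{2t+2}$ treated in Corollary~\ref{CORO:GenTuran-Odd-Path-Exact} and Theorem~\ref{THM:GenTuran-Even-Path-Exact}. The lower bounds come from checking that $S(n,t)$ is $C_{2t+1}^{\triangle}$-free and $S^{+}(n,t)$ is $C_{2t+2}^{\triangle}$-free: every triangle of $S(n,t)$ uses at least one apex from the clique $K_t$, and a hypothetical embedding of $C_k^{\triangle}$ would require more such apices than are available once the cyclic structure is accounted for; in the even case, the extra edge of $S^{+}(n,t)$ is permitted to contribute exactly one cycle edge whose triangle is realised without a clique apex.

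For the upper bound, let $G$ be a $C_k^{\triangle}$-free graph ($k \in \{2t+1,2t+2\}$) achieving the conjectured maximum number of triangles. I would first invoke the parallel stability theorem for the generalized Tur\'{a}n framework (the analogue of Theorem~\ref{THM:Hypergraph-Tree-Stability} for $\mathrm{ex}(n, K_3, \cdot)$ established earlier in this paper) to conclude that $G$ is $\delta$-close to $S(n,t)$ (respectively $S^{+}(n,t)$): there is a set $L$ of $t$ vertices of near-maximum $K_3$-degree, and $V(G) \setminus L$ admits a nearly balanced bipartition $A \cup B$ with $G[A] \cup G[B]$ containing only $O(1)$ edges. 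Standard refinement steps (extremality of $G$, sequential symmetrisation, link analysis at vertices of $L$) then show that $L$ induces a clique, each $v \in L$ is joined to almost every other vertex, and $G$ deviates from $S(n,t)$ (or $S^{+}(n,t)$) by only a controlled number of edges.

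The decisive step is to show that any excess edge in $G[A] \cup G[B]$ beyond what is permitted --- zero edges for $C_{2t+1}$ and one edge for $C_{2t+2}$ --- produces a copy of $C_k^{\triangle}$. For each such excess edge $e$ I would construct $C_k^{\triangle}$ by routing a cycle of length $k$ through $L$ and across $A, B$ so that it uses $e$ and achieves the correct parity, then greedily picking, for each cycle edge $v_iv_{i+1}$, a distinct apex vertex $w_i$ from the common neighbourhood $N_G(v_i) \cap N_G(v_{i+1})$. The main obstacle is precisely this combined routing--plus--apex-selection: one must coordinate the number of cycle edges passing through $L$ with the parity of $k$, while keeping the $k$ apex vertices pairwise disjoint from the cycle and from each other. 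The near-extremal structure of $G$ supplies linearly many candidate apices at each step so that greedy selection succeeds, provided a careful supersaturation estimate ensures that any deviation from $S(n,t)$ (or $S^{+}(n,t)$) costs more triangles than it can recover via excess edges when $n$ is sufficiently large.
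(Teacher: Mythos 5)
Your high-level framework — apply the stability theorem, then show that deviations from $S(n,t)$ or $S^{+}(n,t)$ force a $C_k^{\triangle}$ via routing a length-$k$ cycle through $L$ and greedily selecting distinct apex vertices — correctly matches the paper's strategy, and your "routing plus apex selection" step corresponds to the forbidden-configuration claims in the paper (which use Fact~\ref{FACT:partial-embedding} exactly as you suggest). However, there is a genuine gap in your middle step. You assert that after stability one can show (via "sequential symmetrisation, link analysis") that $G$ deviates from $S(n,t)$ or $S^{+}(n,t)$ "by only a controlled number of edges" — in particular that $G[A] \cup G[B]$ has only $O(1)$ edges — and only then eliminate those finitely many excess edges. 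This intermediate reduction is not something stability delivers and is not what the paper proves: the stability theorem only gives $\delta n^2$ intra-part edges and $\delta n^2$ triangles in $G - L$, and closing the gap between $\delta n^2$ and $O(1)$ \emph{is} the content of the exact result, not a preprocessing step.

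The paper's actual refinement avoids any such intermediate claim and instead runs a global count. It stratifies $V \setminus L$ into $D$ (vertices with large codegree to every $x \in L$) and $\overline{D}$, further refines $D$ into $D'$ and $\overline{D'}$ by cross-part degree, partitions the bad triangles $\mathcal{B}$ by how many vertices they have in $\overline{D}$ (classes $\mathcal{B}_0, \ldots, \mathcal{B}_3$), and bounds each class individually using the forbidden-configuration claims (e.g.\ two edges of $\mathcal{B}[V']$ meeting in one vertex with both free ends in $D$, or two disjoint edges each with two vertices in $D$, would give $C_k^3$ in $\mathcal{K}_G$). It then shows the number of missing triangles $\mathcal{M}$ (split into $\mathcal{M}_1$, $\mathcal{M}_2$) dominates $\sum_i |\mathcal{B}_i|$ whenever $|\overline{D}| + |\overline{D'}| \ge 1$, so extremality forces $|\overline{D}| = |\overline{D'}| = 0$ and hence $G \cong S(n,t)$ or $S^+(n,t)$. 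Your proposal gestures at this balance ("any deviation costs more triangles than it can recover") in its final sentence, but without the stratification machinery and the specific forbidden configurations, the greedy apex selection alone does not suffice: you need to convert each local forbidden configuration into a quantitative bound on a class of bad triples, and that bookkeeping is where the proof lives.
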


\textbf{Remark.}
In~\cite{LGHSTVZ22}, Lv \etal determined the exact values of $\mathrm{ex}(n, K_{3}, P_{3}^{\triangle})$ and $\mathrm{ex}(n, K_{3}, C_{3}^{\triangle})$ using a very different method as we used in this paper. 
They also proposed a conjecture for the values of $\mathrm{ex}(n, K_{3}, P_{k}^{\triangle})$ and $\mathrm{ex}(n, K_{3}, C_{k}^{\triangle})$ for $k \ge 4$. 
Corollary~\ref{CORO:GenTuran-Odd-Path-Exact}, Theorem~\ref{THM:GenTuran-Even-Path-Exact}, and Theorem~\ref{THM:GenTuran-Cycle-Exact} confirm their conjecture (except for $C_{4}^{\triangle}$).  
Though the case $C_{4}^{\triangle}$ can be also handled using the same framework as in the proof of Theorem~\ref{THM:GenTuran-Cycle-Exact}, it is a bit technical and needs some extra work. So we include its proof in a separate note~\cite{S24}. 

Similar to the proof of Theorem~\ref{THM:Hypergraph-Tree-Exact}, proofs for the theorems above also use the stability method. A key step is to prove the following stability theorems. 

Given a graph $G$ and a set $L\subseteq V(G)$, we use $G-L$ to denote the induced subgraph of $G$ on $V(G)\setminus L$. 
We say an $n$-vertex graph $G$ is \textbf{$\delta$-close} to $S(n,t)$ if there exists a $t$-set $L\subseteq V(G)$ so that
\begin{itemize}
    \item $d_{G}(v) \ge (1-\delta)n$ for all $v\in L$, 
        \item $N(K_3, G-L) \le \delta n^2$, 
        \item $|G-L| \ge n^2/4 - \delta n^2$, and
        \item $G-L$ can be made bipartite by removing at most $\delta n^2$ edges.
\end{itemize}
%
\begin{theorem}[Stability]\label{THM:GenTuran-Tree-Stability}
    Let $T$ be a tree with $\sigma(T) = \tau_{\mathrm{ind}}(T)$. 
    For every $\delta>0$ there exist $\varepsilon>0$ and $n_0$ such that the following holds for all $n\ge n_0$. 
    Suppose that $G$ is an $n$-vertex $T^{\triangle}$-free graph with
    \begin{align*}
        N(K_{3}, G)
        \ge \left(\frac{\sigma(T)-1}{4}-\varepsilon\right)n^2.   
    \end{align*}
    Then $G$ is $\delta$-close to $S\left(n,\sigma(T)-1\right)$. 
\end{theorem}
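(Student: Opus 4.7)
Write $t := \sigma(T)-1$. The proof follows the same arc as Theorem~\ref{THM:Hypergraph-Tree-Stability}: identify a set $L \subseteq V(G)$ of $t$ ``apex'' vertices of near-full degree, and then verify that $G-L$ is close to the Mantel extremal. The main new difficulty is that the natural hypergraph reduction yields a density only half of what Theorem~\ref{THM:Hypergraph-Tree-Stability} requires, and the gap has to be closed using the graph-level structure of $G$.

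The first step is to pass to the triangle $3$-graph. Let $\mathcal{H}=\mathcal{H}(G)\subseteq\binom{V(G)}{3}$ be the $3$-graph whose edges are the vertex sets of triangles of $G$; any $T^3\subseteq\mathcal{H}$ lifts to a $T^\triangle\subseteq G$, so $\mathcal{H}$ is $T^3$-free and $|\mathcal{H}|=N(K_3,G)\ge (t/4-\varepsilon)n^2$. Note this is only half of the density required for a direct application of Theorem~\ref{THM:Hypergraph-Tree-Stability}, so we use $\mathcal{H}$ in tandem with the graph $G$ rather than as a black box.

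For each $v\in V(G)$ set $c(v):=e(G[N_G(v)])$, the number of triangles of $G$ through $v$; thus $\sum_v c(v)=3N(K_3,G)\ge (3t/4-3\varepsilon)n^2$. We iteratively delete any vertex $v$ with both $d_G(v)$ bounded away from $n$ and $c(v)$ a negligible fraction of $N(K_3,G)/n$; each deletion removes only $o(n)$ triangles. After $o(n)$ steps we obtain an induced subgraph $G'$ on $n'=(1-o(1))n$ vertices in which the average triangle-degree is at least $(3t/4-o(1))n'$ and in which every non-apex vertex falls short of full degree. A supersaturation argument exploiting the hypothesis $\sigma(T)=\tau_{\mathrm{ind}}(T)$ --- which forces $T$ to be bipartite with one part of size exactly $\sigma(T)$ that is simultaneously independent and a vertex cover --- shows that, unless at least $t$ vertices of $G'$ have degree $\ge (1-\gamma)n'$, the abundance of triangles produces a copy of $T^\triangle$ in $G'$. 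Pulling this back to $G$ yields a set $L\subseteq V(G)$ with $|L|=t$ and $d_G(v)\ge (1-\delta)n$ for every $v\in L$.

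With $L$ fixed, we verify the three remaining $\delta$-closeness conditions:
\begin{itemize}
    \item $N(K_3,G-L)\le \delta n^2$: each triangle inside $G-L$ has a common neighborhood in $L$ (every $v\in L$ misses only $O(\delta n)$ vertices), so if $N(K_3,G-L)$ were too large, a greedy embedding using the $t$-vertex part $I$ of the bipartition of $T$ (available by $\sigma(T)=\tau_{\mathrm{ind}}(T)$) would produce a copy of $T^\triangle$ in $G$.
    \item $|G-L|\ge n^2/4-\delta n^2$: double-counting gives $3N(K_3,G)\le t\cdot e(G-L)+O(\delta n^2)$, since each $v\in L$ contributes at most $e(G-L)+O(\delta n^2)$ to $c(v)$; combined with the lower bound on $N(K_3,G)$, this forces $e(G-L)\ge n^2/4-O(\delta)n^2$.
    \item $G-L$ is within $\delta n^2$ edges of bipartite: this is immediate from the Erd\H{o}s--Simonovits stability version of Mantel's theorem applied to $G-L$, which has $\ge n^2/4-\delta n^2$ edges and only $\delta n^2$ triangles.
\end{itemize}
The heart of the argument --- and the main obstacle --- is the apex-extraction step above. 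Because the available triangle count $(t/4-\varepsilon)n^2$ is only half of the hypergraph stability threshold, we cannot black-box $\mathcal{H}(G)$ into Theorem~\ref{THM:Hypergraph-Tree-Stability}; instead we must squeeze the missing factor out of the graph structure itself, which is precisely where the rigidity provided by $\sigma(T)=\tau_{\mathrm{ind}}(T)$ is used to make the iterative-deletion-plus-supersaturation argument go through.
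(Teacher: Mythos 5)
Your overall frame is right: pass to the triangle $3$-graph $\mathcal{H}=\mathcal{K}_G$, observe that $|\mathcal{H}| = N(K_3,G)$ is only half the density needed to quote Theorem~\ref{THM:Hypergraph-Tree-Stability} directly, and compensate using the fact that $|\partial\mathcal{H}|=|G|\le n^2/4+o(n^2)$ (so the \emph{fill ratio} $|\mathcal{H}|/|\partial\mathcal{H}|$ is still $t-o(1)$). Your verification of the three ``easy'' closeness conditions once $L$ is in hand is also essentially correct, modulo small slips (your displayed inequality should read $N(K_3,G)\le t\cdot e(G-L)+O(\delta n^2)$ without the factor $3$, and the independent vertex cover $I$ has size $t+1=\sigma(T)$, not $t$; you would embed $I\setminus\{v_0\}$ into $L$ and route one leaf through the triangle of $G-L$). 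These are cosmetic.

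The genuine gap is the apex-extraction step, which you compress into ``a supersaturation argument exploiting $\sigma(T)=\tau_{\mathrm{ind}}(T)$ shows that unless $\ge t$ vertices have degree $\ge(1-\gamma)n'$, the abundance of triangles produces $T^\triangle$.'' This is the entire theorem and it is not proved. In particular, abundance of triangles plus absence of $t$ near-full-degree vertices does \emph{not} obviously produce $T^\triangle$: the dangerous configurations are not low-triangle-degree vertices (your iterative deletion removes only those, and would delete nothing from a genuine near-extremal example, where non-apex vertices already have $c(v)=\Theta(n)$), but rather $3$-graphs whose shadow-edge codegrees concentrate on more than one candidate $t$-set. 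The paper addresses exactly this: it first runs the Cleaning Algorithm to obtain a $(t,3k)$-superfull $\mathcal{H}_q$, splits $\partial\mathcal{H}_q$ into the codegree-exactly-$t$ part $G'$ and the codegree-$\ge 3k$ part $G''$ (bounding $|G''|\le kn$ because $G''$ would otherwise contain $T$), then runs an iteration on ``light link'' sizes to shrink the set of possible apex candidates to $|Z_k|$ of size at most $n^{o(1)}$, and finally invokes Proposition~\ref{PROP:embed-T3-two-t-sets} to show the links of distinct $t$-sets in $Z_k$ have disjoint supports, so a pigeonhole forces a \emph{single} $t$-set $T_\ast=L$ to dominate. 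Without something playing the role of Proposition~\ref{PROP:embed-T3-two-t-sets} (a two-apex-set embedding lemma that exploits a maximum crosscut pair of $T$) your ``supersaturation'' cannot distinguish one apex set from several competing ones, and the claim ``after $o(n)$ steps'' in the cleaning iteration is also unsupported. So the proposal identifies the right target but leaves the proof's central mechanism unestablished.
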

Recall from the definition that $\sigma(C_k) = \floor*{(k+1)/2} = \floor*{(k-1)/2}+1$.
\begin{theorem}[Stability]\label{THM:GenTuran-Cycle-Stability}
    Let $k \ge 4$ be a fixed integer. 
    For every $\delta>0$ there exist $\varepsilon>0$ and $n_0\in \mathbb{N}$ such that the following holds for all $n \ge n_0$. 
    Suppose that $G$ is an $n$-vertex $C_{k}^{\triangle}$-free graph with 
    \begin{align*}
        N(K_{3}, G) \ge\left(\frac{\sigma(C_k)-1}{4} - \varepsilon\right)n^2.
    \end{align*}
    Then $G$ is $\delta$-close to $S\left(n,\sigma(C_k)-1\right)$. 
\end{theorem}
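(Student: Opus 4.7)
The plan is to parallel the proof of Theorem~\ref{THM:GenTuran-Tree-Stability}, replacing tree embeddings by cycle embeddings and using the triangle hypergraph as a bridge to hypergraph-Tur\'an tools. Write $t=\sigma(C_k)-1$, and let $\mathcal{H}=\mathcal{H}(G)$ be the $3$-graph whose edges are the vertex sets of triangles of $G$. Any copy of $C_k^{\triangle}$ in $G$ corresponds to a copy of $C_k^{3}$ in $\mathcal{H}$ (the attached triangle at cycle-edge $v_iv_{i+1}$ becomes the $3$-edge $\{v_i,v_{i+1},w_i\}$, and conversely), so $\mathcal{H}$ is $C_k^{3}$-free with $|\mathcal{H}|=N(K_3,G)\ge(t/4-\varepsilon)n^2$. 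This lies well below the Tur\'an threshold $\mathrm{ex}(n,C_k^{3})\sim (t/2)n^2$, so hypergraph stability for $C_k^{3}$ cannot be invoked verbatim; instead, the extra structure that $\mathcal{H}$ is realized as a triangle hypergraph (links satisfy $\mathcal{H}_v=G[N_G(v)]$) must be exploited.

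The heart of the argument is to locate a set $L\subseteq V(G)$ of exactly $t$ vertices with $d_G(v)\ge(1-\delta)n$ for every $v\in L$. I would proceed by iteratively peeling vertices of near-full degree. If fewer than $t$ vertices are peeled before the maximum degree drops below $(1-\delta)n$, the residual graph $G'$ still retains a positive fraction of the triangles of $G$, and the task becomes to build a copy of $C_k^{\triangle}$ inside $G$ and contradict the hypothesis. Since $G'$ has many triangles, classical cycle-existence results (Erd\H{o}s--Gallai or Bondy--Simonovits type) yield a copy of $C_k$ in $G'$, and because each edge of this $C_k$ lies in many triangles one can reserve $k$ pairwise-disjoint attachment vertices in a greedy manner. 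To rule out the possibility $|L|>t$, observe that $t+1$ near-universal vertices together with any single edge in $G-L$ close up into $C_k^{\triangle}$ once $n$ is large, using the near-universal vertices to thread a cycle through $L$ and the pool of common neighbours to supply disjoint triangle attachments.

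With $L$ of size exactly $t$ in hand, the remaining three closeness conditions are verified by similar cycle-closure arguments: if $G-L$ contained more than $\delta n^2$ triangles, any such triangle together with $L$ suffices to build $C_k^{\triangle}$ by threading through $L$ and closing through the triangle; the edge bound $|E(G-L)|\ge n^2/4-\delta n^2$ follows by subtracting the contribution of triangles meeting $L$ from $N(K_3,G)$ and applying an averaging bound against the bipartite-like density required; and almost-bipartiteness of $G-L$ follows because a short odd cycle in $G-L$ combined with $L$-vertices again assembles the forbidden $C_k^{\triangle}$. The main obstacle throughout is the supersaturation step used to extract $L$: constructing $C_k^{\triangle}$ in a graph with few near-universal vertices requires a reservation argument that simultaneously embeds a $C_k$, closes it into an actual cycle, and selects $k$ pairwise-disjoint triangle-attachment vertices. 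The closure constraint, which is absent in the tree case of Theorem~\ref{THM:GenTuran-Tree-Stability}, is what makes the cycle argument technically more delicate than its tree counterpart.
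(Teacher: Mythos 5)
Your opening observation---that $\mathcal{K}_G$ sits at roughly half the hypergraph Tur\'an threshold for $C_k^3$, so hypergraph stability cannot be invoked directly---is the right diagnosis, but the peeling argument you propose leaves a genuine gap at its central step and does not match the paper's route. The crucial structural fact your sketch never uses is that the shadow $\partial\mathcal{K}_G\subseteq G$ is itself forced to have at most $n^2/4+o(n^2)$ pairs (Proposition~\ref{PROP:shadow-size}); together with $|\mathcal{K}_G|\ge(t/4-\varepsilon)n^2$ this gives $|\mathcal{K}_G|\ge t|\partial\mathcal{K}_G|-o(n^2)$, which is exactly the hypothesis needed to run the Cleaning Algorithm (Lemma~\ref{LEMMA:Cleaning-Algo-cycle}) and land in a $(t,3k)$-superfull subhypergraph. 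The paper then never searches for a cycle: from superfullness it takes a max cut of the codegree-$t$ shadow pairs, finds a near-complete bipartite $3k\times 3k$ block containing an edge $e$ together with $\Omega(n^2)$ further crossing pairs, and applies Lemma~\ref{LEMMA:KMVa-superfull-complete-bipartite-cycle} to conclude that a single $t$-set $L=N_{\mathcal{H}_q}(e)$ is the common codegree-neighborhood of all those pairs; the four closeness conditions then follow.

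In your proposal, the unproved step is exactly the one you flag as delicate. You want to extract a $C_k$ from the residual graph via Bondy--Simonovits or Erd\H{o}s--Gallai and then ``reserve $k$ pairwise-disjoint attachment vertices because each edge of this $C_k$ lies in many triangles.'' That second clause does not follow: a cycle produced by a dense-graph cycle theorem need not have all $k$ of its edges contained in many triangles of $G$, and controlling what happens when some cycle edges have codegree exactly $t$ (rather than $\Omega(n)$) is precisely what Lemmas~\ref{LEMMA:KMVa-many-ell+1-shadow-cycle} and~\ref{LEMMA:KMVa-superfull-complete-bipartite-cycle} handle. The claim that peeling fewer than $t$ high-degree vertices leaves a positive fraction of the triangles is also asserted without proof, and your exclusion of $|L|>t$ by threading a cycle through $t+1$ near-universal vertices runs into a parity obstruction when $k$ is odd, since it would require an edge between two $L$-vertices which need not exist. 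So while the high-level outline is reasonable in spirit, the supersaturation/embedding step at its heart is missing, and the peeling heuristic is not an elementary substitute for the Kostochka--Mubayi--Verstra\"{e}te superfull-structure machinery on which the paper's proof depends.
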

Since proofs for Theorems~\ref{THM:GenTuran-Tree-Exact} and~\ref{THM:GenTuran-Tree-Stability} are very similar to proofs of Theorems~\ref{THM:Hypergraph-Tree-Exact} and~\ref{THM:Hypergraph-Tree-Stability}, we include them in the Appendix. 
The proof of Theorem~\ref{THM:GenTuran-Even-Path-Exact} is similar to the proof of Theorem~\ref{THM:GenTuran-Cycle-Exact}, so we include them in the Appendix as well.
The proof of Theorem~\ref{THM:GenTuran-Cycle-Exact} is presented in Section~\ref{SEC:Proof-GenTuran-even-cycle-exact}, and the proof of Theorem~\ref{THM:GenTuran-Cycle-Stability} is included in Section~\ref{SEC:Proof-GenTuran-even-cycle-stability}. 
In the next section, we introduce some definitions and prove some preliminary results. 
\section{Preliminaries}
In this section, we present some definitions and preliminary results related to hypergraphs, trees, and expansions. 
\subsection{Hypergraphs}
Given a $3$-graph $\mathcal{H}$ the \textbf{shadow} $\partial\mathcal{H}$ of $\mathcal{H}$ is a $2$-graph defined by 
\begin{align*}
    \partial\mathcal{H} 
        := \left\{uv\in \binom{V(\mathcal{H})}{2} \colon \mathrm{\ there\ exists\ a \ } w\in V(\mathcal{H}) \mathrm{\ such\ that \ } uvw\in \mathcal{H}\right\}. 
\end{align*}
The \textbf{link} $L_{\mathcal{H}}(v)$ of a vertex $v\in \mathcal{H}$ is 
\begin{align*}
    L_{\mathcal{H}}(v)
    := \left\{uw\in \partial\mathcal{H} \colon uvw \in \mathcal{H}\right\}. 
\end{align*}
The \textbf{degree} of $v$ is $d_{\mathcal{H}}(v):= |L_{\mathcal{H}}(v)|$. 
We use $\delta(\mathcal{H})$ and $\Delta(\mathcal{H})$ to denote the minimum and maximum degree of $\mathcal{H}$, respectively. 
Given a pair of vertices $uv \subseteq V(\mathcal{H})$ the \textbf{neighborhood} of $uv$ is 
\begin{align*}
    N_{\mathcal{H}}(uv) 
        := \left\{  w\in V(\mathcal{H}) \colon uvw \in \mathcal{H}\right\}. 
\end{align*}
The \textbf{codegree} of $uv$ is $d_{\mathcal{H}}(uv) := |N_{\mathcal{H}}(uv)|$. 
We use $\delta_2(\mathcal{H})$ and $\Delta_2(\mathcal{H})$ to denote the minimum and maximum codegree of $\mathcal{H}$, respectively. 
For convenience, for every edge $uvw\in \mathcal{H}$ we let 
\begin{align*}
    \delta_2(uvw) & := \min\left\{d_{\mathcal{H}}(uv),\ d_{\mathcal{H}}(vw),\ d_{\mathcal{H}}(vw)\right\}, \quad\text{and}\quad\\
    \Delta_2(uvw) & := \max\left\{d_{\mathcal{H}}(uv),\ d_{\mathcal{H}}(vw),\ d_{\mathcal{H}}(vw)\right\}.
\end{align*}

Let $k > d \ge 0$ be integers. We say a $3$-graph $\mathcal{H}$ is \textbf{$d$-full} if $d_{\mathcal{H}}(uv) \ge d$ for all $uv\in \partial\mathcal{H}$.
We say $\mathcal{H}$ is \textbf{$(d,k)$-superfull} if it is $d$-full and every edge in $\mathcal{H}$ contains at most one pair of vertices with codegree less than $k$. 

The following simple lemma can be proved by greedily removing shadow edges with small codegree. 
\begin{lemma}[{\cite[Lemma~3.1]{KMV15a}}]\label{LEMMA:KMVa-d-full-subgraph}
    Let $d \ge 0$ be an integer.
    Every $3$-graph $\mathcal{H}$ contains a $(d+1)$-full subgraph $\mathcal{H}'$ with 
    \begin{align*}
        |\mathcal{H}'| \ge |\mathcal{H}| - d|\partial\mathcal{H}|. 
    \end{align*}
\end{lemma}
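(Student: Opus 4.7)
The plan is to construct $\mathcal{H}'$ by a greedy deletion procedure on low-codegree shadow pairs. Set $\mathcal{H}^{(0)} := \mathcal{H}$, and at step $i$, check whether the current hypergraph $\mathcal{H}^{(i)}$ contains a shadow pair $u_iv_i \in \partial \mathcal{H}^{(i)}$ whose codegree in $\mathcal{H}^{(i)}$ is at most $d$. If such a pair exists, delete every hyperedge of $\mathcal{H}^{(i)}$ that contains $u_iv_i$ to form $\mathcal{H}^{(i+1)}$; otherwise halt and output $\mathcal{H}' := \mathcal{H}^{(i)}$. When the procedure halts, every pair in $\partial \mathcal{H}'$ has codegree at least $d+1$, so $\mathcal{H}'$ is $(d+1)$-full by definition.

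To control the number of deleted hyperedges, I would make two observations. First, step $i$ removes exactly $d_{\mathcal{H}^{(i)}}(u_iv_i) \le d$ hyperedges, so each iteration costs at most $d$ edges. Second, after processing $u_iv_i$, this pair no longer lies in $\partial \mathcal{H}^{(j)}$ for any $j > i$, because all hyperedges containing it have been removed and no new hyperedges are ever added. Since $\partial \mathcal{H}^{(i)} \subseteq \partial \mathcal{H}$ for every $i$, the processed pairs $u_0v_0, u_1v_1, \dots$ are distinct elements of $\partial \mathcal{H}$, so the procedure terminates after at most $|\partial \mathcal{H}|$ steps. Multiplying the per-step cost by the number of steps yields $|\mathcal{H}| - |\mathcal{H}'| \le d\,|\partial \mathcal{H}|$, which is the claimed bound.

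This is a classical charging argument and I do not anticipate a real obstacle; the only subtlety worth stating carefully is the monotonicity $\partial \mathcal{H}^{(i)} \subseteq \partial \mathcal{H}$, which is what prevents the shadow from growing during the process and guarantees termination in $|\partial \mathcal{H}|$ steps.
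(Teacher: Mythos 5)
Your proposal is correct and matches the paper's intended approach: the authors explicitly state that the lemma "can be proved by greedily removing shadow edges with small codegree," which is precisely the charging argument you carry out. Your treatment of the two key points — that each deletion step costs at most $d$ hyperedges, and that the processed pairs are distinct elements of $\partial\mathcal{H}$ because the shadow is monotone under edge deletion — is exactly what makes the bound $|\mathcal{H}'| \ge |\mathcal{H}| - d\,|\partial\mathcal{H}|$ go through.
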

The following simple fact, which can be proved easily using a greedy argument, will be useful in our proofs. 
\begin{fact}\label{FACT:partial-embedding}
    Let $\mathcal{H}$ be a $3$-graph and $F$ be a graph with $m\ge 1$ edges.
    Suppose that $\{e_1, \ldots, e_m\} \subseteq \partial\mathcal{H}$ is a copy of $F$ and there exists $t \in [m]$ such that 
    \begin{enumerate}[label=(\roman*)]
        \item there exist distinct vertices $w_1, \ldots, w_{t} \in V(\mathcal{H}) \setminus \left(\bigcup_{i\in m}e_i\right)$  with $e_i \cup \{w_i\} \in \mathcal{H}$ for $i\in [t]$, and
        \item $d_{\mathcal{H}}(e_j) \ge 3m$ for all $j\in [t+1, m]$. 
    \end{enumerate}
    Then $F^3 \subseteq \mathcal{H}$. 
\end{fact}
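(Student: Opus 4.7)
The plan is to extend the given partial embedding greedily, processing the remaining edges $e_{t+1},\ldots,e_m$ one at a time and using the codegree hypothesis (ii) to guarantee enough room at each step. Write $V(F) := \bigcup_{i\in[m]} e_i$, so that the vertices $w_1,\ldots,w_t$ provided by hypothesis (i) are distinct elements of $V(\mathcal{H})\setminus V(F)$ satisfying $e_i\cup\{w_i\}\in\mathcal{H}$ for every $i\in[t]$.

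For each $j=t+1,\ldots,m$ in turn, I would pick a vertex
\begin{align*}
w_j \in N_{\mathcal{H}}(e_j) \setminus \bigl(V(F) \cup \{w_1,\ldots,w_{j-1}\}\bigr).
\end{align*}
The entire content of the argument is a one-line union-bound: since $F$ has exactly $m$ edges, $|V(F)| \le 2m$, and at most $j-1 \le m-1$ vertices have already been chosen, so the forbidden set has size at most $2m + (m-1) = 3m-1$. Hypothesis (ii) gives $|N_{\mathcal{H}}(e_j)| = d_{\mathcal{H}}(e_j) \ge 3m$, so an admissible $w_j$ exists. Iterating produces pairwise distinct vertices $w_1,\ldots,w_m \in V(\mathcal{H})\setminus V(F)$ with $e_i\cup\{w_i\}\in\mathcal{H}$ for every $i\in[m]$, and the triples $\{e_i\cup\{w_i\}: i\in[m]\}$ form exactly a copy of $F^3$ on the vertex set $V(F) \cup \{w_1,\ldots,w_m\}$ inside $\mathcal{H}$.

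There is no genuine obstacle; the only thing to verify is the counting at each step, and the threshold $3m$ in (ii) is chosen precisely to absorb the $\le 2m$ vertices of $V(F)$ together with the $\le m-1$ previously selected extension vertices. Handling the edges sequentially (rather than simultaneously) is what makes this crude bound suffice.
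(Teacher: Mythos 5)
Your argument is exactly the greedy construction the paper alludes to (it states the fact without proof, saying only that it ``can be proved easily using a greedy argument''), and your counting $|V(F)|+(j-1)\le 2m+(m-1)=3m-1<3m\le d_{\mathcal{H}}(e_j)$ is the correct justification for why each step succeeds. The proposal is correct and takes the intended approach.
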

A $3$-graph $\mathcal{H}$ is \textbf{$2$-intersecting} if $|e_1 \cap e_2| = 2$ for all distinct edges $e_1, e_2 \in \mathcal{H}$. 
The following observation on the structure of $2$-intersecting $3$-graphs will be useful. 
\begin{fact}\label{FACT:2-intersecting}
    Suppose that $\mathcal{H}$ is $2$-intersecting. 
    Then either $|\mathcal{H}|\le 4$ or there exists a pair $\{u,v\} \subseteq V(\mathcal{H})$ such that all edges in $\mathcal{H}$ containing $\{u,v\}$. 
\end{fact}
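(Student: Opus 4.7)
The plan is to reduce to a case analysis on three edges and then bootstrap. If $|\mathcal{H}|\le 2$ we are already done, so assume $|\mathcal{H}|\ge 3$ and fix three distinct edges $e_1,e_2,e_3\in\mathcal{H}$. Write $e_1\cap e_2=\{u,v\}$, so $e_1=\{u,v,a\}$ and $e_2=\{u,v,b\}$ with $a\ne b$. The key dichotomy is whether $e_3$ contains the pair $\{u,v\}$ or not, which will lead to either a common kernel or a bounded configuration.

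In the first case, assume $\{u,v\}\subseteq e_3$, so there are three edges through $\{u,v\}$, say $f_i=\{u,v,x_i\}$ for $i=1,2,3$ with $x_1,x_2,x_3$ distinct. I claim every $e\in\mathcal{H}$ must contain $\{u,v\}$. Indeed, if some $e\in\mathcal{H}$ omits one of $u,v$, say $v\notin e$, then from $|e\cap f_i|=2$ and $v\notin e$ we get $u\in e$ and $x_i\in e$ for each $i\in\{1,2,3\}$; this forces $|e|\ge 4$, a contradiction. So in this case all edges contain $\{u,v\}$, giving the second conclusion of the fact.

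In the second case, $e_3\not\supseteq\{u,v\}$, so $e_3$ contains exactly one of $u,v$, together with an element of $e_1\setminus\{u,v\}=\{a\}$ and one of $e_2\setminus\{u,v\}=\{b\}$. Up to swapping $u$ and $v$ we get $e_3=\{u,a,b\}$, and then $\{e_1,e_2,e_3\}$ sit on the four vertices $\{u,v,a,b\}$. Now for any fourth edge $e_4\in\mathcal{H}$, I will check that $e_4\subseteq\{u,v,a,b\}$ and in fact $e_4=\{v,a,b\}$, the unique missing triple. Intersecting $e_4$ with each $e_i$ in turn and using $|e_4\cap e_i|=2$, one quickly finds that either $e_4$ already equals one of $e_1,e_2,e_3$, or $e_4=\{v,a,b\}$; in particular $\mathcal{H}\subseteq\binom{\{u,v,a,b\}}{3}$ and hence $|\mathcal{H}|\le 4$. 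This yields the first conclusion.

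The argument is entirely elementary; the only delicate point is the sub-case enumeration in the last step, where one must verify that every way of choosing which two of the three elements of $e_4$ lie in each $e_i$ either recreates an existing edge or lands in the tetrahedron $\binom{\{u,v,a,b\}}{3}$. I do not expect any real obstacle — the whole proof is a short pigeonhole on $|e_4\cap\{u,v\}|\in\{0,1,2\}$ combined with the constraints $|e_4\cap e_i|=2$.
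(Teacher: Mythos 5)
The paper states this Fact without proof, treating it as an elementary observation, so there is no ``official'' argument to compare against. Your proof is correct and is essentially the natural sunflower--versus--tetrahedron dichotomy one would expect: pick $e_1,e_2,e_3\in\mathcal{H}$ with $\{u,v\}=e_1\cap e_2$; if $\{u,v\}\subseteq e_3$ then a three-petal sunflower through $\{u,v\}$ forces every further edge through $\{u,v\}$ (otherwise it would have to contain $u$ and all three distinct petal-vertices, hence four vertices); if not, a short intersection count pins down $e_3=\{u,a,b\}$ up to the symmetry $u\leftrightarrow v$, and $\{e_1,e_2,e_3\}$ sits inside the tetrahedron on $\{u,v,a,b\}$.

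The one place you defer detail is the final step, where you assert that any further edge $e_4$ lands in $\binom{\{u,v,a,b\}}{3}$. This is true, but since it is the crux of the $|\mathcal{H}|\le 4$ conclusion it is worth closing cleanly. A slicker way than the $|e_4\cap\{u,v\}|$-pigeonhole you sketch is the weighted count
\[
\sum_{i=1}^{3}|e_4\cap e_i|=6,
\]
combined with the fact that $u$ lies in all three of $e_1,e_2,e_3$ while each of $v,a,b$ lies in exactly two of them; this gives
\[
3\,[u\in e_4]+2\bigl([v\in e_4]+[a\in e_4]+[b\in e_4]\bigr)=6,
\]
which by parity forces $u\notin e_4$, hence $v,a,b\in e_4$ and $e_4=\{v,a,b\}$. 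So $\mathcal{H}\subseteq\binom{\{u,v,a,b\}}{3}$ and $|\mathcal{H}|\le 4$, completing the argument.
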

The following analogue for graphs is also useful. 
\begin{fact}\label{FACT:matching-1}
    Suppose that the matching number of a graph $G$ is at most one.  
    Then either $G$ consists of a triangle and possibly some isolated vertices or there exists a vertex $v \in V(G)$ such that all edges in $G$ containing $v$. 
\end{fact}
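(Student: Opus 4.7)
The plan is to do a short case analysis based on whether $G$ has any edges and, if so, whether all edges share a common vertex. The case of matching number $0$ is trivial (no edges, so both alternatives hold vacuously), so I focus on matching number exactly one, in which case I fix any edge $uv \in G$; every other edge must meet $\{u,v\}$, for otherwise two disjoint edges would give a matching of size two.

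Next I would split into two cases according to whether some vertex is common to all edges. If every edge of $G$ contains $u$, or every edge contains $v$, we are immediately in the second alternative. Otherwise there exist edges $ux \in G$ with $x \neq v$ and $vy \in G$ with $y \neq u$. If $x \neq y$ then $\{ux, vy\}$ would be a matching of size two, contradicting the hypothesis; hence $x = y$, call this common vertex $w$, and the three edges $uv$, $uw$, $vw$ form a triangle in $G$.

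It then remains to rule out any further edges outside this triangle. For any additional edge $e \in G$, to avoid a matching of size two with each of $uv$, $uw$, $vw$, the edge $e$ must intersect each of the three pairs $\{u,v\}$, $\{u,w\}$, $\{v,w\}$; since $|e|=2$, this forces $e \subseteq \{u,v,w\}$, so $e$ is already one of the three triangle edges. Hence $G$ consists exactly of the triangle on $\{u,v,w\}$ together with (possibly) isolated vertices, matching the first alternative of the fact.

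No step here is substantively hard; the only thing to be careful about is the bookkeeping in the last paragraph to confirm that an extra edge cannot hide in $V(G)\setminus\{u,v,w\}$ or partially in it, which is handled by the observation that any edge disjoint from some triangle edge would extend the matching.
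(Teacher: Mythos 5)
Your proof is correct. The paper states this as a ``Fact'' without supplying a proof (it is treated as an elementary observation, like its $3$-graph analogue Fact~\ref{FACT:2-intersecting}), so there is no argument in the text to compare against; your case analysis — fix an edge $uv$, note every edge meets $\{u,v\}$, split on whether $u$ or $v$ is a universal vertex, and otherwise deduce a triangle $\{u,v,w\}$ which then absorbs all remaining edges — is exactly the natural argument one would write in.
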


Given a graph $G$, we associate a $3$-graph with it by letting 
        \begin{align*}
            \mathcal{K}_{G} 
            := \left\{e \in \binom{V(G)}{3} \colon G[e] \cong K_3\right\}. 
        \end{align*}
The following simple fact is clear from the definition.  
\begin{fact}\label{FACT:shadow-Turan}
    Let $F$ be a graph. 
    For every $F^{\triangle}$-free graph $G$, the $3$-graph $\mathcal{K}_{G}$ is $F^{3}$-free. 
    In particular, $\mathrm{ex}(n, K_3, F^{\triangle}) \le \mathrm{ex}(n, F^3)$. 
\end{fact}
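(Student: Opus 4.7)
The plan is to prove the first (hypergraph) statement directly from the definitions by a contrapositive argument, and then derive the ``in particular'' inequality as an immediate consequence. There is really no obstacle here beyond careful unpacking of notation, which matches the paper's description of this as a ``simple fact'' that is ``clear from the definition.''

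First I would suppose, towards a contradiction, that $G$ is $F^{\triangle}$-free but that $\mathcal{K}_G$ contains a copy of $F^{3}$. By the definition of the expansion, such a copy of $F^3$ consists of an embedding of $V(F)$ into $V(G)$, together with, for each edge $e=uv$ of $F$, a distinct ``apex'' vertex $w_e \in V(G)\setminus V(F)$ (with all $w_e$ pairwise distinct) such that $\{u,v,w_e\} \in \mathcal{K}_G$. By the definition of $\mathcal{K}_G$, the latter says that $\{u,v,w_e\}$ spans a triangle in $G$; in particular, both $uw_e$ and $vw_e$ are edges of $G$.

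Next I would observe that the union of these triangles, taken over all edges of $F$, realises a copy of $F^{\triangle}$ in $G$: the distinctness of the apex vertices $w_e$ and their disjointness from $V(F)$ exactly reproduce the condition that different edges of $F$ are blown up using disjoint new vertices. This contradicts the $F^{\triangle}$-freeness of $G$, proving the first claim.

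Finally, for the ``in particular'' part, I would note that $N(K_3, G) = |\mathcal{K}_G|$ directly from the definition of $\mathcal{K}_G$. Hence, for any $n$-vertex $F^{\triangle}$-free graph $G$, the preceding paragraph gives that $\mathcal{K}_G$ is an $n$-vertex $F^3$-free $3$-graph, and therefore
\begin{align*}
    N(K_3, G) = |\mathcal{K}_G| \le \mathrm{ex}(n, F^3).
\end{align*}
Taking the maximum over all such $G$ yields $\mathrm{ex}(n, K_3, F^{\triangle}) \le \mathrm{ex}(n, F^3)$, as required.
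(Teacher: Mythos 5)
Your proof is correct and is exactly the direct unpacking of definitions that the paper has in mind when it calls this a simple fact ``clear from the definition'' (the paper gives no explicit proof). The key observations — that a copy of $F^3$ in $\mathcal{K}_G$ forces each triple $\{u,v,w_e\}$ to span a triangle in $G$, and that the pairwise-distinct apex vertices disjoint from the image of $V(F)$ reproduce precisely the disjointness condition in the definition of $F^{\triangle}$ — are both present, and the passage from the hypergraph statement to the numerical inequality via $N(K_3,G)=|\mathcal{K}_G|$ is also correct.
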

\subsection{Trees}
Let $T$ be a tree.  
We say a pair $(I, R)$ with $I \subseteq V(T)$ and $R \subseteq T$ is a \textbf{crosscut pair} if 
\begin{align*}
    \text{$I$ is an independent of $T$, \quad  $R = T-I$, \quad and\quad  $|I|+|R| = \sigma(T)$.}
\end{align*}
Recall that a vertex $v\in V(T)$ is a \textbf{leaf} if $d_{T}(v) = 1$. 
We say an edge $uv \in T$ is a \textbf{pendant} edge if $\min\{d_{T}(u), d_{T}(v)\} = 1$.
An edge $e \in T$ is \textbf{crucial} if $\sigma(T\setminus e) \le \sigma(T)-1$. 

\begin{figure}[htbp]
\centering
 \begin{minipage}[t]{.45\textwidth}
\centering
\tikzset{every picture/.style={line width=0.75pt}} 
\begin{tikzpicture}[x=0.75pt,y=0.75pt,yscale=-1,xscale=1,scale=0.8]

\draw [line width=1pt,color=sqsqsq]   (200,106) -- (260,106) ;
\draw [line width=1pt,color=red]   (260,106) -- (320,106) ;
\draw  [line width=1pt,color=sqsqsq]  (320,106) -- (380,106) ;
\draw  [line width=1pt,color=sqsqsq]  (174,157) -- (200,106) ;
\draw   [line width=1pt,color=sqsqsq] (192,157) -- (200,106) ;
\draw [line width=1pt,color=sqsqsq]   (208,157) -- (200,106) ;
\draw [line width=1pt,color=sqsqsq]   (223,157) -- (200,106) ;
\draw  [line width=1pt,color=sqsqsq]  (353,157) -- (379,106) ;
\draw  [line width=1pt,color=sqsqsq]  (371,157) -- (379,106) ;
\draw [line width=1pt,color=sqsqsq]   (387,157) -- (379,106) ;
\draw  [line width=1pt,color=sqsqsq] (402,157) -- (379,106) ;

\draw [fill=red] (200,106) circle (2pt);
\draw [fill=uuuuuu]  (260,106) circle (2pt);
\draw [fill=uuuuuu]   (320,106)  circle (2pt);
\draw [fill=red]  (380,106)  circle (2pt);

\draw [fill=uuuuuu]   (174,157) circle (2pt);
\draw [fill=uuuuuu]  (192,157) circle (2pt);
\draw [fill=uuuuuu]  (208,157)  circle (2pt);
\draw [fill=uuuuuu]  (223,157)  circle (2pt);
\draw [fill=uuuuuu]   (353,157) circle (2pt);
\draw [fill=uuuuuu]  (371,157)   circle (2pt);
\draw [fill=uuuuuu]  (387,157) circle (2pt);
\draw [fill=uuuuuu]   (402,157)  circle (2pt);
\end{tikzpicture}
\caption{A tree whose crosscut-pair $(I,R)$ is highlighted in red color.}
\label{fig:Tree-Crosscut}
\end{minipage}%
\hfill
\begin{minipage}[t]{.45\textwidth}
\centering
\tikzset{every picture/.style={line width=0.75pt}} 
\begin{tikzpicture}[x=0.75pt,y=0.75pt,yscale=-1,xscale=1,scale=0.7]

\draw[line width=1pt,color=sqsqsq]   (257.31,70.98) .. controls (270.32,70.88) and (281.09,101.48) .. (281.36,139.32) .. controls (281.63,177.16) and (271.3,207.91) .. (258.29,208.01) .. controls (245.28,208.1) and (234.51,177.5) .. (234.24,139.66) .. controls (233.97,101.82) and (244.3,71.07) .. (257.31,70.98) -- cycle ;
\draw[line width=1pt,color=sqsqsq]   (352.72,44.92) .. controls (369.82,44.8) and (383.98,86.11) .. (384.35,137.19) .. controls (384.71,188.27) and (371.15,229.78) .. (354.04,229.9) .. controls (336.94,230.02) and (322.78,188.71) .. (322.41,137.63) .. controls (322.05,86.55) and (335.62,45.04) .. (352.72,44.92) -- cycle ;
\draw [line width=1pt,color=sqsqsq]   (256,93) -- (350,71) ;
\draw[line width=1pt,color=sqsqsq]    (256,93) -- (350,98) ;
\draw [line width=1pt,color=sqsqsq]   (256,93) -- (350,123) ;
\draw [line width=1pt,color=sqsqsq]   (256,135) -- (350,123) ;
\draw [line width=1pt,color=sqsqsq]   (256,135) -- (350,143) ;
\draw  [line width=1pt,color=sqsqsq]  (256,135) -- (350,167) ;
\draw  [line width=1pt,color=sqsqsq]  (256,160) -- (350,167) ;
\draw[line width=1pt,color=sqsqsq]    (256,160) -- (350,195) ;
\draw [line width=1pt,color=green]   (256,187) -- (350,195) ;

\draw [fill=uuuuuu] (256,93) circle (2pt);
\draw [fill=uuuuuu]  (256,135) circle (2pt);
\draw [fill=uuuuuu]  (256,160) circle (2pt);
\draw [fill=uuuuuu]  (256,187) circle (2pt);

\draw [fill=uuuuuu]  (350,71) circle (2pt);
\draw [fill=uuuuuu] (350,98) circle (2pt);
\draw [fill=uuuuuu]  (350,123) circle (2pt);
\draw [fill=uuuuuu]  (350,143)circle (2pt);
\draw [fill=uuuuuu]  (350,167)  circle (2pt);
\draw [fill=uuuuuu]  (350,195)  circle (2pt);

\draw (253,220) node [anchor=north west][inner sep=0.75pt]   [align=left] {$I$};
\draw (252,193) node [anchor=north west][inner sep=0.75pt]   [align=left] {$v$};
\draw (345,202) node [anchor=north west][inner sep=0.75pt]   [align=left] {$u$};

\end{tikzpicture}
\caption{A strongly edge-critical tree in which $I$ is an independent vertex-cover and $uv$ is a critical edge.}
\label{fig:tree cross-cut}
\end{minipage}%
\end{figure}

In the proof of Theorems~\ref{THM:Hypergraph-Tree-Stability} and~\ref{THM:GenTuran-Tree-Stability} we will use the following structural result on trees. 
\begin{proposition}\label{PROP:tree-decomposition}
    Suppose that $T$ is a tree and $(I, R)$ is a cross-cut pair of $T$. Then one of the following holds. 
    \begin{enumerate}[label=(\roman*)]
        \item There exists a vertex $v\in I$ such that all but one vertex in $N_T(v)$ are leaves in $T$. 
        \item There exists an edge $e\in R$ which is a pendant edge in $T$. 
    \end{enumerate} 
    In particular, if $I$ is maximum, then $(i)$ holds. 
\end{proposition}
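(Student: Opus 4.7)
The plan is to prove the dichotomy by contrapositive on (ii): assuming no edge in $R$ is a pendant edge of $T$, I will exhibit a vertex $v \in I$ witnessing (i). The main device is a longest path in $T$. Let $P = v_0 v_1 \cdots v_k$ be a longest path, so that $v_0$ is a leaf, and by the maximality of $P$, every neighbor of $v_1$ other than $v_2$ must also be a leaf (otherwise $P$ could be extended past $v_0$).

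The pendant edge $v_0 v_1$ cannot lie in $R$ (since (ii) fails), and since $R$ is precisely the set of edges with no endpoint in $I$, at least one of $v_0, v_1$ belongs to $I$. If $v_0 \in I$, then I take $v = v_0$: it is itself a leaf with only one neighbor, so the condition ``all but one vertex in $N_T(v)$ are leaves'' is vacuously satisfied. If instead $v_1 \in I$, then I take $v = v_1$: by the longest-path observation above, the only neighbor of $v_1$ that may fail to be a leaf is $v_2$, which is exactly (i).

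For the \emph{in particular} clause, I will show that a maximum independent set $I$ forces option (ii) to fail. Suppose for contradiction that some pendant edge $uv \in R$ exists with $u$ a leaf. Since $uv \in R$, neither $u$ nor $v$ lies in $I$. But then $u$'s only neighbor $v$ is outside $I$, so $I \cup \{u\}$ is still independent, contradicting the maximality of $I$. With (ii) ruled out, the dichotomy just proved forces (i) to hold.

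I do not anticipate a significant obstacle: the argument is a short longest-path reduction, and the only minor subtlety is correctly interpreting ``all but one vertex in $N_T(v)$ are leaves'' in the degenerate case $v = v_0$, where $|N_T(v)| = 1$ and the condition is vacuous. The only other point worth flagging is the implicit assumption that $T$ has at least one edge, which is harmless since any crosscut pair with $R = \emptyset$ and $I = \emptyset$ would force $\sigma(T) = 0$, a trivial case excluded by the contexts in which the proposition will be applied.
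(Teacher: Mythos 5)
Your proof is correct, and it takes a somewhat more elementary route than the paper's. The paper constructs an auxiliary set system $\mathcal{T} := R \cup \{N_T(v) : v \in I\}$ on $U := V(T)\setminus I$, proves it is linear and acyclic, and then extracts the desired $v$ or $e$ from the last hyperedge of a maximal path in $\mathcal{T}$; the key bookkeeping fact is that the degree of $u \in U$ in $\mathcal{T}$ equals $d_T(u)$. You instead argue directly with a longest path $v_0 v_1 \cdots v_k$ in $T$ itself, observing that maximality forces all neighbors of $v_1$ other than $v_2$ to be leaves, and then using that the pendant edge $v_0 v_1$ is not in $R$ to land one of $v_0, v_1$ in $I$. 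Your version avoids the linear-and-acyclic verification entirely and is arguably cleaner to read; it buys nothing beyond simplicity, since both arguments prove exactly the same statement in comparable length. Your treatment of the degenerate case $v = v_0$ (a leaf of $I$, where $N_T(v_0)$ is a singleton and the ``all but one'' condition is vacuous) is the right reading and matches how the proposition is used downstream. Two minor nits: when you write ``contradicting the maximality of $I$'' for the ``in particular'' clause, you should make explicit that $(I\cup\{u\}, R\setminus\{uv\})$ is again a crosscut pair (the leaf $u$ contributes exactly one edge to $R$, so the sum $|I|+|R|$ is preserved); and your observation that the degenerate edgeless tree must be excluded is sound --- if $T$ has at least one edge but $I=\emptyset$, then $R=T$ contains a pendant edge and (ii) holds, which your contrapositive framing handles automatically.
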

Proposition~\ref{PROP:tree-decomposition} can be proved easily by showing that the set system $\mathcal{T} := R \cup \left\{N_T(v) \colon v\in I\right\}$ define on $U:= V(T)\setminus I$ 
is linear and acyclic. 
We refer the interested reader to the Appendix for more detail. 

The following structural result for trees with $\sigma(T) = \tau_{\mathrm{ind}}(T)$ will be crucial for proofs of Theorems~\ref{THM:Hypergraph-Tree-Exact} and~\ref{THM:GenTuran-Tree-Exact}. 
\begin{proposition}\label{PROP:tree-R-empty}
    Let $T$ be a tree with $\sigma(T) = \tau_{\mathrm{ind}}(T)$ and $I \subseteq V(T)$ be a minimum independent vertex cover. 
    Suppose that $T$ has a critical edge. 
    Then there exists a pendant edge $e^{\ast}\in T$ whose leaf endpoint is contained in $I$ such that $\sigma(T\setminus e^{\ast}) \le \sigma(T)-1$. 
\end{proposition}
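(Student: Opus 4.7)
The plan is to reduce the conclusion to finding some leaf of $T$ that lies in $I$, then establish the existence of such a leaf by induction on $|V(T)|$ using Proposition~\ref{PROP:tree-decomposition}.

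First I would observe that if any leaf $u^{\ast} \in I$ of $T$ exists, with unique neighbor $v^{\ast}$, then the pendant edge $e^{\ast} := u^{\ast} v^{\ast}$ already witnesses the conclusion. Indeed $I \setminus \{u^{\ast}\}$ is an independent vertex cover of $T - u^{\ast}$ of size $|I| - 1 = \sigma(T) - 1$, so $\sigma(T - u^{\ast}) \le \tau_{\mathrm{ind}}(T - u^{\ast}) \le \sigma(T) - 1$; since $u^{\ast}$ becomes isolated in $T \setminus e^{\ast}$ and so contributes nothing to an optimal independent set, $\sigma(T \setminus e^{\ast}) = \sigma(T - u^{\ast}) \le \sigma(T) - 1$. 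Notice this step does not use the critical-edge hypothesis; that hypothesis is needed solely to force $I$ to contain a leaf of $T$.

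For the induction, the base $|V(T)| = 2$ is immediate. In the inductive step, suppose for contradiction that no leaf of $T$ lies in $I$, so that every neighbor of a leaf must belong to $I$ by the vertex-cover property. Applying Proposition~\ref{PROP:tree-decomposition} to the crosscut pair $(I, \emptyset)$---which achieves the maximum $|I|$ among crosscut pairs since $\sigma(T) = \tau_{\mathrm{ind}}(T)$---produces $v^{\ast} \in I$ whose neighborhood contains $k - 1 \ge 1$ leaves $\ell_1, \ldots, \ell_{k-1}$ together with at most one non-leaf vertex $u$; note $v^{\ast}$ itself cannot be a leaf, or we would already be done. Setting $T^{\prime} := T - \{v^{\ast}, \ell_1, \ldots, \ell_{k-1}\}$ and $I^{\prime} := I \setminus \{v^{\ast}\}$, I would show that $I^{\prime}$ is a minimum independent vertex cover of $T^{\prime}$ satisfying $\sigma(T^{\prime}) = \tau_{\mathrm{ind}}(T^{\prime}) = |I^{\prime}|$, since any strictly smaller ind vertex cover of $T^{\prime}$ would extend back to one of $T$ of size $< |I|$ (by adjoining $v^{\ast}$ or the leaves $\ell_i$), contradicting $|I| = \tau_{\mathrm{ind}}(T)$. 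Granted that $T^{\prime}$ also carries a critical edge, the inductive hypothesis supplies a leaf $w^{\ast}$ of $T^{\prime}$ lying in $I^{\prime}$; since such a $w^{\ast}$ is either a leaf of $T$ in $I$ (contradicting our standing assumption) or equals $u$ with $d_{T}(u) = 2$ (forcing $u \in I$, impossible because $u v^{\ast} \in T$ and $v^{\ast} \in I$), the desired contradiction emerges.

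The main obstacle I anticipate is verifying that $T^{\prime}$ inherits a critical edge: when the original critical edge $e_0$ of $T$ lies entirely in $V(T^{\prime})$, it immediately remains critical in $T^{\prime}$, but if $e_0$ is incident to $v^{\ast}$ or some $\ell_i$, one must exhibit a different critical edge of $T^{\prime}$ by examining the local structure at $v^{\ast}$---possibly after re-selecting the pivot from the two alternatives produced by Proposition~\ref{PROP:tree-decomposition}. A secondary subtlety arises when $k \ge 3$ in the minimality step, where the direct extension of a smaller ind vertex cover of $T^{\prime}$ back to $T$ is not immediately strict; this can be handled by a one-leaf-at-a-time reduction $T \mapsto T - \ell_1$ in place of the bulk pruning above, which preserves all the relevant $\sigma$- and $\tau_{\mathrm{ind}}$-invariants while keeping the induction on track.
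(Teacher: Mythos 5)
You correctly reduce the proposition to showing that $I$ contains a leaf of $T$, exactly as the paper does, and your verification that such a leaf yields the desired $e^{\ast}$ is fine. But the paper proves this reduced claim by a direct Hall/K\"{o}nig argument, not by induction: assuming $d_T(v) \ge 2$ for all $v \in I$, it fixes a critical edge $e$, sets $T' := T \setminus e$, and for any $S \subseteq I$ observes that $\sum_{v\in S} d_{T'}(v) \ge 2|S| - 1$ (only one endpoint of $e$ can lie in the independent set $I$), while acyclicity of the forest $T'[S \cup N_{T'}(S)]$ forces $\sum_{v\in S} d_{T'}(v) \le |S| + |N_{T'}(S)| - 1$; together these give $|N_{T'}(S)| \ge |S|$. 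Hall's theorem then yields a matching saturating $I$, K\"{o}nig's theorem gives $\tau(T') = |I| = \sigma(T)$, and $\sigma(T') \ge \tau(T') = \sigma(T)$ contradicts the criticality of $e$. The critical-edge hypothesis is invoked once, at the very end, and nothing has to be re-certified on a smaller tree.

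Your inductive route has two gaps that you flag but do not close, and I do not see how to close them short of re-deriving something like the paper's argument. First, the critical-edge hypothesis must descend to $T'$: after pruning $v^{\ast}$ and its pendant leaves (or a single leaf $\ell_1$), you must show that $T'$ again has a critical edge and again satisfies $\sigma(T') = \tau_{\mathrm{ind}}(T')$. You grant this outright, and the suggested fix of ``re-selecting the pivot'' has no content, because maximality of $I$ forces alternative (i) of Proposition~\ref{PROP:tree-decomposition}, so there is only one pivot structure available. Second, the minimality argument for $I' = I \setminus \{v^{\ast}\}$ is broken as written: adjoining $v^{\ast}$ to a smaller independent cover $J$ of $T'$ produces an independent set only if $u \notin J$, and if $u \in J$ the alternative of adjoining the $k-1$ pruned leaves costs too much whenever $k \ge 3$, so no contradiction is reached. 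The one-leaf-at-a-time variant sidesteps the second issue but leaves the first entirely unresolved, so the proof does not go through.
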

\begin{proof}
    Let $I \subseteq V(T)$ be a minimum independent vertex cover. 
    It suffices to show that $I$ contains a leaf since the edge containing this leaf will be a witness for $e^{\ast}$. 
    
    Suppose to the contrary that $d_{T}(v) \ge 2$ for all $v\in I$. 
    Fix $e\in T$ such that $\sigma(T\setminus e) \le \sigma(T)-1$.
    Let $T' = T\setminus e$. 
    For every set $S \subseteq I$, we have 
    \begin{align*}
        \sum_{v\in S}d_{T'}(v) \ge \left(\sum_{v\in S}d_{T}(v)\right) - 1 \ge 2|S|-1. 
    \end{align*}
    On the other hand, since the induced graph $T'[S\cup N_{T'}(S)]$ is a forest, we have 
    \begin{align*}
        \sum_{v\in S}d_{T'}(v) = |T'[S\cup N_{T'}(S)]| \le |S|+|N_{T'}(S)|-1. 
    \end{align*}
    Combining these two inequalities, we obtain that $|N_{T'}(S)| \ge |S|$. 
    Therefore, by Hall's theorem~\cite{Hall35}, $T'$ contains a matching which saturates $I$, and 
    it follows from the K\"{o}nig's theorem (see e.g.~\cite{Konig31,BM76}) that $\tau(T') = |I| = \sigma(T)$. 
    However, this implies that $\sigma(T') \ge \tau(T') = \sigma(T)$, contradicting the definition of $e$. 
\end{proof}
Using Proposition~\ref{PROP:tree-decomposition}, we obtain the following embedding result for the expansion of trees. 
\begin{proposition}\label{PROP:embed-T3-two-t-sets}
    Let $T$ be a tree with $k$ vertices, and $t := \sigma(T)-1$. 
    Let $\mathcal{H}$ be a $3$-graph on $V$, $S_1, S_2 \subseteq V$ are two distinct $t$-subsets, $V_1, V_2 \subseteq V\setminus(S_1\cup S_2)$ are two sets with nonempty intersection. 
    Suppose that there are two graphs $G_1$ and $G_2$ on $V_1$ and $V_2$ respectively such that for $i\in \{1,2\}$
    \begin{enumerate}[label=(\roman*)]
        \item $d_{G_i}(v) \ge 3k$ for all $v\in V_i$, and 
        \item $G_{i} \subseteq L_{\mathcal{H}}(v)$ for all $v\in S_i$.
    \end{enumerate}
    Then $T^3 \subseteq \mathcal{H}$. 
\end{proposition}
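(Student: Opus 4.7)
The plan is to construct an injective map $\phi\colon V(T^3)\to V(\mathcal{H})$ realising a copy of $T^3$ in $\mathcal{H}$. The idea is to use Proposition~\ref{PROP:tree-decomposition}(i) to isolate a single ``bridge'' vertex of $T$ that must be placed in $V_1\cap V_2$, and to use the asymmetry $S_1\ne S_2$ to supply exactly one extra slot in $S_1\cup S_2$. Throughout, all greedy choices will succeed because $d_{G_i}(v)\ge 3k$ while the total number of vertices of $V(\mathcal{H})$ ever appearing in the embedding is at most $2k-1$.

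I first dispose of the trivial case $\sigma(T)=1$: then $T$ is a star (including $P_2$), $t=0$, and $S_1=S_2=\emptyset$ contradicts $S_1\ne S_2$. So assume $T$ is not a star. Choose a crosscut pair $(I,R)$ with $|I|+|R|=t+1$ and $|I|$ maximum; Proposition~\ref{PROP:tree-decomposition}(i) then yields $v^\star\in I$ all of whose $T$-neighbours are leaves except one, call it $u'$. Let $u_1,\dots,u_s$ denote the leaf neighbours of $v^\star$. Fix $s^\star\in S_2\setminus S_1$ (nonempty since $|S_1|=|S_2|=t$ and $S_1\ne S_2$) and $v_0\in V_1\cap V_2$.

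Next, I would define $\phi$ on $V(T)$ as follows: set $\phi(v^\star):=s^\star$ and $\phi(u'):=v_0$; inject $I\setminus\{v^\star\}$ into $S_1$ (possible since $|I|-1\le t=|S_1|$); embed each $u_i$ into a fresh vertex of $V_2$; embed the $R$-component containing $u'$ (if any) by BFS starting from $u'\mapsto v_0$, at each step picking an unused $G_1$-neighbour of the parent's image; and embed the remaining components of $R$ together with any isolated-in-$R$ vertices of $V(R)\setminus\{u',u_1,\dots,u_s\}$ greedily inside $V_1$ along $G_1$-edges. Then I would assign the expansion vertices $\phi(w_e)$: for each edge $e$ of $T$ incident to $v^\star$, take $\phi(w_e)$ to be an unused $G_2$-neighbour of the other endpoint's image, so the triple lies in $\mathcal{H}$ via $G_2\subseteq L_{\mathcal{H}}(s^\star)$; for each edge $e=uv$ with $v\in I\setminus\{v^\star\}$, take $\phi(w_e)$ as an unused $G_1$-neighbour of $\phi(u)\in V_1$, so the triple lies in $\mathcal{H}$ via $G_1\subseteq L_{\mathcal{H}}(\phi(v))$; and for each $e\in R$, assign $\phi(w_e)$ bijectively to the set $S_1\setminus\phi(I\setminus\{v^\star\})$ of size exactly $t-(|I|-1)=|R|$, giving a triple in $\mathcal{H}$ via $G_1\subseteq L_{\mathcal{H}}(\phi(w_e))$.

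The main obstacle I anticipate is the structural claim that only $u'$ needs to lie in $V_1\cap V_2$. Establishing this relies crucially on Proposition~\ref{PROP:tree-decomposition}(i): the leaf neighbours $u_i$ of $v^\star$ each have only the single $T$-edge $v^\star u_i$ and so impose only a $V_2$ constraint; every other vertex of $V(R)$ has its $I$-neighbours entirely in $I\setminus\{v^\star\}\subseteq S_1$ and hence only needs to lie in $V_1$. Only $u'$, whose $T$-neighbourhood can include both $v^\star$ (demanding $V_2$) and vertices of $I\setminus\{v^\star\}$ or $V(R)$ (demanding $V_1$), is forced into the intersection, so the hypothesis $V_1\cap V_2\ne\emptyset$ suffices. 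The counting then balances exactly: by placing $\phi(v^\star)$ into $S_2$ rather than $S_1$, we free up the one slot of $S_1$ needed to accommodate the extra witness demanded by the $|R|$ edges of $R$.
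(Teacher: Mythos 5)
Your proposal is correct and follows essentially the same route as the paper: both proofs pick a maximum-$I$ crosscut pair, invoke Proposition~\ref{PROP:tree-decomposition}(i) to isolate the special vertex $v^\star$ and its non-leaf neighbour $u'$, place $v^\star$ in $S_2\setminus S_1$ and $u'$ in $V_1\cap V_2$, put the leaves of $v^\star$ in $V_2$, embed the rest of $I$ into $S_1$ and the rest of $J$ greedily into $V_1$ via $G_1$, and assign the $R$-edge expansion vertices bijectively to $S_1\setminus\phi(I\setminus\{v^\star\})$. The only cosmetic difference is that the paper finishes by invoking Fact~\ref{FACT:partial-embedding} (high codegree of $S_1\times V_1$ and $S_2\times V_2$ pairs) to place the remaining expansion vertices greedily, whereas you name them explicitly as $G_1$- or $G_2$-neighbours, which amounts to the same counting.
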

    \textbf{Remark.}
    We would like to remind the reader that for trees satisfying $\sigma(T) = \tau_{\mathrm{ind}}(T)$, assumption~(i) can be replaced by $d_{G_i}(v) \ge 1$. 
    We state this general form as it might be useful for future research.
%

\begin{figure}[htbp]
\centering
\tikzset{every picture/.style={line width=0.85pt}} 

\begin{tikzpicture}[x=0.75pt,y=0.75pt,yscale=-1,xscale=1,scale=0.9]

\draw   (180.42,72) .. controls (193.43,71.91) and (204.18,98.92) .. (204.41,132.33) .. controls (204.65,165.74) and (194.3,192.91) .. (181.29,193) .. controls (168.28,193.09) and (157.54,166.08) .. (157.3,132.67) .. controls (157.06,99.25) and (167.41,72.09) .. (180.42,72) -- cycle ;
\draw   (264.76,48.49) .. controls (281.87,48.37) and (295.99,83.87) .. (296.3,127.78) .. controls (296.62,171.69) and (283.01,207.38) .. (265.9,207.51) .. controls (248.8,207.63) and (234.68,172.13) .. (234.36,128.22) .. controls (234.05,84.31) and (247.66,48.62) .. (264.76,48.49) -- cycle ;
\draw  [line width=1pt,color=sqsqsq]  (180,96) -- (264,70) ;
\draw  [line width=1pt,color=sqsqsq]  (180,96) -- (264,86) ;
\draw  [line width=1pt,color=sqsqsq]  (180,96) -- (264,100) ;
\draw  [line width=1pt,color=sqsqsq]  (180,136) -- (264,115) ;
\draw  [line width=1pt,color=sqsqsq]  (180,136) -- (264,135) ;
\draw [line width=1pt,color=sqsqsq]   (180,136) -- (264,156) ;
\draw  [line width=1pt,color=sqsqsq]  (180,172) -- (264,172) ;
\draw [line width=1pt,color=sqsqsq]   (180,172) -- (264,156) ;
\draw  [line width=1pt,color=sqsqsq]  (180,172) -- (264,190) ;
\draw [fill=uuuuuu] (180,96) circle (2pt);
\draw [fill=uuuuuu] (180,136) circle (2pt);
\draw [fill=uuuuuu] (180,172) circle (2pt);
\draw [fill=uuuuuu] (264,70) circle (2pt);
\draw [fill=uuuuuu] (264,86) circle (2pt);
\draw [fill=uuuuuu] (264,100) circle (2pt);
\draw [fill=uuuuuu] (264,115) circle (2pt);
\draw [fill=uuuuuu] (264,135) circle (2pt);
\draw [fill=uuuuuu] (264,156) circle (2pt);
\draw [fill=uuuuuu] (264,172) circle (2pt);
\draw [fill=uuuuuu] (264,190) circle (2pt);
\draw   (384.44,136.01) .. controls (392.26,135.95) and (398.7,149.56) .. (398.82,166.41) .. controls (398.94,183.25) and (392.7,196.95) .. (384.87,197.01) .. controls (377.05,197.07) and (370.61,183.46) .. (370.49,166.61) .. controls (370.37,149.76) and (376.61,136.06) .. (384.44,136.01) -- cycle ;
\draw   (383.95,67.99) .. controls (391.78,67.93) and (398.23,83.11) .. (398.36,101.9) .. controls (398.5,120.68) and (392.26,135.95) .. (384.44,136.01) .. controls (376.61,136.06) and (370.16,120.88) .. (370.03,102.1) .. controls (369.89,83.32) and (376.13,68.04) .. (383.95,67.99) -- cycle ;
\draw  [line width=1pt,color=sqsqsq]  (264,100) .. controls (284.33,112) and (281.33,141) .. (264,156) ;
\draw [line width=1pt,color=sqsqsq]   (382,92) -- (467,48) ;
\draw [line width=1pt,color=sqsqsq]   (382,92) -- (467,65) ;
\draw  [line width=1pt,color=sqsqsq]  (382,92) -- (467,80) ;
\draw [line width=1pt,color=sqsqsq]   (382,116) -- (467,99) ;
\draw  [line width=1pt,color=sqsqsq]  (382,116) -- (467,119) ;
\draw [line width=1pt,color=sqsqsq]   (382,116) -- (467,149) ;
\draw [line width=1pt,color=sqsqsq]   (467,80) .. controls (485.33,92) and (487.33,134) .. (467,149) ;
\draw [line width=1pt,color=sqsqsq]   (382,161) -- (467,187) ;
\draw [line width=1pt,color=sqsqsq]   (382,161) -- (467,149) ;
\draw [line width=1pt,color=sqsqsq]   (382,161) -- (467,213) ;
\draw [fill=uuuuuu] (382,92) circle (2pt);
\draw [fill=uuuuuu] (382,116) circle (2pt);
\draw [fill=uuuuuu] (382,161) circle (2pt);
\draw [fill=uuuuuu] (467,48) circle (2pt);
\draw [fill=uuuuuu] (467,65) circle (2pt);
\draw [fill=uuuuuu] (467,80) circle (2pt);
\draw [fill=uuuuuu] (467,99) circle (2pt);
\draw [fill=uuuuuu] (467,119) circle (2pt);
\draw [fill=uuuuuu] (467,149) circle (2pt);
\draw [fill=uuuuuu] (467,187) circle (2pt);
\draw [fill=uuuuuu] (467,213) circle (2pt);
\draw   (455,38.47) .. controls (455,34.34) and (458.34,31) .. (462.47,31) -- (484.87,31) .. controls (488.99,31) and (492.33,34.34) .. (492.33,38.47) -- (492.33,158.53) .. controls (492.33,162.66) and (488.99,166) .. (484.87,166) -- (462.47,166) .. controls (458.34,166) and (455,162.66) .. (455,158.53) -- cycle ;
\draw   (455,140.47) .. controls (455,136.34) and (458.34,133) .. (462.47,133) -- (484.87,133) .. controls (488.99,133) and (492.33,136.34) .. (492.33,140.47) -- (492.33,232.53) .. controls (492.33,236.66) and (488.99,240) .. (484.87,240) -- (462.47,240) .. controls (458.34,240) and (455,236.66) .. (455,232.53) -- cycle ;

\draw (175,205) node [anchor=north west][inner sep=0.75pt]   [align=left] {$I$};
\draw (170,175) node [anchor=north west][inner sep=0.75pt]   [align=left] {$v_{\ast}$};
\draw (272,155) node [anchor=north west][inner sep=0.75pt]   [align=left] {$u_{\ast}$};
\draw (474,143) node [anchor=north west][inner sep=0.75pt]   [align=left] {$u_{\ast}'$};
\draw (377,171) node [anchor=north west][inner sep=0.75pt]   [align=left] {$v_{\ast}'$};
\draw (261,217) node [anchor=north west][inner sep=0.75pt]   [align=left] {$J$};
\draw (350,94) node [anchor=north west][inner sep=0.75pt]   [align=left] {$S_1$};
\draw (350,157) node [anchor=north west][inner sep=0.75pt]   [align=left] {$S_2$};
\draw (506,90) node [anchor=north west][inner sep=0.75pt]   [align=left] {$V_1$};
\draw (506,181) node [anchor=north west][inner sep=0.75pt]   [align=left] {$V_2$};
\end{tikzpicture}
\caption{Embedding a tree.}
\label{fig:PROP-embed-tree}
\end{figure}
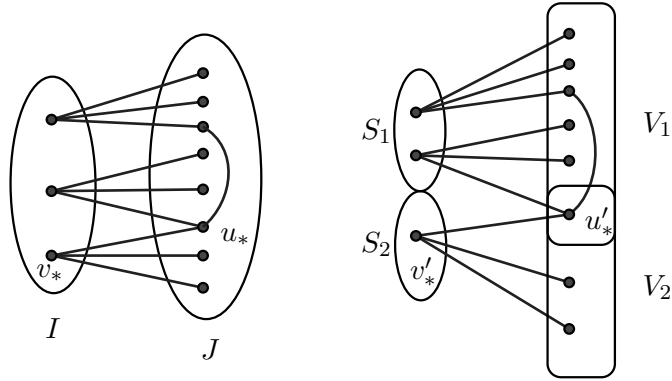

\begin{proof}[Proof of Proposition~\ref{PROP:embed-T3-two-t-sets}]
    Let $(I,R)$ be a crosscut pair of $T$ such that $|I|$ is maximized.
    By Proposition~\ref{PROP:tree-decomposition}, there exists a vertex $v_{\ast} \in I$ such that all but one vertex $u_{\ast} \in N_T(v_{\ast})$ are leaves. 
    Let $N'(v_{\ast}) := N_{T}(v_{\ast}) \setminus\{u_{\ast}\}$, $T' := T-\left(\{v_{\ast}\} \cup N'(v_{\ast})\right)$, $I':= I\setminus \{v_{\ast}\}$, $J:= V(T)\setminus I$, and $J':= J\setminus N_{T}(v_{\ast})$. 
    Observe that $|I'|+|R| = |I|+|R|-1 = t$. 
    
    Fix $v_{\ast}' \in S_2 \setminus S_1$, $u_{\ast}' \in V_1 \cap V_2$, and a $\left(d_{T}(v_{\ast})-1\right)$-set $N' \subseteq V_2 \setminus \{u_{\ast}'\}$. 
    Let $V_1':= V_1\setminus N'$. 
    It follows from the assumptions that the induced bipartite graph of $\partial\mathcal{H}$ on $S_1 \cup V_1$ is complete. 
    Combined with $\delta(G_1[V_1']) \ge \delta(G_1)-d_{T}(v_{\ast})-1 \ge 3k-k\ge 2k$ and a simple greedy argument, it is easy to see that there exists an embedding
    $\phi \colon V(T') \to S_1 \cup V_1'$ such that $\phi(I')  \subseteq S_1$, $\phi\left(T'-I'\right) \subseteq G_1[V_1']$, and moreover, $\phi\left(u_{\ast}\right) = u_{\ast}'$. 
    We can extend $\phi$ by letting $\phi(v_{\ast}) = v_{\ast}'$ and $\phi\left(N'(v_{\ast})\right) = N'$ to obtain an embedding of $T$ into $\partial\mathcal{H}$ (see Figure~\ref{fig:PROP-embed-tree}). 
    Observe that for every edge $e\in R$ we have $\phi(e) \subseteq G_1[V_1']$, and $|S_1| - |\phi(I')| = t-|I'| = |R|$. 
    Thus, assigning each vertex in $S_1\setminus \phi(I')$ to an edge $e\in R$ we obtain an embedding of $R^3$ into $\mathcal{H}$. 
    In addition, notice that every pair in $S_1 \times V_1$ and $S_2 \times V_2$ has codegree at least $\min\{\delta(G_1),\ \delta(G_2)\} \ge 3k$, so it follows from Fact~\ref{FACT:partial-embedding} that $T^3 \subseteq \mathcal{H}$. 
\end{proof}
The following simple and crude upper bound for $\mathrm{ex}(n,K_3,F^{\triangle})$ will be crucial for the proofs of generalzied Tur\'{a}n theorems. 
Its proof is a standard application of the K\"{o}vari--S\'{o}s--Tur\'{a}n Theorem~\cite{KST54}, the Triangle Removal Lemma (see e.g.~\cite{RS78,AFKS00,Fox11}), and Simonovits' stability theorem~\cite{S68}. 
We refer the interested reader to the Appendix for more detail. 
%
\begin{proposition}\label{PROP:shadow-size}
    Let $F$ be a bipartite graph or an odd cycle. 
    For every $\delta>0$ there exists $\varepsilon>0$ such that for sufficiently large $n$, every $n$-vertex $F^{\triangle}$-free graph $G$ with at least $n^2/4 - \varepsilon n^2$ edges can be made bipartite by removing at most $\delta n^2$ edges.
    In particular, every $n$-vertex $F^{\triangle}$-free graph $G$ satisfies  $|G| \le n^2/4 + o(n^2)$. 
\end{proposition}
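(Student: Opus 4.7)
The plan is to combine three classical tools, as suggested by the remark: the K\H{o}v\'ari--S\'os--Tur\'an Theorem (to bound Tur\'an numbers of tripartite $3$-graphs), the Triangle Removal Lemma, and Simonovits' stability theorem for triangle-free graphs. The key reduction is to first prove that every $F^{\triangle}$-free $n$-vertex graph $G$ has $N(K_3,G)=o(n^3)$; once this is in hand, the bipartite approximation follows from routine applications of the other two tools.

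To bound the triangle count, I would use Fact~\ref{FACT:shadow-Turan}, which asserts that the triangle $3$-graph $\mathcal{K}_G$ is $F^3$-free, so $N(K_3,G)=|\mathcal{K}_G|\le\mathrm{ex}(n,F^3)$. When $\chi(F)\le 3$ (which covers both bipartite $F$ and odd cycles), the $3$-graph $F^3$ is tripartite: properly $3$-colour $V(F)$, and colour each added vertex $w_{uv}$ with the unique colour distinct from those of $u$ and $v$. Hence $F^3$ embeds into a complete tripartite $3$-graph $K^{(3)}_{s,s,s}$ with $s=|V(F)|$, and Erd\H{o}s's theorem on partite hypergraphs --- proved by iterated application of K\H{o}v\'ari--S\'os--Tur\'an~\cite{KST54} --- gives $\mathrm{ex}(n,F^3)=O(n^{3-c})$ for some $c=c(F)>0$, so $N(K_3,G)=o(n^3)$.

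Given $\delta>0$, set $\eta:=\delta/2$ and invoke the Triangle Removal Lemma~\cite{RS78}: there exists $\alpha=\alpha(\eta)>0$ such that every $n$-vertex graph with at most $\alpha n^3$ triangles admits a triangle-free subgraph obtained by deleting at most $\eta n^2$ edges. For $n$ large, $N(K_3,G)<\alpha n^3$ by the previous step, so $G$ contains a triangle-free subgraph $G'\subseteq G$ with $|G'|\ge|G|-\eta n^2\ge n^2/4-(\varepsilon+\eta)n^2$. Then, by Simonovits' stability theorem~\cite{S68} applied to triangle-free graphs near the Mantel bound, provided $\varepsilon$ is chosen sufficiently small in terms of $\delta$, we can make $G'$ bipartite by removing at most $\eta n^2$ further edges. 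Combining the two deletion steps shows that $G$ itself becomes bipartite after removing at most $2\eta n^2=\delta n^2$ edges.

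The ``in particular'' clause is then an easy consequence: if $G$ were $F^{\triangle}$-free with $|G|\ge n^2/4+\gamma n^2$ for some fixed $\gamma>0$ and arbitrarily large $n$, then applying the main statement with $\delta=\gamma/2$ would produce a bipartite graph on $n$ vertices with more than $n^2/4$ edges, contradicting $\lfloor n^2/4\rfloor$. I do not expect any serious conceptual obstacle here; the main technical point is the order of parameter choices (fix $\delta$ first, then $\eta$ and the triangle-removal threshold $\alpha$, then $\varepsilon$ small enough that the Simonovits stability window absorbs $\varepsilon+\eta$).
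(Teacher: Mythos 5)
Your proposal is correct and follows the same three-step outline as the paper's Appendix proof: first establish $N(K_3,G)=o(n^3)$, then invoke the Triangle Removal Lemma, then Simonovits' stability theorem, with the parameter choices made in that order. The one place where you diverge is in how the $o(n^3)$ triangle bound is obtained. You observe directly that when $\chi(F)\le 3$ (covering bipartite $F$ and odd cycles alike) the expansion $F^3$ is a $3$-partite $3$-graph --- properly $3$-colour $V(F)$ and give each added vertex $w_{uv}$ the unique colour distinct from $c(u),c(v)$ --- so $F^3\subseteq K^{(3)}_{s,s,s}$ and Erd\H{o}s's hypergraph analogue of K\H{o}v\'ari--S\'os--Tur\'an gives $\mathrm{ex}(n,F^3)=O(n^{3-c})$ in one stroke. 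The paper instead cites a reduction of the form $\mathrm{ex}(n,F^3)\le n\cdot\mathrm{ex}(n,F)+O(n^2)$ from~\cite{KMV15c} together with K\H{o}v\'ari--S\'os--Tur\'an (for bipartite $F$) and Simonovits' edge-critical theorem (for odd cycles). Your observation is cleaner and unifies the two cases, whereas the paper's phrasing is actually a little awkward for the odd-cycle case (where $\mathrm{ex}(n,F)=\Theta(n^2)$, so the displayed chain of inequalities doesn't literally give $n^{3-\alpha}$ and one must fall back on the direct $O(n^2)$ bound for linear cycles from~\cite{KMV15c}). Both routes reach the same endpoint, and the remainder of your argument, including the deduction of the ``in particular'' clause from Mantel's bound, matches the paper.
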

\subsection{Expansion of trees}
For convenience, we use $\mathcal{T}_k$ to denote the family of all trees with $k$ edges. 
The following lemma can be proved using a simple greedy argument. 
\begin{lemma}[{\cite[Lemma~3.2]{KMV15a}}]\label{LEMMA:KMVa-3k-full-subgraph-tree}
    Let $k \ge 3$ be an integer. 
    Every $3k$-full nonempty $3$-graph $\mathcal{H}$ contains $T_{k}^{3}$ for all $T_k \in \mathcal{T}_k$. 
\end{lemma}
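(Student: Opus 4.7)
The plan is to convert the $3k$-full hypothesis into a minimum-degree condition on the shadow graph, greedily embed $T_k$ into $\partial\mathcal{H}$, and then lift the embedding to $T_k^3 \subseteq \mathcal{H}$ via Fact~\ref{FACT:partial-embedding}.

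First, I would argue that every vertex $u \in V(\partial\mathcal{H})$ has at least $3k$ shadow-neighbors. Indeed, $u$ lies in some shadow edge $uv$, and by the $3k$-full property $d_{\mathcal{H}}(uv) \ge 3k$, so there are at least $3k$ distinct vertices $w$ with $uvw \in \mathcal{H}$; each such $w$ witnesses the shadow edge $uw$, giving $d_{\partial\mathcal{H}}(u) \ge 3k$. Since $\mathcal{H}$ is nonempty, $V(\partial\mathcal{H}) \ne \emptyset$.

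Next, I would greedily embed $T_k$ into $\partial\mathcal{H}$ using a standard BFS/DFS ordering $x_0, x_1, \ldots, x_k$ of $V(T_k)$ in which each $x_i$ (for $i \ge 1$) has a unique neighbor $x_{p(i)}$ among $\{x_0, \ldots, x_{i-1}\}$. Map $x_0$ to an arbitrary vertex of $V(\partial\mathcal{H})$; at step $i \ge 1$, the image $\phi(x_{p(i)})$ has at least $3k$ shadow-neighbors, and at most $i \le k$ of them are already in use, leaving at least $2k \ge 1$ valid targets for $\phi(x_i)$. This produces an embedded copy of $T_k$ in $\partial\mathcal{H}$ on $k+1$ vertices.

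Finally, to lift this shadow embedding to a copy of $T_k^3$ in $\mathcal{H}$, I would apply Fact~\ref{FACT:partial-embedding} with $m = k$ and $t = 1$. Concretely, pick any edge $e_1$ of the embedded tree and use $d_{\mathcal{H}}(e_1) \ge 3k \ge k + 2$ to choose a vertex $w_1$ outside the $k+1$ embedded tree vertices with $e_1 \cup \{w_1\} \in \mathcal{H}$; hypothesis (ii) of the fact then requires only $d_{\mathcal{H}}(e_j) \ge 3k$ for the remaining $k-1$ tree edges, which is precisely the $3k$-full hypothesis. No real obstacle arises: the entire proof is a routine translation of the $3k$-full condition into a shadow minimum-degree bound, followed by two straightforward greedy steps with numerical slack of order $3k$ against deletion sets of size $O(k)$.
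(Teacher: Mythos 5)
Your argument is correct, and it is precisely the ``simple greedy argument'' the paper alludes to: the paper does not reprove the lemma (it cites it as Lemma~3.2 of Kostochka--Mubayi--Verstra\"{e}te~\cite{KMV15a}), and your two-stage greedy --- first deduce $\delta(\partial\mathcal{H}) \ge 3k$ from $3k$-fullness and embed $T_k$ in the shadow via a parent-first ordering, then lift to $T_k^3$ with Fact~\ref{FACT:partial-embedding} using $t=1$ --- is exactly the intended route, with all the numerical slack ($3k$ against deletion sets of size at most $k+1$) checked correctly.
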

The following bound for the number of edges in an $n$-vertex $T_k^{3}$-free $3$-graph with bounded codegree can be proved easily using the argument for~{\cite[Proposition~3.8]{KMV15a}}.
We refer the interested reader to the Appendix for more detail. 
%
\begin{lemma}\label{LEMMA:KMVa-lambda-sparse-tree}
    Fix integers $k \ge 3$ and $C \ge 1$. 
    Let  $T_k \in \mathcal{T}_k$. 
    If $\mathcal{H}$ is an $n$-vertex $T_k^{3}$-free $3$-graph with $\Delta_2(\mathcal{H}) \le C$, then $|\mathcal{H}| \le 6Ck n= o(n^2)$.
\end{lemma}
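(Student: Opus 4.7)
The plan is to prove the bound by induction on $n$, with trivial base case $n \le 2$. The inductive step rests on the following sub-claim: any $3$-graph $\mathcal{H}$ with $\Delta_2(\mathcal{H}) \le C$ and minimum degree $\delta(\mathcal{H}) > 6Ck$ must contain a copy of $T_k^3$. Granted the sub-claim, every $T_k^3$-free such $\mathcal{H}$ satisfies $\delta(\mathcal{H}) \le 6Ck$, so deleting a minimum-degree vertex and applying the inductive hypothesis to the resulting $(n-1)$-vertex $3$-graph yields $|\mathcal{H}| \le 6Ck + 6Ck(n-1) = 6Ckn$, closing the induction.

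To prove the sub-claim I will use a greedy tree embedding. Root $T_k$ at a vertex $y_0$ and order its vertices $y_0, y_1, \ldots, y_k$ so that each $y_i$ $(i \ge 1)$ has a unique parent $p(y_i) \in \{y_0, \ldots, y_{i-1}\}$. Starting from an arbitrary $v_0 \in V(\mathcal{H})$ as the image of $y_0$, for $i = 1, \ldots, k$ I will choose the image $v_i$ of $y_i$ and the extension vertex $w_i$ realising the tree edge $p(y_i)y_i$ simultaneously, working inside the link $L_\mathcal{H}(v_p)$, where $v_p$ denotes the already-placed image of $p(y_i)$. Specifically, I pick any edge of this link whose two endpoints both lie outside the used set $U_{i-1} := \{v_0, \ldots, v_{i-1}, w_1, \ldots, w_{i-1}\}$ (of size $2i-1$), and take $v_i, w_i$ to be its endpoints; by construction $\{v_p, v_i, w_i\} \in \mathcal{H}$. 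Since $|L_\mathcal{H}(v_p)| = d_\mathcal{H}(v_p) \ge 6Ck+1$ while $\Delta(L_\mathcal{H}(v_p)) \le \Delta_2(\mathcal{H}) \le C$, at most $(2k-2)C$ edges of $L_\mathcal{H}(v_p)$ can touch $U_{i-1}\setminus\{v_p\}$, leaving at least $(4k+2)C+1>0$ valid edges to choose from. After $k$ rounds the vertices $v_0, \ldots, v_k, w_1, \ldots, w_k$ assemble into a copy of $T_k^3$ inside $\mathcal{H}$, contradicting the assumption.

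The only step that requires any care is the bookkeeping in the greedy selection, namely verifying that at each stage the edges of $L_\mathcal{H}(v_p)$ incident to already-used vertices do not exhaust the entire link graph. The comfortable surplus of $(4k+2)C+1$ good edges settles this uniformly for all $i \le k$, so the main obstacle is purely combinatorial and routine. The essential structural idea is the observation that a single edge $\{v_i, w_i\}$ of $L_\mathcal{H}(v_p)$ can be repurposed to supply both the next tree-image and the corresponding extension vertex in one shot, which is what makes the very weak codegree hypothesis (merely codegree $\ge 1$ on each shadow edge used) sufficient for the embedding once the minimum degree of $\mathcal{H}$ itself is sufficiently large.
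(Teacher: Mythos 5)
Your proof is correct, and it takes a genuinely different route from the one in the paper. The paper (Appendix~C) first applies Proposition~\ref{APPENDIX:PROP:large-linear-subgraph} to extract a \emph{linear} subgraph $\mathcal{H}' \subseteq \mathcal{H}$ of size $\ge |\mathcal{H}|/(3C)$, then strips low-degree vertices to reach minimum degree $\ge 2k$, and finally embeds $T_k^3$ greedily into this linear subgraph; the contradiction with $|\mathcal{H}| \ge 6Ckn$ is reached in one shot. You instead run an induction on $n$ whose engine is the sub-claim that $\delta(\mathcal{H}) > 6Ck$ forces a copy of $T_k^3$, and you carry out the greedy embedding directly in $\mathcal{H}$ rather than in a linear subgraph, using $\Delta_2(\mathcal{H}) \le C$ to bound the maximum degree of each link graph $L_{\mathcal{H}}(v_p)$ by $C$. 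The bookkeeping is right: $|U_{i-1}| = 2i-1$, $v_p$ is never an endpoint of a link edge, so at most $(2k-2)C$ link edges are blocked, and $d_{\mathcal{H}}(v_p) \ge 6Ck+1$ leaves $(4k+2)C+1 > 0$ usable edges at every step. The inductive closing step ($|\mathcal{H}| \le 6Ck(n-1) + 6Ck$) is also correct. What your approach buys: it is self-contained, avoiding the auxiliary linear-subgraph lemma and the vertex-deletion cleanup, and the key observation — that a single link edge $\{v_i, w_i\}$ simultaneously supplies the next tree image and the expansion vertex — makes the argument both shorter and conceptually cleaner. What the paper's route buys: by reducing to a linear subgraph (where all codegrees equal $1$), the subsequent greedy embedding is simpler to state and the same machinery (Proposition~\ref{APPENDIX:PROP:large-linear-subgraph}) is reusable in other codegree-based arguments elsewhere in the paper.
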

Though not specifically stated in~\cite{KMV17b}, the following useful lemma can be proved using the argument for~{\cite[Theorem~1.1]{KMV17b}} (see pages~467 and 468 and, in particular, Claim~4 in ~\cite{KMV17b}).
\begin{lemma}[{\cite{KMV17b}}]\label{LEMMA:l+1-shadows}
        Let $T$ be a tree with $\sigma(T) = \tau_{\mathrm{ind}}(T)$. 
        For every constant $\varepsilon >0$ there exists $n_0$ such that the following holds for all $3$-graphs $\mathcal{H}$ with $n \ge n_0$ vertices. 
      If there exists $E\subseteq \partial \mathcal{H}$ with $|E| \ge \varepsilon n^{2}$ and $d_{\mathcal{H}}(e) \ge \sigma(T)$ for all $e\in E$,  then $T^{3} \subseteq \mathcal{H}$. 
\end{lemma}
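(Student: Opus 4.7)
The plan has three steps: reduce to a cleaner shadow, embed $T$ as a shadow copy, then extend each edge to a distinct expansion vertex. The hypothesis $\sigma(T) = \tau_{\mathrm{ind}}(T)$ will let us organize the extension step star-by-star around a minimum independent vertex cover.

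\textbf{Step 1 (cleaning).} Iteratively delete from $E$ any vertex of $E$-degree below $\varepsilon n/3$; this removes at most $\varepsilon n^{2}/2$ edges, leaving $E_{1}\subseteq E$ with $|E_{1}|\ge \varepsilon n^{2}/2$ and $\delta(E_{1})\ge \varepsilon n/3$, where every edge of $E_{1}$ retains codegree at least $t:=\sigma(T)$ in $\mathcal{H}$. A double count $\sum_{e\in E_{1}}d_{\mathcal{H}}(e)\le 3|\mathcal{H}|$ gives $|\mathcal{H}|\ge t|E_{1}|/3=\Omega(n^{2})$, so the set $U:=\{u\in V(\mathcal{H}) : d_{\mathcal{H}}(u)\ge t\varepsilon n/12\}$ has size $\Omega(n)$.

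\textbf{Step 2 (shadow embedding).} Using the hypothesis, fix a minimum independent vertex cover $I\subseteq V(T)$ with $|I|=\tau_{\mathrm{ind}}(T)=t$ and set $J:=V(T)\setminus I$; every edge of $T$ then runs between $I$ and $J$. Greedily embed $T$ into $E_{1}$ in BFS order, constraining the image of each $v\in I$ to lie in $U$ and the image of each $u\in J$ to be a high-$E_{1}$-degree vertex neighbouring all previously-placed images from $\phi(N_{T}(u)\cap I)$. Because $\delta(E_{1})$, $|U|$, and the typical $E_{1}$-degree into $U$ all vastly exceed $|V(T)|$ for $n$ large, this greedy embedding succeeds, producing an injection $\phi\colon V(T)\to V(\mathcal{H})$ with $\phi(I)\subseteq U$ and $\phi(e)\in E_{1}\subseteq\partial\mathcal{H}$ for every $e\in T$.

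\textbf{Step 3 (expansion vertices).} For every $e\in T$, pick a distinct $w_{e}\in N_{\mathcal{H}}(\phi(e))\setminus\phi(V(T))$. Partition $E(T)$ into the $t$ stars $\{S_{v}:v\in I\}$, where $S_{v}$ is the set of edges of $T$ incident to $v$. For each $v\in I$, because $\phi(v)\in U$, the link $L_{\mathcal{H}}(\phi(v))$ has $\Omega(n)$ edges, and for each $u\in N_{T}(v)$ the vertex $\phi(u)$ has degree $d_{\mathcal{H}}(\phi(v)\phi(u))\ge t$ in this link. Process the stars one by one: at the $v$-step find inside $L_{\mathcal{H}}(\phi(v))$ a matching of size $d_{T}(v)$ saturating $\phi(N_{T}(v))$ and disjoint from the at-most $O(|V(T)|)$ previously used vertices. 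Fact~\ref{FACT:partial-embedding} then certifies that the resulting configuration is a copy of $T^{3}$ in $\mathcal{H}$.

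The main obstacle lies in Step 3: while each link $L_{\mathcal{H}}(\phi(v))$ is globally large, the specific pinned vertices $\phi(u)$ for $u\in N_{T}(v)$ are only guaranteed link-degree $t$, which may be less than $d_{T}(v)$, so Hall's condition inside a star is not immediate. To overcome this one refines $\phi$ in Step 2: among the $\Omega(n)$ admissible images of each $u\in J$, select those that simultaneously lie in the high-degree part of $L_{\mathcal{H}}(\phi(v'))$ for \emph{every} cover-neighbour $v'$ of $u$ in $T$. Such a joint refinement is possible via a delta-system / iterated-pruning argument applied to the $t$ dense links $L_{\mathcal{H}}(\phi(v))$, since there are only $t$ of them and each is of linear size. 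With this careful choice of $\phi$, Hall's condition holds within each star and across stars, the sequential matching never exhausts its candidates, and the resulting distinct expansion vertices complete the required copy of $T^{3}$ in $\mathcal{H}$.
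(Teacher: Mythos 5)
The high-level plan is plausible (clean $E$, place a shadow copy of $T$, then extend), and you correctly identify the crux of the argument — producing distinct expansion vertices when the codegree guarantee is only $\sigma(T)$ — but the patch you describe for that crux does not actually close the gap.

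Concretely: you acknowledge that for a star $S_v$ around $v\in I$, each pinned image $\phi(u)$ ($u\in N_T(v)$) has only degree $\ge t$ in $L_{\mathcal{H}}(\phi(v))$, so Hall's condition can fail when $d_T(v)>t$, and you propose to repair this by choosing $\phi(u)$ from the ``high-degree part'' of $L_{\mathcal{H}}(\phi(v'))$ simultaneously for all cover-neighbours $v'$ of $u$. This requires the set $D_{v'}:=\{x: d_{\mathcal{H}}(\phi(v')x)\ge 3k\}$ to be large for each $v'\in I$ and to contain enough admissible images. But under the stated hypotheses there is no lower bound on $|D_{v'}|$ at all: the link $L_{\mathcal{H}}(\phi(v'))$ is only guaranteed to have $\Omega(n)$ edges, and a graph with $\Omega(n)$ edges can easily have every vertex of degree below $3k$, making $D_{v'}=\emptyset$. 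Even worse, the candidates $\phi(u)$ are constrained to lie in $N_{E_1}(\phi(v'))$, and each such candidate is only promised degree $\ge t$ in $L_{\mathcal{H}}(\phi(v'))$ — nothing prevents $N_{E_1}(\phi(v'))$ from being entirely disjoint from $D_{v'}$. Thus the ``delta-system / iterated-pruning'' step you invoke has no basis: the hypothesis gives codegree exactly $\ge\sigma(T)$ on $E$, which is tight, and cannot be bootstrapped to codegree $\ge 3k$ for the specific pairs realised by the embedding.

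A secondary issue is the claim in Step 1 that $|U|=\Omega(n)$. The double count alone does not give this: $d_{\mathcal{H}}(u)$ can be as large as $\binom{n-1}{2}$, so $|\mathcal{H}|=\Omega(n^2)$ only forces $|U|=\Omega(1)$, potentially smaller than $t$. (The conclusion $|U|=\Omega(n)$ happens to be recoverable from the cleaned set, since every $v\in V(E_1)$ has $d_{E_1}(v)\ge\varepsilon n/3$ and each incident $E_1$-edge contributes at least $t$ link edges at $v$, giving $d_{\mathcal{H}}(v)\ge t\varepsilon n/6$; but the argument as written does not make this connection.) Also, in Step 2, when $u\in J$ has several neighbours in $I$, BFS order places only one of them before $u$, so the claimed simultaneous constraint on $\phi(u)$ with respect to every cover-neighbour cannot be imposed during the BFS without a further argument.

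In short, the obstacle you flag is the real one, and the fix you sketch is not available from the hypotheses; the argument the paper cites (Claim~4 of Kostochka--Mubayi--Verstra\"ete) must do something structurally different to get around the tightness of the codegree bound, and this is precisely the nontrivial content of the lemma.
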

The proof for~{\cite[Theorem~1.1]{KMV17b}} implies the following lemma (see {\cite[Section~6]{KMV17b}}). 
\begin{lemma}[{\cite{KMV17b}}]\label{LEMMA:l+1-full-tree}
    Let $T$ be a tree. 
    Every $n$-vertex $\sigma(T)$-full $T^3$-free $3$-graph $\mathcal{H}$ satisfies $|\mathcal{H}| = o(n^2)$. 
\end{lemma}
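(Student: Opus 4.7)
The plan is to proceed by contradiction: assume $|\mathcal{H}| \ge \varepsilon n^{2}$ for some $\varepsilon > 0$ and all sufficiently large $n$, and construct a copy of $T^{3}$ in $\mathcal{H}$. Write $k := |V(T)|$ and $t := \sigma(T)$; by hypothesis every pair in $\partial\mathcal{H}$ has codegree at least $t$.

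The first step is a codegree-threshold decomposition. Split $\mathcal{H} = \mathcal{H}_{\mathrm{low}} \cup \mathcal{H}_{\mathrm{high}}$, where $\mathcal{H}_{\mathrm{low}}$ consists of the hyperedges all of whose pairs have codegree strictly less than $3k$. Then $\mathcal{H}_{\mathrm{low}}$ is $T^{3}$-free with $\Delta_{2}(\mathcal{H}_{\mathrm{low}}) < 3k$, so Lemma~\ref{LEMMA:KMVa-lambda-sparse-tree} yields $|\mathcal{H}_{\mathrm{low}}| = O(n)$ and hence $|\mathcal{H}_{\mathrm{high}}| \ge (\varepsilon/2) n^{2}$ for $n$ large. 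Set $E := \{ e \in \partial\mathcal{H} : d_{\mathcal{H}}(e) \ge 3k \}$; once a copy of $T$ is located in $E$ viewed as a graph on $V(\mathcal{H})$, Fact~\ref{FACT:partial-embedding} completes the embedding to $T^{3}$, since each of the $k-1$ edges has codegree at least $3k \ge 3(k-1)$ in $\mathcal{H}$ and distinct witnesses can be chosen greedily.

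The main obstacle is locating such a copy of $T$ inside $E$. The naive count $\sum_{e \in E} d_{\mathcal{H}}(e) \ge |\mathcal{H}_{\mathrm{high}}|$ only forces $|E| = \Omega(n)$, which need not support a copy of an arbitrary $k$-vertex tree. The argument of~\cite{KMV17b} (see Section~6 there) handles this via a dichotomy: either $|E|$ is already quadratic, in which case $E$ automatically contains $T$ as a subgraph, or some pair in $E$ has codegree $\Omega(n)$ and serves as a ``hub''. In the hub case one grows the tree $T$ edge-by-edge along a crosscut decomposition supplied by Proposition~\ref{PROP:tree-decomposition}, using $\sigma(T)$-fullness to guarantee $t$ fresh candidates for each successive witness, which is enough slack to keep the embedding running. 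Making this dichotomy quantitative and uniform in the shape of $T$ is where the bulk of the technical work lies.
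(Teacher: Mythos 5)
Your first step is sound: the decomposition $\mathcal{H} = \mathcal{H}_{\mathrm{low}} \cup \mathcal{H}_{\mathrm{high}}$ with $|\mathcal{H}_{\mathrm{low}}| = O(n)$ via Lemma~\ref{LEMMA:KMVa-lambda-sparse-tree}, and the observation (via Fact~\ref{FACT:partial-embedding}) that $E := \{e \in \partial\mathcal{H} : d_{\mathcal{H}}(e) \ge 3k\}$ must be $T$-free, hence $|E| \le \mathrm{ex}(n,T) = O(n)$, are both correct.

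Beyond that there is a genuine gap. A minor point first: because $E$ is always $T$-free, $|E| = O(n)$ unconditionally, so the ``$|E|$ is already quadratic'' branch of your dichotomy never occurs --- you are always forced into the hub case, where some $e \in E$ has $d_{\mathcal{H}}(e) = \Omega(n)$ by averaging $\sum_{e\in E} d_{\mathcal{H}}(e) \ge |\mathcal{H}_{\mathrm{high}}| = \Omega(n^2)$ over $|E| = O(n)$ pairs. The serious problem is the hub argument itself. You claim that $\sigma(T)$-fullness supplies ``$t$ fresh candidates for each successive witness, which is enough slack to keep the embedding running,'' but $\sigma(T)$ is a fixed constant that is typically far smaller than the $2|V(T)| - 1$ distinct vertices an embedding of $T^3$ consumes (for a path on $k$ vertices, $\sigma \approx k/2$ while about $2k$ distinct vertices are needed). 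A single hub pair gives a linear-size common neighbourhood for that one pair only; every other pair you traverse carries codegree merely $\sigma(T)$, and a greedy crawl will exhaust fresh vertices almost immediately, so freshness is not guaranteed. The paper itself does not prove this lemma but attributes it to the arguments of~\cite[Section~6]{KMV17b}, which are considerably more delicate than a greedy extension from one hub; it is exactly this technical core that your sketch leaves unfilled, and the sketch as stated would not compile into a proof.
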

The following proposition follows easily from 
Lemmas~\ref{LEMMA:KMVa-d-full-subgraph} and~\ref{LEMMA:l+1-full-tree}.  
\begin{proposition}\label{PROP:shadow-bound-tree}
    Let $T$ be a tree and $\varepsilon > 0$ be a constant. 
    For sufficiently large $n$, every $n$-vertex $T_{k}^{3}$-free $3$-graph $\mathcal{H}$ satisfies 
    \begin{align*}
        |\mathcal{H}|
        \le \left(\sigma(T)-1\right) |\partial\mathcal{H}| + \varepsilon n^2. 
    \end{align*}
\end{proposition}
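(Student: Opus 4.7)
The plan is to combine the two lemmas cited in the statement in the most direct way possible. First, I would apply Lemma~\ref{LEMMA:KMVa-d-full-subgraph} with the parameter $d := \sigma(T)-1$ to pass from $\mathcal{H}$ to a subgraph $\mathcal{H}' \subseteq \mathcal{H}$ that is $\sigma(T)$-full and satisfies
\begin{align*}
    |\mathcal{H}'| \ge |\mathcal{H}| - \left(\sigma(T)-1\right)|\partial\mathcal{H}|.
\end{align*}
Crucially, since $\mathcal{H}' \subseteq \mathcal{H}$ and $\mathcal{H}$ is $T^{3}$-free, the subgraph $\mathcal{H}'$ is also $T^{3}$-free.

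Next, I would feed $\mathcal{H}'$ into Lemma~\ref{LEMMA:l+1-full-tree}: as an $n$-vertex $\sigma(T)$-full $T^{3}$-free $3$-graph, it must satisfy $|\mathcal{H}'| = o(n^2)$. In particular, for all sufficiently large $n$ (depending on $\varepsilon$ and $T$), we have $|\mathcal{H}'| \le \varepsilon n^2$. Combining the two inequalities gives
\begin{align*}
    |\mathcal{H}| \le |\mathcal{H}'| + \left(\sigma(T)-1\right)|\partial\mathcal{H}| \le \left(\sigma(T)-1\right)|\partial\mathcal{H}| + \varepsilon n^2,
\end{align*}
as required.

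There is essentially no obstacle here; the work is entirely carried by the two cited lemmas. The only minor point to keep track of is that $T^{3}$-freeness is preserved under taking subgraphs, which is what allows the two lemmas to be applied in series. The choice $d = \sigma(T)-1$ is the unique value that makes Lemma~\ref{LEMMA:l+1-full-tree} applicable to $\mathcal{H}'$ while producing the advertised coefficient $\sigma(T)-1$ in front of $|\partial\mathcal{H}|$.
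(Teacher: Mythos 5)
Your argument is correct and follows exactly the route the paper intends: the paper states only that the proposition ``follows easily from Lemmas~\ref{LEMMA:KMVa-d-full-subgraph} and~\ref{LEMMA:l+1-full-tree},'' and your chaining of the two with $d=\sigma(T)-1$ is precisely that derivation.
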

In the proof of~{\cite[Theorem~6.2]{KMV15a}}, Kostochka--Mubayi--Verstra\"{e}te introduced the following process to obtain a large $\left(\floor*{(k-1)/2}, 3k\right)$-superfull subgraph in a $C_{k}^3$-free (or $P_k^3$-free) $3$-graph.
We will use it in proofs of Theorems~\ref{THM:Hypergraph-Tree-Stability},~\ref{THM:GenTuran-Tree-Stability} and~\ref{THM:GenTuran-Cycle-Stability}. 
\begin{algorithm}
\renewcommand{\thealgorithm}{}
\caption{Cleaning Algorithm}\label{ALGO:cleaning}
\begin{algorithmic}

\State \textbf{Input:}  
A triple $(\mathcal{H}, k, t)$, where $\mathcal{H}$ is a $3$-graph and $k \ge t \ge 0$ are integers. 

\State \textbf{Output:} An integer $q$, a sequence of edges $f_1, \ldots, f_q \in \partial\mathcal{H}$, and a sequence of subgraphs $\mathcal{H} \supseteq \mathcal{H}_0 \supseteq \cdots \supseteq \mathcal{H}_q$. 

\State \textbf{Initialization:} 
    Let 
    \begin{align*}
        i=0,\quad
        \mathcal{H}^{\ast} 
            := \left\{e \in \mathcal{H} 
            \colon \Delta_2(e) \le 3k\right\},\quad\text{and}\quad 
            \mathcal{H}_0 := \mathcal{H}\setminus \mathcal{H}^{\ast}
    \end{align*}

\State \textbf{Operation:} 
            For every edge $e \in \partial\mathcal{H}_i$, we say $e$ is of  
            \begin{itemize}
                \item type-1 if $d_{\mathcal{H}_i}(e) \le t-1$,
                \item type-2 if $d_{\mathcal{H}_i}(e) = t$ and there exists $f' \in \partial\mathcal{H}_i$ such that 
                \begin{align*}
                    |f'\cap e| = 1,\quad d_{\mathcal{H}_i}(f') = t,\quad \text{and}\ e \cup f' \in \mathcal{H}_i, 
                \end{align*}
                \item type-3 if $t+1 \le d_{\mathcal{H}_i}(e) \le 3k-1$.
            \end{itemize}
            If $\partial\mathcal{H}_i$ contains no edges of types-$1\sim 3$, then we let $q=i$ and stop. 
            Otherwise, we choose an edge $e\in \partial\mathcal{H}_i$ of minimum type. Let 
            \begin{align*}
                f_{i+1}:= e \quad\text{and}\quad
                \mathcal{H}_{i+1} 
                := \left\{E\in \mathcal{H}_i \colon e\nsubseteq E\right\}, 
            \end{align*}
            and then repeat this process to $\mathcal{H}_{i+1}$. 
\end{algorithmic}
\end{algorithm}

Repeating the proof of~{\cite[Theorem~6.2]{KMV15a}} to a tree with $\sigma(T) = \tau_{\mathrm{ind}}(T)$, it is easy to obtain the following lemma. 
We refer the interested reader to the Appendix for more detail. 
%
\begin{lemma}\label{LEMMA:Cleaning-Algo-tree}
    Let $T$ be a fixed tree with $\sigma(T) = \tau_{\mathrm{ind}}(T) =: t+1$ and $k:= |T|$.
    Fix constant $\varepsilon >0$ and let $n$ be sufficiently large. 
    Let $\mathcal{H}$ be an $n$-vertex $T^3$-free $3$-graph and $\mathcal{H}_q$ be the outputting $3$-graph of Cleaning Algorithm with input $(\mathcal{H}, k, t)$ as defined here.
    If $|\mathcal{H}| \ge t |\partial\mathcal{H}| - \varepsilon n^2$,  
    then $\mathcal{H}_q$ is $(t, 3k)$-superfull, and moreover, 
    \begin{align*}
        q 
        \le 12k \varepsilon n^2, \quad 
        |\mathcal{H}_q| 
        \ge |\mathcal{H}| - 48k^2 \varepsilon n^2, \quad\text{and}\quad
        |\partial\mathcal{H}_q| \ge |\partial\mathcal{H}| - 50k^2\varepsilon n^2. 
    \end{align*}
\end{lemma}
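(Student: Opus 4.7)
The proof splits into a structural observation --- that $\mathcal{H}_q$ is $(t,3k)$-superfull --- and the quantitative estimates on $q$, $|\mathcal{H}_q|$, and $|\partial\mathcal{H}_q|$. The structural part is immediate from the termination rule: the algorithm halts only when $\partial\mathcal{H}_q$ contains no pair of type $1$, $2$, or $3$, so every pair of $\partial\mathcal{H}_q$ has codegree at least $t$ (no type-1), no pair has codegree in $[t+1,3k-1]$ (no type-3), and no two distinct pairs of codegree exactly $t$ lie inside a single edge of $\mathcal{H}_q$ (no type-2, since otherwise one would witness type-2 for the other). Combining these three observations, every edge of $\mathcal{H}_q$ contains at most one pair of codegree strictly less than $3k$, so $\mathcal{H}_q$ is $(t,3k)$-superfull.

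For the quantitative part I would decompose $q = q_1 + q_2 + q_3$ by the type of the step and bound each summand. For $q_3$, a type-3 step picks a pair $f$ with $d_{\mathcal{H}}(f) \ge d_{\mathcal{H}_i}(f) \ge t+1$ and distinct steps pick distinct pairs, so $q_3 \le |D|$ where $D := \{p \in \partial\mathcal{H} : d_{\mathcal{H}}(p) \ge t+1\}$. Applying Lemma~\ref{LEMMA:l+1-shadows} to the $T^3$-free graph $\mathcal{H}$ gives $|D| \le \varepsilon_1 n^2$ for any chosen $\varepsilon_1>0$ (and $n$ large). For $q_1$, I would use the potential $\Phi_i := |\mathcal{H}_i| - t|\partial\mathcal{H}_i|$. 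Writing $\delta_i := d_{\mathcal{H}_i}(f_{i+1})$ and letting $\alpha_i \ge 0$ count the extra shadow edges cascaded out at step $i$, a direct calculation shows
\[
\Phi_{i+1}-\Phi_i \;=\; -\delta_i + t(1+\alpha_i),
\]
which is $\ge 1$ for type-1, $\ge 0$ for type-2, and $\ge -(3k-1-t)$ for type-3. The initial value satisfies $\Phi_0 \ge -\varepsilon n^2 - |\mathcal{H}^*|$ by the hypothesis, and Lemma~\ref{LEMMA:KMVa-lambda-sparse-tree} applied to $\mathcal{H}^*$ (a $T^3$-free graph with $\Delta_2 \le 3k$) gives $|\mathcal{H}^*| = O_k(n)$. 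The final value satisfies $\Phi_q \le \varepsilon_2 n^2$ for any $\varepsilon_2>0$ by applying Proposition~\ref{PROP:shadow-bound-tree} to $\mathcal{H}_q$. Telescoping gives $q_1 \le \Phi_q-\Phi_0 + (3k-1-t)q_3 = O(k\varepsilon n^2)$.

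The main obstacle is bounding $q_2$, because $\Phi$ need not strictly increase at a type-2 step when $t \ge 2$. Here I would exploit the algorithm's minimum-type rule: a type-2 step at time $i$ can occur only when $\partial\mathcal{H}_i$ is free of type-1 pairs. Examining such a step, $f_{i+1}=e$ has codegree exactly $t$ in $\mathcal{H}_i$, and among the $t$ hyperedges of $\mathcal{H}_i$ through the witness $f'$ only $e \cup f'$ contains $e$, so $d_{\mathcal{H}_{i+1}}(f') = t-1$. When $t \ge 2$ this makes $f'$ a type-1 pair at time $i+1$, forcing the immediately subsequent step to be of type-1; since different type-2 steps yield distinct subsequent type-1 steps, $q_2 \le q_1$. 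In the degenerate case $t=1$ the companion $f'$ instead drops out of the shadow ($\alpha_i \ge 1$), so $\Phi_{i+1}-\Phi_i \ge 1$ at every type-2 step and $q_2$ gets absorbed directly into the potential argument of the previous paragraph. (For $t=0$ there are no type-1 or type-2 steps at all.)

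Combining all three bounds and choosing $\varepsilon_1, \varepsilon_2$ as small constant multiples of $\varepsilon$ yields $q \le 12k\varepsilon n^2$. The hyperedge estimate then follows from $|\mathcal{H}|-|\mathcal{H}_q| = |\mathcal{H}^*| + \sum_i \delta_i$ together with $\delta_i \le 3k-1$ and $|\mathcal{H}^*|=O_k(n)$, and the shadow estimate follows from $|\partial\mathcal{H}|-|\partial\mathcal{H}_q| \le 3|\mathcal{H}^*| + \sum_i(1+2\delta_i)$. What remains is to keep careful track of the numerical constants so that the errors fit inside $48k^2\varepsilon n^2$ and $50k^2\varepsilon n^2$ respectively; this is the least conceptually interesting, and most tedious, part of the argument.
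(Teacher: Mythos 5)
Your argument is essentially the one in the paper: bound type-3 steps via Lemma~\ref{LEMMA:l+1-shadows}, pair each type-2 step with the forced type-1 step that must immediately follow it (giving $q_2 \le q_1$, the paper's observation $|I_2|\le|I_1|$), control $|\mathcal{H}^*|$ with Lemma~\ref{LEMMA:KMVa-lambda-sparse-tree}, and close the telescoping bound with Proposition~\ref{PROP:shadow-bound-tree}. Your potential $\Phi_i = |\mathcal{H}_i| - t|\partial\mathcal{H}_i|$ is a clean repackaging of the paper's direct count via $|I_1|,|I_2|,|I_3|$, and your separate handling of $t=1$ (where the companion $f'$ leaves the shadow, $\alpha_i\ge 1$, rather than becoming a type-1 pair) is in fact a touch more careful than the paper's implication $j\in I_2\Rightarrow j+1\in I_1$, which tacitly needs $t\ge 2$; the one place you should mimic the paper rather than counting shadow edges directly is the final $|\partial\mathcal{H}_q|$ bound, where a second application of Proposition~\ref{PROP:shadow-bound-tree} to $\mathcal{H}_q$ yields the stated constant more cheaply.
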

\subsection{Expansion of cycles}
In line with the previous subsection, we adopt the subsequent lemmas on $C_{k}^3$ from~\cite{KMV15a}. 
\begin{lemma}[{\cite[Lemma~3.2]{KMV15a}}]\label{LEMMA:KMVa-3k-full-subgraph-cycle}
    Let $k \ge 3$ be an integer. 
    Every $3k$-full nonempty $3$-graph $\mathcal{H}$ contains $C_{k}^{3}$. 
\end{lemma}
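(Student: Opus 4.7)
The plan is a direct greedy embedding of $C_k^3$, constructing the cycle vertices $v_1,v_2,\ldots,v_k$ and their expansion vertices $w_1,w_2,\ldots,w_k$ one at a time, while maintaining the invariant
$$v_1 v_j \in \partial\mathcal{H} \quad\text{for every index $j\le i$ already chosen.}$$
Since $\mathcal{H}$ is nonempty, I initialize with any hyperedge $v_1 v_2 w_1 \in \mathcal{H}$; this declares $w_1$ to be the expansion vertex of the cycle edge $v_1v_2$ and trivially gives $v_1 v_2 \in \partial\mathcal{H}$.

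For each $i=2,\ldots,k-1$, assume distinct $v_1,\ldots,v_i,w_1,\ldots,w_{i-1}$ have been chosen with the invariant in place. I would pick $v_{i+1}$ in $N_{\mathcal{H}}(v_1 v_i)$ outside the previously used vertices, and then pick $w_i$ in $N_{\mathcal{H}}(v_i v_{i+1})$ outside the used vertices together with $v_{i+1}$. The $3k$-full hypothesis gives $d_{\mathcal{H}}(v_1 v_i)\ge 3k$ (by the invariant $v_1 v_i\in\partial\mathcal{H}$) and $d_{\mathcal{H}}(v_i v_{i+1})\ge 3k$, while at each step at most $2i\le 2k-2$ vertices must be avoided, so at least $k+2>0$ admissible choices remain in either case. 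The key point is that because $v_1 v_i v_{i+1}\in\mathcal{H}$, the new pair $v_1 v_{i+1}$ lands automatically in $\partial\mathcal{H}$, so the invariant is preserved.

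After $i=k-1$, the invariant yields $v_1 v_k \in \partial\mathcal{H}$, and hence $d_{\mathcal{H}}(v_k v_1)\ge 3k$; this lets me pick $w_k\in N_{\mathcal{H}}(v_k v_1)$ distinct from the $2k-1$ vertices used so far. The $k$ resulting hyperedges $v_i v_{i+1} w_i$ (with indices taken modulo $k$) are pairwise distinct and form a copy of $C_k^3$ in $\mathcal{H}$.

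The only nonroutine ingredient is the invariant itself. A naive greedy would build a shadow path $v_1 v_2 \cdots v_k$ with no mechanism to guarantee $v_1 v_k\in\partial\mathcal{H}$, so the final closing step could fail despite the large codegree assumption; this is where the hypothetical difficulty of the argument lies. Forcing every new cycle vertex $v_{i+1}$ to come from $N_{\mathcal{H}}(v_1 v_i)$ converts the global $3k$-full condition into precisely the local resource needed to simultaneously extend the cycle and keep the closing edge available, making the argument go through in one pass.
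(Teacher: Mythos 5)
Your proof is correct, and the invariant is exactly the right device: by always choosing $v_{i+1}$ from $N_{\mathcal{H}}(v_1v_i)$, the hyperedge $v_1v_iv_{i+1}\in\mathcal{H}$ simultaneously delivers $v_iv_{i+1}\in\partial\mathcal{H}$ (needed to pick $w_i$) and $v_1v_{i+1}\in\partial\mathcal{H}$ (needed to eventually close the cycle), so the global codegree hypothesis is converted into exactly the local resource required at every step. The counting is also right: at step $i$ you avoid at most $2i-1\le 2k-3$ vertices when choosing $v_{i+1}$, at most $2i\le 2k-2$ when choosing $w_i$, and at most $2k-1$ when choosing $w_k$, all strictly below $3k$, so the greedy never stalls; and the resulting $2k$ vertices are pairwise distinct, so the $k$ chosen triples form a genuine copy of $C_k^3$. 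The paper itself cites this lemma from Kostochka--Mubayi--Verstra\"ete without reproducing a proof, so there is no in-paper argument to compare against, but your one-pass greedy with the ``anchor the first vertex'' invariant is the standard way this closing-the-cycle difficulty is handled in that line of work, and your write-up correctly identifies the anchoring step as the nontrivial ingredient.
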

\begin{lemma}[{\cite[Proposition~3.8]{KMV15a}}]\label{LEMMA:KMVa-lambda-sparse-cycle}
   Let $k \ge 3$ and $C \ge 1$  be fix integers. 
   Every $n$-vertex $C_{k}^{3}$-free $3$-graph $\mathcal{H}$ with $\Delta_2(\mathcal{H}) \le C$ satisfies $|\mathcal{H}| = o(n^2)$.
\end{lemma}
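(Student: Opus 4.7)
\bigskip
\noindent\textbf{Proof proposal.}
The plan is to argue by contradiction: I will assume $|\mathcal{H}| \ge \varepsilon n^2$ for some fixed constant $\varepsilon > 0$ and $n$ sufficiently large, and then exhibit a copy of $C_k^3$ inside $\mathcal{H}$. The first observation is that double counting with the codegree bound yields $3|\mathcal{H}| = \sum_{uv \in \partial\mathcal{H}} d_\mathcal{H}(uv) \le C\,|\partial\mathcal{H}|$, so the shadow $G := \partial\mathcal{H}$ has at least $3\varepsilon n^2/C$ edges; moreover, every hyperedge contributes a triangle to $G$, hence $G$ is non-bipartite.

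Next I would split into two regimes according to the average shadow codegree $3|\mathcal{H}|/|\partial\mathcal{H}|$. If this average exceeds $9k$, then Lemma~\ref{LEMMA:KMVa-d-full-subgraph} applied with $d = 3k - 1$ gives a nonempty $3k$-full sub-3-graph of $\mathcal{H}$, and Lemma~\ref{LEMMA:KMVa-3k-full-subgraph-cycle} furnishes $C_k^3$ at once. Otherwise the average codegree is bounded, so $|\partial\mathcal{H}| \ge |\mathcal{H}|/(3k) = \Omega(n^2)$ and $G$ is a very dense graph in which typical pairs have only $O(1)$ apex choices in $\mathcal{H}$.

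In the dense-shadow regime, my plan is to find a copy of $C_k$ in $G$ whose $k$ apex extensions can be chosen pairwise distinct and disjoint from the cycle vertices. By the Bondy--Simonovits theorem (for even $k$) and the non-bipartiteness of $G$ combined with supersaturation (for odd $k$), $G$ contains a polynomial number of labelled copies of $C_k$. I would then count obstructions: each apex coincidence of the form $w_i = v_j$ or $w_i = w_{i'}$ pins a bounded number of parameters of the cycle, and since each shadow pair hosts at most $C$ apexes, the number of obstructed labelled cycles is a lower-order term. A positive fraction of labelled $C_k$ copies therefore admits a valid extension to $C_k^3$.

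The main obstacle will be the rigid-apex case, in which almost every shadow pair has codegree exactly one and the forced apexes could conspire to collide systematically (for instance, in Steiner-triple-like structures). To handle it I would introduce a cleaning step in the spirit of the Cleaning Algorithm recorded above: iteratively discard hyperedges whose unique apex falls into a pre-selected ``bad'' region, charging each deletion to a bounded number of ambient shadow pairs so that only $o(n^2)$ hyperedges are lost in total, and then exhibit the cycle within the surviving 3-graph. This bookkeeping mirrors the argument of~{\cite[Proposition~3.8]{KMV15a}} and is the most delicate component of the proof.
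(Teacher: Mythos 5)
The paper does not prove this lemma; it cites it directly from \cite[Proposition~3.8]{KMV15a} and uses it as a black box (the only in-paper analogue, Lemma~\ref{LEMMA:KMVa-lambda-sparse-tree} in Appendix~C, extracts a linear subhypergraph via Proposition~\ref{APPENDIX:PROP:large-linear-subgraph} and greedily embeds the tree, a route that gives $O(n)$ but cannot close a cycle). So there is no in-paper proof to compare against, and I will assess your plan on its own terms. Your overall strategy -- dense shadow, find many copies of $C_k$ there, show one expands to $C_k^3$ because apex collisions are rare -- is indeed the right spirit, but there is a concrete gap at the odd-$k$ supersaturation step. You deduce $|\partial\mathcal H|\ge 3\varepsilon n^2/C$ and observe that the shadow is non-bipartite because each hyperedge spans a triangle. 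That is far too weak. For odd $k\ge 5$ we have $\mathrm{ex}(n,C_k)=\lfloor n^2/4\rfloor$, and $3\varepsilon n^2/C$ may lie well below $n^2/4$; a graph that is merely non-bipartite with that many edges need not contain more than a handful of copies of $C_k$, whereas your obstruction-counting step requires $\Omega(n^k)$ labelled copies (each apex collision pins a vertex, so you need a surplus of that order). The property you actually need, and did not invoke, is that bounded codegree makes $\partial\mathcal H$ \emph{robustly} far from bipartite: each shadow pair lies in at most $C$ hyperedge-triangles, so deleting any set $E'$ of shadow edges kills at most $C|E'|$ of the $\ge\varepsilon n^2$ hyperedge-triangles, whence $\partial\mathcal H$ is $(\varepsilon/C)n^2$-far from triangle-free. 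This robustness, not mere non-bipartiteness, is the engine that powers the odd case, and a supersaturation argument must be run against it.

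Two secondary points. First, your opening case split on whether the average shadow codegree exceeds $9k$ is vacuous whenever $C<9k$, since $\Delta_2(\mathcal H)\le C$ already caps the average; it is not wrong, but the first branch is idle in most parameter regimes. Second, the closing paragraph on the ``rigid-apex case'' is a plan rather than a proof: you sketch a cleaning step, explicitly defer to the bookkeeping in \cite{KMV15a}, and call it ``the most delicate component'' without carrying it out. That is precisely the part that turns the heuristic into an argument, so even after repairing the odd-$k$ step the proposal is incomplete as written.
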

%
%
\begin{lemma}[{\cite[Theorem~6.1~(i)]{KMV15a}}]\label{LEMMA:KMVa-ell+1-full}
    Let $k \ge 4$ be a fixed integer. 
    Every $n$-vertex $\sigma(C_k)$-full $C_{k}^3$-free $3$-graph $\mathcal{H}$ satisfies $|\mathcal{H}| = o(n^2)$. 
\end{lemma}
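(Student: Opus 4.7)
The plan is to argue by contradiction: suppose $|\mathcal{H}|\ge \varepsilon n^{2}$ for some fixed $\varepsilon>0$ and arbitrarily large $n$, and exhibit a copy of $C_{k}^{3}$ in $\mathcal{H}$.

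First, I would force the shadow to be quadratically dense. Applying Lemma~\ref{LEMMA:KMVa-d-full-subgraph} with $d=3k-1$ produces a $3k$-full subgraph $\mathcal{H}'\subseteq \mathcal{H}$ of size at least $|\mathcal{H}|-(3k-1)|\partial\mathcal{H}|$. By Lemma~\ref{LEMMA:KMVa-3k-full-subgraph-cycle} and the $C_{k}^{3}$-freeness of $\mathcal{H}$, the subgraph $\mathcal{H}'$ must be empty, so $|\mathcal{H}|\le (3k-1)|\partial\mathcal{H}|$, which combined with $|\mathcal{H}|\ge \varepsilon n^{2}$ yields $|\partial\mathcal{H}|\ge \varepsilon n^{2}/(3k-1)$. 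A standard pruning of low-degree vertices then produces a subgraph $G\subseteq \partial\mathcal{H}$ with $\Omega(n^{2})$ edges and minimum degree $\Omega(n)$.

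Second, I would locate many copies of $C_{k}$ inside $G$. Linear minimum degree implies $\Omega(n^{k})$ copies of any fixed cycle $C_{k}$: for even $k$ via Bondy--Simonovits supersaturation, and for odd $k$ via classical results that graphs of linear minimum degree contain every fixed cycle length. Each such cycle sits inside the shadow of $\mathcal{H}$, so every one of its edges has codegree at least $\sigma(C_{k})=\lfloor (k+1)/2\rfloor$ by $\sigma(C_{k})$-fullness.

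Third, I would lift a carefully chosen $C_{k}$ to a copy of $C_{k}^{3}$ in $\mathcal{H}$. Given a cycle with edge set $\{e_{1},\ldots,e_{k}\}$ and vertex set $V_{0}$, I would pick distinct expansion vertices $w_{1},\ldots,w_{k}\in V(\mathcal{H})\setminus V_{0}$ with $e_{i}\cup\{w_{i}\}\in\mathcal{H}$, that is, a system of distinct representatives for the sets $N_{\mathcal{H}}(e_{i})\setminus V_{0}$. The main obstacle is Hall's condition under the modest lower bound $\lfloor (k+1)/2\rfloor$: for a single cycle, the codegree neighbourhoods may concentrate on a small common vertex set and block the SDR. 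The key idea for overcoming this is to use the $\Omega(n^{k})$ copies of $C_{k}$ in $G$ simultaneously. If Hall fails on every such copy, then by averaging, a fixed small set $T\subseteq V(\mathcal{H})$ of constant size absorbs a positive fraction of pairs in $\partial\mathcal{H}$ as high-codegree endpoints; but then the link of a vertex in $T$ is itself a dense auxiliary graph and delivers, via a second application of Lemma~\ref{LEMMA:KMVa-d-full-subgraph} inside this localized setting, a nonempty $3k$-full sub-configuration to which Lemma~\ref{LEMMA:KMVa-3k-full-subgraph-cycle} applies, producing $C_{k}^{3}\subseteq \mathcal{H}$. In the complementary case, Hall succeeds for some cycle and the expansion exists directly; either way we reach the desired contradiction.
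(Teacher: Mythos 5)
Your first reduction is correct and matches the standard route: combining Lemma~\ref{LEMMA:KMVa-d-full-subgraph} with $d = 3k-1$ and Lemma~\ref{LEMMA:KMVa-3k-full-subgraph-cycle} does force $|\partial\mathcal{H}| \ge |\mathcal{H}|/(3k-1)$, so a quadratic number of shadow pairs each of codegree at least $\sigma(C_k)$. The problem is everything after that. Your claim that ``graphs of linear minimum degree contain every fixed cycle length'' for odd $k$ is simply false --- a complete bipartite graph has minimum degree $n/2$ and no odd cycles, and nothing in the pruning guarantees the shadow subgraph $G$ is non-bipartite. This is not a side issue: it is exactly where the parity matters, which is why Lemma~\ref{LEMMA:KMVa-many-ell+1-shadow-cycle} has a strictly stronger hypothesis for odd $k$ (an auxiliary triple $w$ with one high-codegree side) than for even $k$.

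The larger gap is in your lifting step. You correctly observe that $\sigma(C_k) = \lfloor (k+1)/2 \rfloor < k$, so for a fixed cycle the sets $N_{\mathcal{H}}(e_i)\setminus V_0$ cannot satisfy Hall's condition on pigeonhole grounds alone --- indeed, after subtracting the up to $k-2$ vertices of $V_0$ that might sit in each codegree neighbourhood, the guaranteed residual size can be zero. Your proposed fix (average over $\Omega(n^k)$ cycles, deduce that one constant-size set $T$ blocks a positive fraction, then find a nonempty $3k$-full sub-configuration inside the link structure near $T$) is not a real argument: the ``bad set'' certifying Hall failure varies from cycle to cycle, so a single pigeonhole only gives $\Omega(n)$ cycles sharing one blocking set (not a positive fraction of $\partial\mathcal{H}$); a vertex link is a $2$-graph, not a $3$-graph, so Lemma~\ref{LEMMA:KMVa-d-full-subgraph} and Lemma~\ref{LEMMA:KMVa-3k-full-subgraph-cycle} do not apply to it; and in any case a configuration localized to a bounded-size set $T$ cannot be nonempty and $3k$-full, since codegrees there are at most $|T|-2 < 3k$. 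The route the paper relies on (following~\cite{KMV15a}) does not try to find an SDR on a single shadow cycle at all. Instead it invokes Lemma~\ref{LEMMA:KMVa-many-ell+1-shadow-cycle}, whose proof builds a long path in the shadow greedily so that consecutive pairs have large codegree, and then closes it into a cycle through the low-codegree pairs --- the point being that one can embed all but a bounded number of the expansion vertices greedily using high-codegree pairs (as in Fact~\ref{FACT:partial-embedding}) and only needs the weak bound $\sigma(C_k)$ on the remaining few. Your proof as written does not reproduce this mechanism and, as it stands, has no working replacement for it.
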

Similar to Proposition~\ref{PROP:shadow-bound-tree}, 
the following proposition follows easily from Lemmas~\ref{LEMMA:KMVa-d-full-subgraph} and~\ref{LEMMA:KMVa-ell+1-full}.
\begin{proposition}\label{APPENDIX:PROP:Turan-cycle-shadow}
     Let $k \ge 4$ be an integer and $\varepsilon > 0$ be a constant. 
    For sufficiently large $n$, every $n$-vertex $C_{k}^{3}$-free $3$-graph $\mathcal{H}$ satisfies 
    \begin{align*}
        |\mathcal{H}|
        \le \left(\sigma(C_k) - 1\right) |\partial\mathcal{H}| + \varepsilon n^2. 
    \end{align*}
\end{proposition}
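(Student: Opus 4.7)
The plan is to combine the two cited lemmas in a straightforward way, mirroring the strategy used earlier to derive Proposition~\ref{PROP:shadow-bound-tree} from Lemmas~\ref{LEMMA:KMVa-d-full-subgraph} and~\ref{LEMMA:l+1-full-tree}.

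First, I would set $d := \sigma(C_k) - 1$ and apply Lemma~\ref{LEMMA:KMVa-d-full-subgraph} to the given $C_k^3$-free $3$-graph $\mathcal{H}$. This yields a $(d+1)$-full, that is, a $\sigma(C_k)$-full, subgraph $\mathcal{H}' \subseteq \mathcal{H}$ satisfying
\[
|\mathcal{H}'| \ge |\mathcal{H}| - (\sigma(C_k) - 1)\,|\partial\mathcal{H}|.
\]
Since $\mathcal{H}' \subseteq \mathcal{H}$, the subgraph $\mathcal{H}'$ is also $C_k^3$-free, and it has at most $n$ vertices.

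Next, I would apply Lemma~\ref{LEMMA:KMVa-ell+1-full} to $\mathcal{H}'$, which is an $n$-vertex $\sigma(C_k)$-full $C_k^3$-free $3$-graph. This gives $|\mathcal{H}'| = o(n^2)$, so for all sufficiently large $n$ (depending on $\varepsilon$ and $k$) we have $|\mathcal{H}'| \le \varepsilon n^2$. Rearranging the inequality from the previous step then yields
\[
|\mathcal{H}| \le |\mathcal{H}'| + (\sigma(C_k) - 1)\,|\partial\mathcal{H}| \le (\sigma(C_k) - 1)\,|\partial\mathcal{H}| + \varepsilon n^2,
\]
which is the desired bound.

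There is no real obstacle here; the proof is a one-line combination of the two lemmas. The only minor point to be careful about is that the $o(n^2)$ bound from Lemma~\ref{LEMMA:KMVa-ell+1-full} depends on $k$ but not on $\varepsilon$, so the threshold $n_0$ for which $|\mathcal{H}'| \le \varepsilon n^2$ is chosen after $\varepsilon$ is fixed. This is consistent with the quantifier order "for sufficiently large $n$" in the statement.
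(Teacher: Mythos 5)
Your proof is correct and follows exactly the approach the paper indicates (it states the proposition "follows easily from Lemmas~\ref{LEMMA:KMVa-d-full-subgraph} and~\ref{LEMMA:KMVa-ell+1-full}," mirroring the derivation of Proposition~\ref{PROP:shadow-bound-tree}). Your remark about the order of quantifiers -- fixing $\varepsilon$ first, then choosing $n_0$ from the $o(n^2)$ bound -- is the right thing to note and is handled correctly.
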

\begin{lemma}[{\cite[Lemma~5.1]{KMV15a}}]\label{LEMMA:KMVa-many-ell+1-shadow-cycle}
    Let $\varepsilon >0$ and $k \ge 4$ be fixed and $n$ be sufficiently large. 
    Suppose that $\mathcal{H}$ is an $n$-vertex $3$-graph such that there exists $E\subseteq \partial\mathcal{H}$ with $|E| \ge \varepsilon n^2$ so that 
    \begin{enumerate}[label=(\roman*)]
        \item for even $k$, every $uv \in E$ satisfies $d_{\mathcal{H}}(uv) \ge \sigma(C_k)$, 
        \item for odd $k$,  every $uv \in E$ satisfies $d_{\mathcal{H}}(uv) \ge \sigma(C_k)$, and there exists $w\in V(\mathcal{H})$ with $uvw \in \mathcal{H}$ such that 
        \begin{align*}
            \min\left\{d_{\mathcal{H}}(uw),\ d_{\mathcal{H}}(vw)\right\} \ge 2
            \quad\mathrm{and}\quad 
            \max\left\{d_{\mathcal{H}}(uw),\ d_{\mathcal{H}}(vw)\right\} \ge 3k. 
        \end{align*}
    \end{enumerate}
    Then $C_{k}^3 \subseteq \mathcal{H}$. 
\end{lemma}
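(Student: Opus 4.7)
The plan is to embed $C_k^3$ via a two-stage strategy: first find a copy of $C_k$ all of whose edges lie in $E$, then choose the $k$ private vertices of the expansion, one per cyclic edge, from the corresponding codegree neighborhoods, so that they are pairwise distinct and avoid the cycle.

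For stage one, since $|E|\ge \varepsilon n^{2}$, a standard cleaning---iteratively deleting vertices of degree less than $\varepsilon n/4$ inside $E$---produces a subgraph $E^{\ast}\subseteq E$ of minimum degree $\Omega_{\varepsilon}(n)$ with $\Omega_{\varepsilon}(n^{2})$ edges. A greedy walk in such a dense graph (or the Erd\H{o}s--Gallai cycle-existence argument) yields in fact $\Omega_{\varepsilon}(n^{k})$ copies of $C_{k}$ inside $E^{\ast}$. For stage two, one needs pairwise distinct $w_{1},\ldots,w_{k}\in V(\mathcal{H})\setminus\{v_{1},\ldots,v_{k}\}$ with $w_{i}\in N_{\mathcal{H}}(v_{i}v_{i+1})$ for every $i$. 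I would apply Hall's theorem to the bipartite graph matching cyclic edges to their codegree neighborhoods; this requires that every $S\subseteq[k]$ satisfies $\bigl|\bigcup_{i\in S}N_{\mathcal{H}}(v_{i}v_{i+1})\setminus\{v_{1},\ldots,v_{k}\}\bigr|\ge |S|$. Averaging over the $\Omega_{\varepsilon}(n^{k})$ candidate cycles in $E^{\ast}$ allows one to pick a ``generic'' cycle for which this Hall condition holds, using that each codegree set has size at least $\sigma(C_{k})\ge k/2$.

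For odd $k$, the basic codegree bound $\sigma(C_{k})=(k+1)/2$ alone has too little slack to overcome the odd-parity obstruction, and this is precisely where the extra hypothesis on the triple $uvw$ enters. The plan is to fix a pair $uv\in E$ together with its accompanying $w$, designate $u,v,w$ as three consecutive vertices of the target cycle, and exploit the leg with codegree $\ge 3k$ to produce, greedily and with ample room, a private vertex for that edge; the other leg, with codegree $\ge 2$, has just enough room to supply a private vertex against a constant-size forbidden set of already-chosen vertices. The remaining $k-3$ cyclic vertices are added by a greedy walk inside $E^{\ast}$ from $u$ (or $v$) to $w$, and the remaining private vertices are obtained exactly as in the even case via Hall's theorem.

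The main obstacle I foresee is verifying Hall's condition in stage one when the codegree sets have size only $\sigma(C_{k})=\Theta(k)$, which barely exceeds the number of forbidden cycle vertices. Making the ``generic cycle'' step precise reduces to a double-counting argument showing that the fraction of copies of $C_{k}$ in $E^{\ast}$ for which Hall's condition fails is $o(1)$; this is the delicate technical crux, and is where supersaturation must be balanced carefully against the constraints of the expansion.
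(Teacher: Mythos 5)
You propose to find a copy of $C_k$ entirely inside $E$ and then apply Hall's theorem to the codegree neighborhoods $N_{\mathcal{H}}(v_iv_{i+1})$ to choose $k$ distinct private vertices. This decomposition is wrong: the core cycle of the target $C_k^3$ need not (and in the relevant extremal examples cannot) sit inside $E$, and the Hall step fails even for a ``generic'' cycle, so the averaging idea cannot rescue it.

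Concretely, take $\mathcal{H}=\mathcal{S}(n,\sigma(C_k))$ and let $E$ be the set of all pairs disjoint from the core $[\sigma(C_k)]$. Every $uv\in E$ has codegree exactly $\sigma(C_k)$, and its codegree neighborhood is precisely the set $[\sigma(C_k)]$; for odd $k$ the extra hypothesis also holds, since any $w\in[\sigma(C_k)]$ gives $uvw\in\mathcal{H}$ with $d_{\mathcal{H}}(uw),d_{\mathcal{H}}(vw)\approx n$. The lemma's conclusion holds here (one readily checks $C_k^3\subseteq\mathcal{S}(n,\sigma(C_k))$), but \emph{every} copy of $C_k$ inside $E$ has all $k$ codegree sets equal to the same set $[\sigma(C_k)]$ of size $\lceil k/2\rceil<k$, so Hall's condition fails for $S=[k]$ on every such cycle. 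The obstruction is uniform over candidate cycles, not a measure-zero event, so no averaging will produce a good one. The genuine $C_k^3$ in $\mathcal{S}(n,\sigma(C_k))$ has a core cycle that alternates between $[\sigma(C_k)]$ and its complement: roughly half its edges are \emph{not} in $E$, and precisely those edges have codegree $\approx n$ because one endpoint is a hub. A correct proof must therefore produce exactly this kind of interleaved structure, starting from a much shorter configuration in $E$ and threading codegree neighbors into the cycle so that the resulting $k$-cycle in $\partial\mathcal{H}$ passes \emph{through} the codegree neighborhoods rather than around them. That is what the cited argument of Kostochka--Mubayi--Verstra\"ete does, and your plan skips the idea entirely.

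Your stage one also fails outright for odd $k$: after cleaning, $E^\ast$ may be bipartite (a complete bipartite graph with two $\Omega(n)$-sized parts already has $\Omega(n^2)$ edges), and a bipartite graph has \emph{no} odd cycle, so ``$\Omega_{\varepsilon}(n^k)$ copies of $C_k$ inside $E^\ast$'' need not exist. The proposed fix---designate $u,v,w$ as three consecutive cycle vertices and walk greedily from $u$ (or $v$) to $w$ inside $E^\ast$---does not address this: $w$ is only a codegree neighbor of the pair $uv$ and need not lie in $E^\ast$ at all, and even if it does, a walk of the required length has a forced parity that the bipartition of $E^\ast$ can render impossible. The hypotheses on $d_{\mathcal{H}}(uw)$ and $d_{\mathcal{H}}(vw)$ are there so that one can route two shadow edges of the eventual $k$-cycle through $w$ directly (one of them having huge codegree and the other having a spare codegree neighbor beyond $v$), not so that $w$ can be absorbed into a walk in $E^\ast$.
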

The proof of~{\cite[Theorem~6.2]{KMV15a}} implies the following lemma. 
We refer the interested reader to the Appendix for more detail. 
%
\begin{lemma}\label{LEMMA:Cleaning-Algo-cycle}
    Let $k \ge 4$ and $t = \floor*{(k-1)/2}$ be integers.
    Fix a constant $\varepsilon >0$ and let $n$ be sufficiently large. 
    Let $\mathcal{H}$ be an $n$-vertex $C_{k}^3$-free $3$-graph and $\mathcal{H}_q$ be the outputting $3$-graph of the Cleaning Algorithm with input $(\mathcal{H}, k, t)$ as defined here.
    If $|\mathcal{H}| \ge t |\partial\mathcal{H}| - \varepsilon n^2$,  
    then $\mathcal{H}_q$ is $(t, 3k)$-superfull, and moreover, 
    \begin{align*}
        q 
        \le 12k \varepsilon n^2, \quad 
        |\mathcal{H}_q| 
        \ge |\mathcal{H}| - 48k^2 \varepsilon n^2, \quad\text{and}\quad
        |\partial\mathcal{H}_q| \ge |\partial\mathcal{H}| - 50k^2\varepsilon n^2. 
    \end{align*}
\end{lemma}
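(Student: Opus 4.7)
My plan has three stages: (i) read off $(t,3k)$-superfullness directly from the halting condition; (ii) bound $q$ by combining a potential argument with the $C_k^3$-free hypothesis via Lemma~\ref{LEMMA:KMVa-many-ell+1-shadow-cycle}; and (iii) convert the bound on $q$ into the stated losses for $|\mathcal{H}_q|$ and $|\partial\mathcal{H}_q|$ through routine accounting of how many hyperedges and shadow edges can vanish per step.

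\textbf{Superfullness at halt.} When the algorithm stops, $\partial\mathcal{H}_q$ contains no type-1 edge, so $d_{\mathcal{H}_q}(e)\ge t$ for every $e\in \partial\mathcal{H}_q$, giving $t$-fullness. It contains no type-3 edge either, so every such $e$ satisfies either $d_{\mathcal{H}_q}(e)=t$ or $d_{\mathcal{H}_q}(e)\ge 3k$. If some $E\in \mathcal{H}_q$ contained two pairs $e,e'$ of codegree strictly less than $3k$, then both codegrees would equal $t$, and $e,e',E$ would realize a type-2 configuration, contradicting the halting rule. The initial pruning of $\mathcal{H}^{\ast}$ guarantees no edge with all three pairs of codegree $\le 3k$ survives into $\mathcal{H}_0$, hence into $\mathcal{H}_q$; thus $\mathcal{H}_q$ is $(t,3k)$-superfull.

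\textbf{Bounding $q$ and the size losses.} Introduce the potential $\Phi(\mathcal{H}_i):=|\mathcal{H}_i|-t|\partial\mathcal{H}_i|$. A type-1 move raises $\Phi$ by at least $1$; a type-2 move preserves $\Phi$ (barring collateral shadow drops, which only help); a type-3 move lowers $\Phi$ by at most $3k-1-t$. The hypothesis gives $\Phi(\mathcal{H})\ge -\varepsilon n^2$, and a matching upper bound $\Phi(\mathcal{H}_q)\le O(\varepsilon n^2)$ follows from Lemma~\ref{LEMMA:KMVa-many-ell+1-shadow-cycle}: otherwise $\mathcal{H}_q$ would contain at least $\varepsilon n^2$ shadow edges of codegree $\ge t+1$, and since $\mathcal{H}_q$ is $t$-full with codegrees either $t$ or $\ge 3k$, the auxiliary codegree hypotheses of the lemma hold, producing $C_k^3\subseteq\mathcal{H}$, a contradiction. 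A parallel argument caps the number of type-2 iterations, since each yields two codegree-$t$ pairs forming a hyperedge, and too many such configurations would again trigger Lemma~\ref{LEMMA:KMVa-many-ell+1-shadow-cycle}. Aggregating across the three types yields $q\le 12k\varepsilon n^2$. Since each step deletes at most $3k-1$ hyperedges and $|\mathcal{H}^{\ast}|=O(k\varepsilon n^2)$ (by Lemma~\ref{LEMMA:KMVa-ell+1-full} applied to small-codegree pairs, or directly bounding pairs of codegree $\le 3k$), one gets $|\mathcal{H}|-|\mathcal{H}_q|\le 48k^2\varepsilon n^2$. The shadow bound follows by noting that each step removes exactly one designated shadow edge plus side-effect losses bounded by the hyperedge decrease, so $|\partial\mathcal{H}|-|\partial\mathcal{H}_q|\le 50k^2\varepsilon n^2$.

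\textbf{Main obstacle.} The delicate point is bounding the type-2 iterations: since $\Phi$ is neutral on them, the bound cannot come from potential and must instead come from the embedding Lemma~\ref{LEMMA:KMVa-many-ell+1-shadow-cycle}. For odd $k$, verifying the auxiliary codegree condition in part (ii) requires the ambient edge to contain a pair of codegree $\ge 3k$, which is precisely why the algorithm first strips $\mathcal{H}^{\ast}$ and processes edges in increasing order of type; getting these hypotheses to align cleanly is the crux of the argument.
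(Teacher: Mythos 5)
Your argument for $(t,3k)$-superfullness at halt is essentially correct, and your identification of the potential $\Phi(\mathcal{H}_i) = |\mathcal{H}_i| - t\,|\partial\mathcal{H}_i|$ with its per-type behaviour (type-1 raises $\Phi$ by $\ge 1$, type-2 is neutral, type-3 drops $\Phi$ by at most $3k-1-t$) matches the accounting the paper does implicitly. You also correctly apply Lemma~\ref{LEMMA:KMVa-many-ell+1-shadow-cycle} to cap the number of type-3 steps: those deleted shadow edges have codegree $\ge t+1 = \sigma(C_k)$ in $\mathcal{H}_0$, so too many of them would force $C_k^3 \subseteq \mathcal{H}_0 \subseteq \mathcal{H}$.

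The gap is precisely where you flagged it as the crux: the type-2 steps. Your proposed bound --- ``too many such configurations would again trigger Lemma~\ref{LEMMA:KMVa-many-ell+1-shadow-cycle}'' --- does not go through. Every type-2 removal $f_j$ has codegree \emph{exactly} $t$ in $\mathcal{H}_{j-1}$, and therefore codegree only $\ge t$ in $\mathcal{H}_0$ (codegrees are monotone nonincreasing under the algorithm, so you cannot upgrade $t$ to $t+1$ in any ambient graph). But Lemma~\ref{LEMMA:KMVa-many-ell+1-shadow-cycle} needs codegree $\ge \sigma(C_k) = t+1$ for all edges in the set $E$; codegree $t$ pairs simply do not meet the hypothesis, and no amount of them produces a $C_k^3$-embedding through that lemma. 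So the number of type-2 steps is not bounded by this route, and since type-2 is neutral for $\Phi$, the potential argument alone also gives you nothing about $|I_2|$.

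The missing ingredient is a combinatorial observation about the algorithm itself, not an embedding lemma: if step $j$ is of type 2, removing $f_j$ also destroys the hyperedge $f_j \cup f'$ witnessing the type-2 condition, dropping $d_{\mathcal{H}_j}(f')$ from $t$ to at most $t-1$; hence $f'$ is a type-1 edge in $\mathcal{H}_j$, and since type-1 has the highest priority, step $j+1$ is forced to be type-1. This gives an injection from type-2 indices to type-1 indices, so $|I_2| \le |I_1|$. Feeding that into the sharper accounting $|\mathcal{H}_q| \ge |\mathcal{H}_0| - (t-1)|I_1| - t|I_2| - 3k|I_3| \ge |\mathcal{H}_0| - (t-\tfrac12)q - 3k|I_3|$ and comparing against the Kruskal--Katona type upper bound $|\mathcal{H}_q| \le t|\partial\mathcal{H}_q| + o(n^2)$ from Proposition~\ref{APPENDIX:PROP:Turan-cycle-shadow} extracts the desired $q \le 12k\varepsilon n^2$; from there your derivation of the $|\mathcal{H}_q|$ and $|\partial\mathcal{H}_q|$ bounds is routine and correct.
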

In the proof of Theorem~\ref{THM:GenTuran-Cycle-Stability}, we need the following strengthen of~{\cite[Lemma~3.5]{KMV15a}}. 
Since two proofs are essentially the same, we omit its proof and refer the interested reader to the Appendix for more detail. 
%
\begin{lemma}\label{LEMMA:KMVa-superfull-complete-bipartite-cycle}
    Let $k \ge 4$, $t := \left\lfloor(k-1)/2\right\rfloor$, and $\mathcal{H}$ be a $t$-superfull $3$-graph containing two disjoint sets $W_1, W_2 \subseteq V(\mathcal{H})$ each of size at least $3k$ such that every pair of vertices $(w_1, w_2) \in W_1 \times W_2$ has codegree exactly $t$ in $\mathcal{H}$. 
    If $C_k^3 \not\subseteq \mathcal{H}$, then there exists a $t$-set $L\subseteq V(\mathcal{H}) \setminus (W_1\cup W_2)$ such that $N_{\mathcal{H}}(w_1w_2) = L$ for all pairs $(w_1, w_2) \in W_1 \times W_2$. 
\end{lemma}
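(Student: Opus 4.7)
For every pair $(w_1, w_2) \in W_1 \times W_2$, set $L(w_1, w_2) := N_{\mathcal{H}}(w_1 w_2)$, so $|L(w_1, w_2)| = t$ by hypothesis. The goal splits naturally into two parts: (a)~every such $L(w_1, w_2)$ avoids $W_1 \cup W_2$, and (b)~all of the $L(w_1, w_2)$ coincide.

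\textbf{Part (a): disjointness.} Suppose, for contradiction, that some $w \in L(w_1, w_2)$ lies in $W_1$ with $w \ne w_1$. The edge $\{w, w_1, w_2\} \in \mathcal{H}$ then contains two pairs in $W_1 \times W_2$, namely $\{w, w_2\}$ and $\{w_1, w_2\}$, both of codegree exactly $t < 3k$. This violates the $(t, 3k)$-superfull property, which allows at most one deficient pair per edge. The same reasoning excludes $w \in W_2$, so $L(w_1, w_2) \subseteq V(\mathcal{H}) \setminus (W_1 \cup W_2)$. A useful byproduct is that for every $v \in L(w_1, w_2)$ the pairs $\{v, w_1\}$ and $\{v, w_2\}$ must each have codegree at least $3k$, since the pair $\{w_1, w_2\}$ already absorbs the one deficient pair allowed in the edge $\{v, w_1, w_2\}$.

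\textbf{Part (b): constancy.} Suppose, for contradiction, $L(w_1, w_2) \ne L(w_1^*, w_2^*)$ for two pairs in $W_1 \times W_2$. Considering the intermediate pair $(w_1, w_2^*)$ and relabeling, we may assume the offending pairs share a coordinate; without loss of generality $w_1 = w_1^*$. Pick $v \in L(w_1, w_2) \setminus L(w_1, w_2^*)$ and any $v^* \in L(w_1, w_2^*)$, so $v^* \ne v$. I now embed $C_k^3$ in $\mathcal{H}$ along the shadow cycle $v_1 = w_1, v_2 = w_2, v_3, \ldots, v_{k-1}, v_k = w_2^*$, forcing the extension at $v_1 v_2$ to be $v$ and the extension at $v_k v_1$ to be $v^*$. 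For even $k$ the middle vertices $v_3, \ldots, v_{k-1}$ are drawn alternately from $W_1$ and $W_2$, so each middle shadow edge sits in the bipartite slab of codegree $t \ge 1$; for odd $k$ I correct the parity by inserting the helper vertex $v$ itself as $v_3$, using the byproduct of Part~(a) that the pair $\{w_2, v\}$ and the pairs $\{v, w_1'\}$ for $w_1' \in W_1$ have codegree at least $3k$. With the two rigid extensions $v, v^*$ fixed and the at most $k$ middle extensions drawn greedily from codegree sets of size $\ge t$ (or $\ge 3k$ at edges touching $v$), the $3k$-slack supplies pairwise distinct extensions avoiding the $\le 2k$ previously used vertices, and we obtain the forbidden $C_k^3 \subseteq \mathcal{H}$.

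\textbf{Main obstacle.} The delicate point is assembling a cycle whose middle shadow pairs are simultaneously dense enough to yield $k-2$ fresh extension vertices and compatible with the two prescribed extensions $v, v^*$. This is precisely why the slack $3k$ in the superfull hypothesis matters, and why Part~(a) is a prerequisite: without knowing that every $v \in L(w_1, w_2)$ is strongly connected to $w_1$ and $w_2$, the parity fix for odd $k$ would fail. Once the embedding is in place, distinctness of the extensions falls out of routine greediness, as in the proof of \cite[Lemma~3.5]{KMV15a}.
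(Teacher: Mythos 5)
Part (a) follows the same argument as the paper and is correct, as is the reduction at the start of Part (b) to offending pairs sharing a coordinate.

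The embedding in Part (b), however, has a genuine gap. For even $k$ your shadow cycle has $k-2 = 2t$ middle edges, all lying in $W_1 \times W_2$ and hence all of codegree exactly $t$. You need $2t$ pairwise distinct pending vertices drawn from these codegree sets (on top of $v$ and $v^*$), yet nothing prevents every middle edge from sharing a common $t$-element codegree set $L_0$ — e.g.\ when $(w_1,w_2)$ is the only anomalous pair and $N_{\mathcal{H}}(w_1'w_2') = L_0$ for every other pair in $W_1 \times W_2$. In that case at most $t < 2t$ distinct pending vertices are available and the embedding fails. The ``$3k$-slack'' you invoke does not help: for even $k$ no middle edge touches $v$ or $v^*$. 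For odd $k$ there are two further problems. First, making $v$ the cycle vertex $v_3$ directly conflicts with your requirement that $v$ be the pending vertex extending $v_1v_2$; a vertex cannot simultaneously lie on the shadow cycle and serve as the pending vertex of one of its edges. Second, the byproduct of Part (a) gives $d_{\mathcal{H}}(vw_1) \ge 3k$ and $d_{\mathcal{H}}(vw_2) \ge 3k$ only for those $w_1, w_2$ with $v \in N_{\mathcal{H}}(w_1w_2)$; for a generic $w_1' \in W_1$ the pair $\{v, w_1'\}$ need not even lie in $\partial\mathcal{H}$, so your claim that all such pairs have codegree $\ge 3k$ is unsupported.

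The paper sidesteps these difficulties with a different device. Rather than building a shadow $C_k$ directly, it constructs a short shadow cycle entirely inside $W_1 \times W_2$, of length $t+1$ when $t$ is odd and $t+2$ when $t$ is even (so that the short cycle is bipartite), and then applies Hall's theorem to the family of codegree sets $N_{\mathcal{H}}(e_1), \ldots, N_{\mathcal{H}}(e_{t+1})$: each has size exactly $t$, and $N_{\mathcal{H}}(e_1) \ne N_{\mathcal{H}}(e_2)$ forces the union to have size at least $t+1$, so Hall's condition holds and a system of distinct representatives exists. This requires only $t+1$ (or $t+2$) distinct pending vertices — precisely what Hall's theorem can guarantee. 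The short cycle and its pending vertices are then inflated to a copy of $C_{2t+1}^3$ or $C_{2t+2}^3$ using Proposition~\ref{PROP:vario-cycle-length}, where the codegree bound $\ge 3k$ on the pairs $\{w_i, v_i\}$ (not on the middle shadow edges) enables the inflation. Limiting the number of required distinct representatives to roughly half of $k$, so that Hall's theorem applies, is the idea your proposal is missing.
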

\section{Proofs for hypergraph Tur\'{a}n results}\label{SEC:Proof-Hypergraph-Turan}
We prove Theorems~\ref{THM:Hypergraph-Tree-Exact} and~\ref{THM:Hypergraph-Tree-Stability} in this section. 
We will first prove the stability theorem (Theorem~\ref{THM:Hypergraph-Tree-Stability}) and then use it to prove the exact result (Theorem~\ref{THM:Hypergraph-Tree-Exact}). 
\subsection{Proof of Theorem~\ref{THM:Hypergraph-Tree-Stability}}\label{SUBSEC:Proof-Hypergraph-Turan-Stability}
Building upon the foundation laid out in the proof of~{\cite[Theorem~6.2]{KMV15a}}, we begin with a near-extremal $T^{3}$-free $3$-graph $\mathcal{H}$, and apply the Cleaning Algorithm to obtain a $(t,3k)$-superfull subgraph $\mathcal{H}'\subseteq \mathcal{H}$ by removing a negligible number $o(n^2)$ of edges. 
After that, our proof diverges from the prior proof of~{\cite[Theorem~6.2]{KMV15a}} due to the failure of a crucial lemma (Lemma~\ref{LEMMA:KMVa-superfull-complete-bipartite-cycle}) in the context of trees. 
To handle this technical difficulty, we first iteratively remove a 
small amount of edges from $\mathcal{H}'$ to shrink its vertex covering number to at most $\sqrt{n}$ (see Claim~\ref{equ:THM:Hypergraph-Tree-Stability-4}). 
Subsequently, we turn our attention to the common links of $t$-subsets within a covering set $Z$ of the remaining $3$-graph, which still contains most edges in $\mathcal{H}$.
Using properties of trees (Proposition~\ref{PROP:embed-T3-two-t-sets}), we show that only one $t$-set in $Z$ has a large common link, and this particular $t$-set is the object we are looking for. 
\begin{proof}[Proof of Theorem~\ref{THM:Hypergraph-Tree-Stability}]
    Let $T$ be a tree with $\sigma(T) = \tau_{\mathrm{ind}}(T)$.
    Fix $\delta <0$. 
    Let $0 \le \varepsilon \ll \varepsilon_1 \ll \varepsilon_2 \ll \varepsilon_3 \ll \delta$ be sufficiently small constant, and let $n$ be sufficiently large. 
    Let $k := |T|$ and $t := \sigma(T) - 1$. 
    Let $\mathcal{H}$ be a $T^{3}$-free $3$-graph on $n$ vertices with at least $(t/2 -\varepsilon) n^2$ edges.
    For convenience, let $V:= V(\mathcal{H})$. 
    Our aim is to show that $\mathcal{H}$ is $\delta$-close to $\mathcal{S}(n,t)$. 

    First, observe from Proposition~\ref{PROP:shadow-bound-tree} that 
    \begin{align}\label{equ:THM:Hypergraph-Tree-Stability-1}
        |\partial\mathcal{H}|
        \ge \frac{|\mathcal{H}|-\varepsilon n^2}{t}
        \ge \frac{n^2}{2} - 2\varepsilon n^2. 
    \end{align}
    Let $\mathcal{H}_q$ be the output $3$-graph of the Cleaning Algorithm with input $(\mathcal{H}, k, t)$ as defined here. 
    Since $|\mathcal{H}| \ge t n^2/2-\varepsilon n^2 \ge t |\partial\mathcal{H}| - \varepsilon n^2$, 
    it follows from Lemma~\ref{LEMMA:Cleaning-Algo-tree} that $\mathcal{H}_q$ is $(t, 3k)$-superfull and
    \begin{align}\label{equ:THM:Hypergraph-Tree-Stability-2}
        q 
            \le 12k \varepsilon n^2, \quad 
        |\mathcal{H}_q| 
            \ge |\mathcal{H}| - 48k^2 \varepsilon n^2,  \quad\text{and}\quad
        |\partial\mathcal{H}_q| 
            \ge |\partial\mathcal{H}| - 50k^2\varepsilon n^2.
    \end{align}
    Define graphs 
    \begin{align*}
        G' 
            := \left\{e\in \partial\mathcal{H}_q \colon d_{\mathcal{H}_q}(e) = t\right\}, 
            \quad\text{and}\quad
        G''
            := \left\{e\in \partial\mathcal{H}_q \colon d_{\mathcal{H}_q}(e) \ge 3k\right\}. 
    \end{align*}
    It follows from the definition of the Cleaning Algorithm that $G' \cup G'' = \partial\mathcal{H}_q$ and every edge in $\mathcal{H}_q$ contains at least two elements from $G''$. 
    \begin{claim}\label{CLAIM:Turan-tree-l-shadow}
        We have $|G''| \le kn$ and $|G'| \ge \left(1/2 - \varepsilon_1\right)n^2$. 
    \end{claim}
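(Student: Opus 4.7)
The plan is to deduce both halves of the claim from a single structural observation: the $2$-graph $G''$ is itself $T$-free. Granting this, the bound $|G''| \le kn$ comes from a standard degree-reduction argument for $T$-free graphs, after which the lower bound on $|G'|$ is immediate from $|G'| \ge |\partial \mathcal{H}_q| - |G''|$ together with the previously derived inequalities~\eqref{equ:THM:Hypergraph-Tree-Stability-1} and~\eqref{equ:THM:Hypergraph-Tree-Stability-2}.

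To establish that $G''$ is $T$-free, suppose for contradiction that there is a copy of $T$ in $G''$ given by edges $e_1,\dots,e_k \in G''$ (where $k=|T|$ is the number of edges of $T$). By the definition of $G''$, every $e_j$ satisfies $d_{\mathcal{H}_q}(e_j)\ge 3k$. I then apply Fact~\ref{FACT:partial-embedding} to $\mathcal{H}_q$ with $m=k$ and $t=0$: condition~(i) is vacuous, and condition~(ii) follows verbatim from the codegree bound inside $G''$. Hence $T^3\subseteq \mathcal{H}_q\subseteq \mathcal{H}$, contradicting the assumption that $\mathcal{H}$ is $T^3$-free.

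For the edge count, since $G''$ is $T$-free I iteratively delete from $G''$ any vertex of degree less than $k$, each deletion removing fewer than $k$ edges. If this procedure eventually empties the graph, then at most $(k-1)n<kn$ edges are deleted in total, so $|G''|<kn$. Otherwise it terminates at a nonempty subgraph with minimum degree at least $k$, which contains every tree on $k+1$ vertices by a standard greedy embedding along a BFS-ordering of $T$, again contradicting $T$-freeness. Either way $|G''|\le kn$. Combining this with $\partial\mathcal{H}_q = G'\cup G''$ and the estimates \eqref{equ:THM:Hypergraph-Tree-Stability-1} and \eqref{equ:THM:Hypergraph-Tree-Stability-2} gives
\[
    |G'| \;\ge\; |\partial\mathcal{H}_q| - |G''| \;\ge\; |\partial\mathcal{H}| - 50k^2\varepsilon n^2 - kn \;\ge\; \frac{n^2}{2} - (2+50k^2)\varepsilon n^2 - kn \;\ge\; \Big(\frac{1}{2}-\varepsilon_1\Big)n^2,
\]
where the last inequality uses $\varepsilon \ll \varepsilon_1$ and $n$ sufficiently large.

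I do not expect any serious obstacle: the whole argument is a short combination of Fact~\ref{FACT:partial-embedding}, the bounds carried over from the previous paragraphs, and the folklore minimum-degree embedding lemma for trees. The only small point worth checking is that the codegree threshold $3k$ built into the definition of $G''$ is exactly what Fact~\ref{FACT:partial-embedding} requires when embedding a graph with $k$ edges, so no additional slack or new lemma is needed.
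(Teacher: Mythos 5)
Your proof is correct and follows essentially the same route as the paper: deduce $T$-freeness of $G''$ from Fact~\ref{FACT:partial-embedding}, bound $|G''|\le kn$ by the standard minimum-degree argument for tree-free graphs, and then subtract from $|\partial\mathcal{H}_q|$ using \eqref{equ:THM:Hypergraph-Tree-Stability-1} and \eqref{equ:THM:Hypergraph-Tree-Stability-2}. The only cosmetic wrinkle is your invocation of Fact~\ref{FACT:partial-embedding} with $t=0$ whereas the statement requires $t\in[m]$; since $d_{\mathcal{H}_q}(e_1)\ge 3k>|V(T)|$ one simply takes $t=1$ with any valid choice of $w_1$, and the argument goes through unchanged.
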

    \begin{proof}
        It follows from Fact~\ref{FACT:partial-embedding} that $G''$ is $T$-free, Therefore, $|G''| \le kn$. 
        In addition, by~\eqref{equ:THM:Hypergraph-Tree-Stability-1} and the third inequality in~\eqref{equ:THM:Hypergraph-Tree-Stability-2}, we obtain 
        \begin{align*}
            |G'| 
            = |\partial\mathcal{H}_q|- |G''| 
            \ge \frac{n^2}{2}-2\varepsilon n^2 -  50k^2\varepsilon n^2 - \varepsilon n^2
            \ge \frac{n^2}{2}-\varepsilon_1 n^2. 
        \end{align*}
        This proves Claim~\ref{CLAIM:Turan-tree-l-shadow}. 
    \end{proof}
    Define a $3$-graph $\mathcal{G} \subseteq \mathcal{H}_q$ as follows:
    \begin{align*}
        \mathcal{G} := \left\{e\in \mathcal{H}_q \colon \left|\binom{e}{2} \cap G'\right| = 1 \right\}. 
    \end{align*}
    Since every pair in $G'$ contributes exactly $t$ triples to $\mathcal{G}$, 
    it follows from Claim~\ref{CLAIM:Turan-tree-l-shadow} that 
    \begin{align}\label{equ:THM:Hypergraph-Tree-Stability-3}
        |\mathcal{G}|
            \ge t |G'|
            \ge \left(\frac{1}{2} - \varepsilon_1\right)tn^2. 
    \end{align}
    For every vertex $v\in V$ define the \textbf{light link} of $v$ as 
    \begin{align*}
        \widehat{L}(v) := \left\{e\in G' \colon e\cup \{v\} \in \mathcal{H}_q\right\}. 
    \end{align*}
    It is easy to see from the definitions that 
    \begin{align}\label{equ:THM:Hypergraph-Tree-Stability-4}
        |\mathcal{G}| = \sum_{v\in V} |\widehat{L}(v)|, \quad\text{and}\quad
        \widehat{L}(v) \subseteq \binom{N_{G''}(v)}{2} \quad\text{for all}\quad v\in V(\mathcal{H}), 
    \end{align}
    where recall that $N_{G''}(v)$ is the set of neighbors of $v$ in $G''$. 
    For every integer $i\ge 0$ define 
    \begin{align*}
        Z_i := \left\{v\in V \colon |\widehat{L}(v)| \ge \frac{n^{2-2^{-i}}}{(\log n)^2}\right\} 
        \quad\text{and}\quad
        U_i:= V\setminus Z_i.  
    \end{align*}
    Additionally, let 
    \begin{align*}
        \mathcal{G}_0 
        \coloneqq \left\{e\in \mathcal{G} \colon |e\cap Z_0|=1 \text{ and } e\setminus Z_{0}\in G^\prime\right\},
        \quad\text{and}\quad
        \mathcal{G}_i \coloneqq \left\{e\in \mathcal{G}_{i-1} \colon |e\cap Z_i|=1 \right\} 
    \end{align*}
    for $i \ge 1$. 
    Observe that for all $i\ge 1$, 
    \begin{align*}
        V \supseteq Z_0 \supseteq Z_1 \supseteq \cdots \supseteq Z_{i}, \quad
        U_0 \subseteq U_1 \subseteq \cdots \subseteq U_i \subseteq V, \quad\text{and}\quad
        \mathcal{G} \supseteq \mathcal{G}_0 \supseteq \mathcal{G}_1 \supseteq \cdots \supseteq \mathcal{G}_i. 
    \end{align*}
    \begin{claim}\label{CLAIM:GenTuran-tree-Zk}
        We have $|Z_i| \le 2kn^{2^{-i-1}} \log n$ and $|\mathcal{G}_i|\ge |\mathcal{G}| - \frac{2(i+1)n^2}{(\log n)^2}$ for all $i \ge 0$. 
        In particular, 
        \begin{align*}
            \binom{|Z_k|}{t} \le \sqrt{n} 
            \quad\text{and}\quad
            |\mathcal{G}_k| 
                \ge \left(\frac{1}{2} - 2\varepsilon_1\right)tn^2. 
        \end{align*}
    \end{claim}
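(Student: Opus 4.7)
The plan is to establish both bounds by induction on $i$, with the $|Z_i|$ estimate feeding into the $|\mathcal{G}_i|$ estimate. Both arguments rely on two facts collected earlier in the proof: the containment $\widehat{L}(v)\subseteq\binom{N_{G''}(v)}{2}$, which is valid because the $(t,3k)$-superfullness of $\mathcal{H}_q$ forces every edge of $\mathcal{H}_q$ containing a codegree-$t$ pair $e\in G'$ to have its other two pairs in $G''$; and the degree-sum bound $\sum_{v}|N_{G''}(v)|=2|G''|\le 2kn$ from Claim~\ref{CLAIM:Turan-tree-l-shadow}.

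For the $|Z_i|$ bound, observe that if $v\in Z_i$ then $\binom{|N_{G''}(v)|}{2}\ge|\widehat{L}(v)|\ge n^{2-2^{-i}}/(\log n)^2$, so $|N_{G''}(v)|\ge\sqrt{2}\,n^{1-2^{-i-1}}/\log n$; summing over $Z_i$ against $\sum_v|N_{G''}(v)|\le 2kn$ yields $|Z_i|\le 2k n^{2^{-i-1}}\log n$ directly, with no recursive dependence.

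For the $|\mathcal{G}_i|$ bound, the key observation is that each edge of $\mathcal{G}$ has a unique \emph{apex} --- the vertex outside its lone $G'$-pair --- and unfolding the definitions shows $\mathcal{G}_i$ consists exactly of those edges of $\mathcal{G}$ whose apex lies in $Z_i$ and whose $G'$-pair is contained in $U_0$. The base case $i=0$ splits $\mathcal{G}\setminus\mathcal{G}_0$ into edges whose apex lies in $U_0$ (contributing at most $\sum_{a\in U_0}|\widehat{L}(a)|\le n\cdot n/(\log n)^2=n^2/(\log n)^2$) and edges whose $G'$-pair meets $Z_0$ (contributing at most $t|Z_0|n=O(n^{3/2}\log n)$, which is negligible). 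For the inductive step, $\mathcal{G}_{i-1}\setminus\mathcal{G}_i$ consists exactly of edges whose apex lies in $Z_{i-1}\setminus Z_i$; using $|\widehat{L}(a)|<n^{2-2^{-i}}/(\log n)^2$ for such apices together with the already-proved bound $|Z_{i-1}|\le 2k n^{2^{-i}}\log n$ shows the loss is $o(n^2)$ per step, which telescopes over the $k+1$ steps into $|\mathcal{G}|-|\mathcal{G}_k|=o(n^2)$. The claimed bound $2(i+1)n^2/(\log n)^2$ then follows (up to absorbed constants) for $n$ large.

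The ``in particular'' statements are immediate consequences: since $t\le k<2^k$, the exponent $t/2^{k+1}$ in $|Z_k|^t\le(2k\log n)^t n^{t/2^{k+1}}$ is strictly less than $1/2$, yielding $\binom{|Z_k|}{t}\le|Z_k|^t\le\sqrt n$ for large $n$; and $|\mathcal{G}_k|\ge|\mathcal{G}|-o(n^2)\ge\left(\frac{1}{2}-\varepsilon_1\right)tn^2-o(n^2)\ge\left(\frac{1}{2}-2\varepsilon_1\right)tn^2$. The main subtlety I anticipate is the calibration between the square-root produced when passing from $|\widehat{L}(v)|$ to $|N_{G''}(v)|$ and the threshold exponent $2-2^{-i}$: it is exactly this matching that doubles the ``saving'' at each level and forces $|Z_i|$ to shrink to $O(n^{1/2^{k+1}}\log n)$ by step $i=k$, which is what renders the eventual set of ``centers'' small enough to serve as the covering set of Theorem~\ref{THM:Hypergraph-Tree-Stability}.
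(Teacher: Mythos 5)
Your proof is correct and follows essentially the same route as the paper's: induction on $i$, using $\widehat{L}(v)\subseteq\binom{N_{G''}(v)}{2}$ and the degree-sum constraint $\sum_v|N_{G''}(v)|=2|G''|\le 2kn$. A few of your framings are genuinely cleaner. You correctly observe that the $|Z_i|$ bound is obtained directly from the degree-sum bound with no recursive dependence (the paper's ``inductive'' wrapper for this part is cosmetic). Your apex characterization of $\mathcal{G}_i$ --- an edge belongs to $\mathcal{G}_i$ iff its apex is in $Z_i$ and its $G'$-pair lies in $U_0$ --- is exactly right, and it streamlines the inductive step by showing that $\mathcal{G}_{i-1}\setminus\mathcal{G}_i$ is precisely the set of edges whose apex drops out of $Z_i$; in fact this also makes the paper's second correction term $t|Z_{i+1}|n$ (for edges with two vertices in $Z_{i+1}$) vacuous, since no edge of $\mathcal{G}_{i}$ can have two vertices in $Z_0\supseteq Z_{i+1}$.

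One small caveat on precision. You write that the stated per-step bound of $2n^2/(\log n)^2$ follows ``up to absorbed constants.'' That is not quite accurate: using $|Z_{i-1}|\le 2kn^{2^{-i}}\log n$ and $|\widehat{L}(a)|<n^{2-2^{-i}}/(\log n)^2$ for $a\notin Z_i$, the per-step loss comes to $O(n^2/\log n)$, which exceeds the claimed $2n^2/(\log n)^2$ by a factor of $\log n$ (up to constants). The paper's own intermediate inequality overclaims the same way. As you note, this is harmless: all that is used downstream is that the total loss over the $k+1$ steps is $o(n^2)$, and hence $|\mathcal{G}_k|\ge\left(\frac12-2\varepsilon_1\right)tn^2$ for large $n$; similarly $\binom{|Z_k|}{t}\le\sqrt n$ needs only $t/2^{k+1}<1/2$, which holds since $t\le k<2^k$. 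So the ``in particular'' conclusions --- the only ones invoked later --- are established correctly by your argument.
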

    \begin{proof}
        We prove it by induction on $i$. 
        For the base case $i=0$, 
        observe from the second part of~\eqref{equ:THM:Hypergraph-Tree-Stability-4} that 
        for every $v\in Z_0$, we have $d_{G''}(v) \ge \sqrt{|\widehat{L}(v)|} \ge n^{1/2}/\log n$ (as the worst case is that $\widehat{L}(v)$ is a complete graph on $d_{G''}(v)$ vertices). 
        Therefore, by Claim~\ref{CLAIM:Turan-tree-l-shadow}, 
        \begin{align*}
            2kn
            \ge 2|G''| 
            \ge \sum_{v\in Z_0}d_{G''}(v) 
            \ge \sum_{v\in Z_0}\sqrt{|\widehat{L}(v)|} 
            \ge |Z_0|\frac{n^{1/2}}{\log n}, 
        \end{align*}
        which implies that $|Z_0| \le 2kn^{1/2}\log n$. 
        Consequently, the number of edges in $G'$ that have nonempty intersection with $Z_0$ is at most $|Z_0|n \le 2kn^{3/2}\log n$.
        Observe that every triple in $\mathcal{G}$ with at least two vertices in $Z_0$ must contain a pair (in $G'$) with (at least) one endpoint in $Z_0$. Therefore, the number of edges in $\mathcal{G}$ with at least two vertices in $Z_0$ is at most $2k t n^{3/2}\log n$ (recall that each edge in $G'$ contributes $t$ triples in $\mathcal{G}$). 
        Therefore, 
        \begin{align*}
            |\mathcal{G}_0| 
                 \ge  |\mathcal{G}| - \sum_{v \in V\setminus Z_0}|\widehat{L}(v)| -2k t n^{3/2}\log n 
                 & \ge |\mathcal{G}| - n \times \frac{n}{(\log n)^2} - 2k t n^{3/2}\log n \\
                 & \ge |\mathcal{G}| - \frac{2n^2}{(\log n)^2}. 
        \end{align*}
        Now suppose that $|Z_i| \le 2kn^{2^{-i-1}}\log n$ holds for some $i \ge 0$.
        Repeating the argument above to $Z_{i+1}$, we obtain 
        \begin{align*}
            2kn
            \ge 2|G''| 
            \ge \sum_{v\in Z_{i+1}}d_{G''}(v) 
            \ge \sum_{v\in Z_{i+1}}\sqrt{|\widehat{L}(v)|} 
            \ge |Z_{i+1}|\frac{n^{1-2^{-i-2}}}{\log n}, 
        \end{align*}
        which implies that $|Z_{i+1}| \le 2kn^{2^{-i-2}}\log n$.
        So it follows from the inductive hypothesis that 
        \begin{align*}
            |\mathcal{G}_{i+1}| 
                 \ge  |\mathcal{G}_i| - \sum_{v \in Z_i\setminus Z_{i+1}}|\widehat{L}(v)| -t|Z_{i+1}|n
                 & \ge |\mathcal{G}_i| - |Z_i|\times \frac{n^{2-2^{-i-1}}}{(\log n)^2} -  2ktn^{1+2^{-i-2}}\log n\\
                 & \ge |\mathcal{G}_i| - \frac{2n^2}{(\log n)^2}
                 \ge |\mathcal{G}| - \frac{2(i+2)n^2}{(\log n)^2}. 
        \end{align*}
        Here, $t|Z_{i+1}|n$ is an upper bound for the number of edges in $\mathcal{G}_{i}$ with at least two vertices in $Z_{i+1}$. 
    \end{proof}
    Let $\mathcal{G}_k'$ denote the outputting $3$-graph of the Cleaning Algorithm with input $\left(\mathcal{G}_k, k, t\right)$ as defined here. 
    Since $|\mathcal{G}_k| \ge t n^2/2 - 2\varepsilon_1 tn^2$, it follows from Lemma~\ref{LEMMA:Cleaning-Algo-tree} that 
    \begin{align*}
        |\mathcal{G}_k'|
        \ge \left(\frac{t}{2} - \varepsilon_2\right) n^2. 
    \end{align*}
    Recall from the definition of $\mathcal{G}_k$ that every edge in $\mathcal{G}_k'$ contains exactly one vertex in $Z_{k}$. 
    Similar to the proof of Claim~\ref{CLAIM:Turan-tree-l-shadow}, the number of pairs in $\binom{U_k}{2}$ that have codegree exactly $t$ in $\mathcal{G}_k'$ is at least $(1/2-\varepsilon_3)n^2 - |Z_k|n \ge (1/2-2\varepsilon_3)n^2$. 
    Therefore, the $3$-graph 
    \begin{align*}
        \mathcal{G}_k''
            := \left\{e \in \mathcal{G}_k' \colon  \text{the pair } \binom{e}{2}\cap \binom{U_k}{2} \text{ has codegree exactly $t$ }\right\}
    \end{align*}
    satisfies 
    \begin{align}\label{equ:THM:Hypergraph-Tree-Stability-5}
        |\mathcal{G}_k''|
        \ge \left(\frac{1}{2}-2\varepsilon_3\right) t n^2. 
    \end{align}
    For every set $T \subseteq Z_k$, let $L_{\mathcal{G}_k''}(T) := \bigcap_{v\in T}L_{\mathcal{G}_k''}(v)$ denote the link of $T$. 
    Observe that for distinct $t$-sets $T_1, T_2 \subseteq Z_{k}$, the two link graphs $L_{\mathcal{G}_k''}(T_1)$ and $L_{\mathcal{G}_k''}(T_2)$ are edge disjoint, and moreover, $\bigcup_{T\in \binom{Z_k}{t}} L_{\mathcal{G}_k''}(T)$ is a partition of the induced subgraph of $\partial\mathcal{G}_{k}''$ on $U_k$. 
    Therefore, we have 
    \begin{align}\label{equ:sum-link-Z-t-set}
        \sum_{T\in \binom{Z_k}{t}}|L_{\mathcal{G}_k''}(T)| = {|\mathcal{G}_k''|}/{t}. 
    \end{align}
    Define 
    \begin{align*}
        \mathcal{Z} := \left\{T\in \binom{Z_k}{t} \colon |L_{\mathcal{G}_k''}(T)| \ge 4kn\right\}.  
    \end{align*} 
    For every $T\in \mathcal{Z}$ let $L'(T)$ be a maximum induced subgraph of $L_{\mathcal{G}_k''}(T)$ with minimum degree at least $3k$. 
    By greedily removing vertex with degree less than $3k$ we have  
    \begin{align}\label{equ:Hypergraph-tree-stability-L'T}
        |L'(T)| \ge |L_{\mathcal{G}_k''}(T)| - 3kn >0. 
    \end{align} 
    In addition, it follows from~\eqref{equ:sum-link-Z-t-set} and Claim~\ref{CLAIM:GenTuran-tree-Zk} that 
    \begin{align}\label{equ:sum-link-mathcalZ}
        \sum_{T\in \mathcal{Z}}|L_{\mathcal{G}_k''}(T)| 
        \ge {|\mathcal{G}_k''|}/{t} - 4kn^{3/2}.  
    \end{align}
    Let $\mathrm{Supp}_T$ denote the set of vertices in the graph $L'(T)$ with positive degree. 
    The following claim follows easily from Proposition~\ref{PROP:embed-T3-two-t-sets}. 
    \begin{claim}\label{CLAIM:Turan-disjoint-support}
        We have $\mathrm{Supp}_T \cap \mathrm{Supp}_{T'}= \emptyset$ for all distinct sets $T, T' \in \mathcal{Z}$. 
    \end{claim}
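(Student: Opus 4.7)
The plan is to argue by contradiction, directly verifying the hypotheses of Proposition~\ref{PROP:embed-T3-two-t-sets}. Suppose for contradiction that distinct $T, T' \in \mathcal{Z}$ satisfy $\mathrm{Supp}_T \cap \mathrm{Supp}_{T'} \ne \emptyset$. I would then apply Proposition~\ref{PROP:embed-T3-two-t-sets}, with the tree from the hypothesis of Theorem~\ref{THM:Hypergraph-Tree-Stability} playing the role of the tree there, and with the choices
\[
    S_1 := T, \quad S_2 := T', \quad V_1 := \mathrm{Supp}_T, \quad V_2 := \mathrm{Supp}_{T'}, \quad G_1 := L'(T), \quad G_2 := L'(T').
\]

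The placement conditions come for free: $S_1$ and $S_2$ are distinct $t$-subsets of $Z_k \subseteq V$, while every edge of $\mathcal{G}_k''$ contains exactly one vertex of $Z_k$, so every pair in $L_{\mathcal{G}_k''}(T)$ --- hence every vertex of $\mathrm{Supp}_T$ --- lies in $U_k = V \setminus Z_k$. Consequently $V_1, V_2 \subseteq V \setminus (S_1 \cup S_2)$, and by assumption $V_1 \cap V_2 \ne \emptyset$. For hypothesis (i), the construction of $L'(T_i)$ as a maximum induced subgraph of $L_{\mathcal{G}_k''}(T_i)$ with minimum degree at least $3k$ gives $d_{G_i}(v) \ge 3k$ for every $v$ in its vertex set, which is precisely $V_i$. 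For hypothesis (ii), the definition of the common link yields
\[
    G_1 \;=\; L'(T) \;\subseteq\; L_{\mathcal{G}_k''}(T) \;=\; \bigcap_{v \in T} L_{\mathcal{G}_k''}(v) \;\subseteq\; L_{\mathcal{G}_k''}(v) \;\subseteq\; L_{\mathcal{H}}(v)
\]
for every $v \in S_1 = T$, where the last containment uses $\mathcal{G}_k'' \subseteq \mathcal{H}_q \subseteq \mathcal{H}$; the analogous chain holds for $G_2$ and $S_2$. Proposition~\ref{PROP:embed-T3-two-t-sets} then produces an embedded expansion of the tree inside $\mathcal{H}$, contradicting the $T^3$-freeness of $\mathcal{H}$.

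I do not anticipate any real obstacle: the claim is essentially a direct application of Proposition~\ref{PROP:embed-T3-two-t-sets}, and all the work has already been invested in engineering $\mathcal{G}_k$ and $\mathcal{G}_k''$ so that edges have exactly one vertex in $Z_k$, which forces $V_1, V_2 \subseteq U_k$ and automatically separates the supports from the $t$-sets $S_1, S_2$. The only minor point to manage carefully is the notational overlap between the tree $T$ of the theorem and the generic $t$-subsets $T, T' \in \mathcal{Z}$, but the meaning is unambiguous from context.
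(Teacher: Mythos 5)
Your proof is correct and is exactly the argument the paper intends when it writes that the claim ``follows easily from Proposition~\ref{PROP:embed-T3-two-t-sets}''; you have simply written out the verification of the hypotheses (distinctness of $S_1, S_2$, disjointness of $V_1, V_2$ from $Z_k \supseteq S_1 \cup S_2$ via the fact that every edge of $\mathcal{G}_k''$ meets $Z_k$ in exactly one vertex, the minimum-degree condition from the construction of $L'(T)$, and the containment chain $L'(T) \subseteq L_{\mathcal{G}_k''}(v) \subseteq L_{\mathcal{H}}(v)$). The one thing worth keeping in mind is the paper's slight inconsistency between ``$T$ is a tree with $k$ vertices'' in Proposition~\ref{PROP:embed-T3-two-t-sets} and ``$k := |T|$'' (edges) in the stability proof, but as the remark following the proposition notes, condition~(i) can be weakened to $d_{G_i}(v)\ge 1$ when $\sigma(T)=\tau_{\mathrm{ind}}(T)$, so this off-by-one in the threshold is immaterial here.
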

    %
    Since $L'(T) \subseteq \binom{\mathrm{Supp}_T}{2}$ for all $T \in \mathcal{Z}$ and $\sum_{T\in \mathcal{Z}}|L_{\mathcal{G}_k''}(T)| \ge |\mathcal{G}_k''|/t - 4kn^{3/2}$ (see~\eqref{equ:sum-link-mathcalZ}), it follows from~\eqref{equ:THM:Hypergraph-Tree-Stability-5} and~\eqref{equ:Hypergraph-tree-stability-L'T} that 
    \begin{align*}
        \sum_{T\in \mathcal{Z}} \binom{|\mathrm{Supp}_T|}{2}
        \ge \sum_{T\in \mathcal{Z}}|L'(T)| 
        \ge \sum_{T\in \mathcal{Z}}\left(|L_{\mathcal{G}_k''}(T)|-3kn\right)
        & \ge |\mathcal{G}_k''|/t  - 4kn^{3/2}- 3kn|\mathcal{Z}| \\
        & \ge \left(\frac{1}{2}-3\varepsilon_3\right) n^2. 
    \end{align*}
    It follows from Claim~\ref{CLAIM:Turan-disjoint-support} that $\sum_{T\in \mathcal{Z}} |\mathrm{Supp}_T| \le |U_k| \le n$. 
    Combined with the inequality above and using the fact $\sum_{i\ge 1} \binom{x_i}{2} \le \binom{x_1}{2} + \binom{\sum_{i\ge 2}x_i}{2}$, we obtain 
    \begin{align*}
        \max\left\{|\mathrm{Supp}_T| \colon T \in \mathcal{Z}\right\} 
        \ge \left(1-\sqrt{6\varepsilon_3}\right)n. 
    \end{align*}
    Let $T_{\ast} \in \mathcal{Z}$ be the (unique) $t$-set with $|\mathrm{Supp}_{T_{\ast}}|\ge \left(1-\sqrt{6\varepsilon_3}\right)n$. 
    Then, by Claim~\ref{CLAIM:Turan-disjoint-support}, 
    \begin{align*}
        |L_{\mathcal{G}_k''}(T_{\ast})|
        \ge |\mathcal{G}_{k}''|/t - 3kn^{3/2} - \sum_{T\in \mathcal{Z}-T_{\ast}}|L_{\mathcal{G}_k''}(T)|
        & \ge \left(\frac{1}{2}-2\varepsilon_3\right) n^2 -3kn^{3/2} - \binom{\sqrt{6\varepsilon_3}n}{2} \\
        & \ge \left(\frac{1}{2}-\frac{\delta}{2t} \right) n^2. 
    \end{align*}
    Let $L \coloneqq T_{\ast}$. 
    It follows from the definition of 
    $L_{\mathcal{G}_k''}(T_{\ast})$ that for every $v \in L$, 
    \begin{align*}
        d_{\mathcal{H}}(v)
        = |L_{\mathcal{H}}(v)|
        \ge |L_{\mathcal{G}_{k}''}(v)|
        \ge |L_{\mathcal{G}_k''}(T_{\ast})|
        \ge \left(\frac{1}{2}-\frac{\delta}{2t} \right) n^2. 
    \end{align*}
    Additionally, by combining this with Theorem~\ref{THM:KMV-tree} that 
    \begin{align*}
        |\mathcal{H} - L|
        \le |\mathcal{H}| - \sum_{v\in L}d_{\mathcal{H}}(v)  
        \le \left(\frac{t}{2} +o(1)\right)n^2 - t \cdot \left(\frac{1}{2}-\frac{\delta}{2t} \right) n^2 
        \le \delta n^2.
    \end{align*}
    This completes the proof of Theorem~\ref{THM:Hypergraph-Tree-Stability}. 
\end{proof}
%
\subsection{Proof of Theorem~\ref{THM:Hypergraph-Tree-Exact}}\label{SUBSEC:Proof-Hypergraph-Turan-Exact}
In this subsection, we use Theorem~\ref{THM:Hypergraph-Tree-Stability} to prove Theorem~\ref{THM:Hypergraph-Tree-Exact}. 
\begin{proof}[Proof of Theorem~\ref{THM:Hypergraph-Tree-Exact}]
    Let $T$ be a tree with $\sigma(T) = \tau_{\mathrm{ind}}(T)$.
    Let $I\cup J = V(T)$ be a partition such that $I$ is a minimum independent vertex cover of $T$. 
    Let $e_{\ast} := \{u_0, v_0\}$ be a pendant critical edge such that $v_0 \in I$ is a leaf (the existence of such an edge is guaranteed by Proposition~\ref{PROP:tree-R-empty}).  
    Choose $C>0$ sufficiently large such that $\mathrm{ex}(N, T^3) \le C N^2/2$ holds for all integers $N \ge 0$.
    By the theorem of Kostochka--Mubayi--Verstra\"{e}te~\cite{KMV17b} (or Theorem~\ref{THM:Hypergraph-Tree-Stability}), such a constant $C$ exists. 
    Let $0 < \delta \ll \delta_1 \ll C^{-1}$ be sufficiently small, $n \gg C$ be sufficiently large,  $t := \sigma(T) - 1$,  and $q := |\mathcal{S}(n,t)| = \binom{n}{3} - \binom{n-t}{3}$. 
    Let $\mathcal{H}$ be an $n$-vertex $T^3$-free $3$-graph with $q$ edges. 
    Our aim is to show that $\mathcal{H} \cong \mathcal{S}(n,t)$. 

    Let $V:= V(\mathcal{H})$. 
    Since $|\mathcal{H}| = q = (t-o(1))n^2/2$, 
    by Theorem~\ref{THM:Hypergraph-Tree-Stability}, there exists a $t$-set $L:= \{x_1, \ldots, x_{t}\} \subseteq V$ such that 
    \begin{align}\label{equ:Turan-vtx-L}
        d_{\mathcal{H}}(x_i) \ge (1-\delta)\frac{n^2}{2}
        \quad\text{for all}\quad 
        x_i \in L. 
    \end{align}

    Let $V' := V \setminus L$, 
    \begin{align*}
        \tau := \frac{n}{200}, \quad
        D 
        := \left\{y\in V' \colon d_{\mathcal{H}}(y x_i) \ge \tau \text{ for all } x_i \in L \right\}, 
        \quad\text{and}\quad 
        \overline{D} := V' \setminus D. 
    \end{align*}

    Define 
    \begin{align*}
        \mathcal{S} := \left\{e \in \binom{V}{3} \colon |e \cap L| \ge 1\right\}, \quad
        \mathcal{B} := \mathcal{H} - L = \mathcal{H}[V'], 
        \quad\mathrm{and}\quad
        \mathcal{M} := \mathcal{S}\setminus  \mathcal{H}. 
    \end{align*}
    Let $m:= |\mathcal{M}|$ and $b:= |\mathcal{B}|$. 
    Notice that we are done if $b = 0$. So we may assume that $b \ge 1$. 
    Our aim is to show that $b < m$. 
    \begin{claim}\label{CLAIM:Turan-size-D}
        We have $m \ge n |\overline{D}|/6$ and $|\overline{D}| \le 3\delta t n$. 
    \end{claim}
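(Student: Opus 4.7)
Both inequalities are straightforward averaging arguments driven by the degree hypothesis~\eqref{equ:Turan-vtx-L} on vertices of $L$ together with the definition of $\overline{D}$; I would prove them independently.

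For $|\overline{D}|\le 3\delta t n$, the plan is to control, for each fixed $x_i\in L$, the set of vertices whose codegree with $x_i$ is small. By double counting, $\sum_{y\in V\setminus\{x_i\}} d_{\mathcal{H}}(y x_i) = 2 d_{\mathcal{H}}(x_i)\ge (1-\delta)n^2$. Let $S_i:=\{y\in V\setminus\{x_i\}\colon d_{\mathcal{H}}(y x_i)<\tau\}$; using the trivial bound $d_{\mathcal{H}}(y x_i)\le n-2$, I would obtain
\[
(1-\delta)n^2 \;\le\; |S_i|\tau + (n-1-|S_i|)(n-2),
\]
which rearranges to $|S_i|\bigl((n-2)-\tau\bigr)\le \delta n^2 + O(n)$. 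Since $\tau=n/200$, this gives $|S_i|\le (2\delta+o(1))n\le 3\delta n$ for $n$ large. Every $y\in \overline{D}$ belongs to at least one $S_i$, so a union bound over $i\in [t]$ yields $|\overline{D}|\le \sum_{i=1}^{t}|S_i|\le 3\delta t n$.

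For $m\ge n|\overline{D}|/6$, for each $y\in \overline{D}$ I would fix a witness $x_{i(y)}\in L$ with $d_{\mathcal{H}}(y x_{i(y)})<\tau$. Then at least $(n-2)-\tau\ge n/2$ triples of the form $\{y,x_{i(y)},z\}$ with $z\in V\setminus\{y,x_{i(y)}\}$ are absent from $\mathcal{H}$, and since each such triple contains $x_{i(y)}\in L$, each lies in $\mathcal{S}\setminus \mathcal{H}=\mathcal{M}$. Summing over $y\in \overline{D}$ produces at least $|\overline{D}|\cdot n/2$ pairs (missing triple, distinguished $\overline{D}$-vertex). A missing triple contains at most two vertices of $V'$, hence is charged at most twice in this sum, giving $m\ge |\overline{D}|\,n/4\ge n|\overline{D}|/6$, which is the claimed bound with room to spare.

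Neither step should be genuinely hard: the only care required is in the double counting at the end of the second step, verifying that a missing triple of the form $\{y,x_i,z\}$ with $y,z\in \overline{D}$ both choosing the same witness $x_i$ is overcounted by a factor of at most two (and not three, since at most two of the three vertices of the triple can lie in $V'$). The real obstacle in this portion of the argument lies not in the claim itself but in the subsequent step, where these two estimates must be combined with an upper bound on $|\mathcal{B}|$, coming from the $T^3$-freeness of $\mathcal{H}[V']$, to derive the contradiction $b<m$ needed to conclude $\mathcal{H}\cong \mathcal{S}(n,t)$.
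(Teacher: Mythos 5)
Your proof is correct, and it uses essentially the same double-counting/averaging ideas as the paper; the only structural difference is that you derive $|\overline{D}|\le 3\delta t n$ independently via a per-$x_i$ bound on the "small-codegree" sets $S_i$, whereas the paper first establishes the lower bound $m\ge n|\overline{D}|/6$ and combines it with the global upper bound $m\le |\mathcal{S}|-\sum_{x_i\in L}d_{\mathcal{H}}(x_i)\le \delta t n^2/2$ to deduce $|\overline{D}|\le 3\delta t n$. Both routes are elementary averaging; your multiplicity factor of $2$ in the charging for $m$ is sharper than the paper's cautious $3$, but both comfortably give the stated $n|\overline{D}|/6$.
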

    \begin{proof}
        It follows from the definition of $D$ that for every vertex $u\in \overline{D}$, there exists $x_i \in L$ such that the pair $\{u,x_i\}$ contributes at least $n - 2 -\tau$ members (called missing edges) in $\mathcal{M}$. 
        Therefore, we have 
            \begin{align}\label{equ:Turan-missing}
                m 
                \ge  \frac{1}{3}(n-2- \tau)|\overline{D}| 
                \ge \frac{n |\overline{D}|}{6}. 
            \end{align}
        On the other hand, it follows from~\eqref{equ:Turan-vtx-L} that 
        \begin{align*}
            m 
            \le |\mathcal{S}| - \sum_{x_i \in L}d_{\mathcal{H}}(x_i)
            \le t \left(\binom{n}{2} - (1-\delta)\frac{n^2}{2}\right)
            \le  \frac{\delta t n^2}{2}. 
        \end{align*}
        Combined with~\eqref{equ:Turan-missing}, we obtain 
        \begin{align*}
            |\overline{D}|
            \le \frac{\delta t n^2}{2 \times n/6} 
            = 3\delta t n. 
        \end{align*}
        This proves Claim~\ref{CLAIM:Turan-size-D}. 
    \end{proof}
    
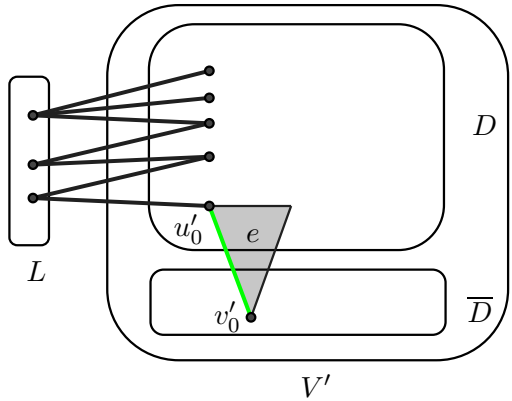
\begin{figure}[htbp]
\centering
\tikzset{every picture/.style={line width=0.85pt}} 
\begin{tikzpicture}[x=0.75pt,y=0.75pt,yscale=-1,xscale=1,scale=0.8]

\draw   (169.06,60.7) .. controls (171.78,60.7) and (173.99,62.91) .. (173.99,65.63) -- (173.99,161.73) .. controls (173.99,164.46) and (171.78,166.67) .. (169.06,166.67) -- (154.26,166.67) .. controls (151.53,166.67) and (149.32,164.46) .. (149.32,161.73) -- (149.32,65.63) .. controls (149.32,62.91) and (151.53,60.7) .. (154.26,60.7) -- cycle ;
\draw   (391.93,27.7) .. controls (407.62,27.7) and (420.33,40.41) .. (420.33,56.1) -- (420.33,141.3) .. controls (420.33,156.98) and (407.62,169.7) .. (391.93,169.7) -- (264.72,169.7) .. controls (249.04,169.7) and (236.32,156.98) .. (236.32,141.3) -- (236.32,56.1) .. controls (236.32,40.41) and (249.04,27.7) .. (264.72,27.7) -- cycle ;
\draw   (415.67,15.67) .. controls (440.34,15.67) and (460.33,35.66) .. (460.33,60.33) -- (460.33,194.33) .. controls (460.33,219) and (440.34,239) .. (415.67,239) -- (255,239) .. controls (230.33,239) and (210.33,219) .. (210.33,194.33) -- (210.33,60.33) .. controls (210.33,35.66) and (230.33,15.67) .. (255,15.67) -- cycle ;
\draw   (413.13,182) .. controls (417.66,182) and (421.33,185.67) .. (421.33,190.2) -- (421.33,214.8) .. controls (421.33,219.33) and (417.66,223) .. (413.13,223) -- (245.52,223) .. controls (241,223) and (237.32,219.33) .. (237.32,214.8) -- (237.32,190.2) .. controls (237.32,185.67) and (241,182) .. (245.52,182) -- cycle ;
\fill[sqsqsq, fill opacity=0.25]  (300,212) -- (274,142)-- (325,142) -- cycle;
\draw[line width=1.5pt,color=green]   (300,212) -- (274,142) ;
\draw[color=sqsqsq]    (274,142) -- (325,142) ;
\draw[color=sqsqsq]   (300,212)  -- (325,142)  ;
\draw[line width=1.5pt,color=sqsqsq]    (164,85) -- (274,57) ;
\draw[line width=1.5pt,color=sqsqsq]    (164,85) -- (274,74) ;
\draw[line width=1.5pt,color=sqsqsq]    (164,85) -- (274,90) ;
\draw[line width=1.5pt,color=sqsqsq]    (164,116) -- (274,90) ;
\draw[line width=1.5pt,color=sqsqsq]    (164,116) -- (274,111) ;
\draw[line width=1.5pt,color=sqsqsq]    (164,137) -- (274,111) ;
\draw[line width=1.5pt,color=sqsqsq]    (164,137) -- (274,142) ;
\draw [fill=uuuuuu] (300,212) circle (2pt);
\draw [fill=uuuuuu] (274,142) circle (2pt);
%
\draw [fill=uuuuuu] (164,85) circle (2pt);
\draw [fill=uuuuuu] (274,57) circle (2pt);
\draw [fill=uuuuuu] (274,74) circle (2pt);
\draw [fill=uuuuuu] (274,90) circle (2pt);
\draw [fill=uuuuuu] (164,116) circle (2pt);
\draw [fill=uuuuuu] (164,137) circle (2pt);
\draw [fill=uuuuuu] (274,111) circle (2pt);
%
\draw (158,175) node [anchor=north west][inner sep=0.75pt]   [align=left] {$L$};
\draw (433,197) node [anchor=north west][inner sep=0.75pt]   [align=left] {$\overline{D}$};
\draw (436,83) node [anchor=north west][inner sep=0.75pt]   [align=left] {$D$};
\draw (329,246) node [anchor=north west][inner sep=0.75pt]   [align=left] {$V'$};
\draw (250,145) node [anchor=north west][inner sep=0.75pt]   [align=left] {$u_0'$};
\draw (295,155) node [anchor=north west][inner sep=0.75pt]   [align=left] {$e$};
\draw (275,200) node [anchor=north west][inner sep=0.75pt]   [align=left] {$v_0'$};
\end{tikzpicture}
\caption{Embedding the tree $T$.}
\label{fig:hypergraph-turan-tree-1}
\end{figure}
    
    \begin{claim}\label{CLAIM:Turan-size-b-1}
        We have $\mathcal{B} \setminus \mathcal{B}\left[\overline{D}\right] = \emptyset$.
    \end{claim}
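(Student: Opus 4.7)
The plan is to argue by contradiction: assume there is an edge $e = \{u,v,w\} \in \mathcal{B}$ with $u \in D$, and build a copy of $T^3$ inside $\mathcal{H}$, contradicting $T^3$-freeness. The guiding idea is to use the edge $e$ itself as the expansion of the pendant critical edge $e_{\ast} = \{u_0, v_0\}$ of $T$: I would set $\phi(u_0) = u$, $\phi(v_0) = v$, and reserve $w$ as the designated new vertex realising the expansion of $e_{\ast}$. It then suffices to extend $\phi$ to an embedding of $T - v_0$ into $\partial\mathcal{H}$ with $\phi(u_0) = u$ pinned, so that each embedded edge other than $\{u,v\}$ has codegree at least $3k$ in $\mathcal{H}$, and then to invoke Fact~\ref{FACT:partial-embedding} with the pre-assigned pair $\{u,v\}$ and new vertex $w$.

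For the embedding of $T - v_0$, I would exploit the hypothesis $\sigma(T) = \tau_{\mathrm{ind}}(T)$: $I$ is then an independent vertex cover of $T$, so $T$ is bipartite with parts $(I, J)$, and hence every edge of $T - v_0$ runs between $I' := I \setminus\{v_0\}$ and $J \setminus\{u_0\}$. I would map $I'$ bijectively onto $L$ (the sizes match: $|I'| = t = |L|$) and map $J \setminus\{u_0\}$ greedily and injectively into $D \setminus \{u,v,w\}$. Every pair in $L \times (D \cup \{u\})$ has codegree at least $\tau = n/200$ by the definition of $D$, so it lies in $\partial\mathcal{H}$ and comfortably satisfies the codegree threshold $3k$ demanded by Fact~\ref{FACT:partial-embedding}; the greedy step cannot fail since $|D| \ge n/2 \gg k$, and choosing any bijection $I' \to L$ and any injection into $D$ yields a legitimate copy of $T-v_0$ in $\partial\mathcal{H}$ thanks to this complete bipartite structure.

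Finally, I would apply Fact~\ref{FACT:partial-embedding} to the resulting copy of $T$ in $\partial\mathcal{H}$, with the parameter $t$ of the Fact set to $1$ (only the pair $\{u,v\}$ carries a pre-assigned new vertex, namely $w$), producing $T^3 \subseteq \mathcal{H}$ and the sought contradiction. I do not foresee a serious obstacle here. The only nuance is that $w$ must stay distinct from every vertex of the embedded $T - v_0$ and from the expansion vertices subsequently chosen greedily by the Fact, but forbidding the three vertices of $e$ as targets when placing $J \setminus \{u_0\}$ handles the first concern trivially, and the second is automatic from the $3k$-codegree bound. The entire argument is powered by the strong codegree lower bound built into the definition of $D$ together with the bipartite structure of $T$ afforded by $\sigma(T) = \tau_{\mathrm{ind}}(T)$; the pendant critical edge is used only to seed the construction through its pre-expansion by $e$.
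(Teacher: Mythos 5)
Your proposal is correct and follows essentially the same route as the paper's proof: pick a vertex of $e$ in $D$, map it to $u_0$, embed $T-v_0$ into the complete bipartite graph $\partial\mathcal{H}[L,D]$ with $I\setminus\{v_0\}\to L$ and $J\to D$, use $e$ itself as the expansion of the pendant critical edge $e_\ast$, and finish with Fact~\ref{FACT:partial-embedding} via the $\tau\ge 3k$ codegree bound. The bookkeeping you flag (keeping $w$ out of the image of $T-v_0$ by excluding $e$'s vertices as targets, and letting the Fact's greedy step avoid collisions) matches exactly what the paper does.
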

    \begin{proof}
        Suppose to the contrary that there exists an edge $e \in \mathcal{B} \setminus \mathcal{B}\left[\overline{D}\right]$. 
        Then fix $u_0' \in e\cap D$ and $v_0' \in e\setminus \{u_0'\}$. 
        Let $T' := T-v_0$, $I':= I-v_0$, and $D' := D\setminus (e-u_0')$. 
        Since the induced bipartite subgraph of $\partial\mathcal{H}$ on $L \cup D'$ is complete, 
        there exists an embedding $\psi \colon T' \to \partial\mathcal{H}[L, D']$ such that $\psi(I') = L$, $\psi(J) \subseteq D$, and $\psi(u_0) = u_0'$. 
        Since every pair in $\partial\mathcal{H}[L, D]$ has codegree at least $\tau \ge 3k$, by Fact~\ref{FACT:partial-embedding}, the map $\psi$ can be greedily extended to be an embedding $\hat{\psi} \colon T^3 \to \mathcal{H}$ with $\hat{\psi}(e_{\ast}^3) = e$ (see Figure~\ref{fig:hypergraph-turan-tree-1}), a contradiction.
        Here, recall that $e_{\ast}^3$ is the expansion of $e_{\ast}$. 
    \end{proof}

    Observe that the $3$-graph $\mathcal{B}\left[\overline{D}\right]$ is $T^3$-free, so we have $|\mathcal{B}\left[\overline{D}\right]| \le C |\overline{D}|^2$. 
    It follows from Claim~\ref{CLAIM:Turan-size-b-1} that 
    \begin{align}\label{equ:Turan-bad}
        b 
        = |\mathcal{B}| 
        = |\mathcal{B}\left[\overline{D}\right]| 
        \le C |\overline{D}|^2. 
    \end{align}
    By Claim~\ref{CLAIM:Turan-size-D} and~\eqref{equ:Turan-bad}, we obtain  
    \begin{align*}
        b 
        \le  C |\overline{D}|^2
        \le  C |\overline{D}| \times 3\delta t n
        < \frac{n |\overline{D}|}{6}
        = m. 
    \end{align*}
    Therefore, 
    \begin{align*}
        q
        = |\mathcal{H}|
        = |\mathcal{S}| + |\mathcal{B}| - |\mathcal{M}|
        \le q + b - m
        < q, 
    \end{align*}
    a contradiction. 
    This proves Theorem~\ref{THM:Hypergraph-Tree-Exact}. 
\end{proof}
\section{Proofs for anti-Ramsey results}\label{SEC:Proof-antiRamsey}
%
We prove Theorem~\ref{THM:AntiRamsey-Tree-Exact} in this section. 
Let us first present some useful lemmas. 

Using Lemma~\ref{LEMMA:KMVa-ell+1-full} and the theorem of  Kostochka--Mubayi--Verstra\"{e}te~\cite{KMV17b} on linear cycles (or Theorem~\ref{THM:Hypergraph-Tree-Stability}), it is not hard to obtain the following crude bound for $\mathrm{ex}(n, F^3)$ when $F$ is an augmentation of a tree.  
We omit its proof and refer the interested reader to the Appendix for more detail. 
%
\begin{proposition}\label{PROP:Turan-augmentation}
        Suppose that $F$ is an augmentation of a tree $T$.  
        Then $\mathrm{ex}(n, F^3) \le 3\left(|T|+1\right) n^2$ holds for sufficiently large $n$. 
\end{proposition}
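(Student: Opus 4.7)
\textbf{Proof proposal for Proposition~\ref{PROP:Turan-augmentation}.}

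The plan is to argue by contradiction: suppose $\mathcal{H}$ is an $n$-vertex $F^3$-free $3$-graph with $|\mathcal{H}| > 3(|T|+1)n^2$, and write $k := |T|+1 = |F|$. The goal will be to locate $F^3 \subseteq \mathcal{H}$. First I would apply Lemma~\ref{LEMMA:KMVa-d-full-subgraph} with $d = 3k$ to pull out a $(3k+1)$-full subgraph $\mathcal{H}' \subseteq \mathcal{H}$ satisfying
\[
|\mathcal{H}'| \;\ge\; |\mathcal{H}| - 3k\,|\partial\mathcal{H}| \;>\; 3kn^2 - 3k\tbinom{n}{2} \;>\; \tfrac{3k}{2}n^2,
\]
so $\mathcal{H}'$ is nonempty and still $F^3$-free, and the support of $\partial\mathcal{H}'$ has minimum degree at least $3k+1$, since each pair in $\partial\mathcal{H}'$ has codegree at least $3k+1$ in $\mathcal{H}'$. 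It will then suffice to produce $F^3 \subseteq \mathcal{H}'$.

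I would split into cases based on the position of the augmentation edge $e = E(F) \setminus E(T)$. If $e$ has at least one endpoint outside $V(T)$, then $F$ is either a tree with $k$ edges or the disjoint union of $T$ with a single edge; in either case, Lemma~\ref{LEMMA:KMVa-3k-full-subgraph-tree} embeds the tree component of $F^3$ into $\mathcal{H}'$ on at most $2k$ vertices, and when $F$ has a disjoint edge I would pick any hyperedge of $\mathcal{H}'$ disjoint from the already used vertices (such a hyperedge exists because deleting $2k$ vertices removes fewer than $2k\binom{n}{2} < \tfrac{3k}{2}n^2 \le |\mathcal{H}'|$ hyperedges) to realise its expansion. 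This yields $F^3 \subseteq \mathcal{H}'$, a contradiction.

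If instead both endpoints of $e$ lie in $V(T)$, then $F$ has a unique cycle $C_\ell$ with $3 \le \ell \le k$. Since $\mathcal{H}'$ is $3k$-full and $3k \ge 3\ell$, Lemma~\ref{LEMMA:KMVa-3k-full-subgraph-cycle} provides a copy of $C_\ell^3$ in $\mathcal{H}'$; this embeds $C_\ell$ into $\partial\mathcal{H}'$ and fixes witnesses $w_1, \ldots, w_\ell$ for its cycle edges. I would then extend this partial embedding to a full copy of $F$ in $\partial\mathcal{H}'$ by greedily attaching the forest $F - E(C_\ell)$ one edge at a time; each attachment step succeeds because every previously placed vertex has at least $3k+1$ neighbours in $\partial\mathcal{H}'$ while at most $2k$ vertices are already in use. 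Finally, Fact~\ref{FACT:partial-embedding} applied with $t = \ell$ and the fixed witnesses $w_1, \ldots, w_\ell$ lifts this copy of $F$ to a copy of $F^3 \subseteq \mathcal{H}'$, since each of the remaining $k - \ell$ attached edges still has codegree at least $3k = 3|F|$ in $\mathcal{H}'$ by $3k$-fullness, again a contradiction.

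The main obstacle will be the unicyclic case, where the cycle of $F$ must be located before any tree attachment can be made: Lemma~\ref{LEMMA:KMVa-3k-full-subgraph-cycle} delegates the cycle-finding, after which the $(3k+1)$-fullness of $\mathcal{H}'$ affords plenty of room for the greedy extension and for the final application of Fact~\ref{FACT:partial-embedding}.
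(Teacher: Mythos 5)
Your proof is correct and its skeleton matches the paper's: clean $\mathcal{H}$ to a $(\!\ge 3k)$-full nonempty subgraph $\mathcal{H}'$, dispose of the acyclic case, and in the unicyclic case locate $C_\ell^3$ first and then attach the remaining pendant edges. Two points of execution differ, and both are worth recording.

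First, the acyclic case. The paper observes that if $F$ is a forest then $F^3\subseteq\mathcal{H}$ ``follows from'' the Kostochka--Mubayi--Verstra\"ete tree theorem, leaving implicit the extension from trees to forests (this is justified by the proposition in the concluding remarks, which embeds any forest into a tree of the same crosscut number). You instead split on the position of the new edge: if one endpoint is new, $F$ is itself a tree and Lemma~\ref{LEMMA:KMVa-3k-full-subgraph-tree} applies directly; if both endpoints are new, $F=T\sqcup e$, and you embed $T^3$ and then pick a hyperedge of $\mathcal{H}'$ avoiding the $\le 2k-1$ used vertices, which exists since removing $2k-1$ vertices costs fewer than $2k\binom{n}{2}<\tfrac{3k}{2}n^2\le|\mathcal{H}'|$ hyperedges. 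This is more explicit and avoids appealing to the forest version of the tree theorem.

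Second, the unicyclic case. The paper strips pendant edges one at a time to reach the unique cycle, and proves $F_i^3\subseteq\mathcal{H}'$ by backward induction, extending the actual hyper-embedding directly in $\mathcal{H}'$ at each step by two applications of $3k$-fullness. You instead place $C_\ell$ in $\partial\mathcal{H}'$ (with its $\ell$ witnesses fixed from the KMV cycle copy), greedily grow the forest $F-E(C_\ell)$ in $\partial\mathcal{H}'$ while avoiding the $\le 2k$ occupied vertices, and lift all at once with Fact~\ref{FACT:partial-embedding}, taking $t=\ell$ and using that the remaining $k-\ell$ pairs of the forest have codegree $\ge 3k=3|F|$ in $\mathcal{H}'$. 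These are morally equivalent, but the shadow-then-lift formulation cleanly separates the combinatorial embedding from the hypergraph lifting and matches the Fact~\ref{FACT:partial-embedding} interface used elsewhere in the paper. Both arguments of course pay the same price: the very crude $3(|T|+1)n^2$ bound absorbs all the constants involved.
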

The following lemma is a stability result for augmentations of trees. 
\begin{lemma}\label{LEMMA:anti-Ramsey-stability}
    Let $T$ be a tree with $\sigma(T) = \tau_{\mathrm{ind}}(T)$ and $F$ be an augmentation of $T$. 
    For every $\delta > 0$ there exist $\varepsilon >0$ and $n_0$ such that the following holds for all $n \ge n_0$. 
    Suppose that $\chi \colon K_n^3 \to \mathbb{N}$ is an edge-coloring without rainbow copies of $F^3$ and $\mathcal{H} \subseteq K_n^3$ is a rainbow subgraph with 
    \begin{align*}
        |\mathcal{H}| 
        \ge \left(\frac{\sigma(T) - 1}{2} - \varepsilon\right)n^2. 
    \end{align*}
    Then there exists a $(\sigma(T)-1)$-set $L \subseteq [n]$ such that 
    \begin{align*}
        d_{\mathcal{H}}(v) \ge \left(\frac{1}{2} - \delta\right)n^2
        \quad\text{for all}\quad v\in L. 
    \end{align*}
\end{lemma}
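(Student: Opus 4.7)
The plan is to reduce the statement to Theorem~\ref{THM:Hypergraph-Tree-Stability}. The crucial preliminary observation is that $\mathcal{H}$ is not merely rainbow but is actually $F^3$-free as a $3$-graph: any copy of $F^3$ contained in a rainbow hypergraph is automatically a rainbow $F^3$, contradicting the hypothesis on $\chi$. Thus I may treat $\mathcal{H}$ as a rainbow, $F^3$-free $3$-graph with at least $\bigl(\tfrac{\sigma(T)-1}{2}-\varepsilon\bigr)n^2$ edges.

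The next step is to pass to a $T^3$-free subgraph $\mathcal{H}'\subseteq\mathcal{H}$ with $|\mathcal{H}'|\ge |\mathcal{H}|-o(n^2)$. To produce $\mathcal{H}'$, I would bound the number of copies of $T^3$ inside $\mathcal{H}$ as follows. Fix any copy $\mathcal{T}\subseteq\mathcal{H}$ of $T^3$ and let $c_1,\ldots,c_{|T|}$ be the (distinct) colors used by $\mathcal{T}$. For every hyperedge $f\in K_n^3$ for which $\mathcal{T}\cup\{f\}\cong F^3$, the $F^3$-freeness of $\mathcal{H}$ forces $f\notin\mathcal{H}$, while the absence of a rainbow $F^3$ in $\chi$ forces $\chi(f)\in\{c_1,\ldots,c_{|T|}\}$. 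Since $F=T+e$, the number of such extensions is $\Omega(n)$: regardless of whether the added edge $e$ lies between two vertices of $T$, between a vertex of $T$ and a new vertex, or between two new vertices, the ``outside'' vertex (or vertices) of $f$ ranges over nearly all of $[n]$. Because $\mathcal{H}$ is rainbow, the colors $c_1,\ldots,c_{|T|}$ each appear on exactly one edge of $\mathcal{H}$, so these $\Omega(n)$ extensions are restricted to the $|T|$ color classes determined by $\mathcal{T}$. A double-counting/supersaturation argument summing over all copies $\mathcal{T}$ then shows that the number of copies of $T^3$ in $\mathcal{H}$ is $o(n^{v(T)})$: otherwise, by averaging, one can locate a $T^3$ together with an extension $f$ whose color is not in the palette $\{c_1,\ldots,c_{|T|}\}$, producing a rainbow $F^3$. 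A routine greedy deletion (or an application of the hypergraph removal lemma) then removes $o(n^2)$ edges to kill every $T^3$, yielding the required $\mathcal{H}'$.

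With $\mathcal{H}'$ in hand, I would apply Theorem~\ref{THM:Hypergraph-Tree-Stability} to $\mathcal{H}'$ with parameter $\delta/2$ and a suitably small $\varepsilon'$, producing a $(\sigma(T)-1)$-set $L\subseteq V(\mathcal{H}')$ such that $\mathcal{H}'$ is $(\delta/2)$-close to $\mathcal{S}(n,\sigma(T)-1)$; in particular,
\begin{align*}
    d_{\mathcal{H}'}(v)\ge \left(\frac{1}{2}-\frac{\delta}{2}\right)n^2\quad \text{for every }v\in L.
\end{align*}
Since $\mathcal{H}'\subseteq\mathcal{H}$, we have $d_{\mathcal{H}}(v)\ge d_{\mathcal{H}'}(v)$, giving the desired degree bound after absorbing the $o(n^2)$ loss into $\delta$.

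The main obstacle is the counting step: verifying that the ``restricted color palette'' constraint around each copy of $T^3$ is genuinely incompatible with having many such copies. The case analysis according to the position of the extra edge of $F$ (internal, half-new, or fully new) has to be carried out to ensure that $\Omega(n)$ free extensions are available in every case, and one must carefully trade off the local pigeonhole (forcing a repeated color among extensions of a single $T^3$) against the rainbow constraint on $\mathcal{H}$ itself in order to extract the global $o(n^{v(T)})$ bound on the number of $T^3$ copies.
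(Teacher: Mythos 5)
Your opening observation is correct (a rainbow $\mathcal{H}$ cannot contain any $F^3$, so it is $F^3$-free), and the strategy of reducing to Theorem~\ref{THM:Hypergraph-Tree-Stability} by removing $o(n^2)$ edges to kill all $T^3$'s is indeed the right target. However, the middle of your argument has a genuine gap in two places, which you yourself flag as ``the main obstacle.''

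First, the appeal to the hypergraph removal lemma is inapplicable here: the removal lemma converts a bound of $o(n^{v(T)})$ on the number of copies of $T^3$ into the ability to delete $o(n^{3})$ edges to become $T^3$-free. Since $\mathcal{H}$ has only $\Theta(n^2)$ edges, this conclusion is vacuous. What you actually need is the much stronger ``remove only $o(n^2)$ edges,'' and the removal lemma does not deliver that regardless of how small the copy count is. ``Routine greedy deletion'' does not obviously deliver it either, since a single edge of $\mathcal{H}$ can lie in $\Theta(n^{v(T)-3})$ copies of $T^3$ and your copy count alone does not control the hitting-set size.

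Second, and more fundamentally, your supersaturation argument never makes contact with a contradiction. For a single copy $\mathcal{T}$ of $T^3$ you correctly observe that the $\Omega(n)$ extensions $f$ are color-constrained to the palette $\chi(\mathcal{T})$. But nothing about ``having many copies of $T^3$'' forces two of these palettes to be disjoint, because distinct copies of $T^3$ in a rainbow $\mathcal{H}$ may share edges and hence share colors. Without disjoint palettes there is no pigeonhole clash, and the promised ``$T^3$ with an extension $f$ of a new color'' never materializes. The missing idea, and it is the crux of the paper's argument, is to take a \emph{maximal collection of pairwise edge-disjoint} copies of $\widehat{T}^3$ (where $\widehat{T}$ is $F$ with the extra edge $e_{\ast}$ deleted but its endpoints kept, so each copy comes equipped with a designated ``critical pair''). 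Edge-disjointness combined with $\mathcal{H}$ being rainbow forces the copies to have \emph{disjoint} color palettes. This disjointness is exactly what makes Claim~\ref{CLAIM:Anti-2} of the paper work (two edge-disjoint copies cannot share a critical pair, since an outside vertex would give a triple whose color must lie in two disjoint palettes) and what makes Claim~\ref{CLAIM:Anti-3} work (the critical-pair graph $G$ must be $\widehat{F}$-free for a suitable forest $\widehat{F}$, hence $|G|=O(n)$). Maximality of the edge-disjoint collection then ensures that deleting its union, which is $O(n)$ edges, yields a $T^3$-free subgraph, after which Theorem~\ref{THM:Hypergraph-Tree-Stability} finishes. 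Your proposal replaces the edge-disjoint family by all copies, losing both the disjoint-palettes mechanism and the maximality-implies-$T^3$-free shortcut, and so the step from ``few copies'' to ``few edges to delete'' remains unproved.
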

\begin{proof}[Proof of Lemma~\ref{LEMMA:anti-Ramsey-stability}]
    Let $\{e_{\ast}\} = F\setminus T$ and
    let $x,y$ denote the endpoints of $e_{\ast}$ in $F$. Let $\widehat{T}$ be the graph obtained from $F$ by removing the edge $e_{\ast}$ but keeping the endpoints of $e_{\ast}$ (since $e_{\ast}$ might be outside $V(T)$, two graphs $T$ and $\widehat{T}$ can be different).
    Let $\mathcal{C}$ be a maximal collection of edge disjoint copies of $\widehat{T}^{3}$ in $\mathcal{H}$.
    Assume that $\mathcal{C} = \{\widehat{T}^{3}_{1}, \ldots, \widehat{T}^{3}_{m}\}$ for some integer $m\ge 0$. 
    For $i\in [m]$ let $x_i, y_i$ be two distinct vertices in $\widehat{T}^{3}_i$ such that there exists an isomorphism $\psi_{i} \colon \widehat{T}^{3} \to \widehat{T}_i^3$ with $\psi_{i}(x) = x_i$ and $\psi_{i}(y) = y_i$.
    We call $\{x_i, y_i\}$ the \textbf{critical pair} of $\widehat{T}^{3}_i$ for $i\in [m]$. 
    Let $G$ denote the multi-graph on $[n]$ whose edge set is the multi-set $\{x_iy_i \colon i\in [m]\}$.
    Since the coloring $\chi$ does not contain rainbow copies of $F^3$, we obtain the following claim. 
    \begin{claim}\label{CLAIM:Anti-1}
        For every $i\in [m]$ and every vertex $z\in [n]\setminus V(\widehat{T}_i^{3})$ we have $\chi(zx_iy_i) \in \chi\left(\widehat{T}_i^{3}\right)$.
    \end{claim}
    \begin{claim}\label{CLAIM:Anti-2}
        The graph $G$ does not contain multi-edges. 
    \end{claim}
    \begin{proof}
        Suppose to the contrary that $G$ contains multi-edges, and by symmetry, we may assume that $(x_1, y_1) = (x_2, y_2)$. 
        Take any vertex $z$ outside $V(\widehat{T}_1^{3}) \cup V(\widehat{T}_2^{3})$. 
        Then by Claim~\ref{CLAIM:Anti-1}, we have $\chi(x_1y_1z) = \chi(x_2y_2z) \in \chi\left(\widehat{T}_1^{3}\right) \cap \chi\left(\widehat{T}_2^{3}\right) = \emptyset$ (note that $\mathcal{H}$ is rainbow, and $\widehat{T}_1^{3}, \widehat{T}_2^{3} \subset \mathcal{H}$ are edge-disjoint), a contradiction. 
    \end{proof}
    \begin{claim}\label{CLAIM:Anti-3}
        We have $m = |G| \le 2|V(F)|n$.
    \end{claim}
    \begin{proof}
        Assume that the set of neighbors of $x$ in $F$ is $\{w_1, \ldots, w_{\ell}\} = N_{F}(x)$, where $\ell = d_{F}(x)$ and $w_1 = y$.
        Let $\{z_1, \ldots, z_{\ell}\}$ be a set of new vertices not contained in $V(F)$, and define a new graph 
        \begin{align*}
            \widehat{F}
             = \left(F - x\right) \cup \left\{w_iz_i \colon i\in [\ell]\right\},
        \end{align*}
        Observe that $\widehat{F}$ is acyclic (i.e. a forest) and $|V(\widehat{F})| \le 2|V(F)|$.
        
        Suppose to the contrary that $|G| \ge 2|V(F)|n$. 
        Then, by the trivial bound for the Tur\'{a}n number of a forest, there exists an embedding $\phi \colon \widehat{F} \to G$. 
        For every $e\in \phi(\widehat{F})$ we fix a member in $\mathcal{C}$ whose critical pair is $e$, and denote this member by $\widehat{T}_{e}$.
        Let $B_1 := \bigcup_{e\in \phi(\widehat{F})} V(\widehat{T}_{e})$. 
        Choose a vertex set $U_1 := \{w_{e} \colon e\in \phi(F - x)\} \subseteq [n]\setminus B_1$. 
        Fix a vertex $v\in [n]\setminus (B_1 \cup U_1)$. 
        Then, by Claim~\ref{CLAIM:Anti-1}, the $3$-graph 
        \begin{align*}
            F' := \left\{e\cup w_e \colon e\in \phi(F - x)\right\} \cup \left\{v\phi(w_i)\phi(z_i) \colon i\in [\ell]\right\}
        \end{align*}
        is rainbow under $\chi$. 
        Observe that $F'$ is a copy of $F^{3}$ (with vertex $v$ playing the role of $x$), contradicting the rainbow-$F$-freeness of $\chi$.
    \end{proof}
    By Claim~\ref{CLAIM:Anti-3}, the $3$-graph $\mathcal{B} := \bigcup_{i\in [m]} \widehat{T}_{i}^{3}$ has size $m\times |F|\le  2|V(F)||F|n \le \varepsilon n^2$.
    Hence, $\mathcal{H}' := \mathcal{H}\setminus \mathcal{B}$ has size at least $\left(\frac{\sigma(T) - 1}{2} - 2\varepsilon\right)n^2$.  
    In addition, 
    by the maximality of $\mathcal{C}$, the $3$-graph $\mathcal{H}'$ is $\widehat{T}^3$-free, and hence, $T^3$-free. 
    Therefore, it follows from Theorem~\ref{THM:Hypergraph-Tree-Stability} that there exists a $(\sigma(T)-1)$-set $L \subseteq [n]$ such that $d_{\mathcal{H}}(v) \ge \left(1/2 - \delta\right)n^2$ holds for all $v\in L$. 
\end{proof}
Now we are ready to prove Theorem~\ref{THM:AntiRamsey-Tree-Exact}. 
\begin{proof}[Proof of Theorem~\ref{THM:AntiRamsey-Tree-Exact}]
    Let $T$ be a tree satisfying $\sigma(T) = \tau_{\mathrm{ind}}(T)$ and containing a critical edge. Let $F$ be an augmentation of $T$. 
    Let $\{f\}:= F\setminus T$ and assume that $f = \{u_1,v_1\}$. 
    Let $I\cup J = V(T)$ be a partition such that $I$ is a minimum independent vertex cover of $T$. 
    Let $e_{\ast} := \{u_0, v_0\}$ be a pendant critical edge such that $v_0 \in I$ is a leaf. The existence of such an edge is guaranteed by Proposition~\ref{PROP:tree-R-empty}.   

    Choose $C>0$ sufficiently large such that $\mathrm{ex}(N, F^3) \le C N^2$ holds for all integers $N \ge 0$. 
    By Proposition~\ref{PROP:Turan-augmentation}, such a constant $C$ exists and depends only on $F$. 
    Let $t := \sigma(T) - 1$ and $q := |\mathcal{S}(n,t)|+2= \binom{n}{3} - \binom{n-t}{3}+2$. 
    Let $\chi \colon K_n^3 \to [q]$ be a surjective map.
    We aim to show that there exists a rainbow copy of $F^3 \subseteq K_{n}^3$ under the coloring $\chi$. 

    Suppose to the contrary that there is no rainbow copy of $F^3$ under the coloring $\chi$. 
    Let $\mathcal{H} \subseteq K_n^3$ a rainbow subgraph with $q$ edges. 
    Since $|\mathcal{H}| = q =  \left(t - o(1)\right)n^2/2$, 
    it follows from Lemma~\ref{LEMMA:anti-Ramsey-stability} that there exists a $t$-set $L:= \{x_1, \ldots, x_t\} \subseteq V:=[n]$ such that 
    \begin{align}\label{equ:AntiRamsey-deg-L}
        d_{\mathcal{H}}(x_i) \ge \left(\frac{1}{2} - \delta\right)n^2
        \quad\text{for all}\quad x_i \in L. 
    \end{align}
    Let $V' := V \setminus L$, 
    \begin{align*}
        \tau := \frac{n}{200},\quad
        D 
        := \left\{y\in V' \colon d_{\mathcal{H}}(y x_i) \ge \tau \text{ for all } x_i \in L \right\}, 
        \quad\text{and}\quad 
        \overline{D} := V' \setminus D. 
    \end{align*}
    Define 
    \begin{align*}
        \mathcal{S} := \left\{e \in \binom{V}{3} \colon |e \cap L| \ge 1\right\}, \quad
        \mathcal{B} := \mathcal{H} - L = \mathcal{H}[V'], 
        \quad\mathrm{and}\quad
        \mathcal{M} := \mathcal{S}\setminus  \mathcal{H}. 
    \end{align*}
    Let $m:= |\mathcal{M}|$ and $b:= |\mathcal{B}|$. 
   %
    %
    \begin{claim}\label{CLAIM:Ramsey-missing}
        We have $m \ge n|\overline{D}|/6$ and $|\overline{D}| \le  6\delta t n$. 
    \end{claim}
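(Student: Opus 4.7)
The proof will essentially mirror the argument for Claim~\ref{CLAIM:Turan-size-D} in the Tur\'an setting, adapted only to account for the weaker per-vertex degree bound $d_{\mathcal{H}}(x_i) \ge (1/2 - \delta)n^2$ given by~\eqref{equ:AntiRamsey-deg-L} (compared to the bound $(1-\delta)n^2/2$ used there, which is why the constant in the second inequality doubles from $3\delta tn$ to $6\delta tn$). Concretely, the plan is to produce many missing triples from each vertex of $\overline{D}$ for the lower bound on $m$, and then to upper bound $m$ using the total degree of $L$ in $\mathcal{H}$ for the upper bound on $|\overline{D}|$.

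For the first inequality, I would unpack the definition of $\overline{D}$: every $u \in \overline{D}$ comes with some $x_i \in L$ such that $d_{\mathcal{H}}(ux_i) < \tau = n/200$. Then the pair $\{u, x_i\}$ extends to at least $n - 2 - \tau$ triples $\{u, x_i, z\}$ with $z \in V \setminus \{u, x_i\}$, each of which lies in $\mathcal{S}$ (since $x_i \in L$) but not in $\mathcal{H}$, hence in $\mathcal{M}$. Summing over $u \in \overline{D}$ and noting that each missing triple is counted at most $3$ times (one for each vertex it contains) yields
\begin{align*}
    m \ge \frac{1}{3}(n - 2 - \tau)|\overline{D}| \ge \frac{n|\overline{D}|}{6},
\end{align*}
which is the desired lower bound.

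For the upper bound on $|\overline{D}|$, I would first upper bound $m$ by noting that every triple of $\mathcal{M}$ contains at least one vertex of $L$, so
\begin{align*}
    m \le \sum_{x_i \in L}\left(\binom{n-1}{2} - d_{\mathcal{H}}(x_i)\right) \le t \cdot \frac{n^2}{2} - t\left(\frac{1}{2}-\delta\right)n^2 = \delta t n^2,
\end{align*}
where the second inequality uses~\eqref{equ:AntiRamsey-deg-L}. Combining with the first part gives $|\overline{D}| \le 6m/n \le 6\delta t n$. There is no serious obstacle here; the only point of caution is the doubled constant arising from the fact that the degree lower bound in the anti-Ramsey setting is $(1/2-\delta)n^2$ rather than $(1-\delta)n^2/2$, and one must verify that each triple in $\mathcal{M}$ is counted with multiplicity between $1$ and $3$ in the sum over $L$, which is automatic since $|e\cap L| \in \{1,2,3\}$ for all $e \in \mathcal{S}$.
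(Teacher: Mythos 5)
Your proof is correct and takes essentially the same approach as the paper: the paper's proof of this claim simply states "Same as the proof of Claim~\ref{CLAIM:Turan-size-D}," and your argument reproduces that proof, correctly substituting the weaker degree bound $d_{\mathcal{H}}(x_i) \ge (1/2 - \delta)n^2$ from~\eqref{equ:AntiRamsey-deg-L} in place of $(1-\delta)n^2/2$ and accurately tracking how this doubles the final constant from $3\delta tn$ to $6\delta tn$. Your version of the upper bound on $m$ (summing $\binom{n-1}{2} - d_{\mathcal{H}}(x_i)$ over $x_i \in L$) is in fact slightly cleaner than the paper's written form of the Tur\'an claim, which literally reads $m \le |\mathcal{S}| - \sum_{x_i\in L} d_{\mathcal{H}}(x_i)$ but clearly intends the overcounted sum $\sum_{x_i \in L}\binom{n-1}{2}$, as the very next step shows.
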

    \begin{proof}
        Same as the proof of Claim~\ref{CLAIM:Turan-size-D}. 
    \end{proof}
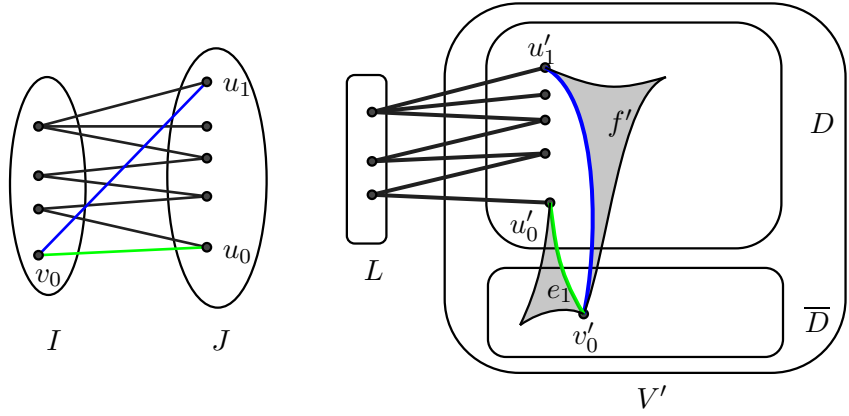
\begin{figure}[htbp]
\centering
\tikzset{every picture/.style={line width=0.85pt}} 
\begin{tikzpicture}[x=0.75pt,y=0.75pt,yscale=-1,xscale=1, scale=0.8]

\draw   (314.06,64.7) .. controls (316.78,64.7) and (318.99,66.91) .. (318.99,69.63) -- (318.99,165.73) .. controls (318.99,168.46) and (316.78,170.67) .. (314.06,170.67) -- (299.26,170.67) .. controls (296.53,170.67) and (294.32,168.46) .. (294.32,165.73) -- (294.32,69.63) .. controls (294.32,66.91) and (296.53,64.7) .. (299.26,64.7) -- cycle ;
\draw   (536.93,31.7) .. controls (552.62,31.7) and (565.33,44.41) .. (565.33,60.1) -- (565.33,145.3) .. controls (565.33,160.98) and (552.62,173.7) .. (536.93,173.7) -- (409.72,173.7) .. controls (394.04,173.7) and (381.32,160.98) .. (381.32,145.3) -- (381.32,60.1) .. controls (381.32,44.41) and (394.04,31.7) .. (409.72,31.7) -- cycle ;
\draw   (558.87,19.67) .. controls (584.53,19.67) and (605.33,40.47) .. (605.33,66.13) -- (605.33,205.53) .. controls (605.33,231.2) and (584.53,252) .. (558.87,252) -- (401.8,252) .. controls (376.14,252) and (355.33,231.2) .. (355.33,205.53) -- (355.33,66.13) .. controls (355.33,40.47) and (376.14,19.67) .. (401.8,19.67) -- cycle ;
\draw   (555.13,186) .. controls (561.32,186) and (566.33,191.01) .. (566.33,197.2) -- (566.33,230.8) .. controls (566.33,236.99) and (561.32,242) .. (555.13,242) -- (393.52,242) .. controls (387.34,242) and (382.32,236.99) .. (382.32,230.8) -- (382.32,197.2) .. controls (382.32,191.01) and (387.34,186) .. (393.52,186) -- cycle ;
\draw [line width=1.5pt,color=sqsqsq]   (310,88) -- (418,60) ;
\draw  [line width=1.5pt,color=sqsqsq]  (310,88) -- (418,77) ;
\draw  [line width=1.5pt,color=sqsqsq]  (310,88) -- (418,93) ;
\draw  [line width=1.5pt,color=sqsqsq]  (310,119) -- (418,93) ;
\draw  [line width=1.5pt,color=sqsqsq]  (310,119) -- (418,114) ;
\draw [line width=1.5pt,color=sqsqsq]   (310,140) -- (418,114) ;
\draw  [line width=1.5pt,color=sqsqsq]  (310,140) -- (421,145) ;
\draw [fill=uuuuuu] (310,88) circle (2pt);
\draw [fill=uuuuuu] (310,119) circle (2pt);
\draw [fill=uuuuuu] (310,140) circle (2pt);
\draw [fill=uuuuuu] (418,60) circle (2pt);
\draw [fill=uuuuuu] (418,77) circle (2pt);
\draw [fill=uuuuuu] (418,93) circle (2pt);
\draw [fill=uuuuuu] (418,114) circle (2pt);
\draw [fill=uuuuuu] (421,145) circle (2pt);
\draw [fill=uuuuuu] (442,215) circle (2pt);
\fill[sqsqsq, fill opacity=0.25] (418,60) .. controls (454.33,80) and (450.33,178) .. (442,215) .. controls (457.33,176) and  (464.33,78) .. (493.33,66) .. controls (464.33,74) and (455.33,76) .. (418,60) -- cycle;
\draw  [line width=1.5pt,color=blue]  (418,60) .. controls (454.33,80) and (450.33,178) .. (442,215) ;
\draw    (418,60) .. controls (455.33,76) and (464.33,74) .. (493.33,66) ;
\draw    (493.33,66) .. controls (464.33,78) and (457.33,176) .. (442,215) ;
\draw  [line width=1.5pt,color=green]  (421,145) .. controls (424.33,168) and (424.33,186) .. (442,215) ;
\draw    (421,145) .. controls (418.33,178) and (413.33,199) .. (402.33,222) ;
\draw    (442,215) .. controls (432.33,210) and (416.33,212) .. (402.33,222) ;
\fill[sqsqsq, fill opacity=0.25] (421,145) .. controls (424.33,168) and (424.33,186) .. (442,215) .. controls (432.33,210) and (416.33,212) .. (402.33,222) .. controls (413.33,199) and (418.33,178) .. (421,145) -- cycle;
\draw   (107.31,65.98) .. controls (120.32,65.88) and (131.09,96.48) .. (131.36,134.32) .. controls (131.63,172.16) and (121.3,202.91) .. (108.29,203.01) .. controls (95.28,203.1) and (84.51,172.5) .. (84.24,134.66) .. controls (83.97,96.82) and (94.3,66.07) .. (107.31,65.98) -- cycle ;
\draw   (212.72,47.92) .. controls (229.82,47.8) and (243.95,84.2) .. (244.27,129.23) .. controls (244.59,174.26) and (230.99,210.87) .. (213.89,210.99) .. controls (196.78,211.11) and (182.66,174.71) .. (182.33,129.68) .. controls (182.01,84.65) and (195.62,48.04) .. (212.72,47.92) -- cycle ;
\draw  [line width=1pt,color=sqsqsq]  (102,97) -- (207,69) ;
\draw  [line width=1pt,color=sqsqsq]  (102,97) -- (207,97) ;
\draw  [line width=1pt,color=sqsqsq]  (102,97) -- (207,117) ;
\draw [line width=1pt,color=sqsqsq]   (102,128) -- (207,117) ;
\draw [line width=1pt,color=sqsqsq]   (102,128) -- (207,141) ;
\draw  [line width=1pt,color=sqsqsq]  (102,149) -- (207,141) ;
\draw  [line width=1pt,color=sqsqsq]  (102,149) -- (207,173) ;
\draw [line width=1pt,color=green]   (102,178) -- (207,173) ;
\draw [line width=1pt,color=blue]   (102,178) -- (207,69) ;
\draw [fill=uuuuuu] (102,97) circle (2pt);
\draw [fill=uuuuuu] (207,69) circle (2pt);
\draw [fill=uuuuuu] (207,97) circle (2pt);
\draw [fill=uuuuuu] (207,117) circle (2pt);
\draw [fill=uuuuuu] (102,128) circle (2pt);
\draw [fill=uuuuuu] (102,149) circle (2pt);
\draw [fill=uuuuuu] (102,178) circle (2pt);
\draw [fill=uuuuuu] (207,173) circle (2pt);
\draw [fill=uuuuuu] (207,141) circle (2pt);
%
\draw (303,179) node [anchor=north west][inner sep=0.75pt]   [align=left] {$L$};
\draw (578,208) node [anchor=north west][inner sep=0.75pt]   [align=left] {$\overline{D}$};
\draw (581,87) node [anchor=north west][inner sep=0.75pt]   [align=left] {$D$};
\draw (473,258) node [anchor=north west][inner sep=0.75pt]   [align=left] {$V'$};
\draw (394,148) node [anchor=north west][inner sep=0.75pt]   [align=left] {$u_0'$};
\draw (433,218) node [anchor=north west][inner sep=0.75pt]   [align=left] {$v_0'$};
\draw (405,37) node [anchor=north west][inner sep=0.75pt]   [align=left] {$u_1'$};
\draw (455,84) node [anchor=north west][inner sep=0.75pt]   [align=left] {$f'$};
\draw (417,195) node [anchor=north west][inner sep=0.75pt]   [align=left] {$e_1$};
\draw (105,223) node [anchor=north west][inner sep=0.75pt]   [align=left] {$I$};
\draw (98,185) node [anchor=north west][inner sep=0.75pt]   [align=left] {$v_0$};
\draw (215,170) node [anchor=north west][inner sep=0.75pt]   [align=left] {$u_0$};
\draw (215,65) node [anchor=north west][inner sep=0.75pt]   [align=left] {$u_1$};
\draw (208,223) node [anchor=north west][inner sep=0.75pt]   [align=left] {$J$};
\end{tikzpicture}
\caption{Embedding $F$.}
\label{fig:antiRamsey-tree-1}
\end{figure}
    \begin{claim}\label{CLAIM:antiRamsey-missing-edge-D-bar}
        We have $\left|\mathcal{B} \setminus \mathcal{B}[\overline{D}] \right| \le 1$. 
    \end{claim}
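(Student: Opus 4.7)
The claim is proved by contradiction: suppose that there exist two distinct edges $e_1, e_2 \in \mathcal{B}\setminus \mathcal{B}[\overline{D}]$, each containing at least one vertex of $D$. The plan is to use $e_1$ and $e_2$ together to embed a copy of $F^3$ into $\mathcal{H}$. Since $\mathcal{H}$ is rainbow under $\chi$, any copy of $F^3$ contained in $\mathcal{H}$ is automatically rainbow, contradicting the assumption that $\chi$ contains no rainbow copy of $F^3$.

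The construction generalises the embedding carried out in Claim~\ref{CLAIM:Turan-size-b-1}: a single bad edge sufficed there to embed $T^3\subseteq \mathcal{H}$ by playing the role of $e_{\ast}^3$, the expansion of the pendant critical edge $e_{\ast} = \{u_0, v_0\}$ of $T$. Here, the second bad edge is used to accommodate the expansion $f^3$ of the augmentation edge $f = \{u_1,v_1\}$ of $F$. Explicitly, we pick $u_0' \in e_1 \cap D$, $v_0' \in e_1 \setminus \{u_0'\}$, and $w' \in e_2 \cap D$, and construct a partial embedding $\phi$ of $F$ with $\phi(u_0) = u_0'$, $\phi(v_0) = v_0'$, $\phi(I) \subseteq L$, $\phi(J) \subseteq D\cup \{u_0'\}$, and with the image of $f^3$ chosen so that it equals $e_2$ whenever possible. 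We then extend $\phi$ to a full embedding of $F^3 \subseteq \mathcal{H}$ using Fact~\ref{FACT:partial-embedding} together with the fact that every pair in $L \times D$ has codegree at least $\tau \ge 3k$ in $\mathcal{H}$.

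The construction splits into subcases according to $|V(f)\cap V(T)|$, mirroring the three types of augmentation illustrated in Figure~\ref{fig:tree-plus}. When $V(f)\cap V(T) = \emptyset$, we assign $u_1$, $v_1$, and the expansion vertex of $f$ to the three vertices of $e_2$ in some order, and pick the image of $J\setminus\{u_0\}$ inside $D\setminus V(e_2)$; this is possible because $|D| \ge n - 6\delta tn$ while $|V(e_2)| = 3$. When $|V(f)\cap V(T)| \in \{1,2\}$, we pre-specify $\phi$ at each $J$-endpoint of $f$ to be the vertex $w'\in e_2\cap D$ (or a second vertex of $e_2\cap D$ if two $J$-endpoints are needed). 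For an endpoint of $f$ lying in $I$, its image is forced into $L$, which is disjoint from $V(e_2)\subseteq V'$; in that subcase we cannot use $e_2$ directly for $f^3$, but the large codegree of the relevant $L\times D$ pair in $\mathcal{H}$ supplies an expansion vertex for $f^3$ outside $V(\phi(T^3))$, while an edge of $\mathcal{H}$ containing $w'$ still plays an auxiliary role analogous to $e_2$.

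The main obstacle is the case analysis over these positions of $f$ in $F$ together with the possible intersection patterns of $e_1$ with $e_2$ (which can share up to two vertices). In every case the partial embedding extends successfully because $|D|$ is linear in $n$ and every pair in $L\times D$ has codegree at least $\tau\ge 3k$ in $\mathcal{H}$, leaving abundant room to select the remaining vertices of $\phi(F^3)$ consistently with the prescribed roles of $e_1$ and $e_2$.
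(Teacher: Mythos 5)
Your high-level strategy misreads what the two bad edges are for. You assume we should embed a whole copy of $F^3$ \emph{inside} $\mathcal{H}$, using $e_1$ for $e_{\ast}^3$ and, when geometrically feasible, $e_2$ for $f^3$; the rainbowness would then be automatic. The paper does something fundamentally different: it chooses a triple $f'$ for the role of $f^3$ that is \emph{not} required to lie in $\mathcal{H}$ at all, embeds $F^3$ into $\mathcal{H}'\cup\{e_1,f'\}$ where $\mathcal{H}'$ is $\mathcal{H}$ with the (at most one) edge of colour $\chi(f')$ deleted, and uses the second bad edge $e_2$ only for a colour count: since $\chi(e_1)\neq\chi(e_2)$, at least one of them has colour different from $\chi(f')$, so that edge survives in $\mathcal{H}'$ and can still serve as $e_{\ast}^3$. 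This is the ingredient your proposal never recovers, and it is exactly what makes the freedom in choosing $f'$ harmless.

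The distinction matters because embedding $F^3$ entirely inside $\mathcal{H}$ is not always possible from the information available. The worst offender is $f\subseteq I$ (Cases~7 and~8 of the paper): then both endpoints of $f$ are forced into $L$, so $\phi(f^3)$ must be a triple with two vertices in $L$ and one expansion vertex. The set $D$ was defined via large codegrees $d_{\mathcal{H}}(yx_i)$ for $y\in V'$ and $x_i\in L$; nothing in the setup controls $d_{\mathcal{H}}(x_ix_j)$ for $x_i,x_j\in L$, and this codegree can be zero even though each $d_{\mathcal{H}}(x_i)$ is close to $n^2/2$. So the ``edge of $\mathcal{H}$ containing $w'$ playing an auxiliary role analogous to $e_2$'' that your proposal invokes simply need not exist. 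Your argument also has a secondary issue in its cleanest subcase $V(f)\cap V(T)=\emptyset$: it uses $e_2$ verbatim as $\phi(f^3)$ and requires $e_1\cap e_2=\emptyset$, but distinct edges of $\mathcal{B}\setminus\mathcal{B}[\overline{D}]$ can share up to two vertices. The paper sidesteps both problems at once by never committing $f'$ to $\mathcal{H}$ and by using $e_2$ purely to guarantee the colour disjointness $\chi(f')\neq\chi(e_1)$ after relabelling.
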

    \begin{proof}
        Suppose to the contrary that there exist two edges $e_1, e_2 \in \mathcal{B} \setminus \mathcal{B}[\overline{D}]$. 
        We will consider several cases depending on the position of $f$ in $F$ (see Figure~\ref{fig:tree-plus}). The most technical case would be Case 6. 
        
        \textbf{Case 1:} $f\cap V(T) = \emptyset$. 

            Take any triple $f'\subseteq D \setminus (e_1 \cup e_2)$.  By symmetry, we may assume that $\chi(f') \neq \chi(e_1)$. 
            Fix $u_0' \in e_1 \cap D$ and let $e_1' := e_1 \setminus \{u_0'\}$. 
            Fix $v_0' \in e_1'$. 
            Let $D' := D\setminus (f' \cup e_1')$. 
            Let $\mathcal{H}'$ be the $3$-graph be obtained from $\mathcal{H}$ by removing (at most one) edge with color $\chi(f')$. 
            Observe that the induced bipartite subgraph of $\partial\mathcal{H}'$ on $L\cup D'$ is complete. 
            Combined with $\sigma(T\setminus\{e_{\ast}\}) = |L|$, it is easy to see that there exists an embedding $\psi \colon V(T- v_0) \to L \cup D'$ such that $\psi(T\setminus\{e_{\ast}\}) \subseteq \partial\mathcal{H}'[L, D']$,  $\psi(I\setminus\{v_0\}) = L$, and $\psi(u_0) = u_0'$. 
            By definition, each pair in $\partial\mathcal{H}'[L, D']$ has codegree at least $\tau-1 \ge 3k$. 
            Therefore, by Fact~\ref{FACT:partial-embedding}, the map $\psi$ can be extended greedily to be a (rainbow) embedding $\hat{\psi} \colon F^3 \to \mathcal{H}' \cup \{e_1, f'\}$ with $\hat{\psi}(v_0) = v_0'$, $\hat{\psi}(f^3) = f'$ and $\hat{\psi}(e_{\ast}^3) = e_1$. 
        
            \bigskip

        \textbf{Case 2:} $|f\cap V(T)| = 1$. 

            Suppose that $f\cap e_{\ast} = \emptyset$ and $f\cap I \neq\emptyset$.
            Let us assume that $\{u_1\} = f \cap I$. 
            Then take any triple $f'\subseteq L\cup D \setminus (e_1 \cup e_2)$ such that $|f' \cap L| = 1$.
            Let $\{u_1'\} := f' \cap L$. 
            By symmetry, we may assume that $\chi(f') \neq \chi(e_1)$. 
            Fix $u_0' \in e_1 \cap D$.
            Then similar to the proof in Case~1, it is easy to see that there exists a (rainbow) embedding $\hat{\psi} \colon F^3 \to \mathcal{H} \cup \{e_1, f'\}$ with $\hat{\psi}(u_0) = u_0'$, $\hat{\psi}(u_1) = u_1'$,   $\hat{\psi}(f^3) = f'$, and $\hat{\psi}(e_{\ast}^3) = e_1$. 

            Suppose that $f\cap e_{\ast} = \emptyset$ and $f\cap J \neq\emptyset$. 
            Let us assume that $\{u_1\} = f \cap J$. 
            Then take any triple $f'\subseteq  D \setminus (e_1 \cup e_2)$. 
            By symmetry, we may assume that $\chi(f') \neq \chi(e_1)$. 
            Fix $u_1' \in  f' \cap D$ and $u_0' \in e_1 \cap D$.
            Then similar to the proof in Case~1, it is easy to see that there exists a (rainbow) embedding $\hat{\psi} \colon F^3 \to \mathcal{H} \cup \{e_1, f'\}$ with $\hat{\psi}(u_1) = u_1'$, $\hat{\psi}(u_0) = u_0'$,   $\hat{\psi}(f^3) = f'$, and $\hat{\psi}(e_{\ast}^3) = e_1$. 

            Suppose that $f\cap e_{\ast} \neq\emptyset$ and $f \cap e_{\ast} \in J$.
            Let us assume that $\{u_1\} = \{u_0\} = f \cap e_{\ast}$. 
            Take any triple $f'\subseteq  D \setminus (e_1 \cup e_2)$ such that $|f' \cap e_1| = |f' \cap e_2| = 1$. 
            By symmetry, we may assume that $\chi(f') \neq \chi(e_1)$. 
            Let $\{u_1'\} := f' \cap e_1$.
            Then similar to the proof in Case~1, it is easy to see that there exists a (rainbow) embedding $\hat{\psi} \colon F^3 \to \mathcal{H} \cup \{e_1, f'\}$ with  $\hat{\psi}(u_1) = \hat{\psi}(u_0) = u_1'$,  $\hat{\psi}(f^3) = f'$, and $\hat{\psi}(e_{\ast}^3) = e_1$. 
            
            Suppose that $f\cap e_{\ast} \neq\emptyset$ and $f \cap e_{\ast} \in I$. 
            Let us assume that $\{u_1\} = \{v_0\} = f \cap e_{\ast}$. 
            Fix $u_0' \in e_1 \cap D$ and $u_0'' \in e_2 \cap D$. 
            Let $e_1' := e_1 \setminus \{u_0'\}$ and $e_2' := e_2 \setminus \{u_0''\}$. 
            Take any triple $f'\subseteq  V'$ such that $|f' \cap e_1'| = |f' \cap e_2'| = 1$ and $f' \setminus (e_1' \cup e_2') \subseteq D$. 
            By symmetry, we may assume that $\chi(f') \neq \chi(e_1)$. 
            Let $v_0' \in e_1 \cap f'$ and fix $v_1' \in f' \setminus (e_1' \cup e_2')$. 
            Then similar to the proof in Case~1, it is easy to see that there exists a (rainbow) embedding $\hat{\psi} \colon F^3 \to \mathcal{H} \cup \{e_1, f'\}$ with $\hat{\psi}(u_0) = u_0'$, $\hat{\psi}(u_1) = \hat{\psi}(v_0) = v_0'$, $\hat{\psi}(v_1) = v_1'$,  $\hat{\psi}(f^3) = f'$, and $\hat{\psi}(e_{\ast}^3) = e_1$. 
            
            \bigskip 
            
        \textbf{Case 3:} $f\subseteq J$ and $e_{\ast} \cap f = \emptyset$. 

            Take any triple $f'\subseteq D$ such that $|f' \cap e_1| = |f' \cap e_2| = 0$. 
            By symmetry, we may assume that $\chi(f') \neq \chi(e_1)$. 
            Fix $\{u_1', v_1'\} \subseteq f'$ and $u_0' \subseteq e_1 \setminus \overline{D}$. 
            Then similar to the proof in Case~1,  it is easy to see that there exists a (rainbow) embedding $\hat{\psi} \colon F^3 \to \mathcal{H} \cup \{e_1, f'\}$ with  $\hat{\psi}(u_1) = u_1'$, $\hat{\psi}(v_1) = v_1'$, $\hat{\psi}(u_0) = u_0'$, $\hat{\psi}(f^3) = f'$, and $\hat{\psi}(e_{\ast}^3) = e_1$. 

            \bigskip 
            
        \textbf{Case 4:} $f\subseteq J$ and $e_{\ast} \cap f \neq \emptyset$. 

            Take any triple $f'\subseteq D$ such that $|f' \cap e_1| = |f' \cap e_2| = 1$. 
            By symmetry, we may assume that $\chi(f') \neq \chi(e_1)$.
            Let $u_1' := f' \cap e_1$ and fix $v_1' \in f'\setminus e_1$. 
            Then similar to the proof in Case~1,  it is easy to see that there exists a (rainbow) embedding $\hat{\psi} \colon F^3 \to \mathcal{H} \cup \{e_1, f'\}$ with  $\hat{\psi}(u_1) = \hat{\psi}(u_0) = u_1'$, $\hat{\psi}(v_1) = v_1'$, $\hat{\psi}(f^3) = f'$, and $\hat{\psi}(e_{\ast}^3) = e_1$.

            \bigskip

        \textbf{Case 5:} $f\cap I \neq \emptyset$, $f\cap J\neq \emptyset$, and $e_{\ast} \cap f =\emptyset$. 

            Assume that $\{u_1\} = f\cap I$ and $\{v_1\} = f\cap J$. 
            Take any triple $f'\subseteq L \cup D$ such that $|f' \cap L| = 1$ and $|f' \cap e_1| = |f' \cap e_2| = 0$.  
            By symmetry, we may assume that $\chi(f') \neq \chi(e_1)$.
            Let $u_1' := f' \cap L$. 
            Fix $v_1' \in f'\setminus L$ and $u_0' \in e_1 \setminus \overline{D}$. 
            Then similar to the proof in Case~1,  it is easy to see that there exists a (rainbow) embedding $\hat{\psi} \colon F^3 \to \mathcal{H} \cup \{e_1, f'\}$ with  $\hat{\psi}(u_1) = u_1'$, $\hat{\psi}(v_1) = v_1'$,  $\hat{\psi}(u_0) = u_0'$, $\hat{\psi}(f^3) = f'$, and $\hat{\psi}(e_{\ast}^3) = e_1$. 

            \bigskip 
            
        \textbf{Case 6:} $f\cap I \neq \emptyset$, $f\cap J\neq \emptyset$, and $e_{\ast} \cap f \neq\emptyset$.

            Assume that $\{u_1\} = f\cap I$ and $\{v_1\} = f\cap J$.

            Suppose that $f \cap e_{\ast} \in J$. 
            Then take any triple $f'\subseteq L \cup D$ such that $|f' \cap L| = 1$ and $|f' \cap e_1| = |f' \cap e_2| = 1$.  
            By symmetry, we may assume that $\chi(f') \neq \chi(e_1)$.
            Let $u_1' := f' \cap L$ and $v_1' := f' \cap e_{\ast}$. 
            Then similar to the proof in Case~1,  it is easy to see that there exists a (rainbow) embedding $\hat{\psi} \colon F^3 \to \mathcal{H} \cup \{e_1, f'\}$ with  $\hat{\psi}(u_1) = u_1'$, $\hat{\psi}(v_1) = v_1'$,  $\hat{\psi}(f^3) = f'$, and $\hat{\psi}(e_{\ast}^3) = e_1$. 

            Suppose that $f \cap e_{\ast} \in I$.
            Then fix $u_0' \in e_1 \cap (D)$ and $u_0'' \in e_2 \cap (D)$ (it could be true that $u_0' = u_0''$). 
            Let $e_1' := e_1 \setminus \{u_0'\}$ and $e_2' := e_2 \setminus \{u_0''\}$. 
            Choose a triple $f' \subseteq V'$ such that $|f' \cap e_1'| = |f' \cap e_2'| = 1$ and $f' \setminus (e_1 \cup e_2) \in D$. 
            By symmetry, we may assume that $\chi(f') \neq \chi(e_1)$.
            Let $\{v_0'\} := f' \cap e_1$ and $\{u_1'\} := f' \setminus (e_1 \cup e_2)$. 
            Then similar to the proof in Case~1,  it is easy to see that there exists a (rainbow) embedding $\hat{\psi} \colon F^3 \to \mathcal{H} \cup \{e_1, f'\}$ with  $\hat{\psi}(u_1) = u_1'$, $\hat{\psi}(v_1) = \hat{\psi}(v_0) = v_0'$, $\hat{\psi}(u_0) =  u_0'$,  $\hat{\psi}(f^3) = f'$, and $\hat{\psi}(e_{\ast}^3) = e_1$ (see Figure~\ref{fig:antiRamsey-tree-1}). 

            \bigskip 
            
        \textbf{Case 7:} $f \subseteq I$ and $f\cap e_{\ast} = \emptyset$. 

            The proof is similar to Case 1, and the only difference is that in this case we choose the triple $f'$ to satisfy $|f'\cap L| = 2$ and $|f'\cap e_1| = |f'\cap e_2| = 0$. 

            \bigskip 
            
        \textbf{Case 8:} $f \subseteq I$ and $f\cap e_{\ast} \neq \emptyset$.

            First fix a vertex $u_0' \in e_1 \cap D$ and a vertex $u_0''\in e_2 \cap D$. Let $e_1':= e_1 \setminus\{u_0'\}$ and $e_2':= e_2 \setminus\{u_0''\}$. 
            Take a triple $f'$ satisfying $|f'\cap L| = |f'\cap e_1'| = |f'\cap e_2'| = 1$. 
            The rest part is similar to the proof of Case 6, and in this case, we let $u_0'$ play the role of $u_0$, let the vertex in $f'\cap e_1$ play the role of the vertex $f\cap e_{\ast}$, and let the vertex $f'\cap L$ play the role of $f\setminus e_{\ast}$.            
    \end{proof}
    \begin{claim}\label{CLAIM:Ramsey-bad}
        We have $b \le C |\overline{D}|^2+1$. 
    \end{claim}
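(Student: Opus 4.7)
The plan is to split $\mathcal{B}$ according to the partition $V' = D \cup \overline{D}$ and handle the two parts separately, leveraging Claim~\ref{CLAIM:antiRamsey-missing-edge-D-bar} to bound the first part and a Tur\'{a}n-type argument to bound the second.

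First, I observe that $\mathcal{H}$ is a rainbow subgraph of $K_n^3$ under $\chi$, so every copy of $F^3$ inside $\mathcal{H}$ would automatically be rainbow. Since we are assuming (for contradiction) that $\chi$ admits no rainbow copy of $F^3$, the $3$-graph $\mathcal{H}$ must itself be $F^3$-free. In particular, the induced subgraph $\mathcal{B}[\overline{D}]$ is $F^3$-free on $|\overline{D}|$ vertices. By the choice of the constant $C$, which satisfies $\mathrm{ex}(N,F^3) \le C N^2$ for all $N \ge 0$ (such $C$ exists by Proposition~\ref{PROP:Turan-augmentation}), this gives
\begin{align*}
    |\mathcal{B}[\overline{D}]| \;\le\; \mathrm{ex}(|\overline{D}|,F^3) \;\le\; C\,|\overline{D}|^{2}.
\end{align*}

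Next, Claim~\ref{CLAIM:antiRamsey-missing-edge-D-bar} tells us that $|\mathcal{B} \setminus \mathcal{B}[\overline{D}]| \le 1$. Combining with the previous display yields
\begin{align*}
    b \;=\; |\mathcal{B}| \;=\; |\mathcal{B}[\overline{D}]| + |\mathcal{B}\setminus \mathcal{B}[\overline{D}]| \;\le\; C\,|\overline{D}|^{2} + 1,
\end{align*}
which is the desired bound.

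There is no real obstacle here: the content of the argument was already packaged into Claim~\ref{CLAIM:antiRamsey-missing-edge-D-bar} (which controls edges touching $D$) and Proposition~\ref{PROP:Turan-augmentation} (which controls edges inside $\overline{D}$). The only conceptual point worth highlighting is the ``rainbow implies $F^3$-free'' reduction, which is what legitimizes applying the ordinary Tur\'{a}n bound for $F^3$ to the rainbow subgraph $\mathcal{B}[\overline{D}]$. The additive $+1$ in the statement simply reflects the single possible edge allowed by Claim~\ref{CLAIM:antiRamsey-missing-edge-D-bar}, and together with $|\overline{D}| \le 6\delta t n$ from Claim~\ref{CLAIM:Ramsey-missing} this bound will be small enough (for $\delta$ sufficiently small) to contradict $b = m+2$ obtained from $|\mathcal{H}| = |\mathcal{S}| + b - m = q = |\mathcal{S}|+2$.
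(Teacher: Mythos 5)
Your proposal is correct and takes essentially the same route as the paper: you split $\mathcal{B}$ into $\mathcal{B}[\overline{D}]$ and its complement, bound the first by the Tur\'{a}n bound $\mathrm{ex}(|\overline{D}|,F^3) \le C|\overline{D}|^2$ (justified by the correct observation that $\mathcal{H}$, being rainbow, must be $F^3$-free), and bound the second by $1$ via Claim~\ref{CLAIM:antiRamsey-missing-edge-D-bar}. The paper writes the same decomposition as $\mathcal{B}_1 \cup \mathcal{B}_2 \cup \mathcal{B}_3$ indexed by $|e\cap\overline{D}|$, but the content is identical, and you make explicit the rainbow-implies-$F^3$-free step that the paper leaves implicit.
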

    \begin{proof}
        For $i\in \{1,2,3\}$ let  
        \begin{align*}
            \mathcal{B}_i := \left\{e\in \mathcal{B} \colon |e\cap \overline{D}| = i\right\}. 
        \end{align*}
        Since $\mathcal{B}_3$ is $F^3$-free, it follows from the definition of $C$ that $|\mathcal{B}_3|\le C |\overline{D}|^2$. 
        By Claim~\ref{CLAIM:antiRamsey-missing-edge-D-bar}, we have $|\mathcal{B}_1| + |\mathcal{B}_2| \le 1$. 
        Therefore, $b =|\mathcal{B}_1| + |\mathcal{B}_2| + |\mathcal{B}_3| \le C |\overline{D}|^2  + 1$. 
    \end{proof}

    Now it follows from Claims~\ref{CLAIM:Ramsey-missing} and~\ref{CLAIM:Ramsey-bad} that 
    \begin{align*}
        |\mathcal{H}| 
        = |\mathcal{S}| + |\mathcal{B}| - |\mathcal{M}| 
         \le q-2 + C |\overline{D}|^2 + 1 - m 
         \le q-1 - \left(\frac{n}{6}-C\times 6\delta t n\right)|\overline{D}|
        < q
    \end{align*}
    a contradiction. 
\end{proof}
\section{Proofs for generalized Tur\'{a}n results}\label{SEC:Proof-GenTuran}
We prove Theorems~\ref{THM:GenTuran-Cycle-Exact} and~\ref{THM:GenTuran-Cycle-Stability} in this section. 
We will prove Theorem~\ref{THM:GenTuran-Cycle-Stability} first, and then use it to prove Theorem~\ref{THM:GenTuran-Cycle-Exact}. 
%
\subsection{Proof of Theorem~\ref{THM:GenTuran-Cycle-Stability}}\label{SEC:Proof-GenTuran-even-cycle-stability}
Now we are ready to prove Theorem~\ref{THM:GenTuran-Cycle-Stability}. 
The proof is an adaptation of the proof for~{\cite[Theorem~6.2]{KMV15a}}. 

\begin{proof}[Proof of Theorem~\ref{THM:GenTuran-Cycle-Stability}]
        Let $k \ge 4$ be a fixed integer and $t := \floor*{(k-1)/2}$. 
        Fix $\delta >0$. 
        Let $0 < \varepsilon \ll \varepsilon_1 \ll \varepsilon_2 \ll \delta$ be sufficiently small constants and $n$ be a sufficiently large integer. 
        Let $G$ be an $n$-vertex $C_{k}^{\triangle}$-free graph with 
        \begin{align}\label{equ:THM:GenTuran-Cycle-Stability-1}
            N(K_{3}, G) 
            \ge  \left(\frac{t}{4} -\varepsilon\right) n^2.
        \end{align}
        Let $\mathcal{H} := \mathcal{K}_{G}$ and recall that 
        \begin{align*}
            \mathcal{K}_{G} 
            := \left\{e \in \binom{V(G)}{3} \colon G[e] \cong K_3\right\}. 
        \end{align*}
        Let $\mathcal{H}_q$ denote the outputting $3$-graph of the Cleaning Algorithm with input $\left(\mathcal{H}, k, t\right)$ as defined here. 
        Recall from Fact~\ref{FACT:shadow-Turan} that $\mathcal{H}$ is $C_{k}^3$-free. 
        In addition, it follows from Proposition~\ref{PROP:shadow-size} and~\eqref{equ:THM:GenTuran-Cycle-Stability-1} that 
        \begin{align*}
            |\mathcal{H}|
            = N(K_{3}, G)
            \ge t \frac{n^2}{4}  -\varepsilon n^2
            \ge t \left(|G|-\varepsilon n^2\right) -\varepsilon n^2
            = t |\partial\mathcal{H}| - \varepsilon_1 n^2, 
        \end{align*}
        where $\varepsilon_1 := (t+1)\varepsilon$. 
        Therefore, by Lemma~\ref{LEMMA:Cleaning-Algo-cycle}, we have 
        \begin{align}\label{equ:THM:GenTuran-Cycle-Stability-2}
            q 
            \le 12k \varepsilon_1 n^2, \quad 
            |\mathcal{H}_q| 
            \ge |\mathcal{H}| - 48k^2 \varepsilon_1 n^2, \quad\text{and}\quad
            |\partial\mathcal{H}_q| \ge |\partial\mathcal{H}| - 50k^2 \varepsilon_1 n^2.
        \end{align}
        Moreover, the $3$-graph $\mathcal{H}_q$ is $(t,3k)$-superfull. 
        %
        
       Using the third inequality in~\eqref{equ:THM:GenTuran-Cycle-Stability-2} and Proposition~\ref{PROP:shadow-size}, we obtain the following claim. 
       \begin{claim}\label{CLAIM:shadow-Hq-lower-bound}
           We have $|\partial\mathcal{H}_q| \ge \left(1/4-\varepsilon_2\right) n^2$. 
       \end{claim}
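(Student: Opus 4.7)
The plan is to combine the Cleaning Algorithm's shadow-loss estimate with the hypergraph Tur\'{a}n-type bound of Proposition~\ref{APPENDIX:PROP:Turan-cycle-shadow}. Specifically, the third inequality in~\eqref{equ:THM:GenTuran-Cycle-Stability-2} tells us that $|\partial\mathcal{H}_q| \ge |\partial\mathcal{H}| - 50k^2 \varepsilon_1 n^2$, so it suffices to show that $|\partial\mathcal{H}| \ge n^2/4 - O(\varepsilon) n^2$.

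I would obtain this lower bound on $|\partial\mathcal{H}|$ by applying Proposition~\ref{APPENDIX:PROP:Turan-cycle-shadow} to the $C_k^3$-free $3$-graph $\mathcal{H} = \mathcal{K}_G$, which yields
\begin{align*}
    |\mathcal{H}| \le (\sigma(C_k) - 1)\, |\partial\mathcal{H}| + \varepsilon n^2 = t\, |\partial\mathcal{H}| + \varepsilon n^2.
\end{align*}
Rearranging and using the hypothesis $|\mathcal{H}| = N(K_3, G) \ge (t/4 - \varepsilon) n^2$ from~\eqref{equ:THM:GenTuran-Cycle-Stability-1} gives
\begin{align*}
    |\partial\mathcal{H}| \ge \frac{|\mathcal{H}| - \varepsilon n^2}{t} \ge \frac{n^2}{4} - \frac{2\varepsilon}{t}\, n^2.
\end{align*}
As a sanity check, this is consistent with the complementary upper bound $|\partial\mathcal{H}| \le |G| \le n^2/4 + o(n^2)$ furnished by Proposition~\ref{PROP:shadow-size} applied to the $C_k^{\triangle}$-free graph $G$, so the estimate is essentially tight.

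Plugging back into the third inequality of~\eqref{equ:THM:GenTuran-Cycle-Stability-2}, we obtain
\begin{align*}
    |\partial\mathcal{H}_q| \ge \frac{n^2}{4} - \left(\frac{2\varepsilon}{t} + 50 k^2 \varepsilon_1\right) n^2 \ge \left(\frac{1}{4} - \varepsilon_2\right) n^2,
\end{align*}
where the final inequality uses the chosen hierarchy $\varepsilon \ll \varepsilon_1 \ll \varepsilon_2$. There is no real obstacle here; the only care required is in setting up the hierarchy of constants so that both the $\varepsilon n^2$ slack in Proposition~\ref{APPENDIX:PROP:Turan-cycle-shadow} and the shadow edges discarded by the Cleaning Algorithm (quantified by the $50k^2 \varepsilon_1 n^2$ term) are absorbed into the final $\varepsilon_2 n^2$ error.
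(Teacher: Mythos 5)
Your proof is correct and follows the same chain of ideas the paper has in mind: apply the Kruskal--Katona-type bound for $C_k^3$-free $3$-graphs to lower-bound $|\partial\mathcal{H}|$, then discount by the shadow loss from the Cleaning Algorithm. In fact you identify the right tool more precisely than the paper's one-line proof, which cites Proposition~\ref{PROP:shadow-size} (an upper bound on $|G|$) when what is really needed for the lower bound on $|\partial\mathcal{H}|$ is Proposition~\ref{APPENDIX:PROP:Turan-cycle-shadow}, exactly as you use it.
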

       Define a graph 
       \begin{align*}
           G' 
           := \left\{uv \in \partial\mathcal{H}_q \colon d_{\mathcal{H}_q}(uv) = t\right\}.
       \end{align*}
       \begin{claim}\label{CLAIM:size-t-shadow-lower-bound}
           We have $|G'| \ge \left(1/4-2\varepsilon_2\right) n^2$. 
       \end{claim}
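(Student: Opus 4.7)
The plan is to write $|G'| = |\partial\mathcal{H}_q| - |G''|$, where $G'' := \partial\mathcal{H}_q \setminus G'$, and show $|G''| \le \varepsilon_2 n^2$; combined with Claim~\ref{CLAIM:shadow-Hq-lower-bound} this immediately yields the desired bound. The first step is to identify $G''$ explicitly. By the termination condition of the Cleaning Algorithm, $\partial\mathcal{H}_q$ contains no type-$3$ edge, so no pair in $\partial\mathcal{H}_q$ has codegree in $\{t+1,\dots,3k-1\}$. Since $\mathcal{H}_q$ is in particular $t$-full, every pair of $\partial\mathcal{H}_q$ has codegree either exactly $t$ or at least $3k$. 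Hence $G'' = \{uv \in \partial\mathcal{H}_q \colon d_{\mathcal{H}_q}(uv) \ge 3k\}$.

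Next, I would apply Lemma~\ref{LEMMA:KMVa-many-ell+1-shadow-cycle} to $\mathcal{H}_q$ with $E = G''$, arguing by contradiction: suppose $|G''| \ge \varepsilon_2 n^2$. Every pair $uv \in G''$ has codegree $\ge 3k > \sigma(C_k) = t+1$, so hypothesis~(i) is satisfied when $k$ is even. When $k$ is odd, hypothesis~(ii) additionally requires producing, for each $uv \in G''$, a vertex $w$ with $uvw \in \mathcal{H}_q$, $\min\{d_{\mathcal{H}_q}(uw), d_{\mathcal{H}_q}(vw)\} \ge 2$, and $\max\{d_{\mathcal{H}_q}(uw), d_{\mathcal{H}_q}(vw)\} \ge 3k$. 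Take any such $w$ (which exists since $d_{\mathcal{H}_q}(uv) \ge 3k$). Since $\mathcal{H}_q$ is $(t,3k)$-superfull, the edge $uvw$ contains at most one pair with codegree less than $3k$; because $uv$ already has codegree at least $3k$, at least one of $uw, vw$ must have codegree at least $3k$, giving the desired maximum. For the minimum, $t$-fullness gives $d_{\mathcal{H}_q}(uw), d_{\mathcal{H}_q}(vw) \ge t \ge 2$, where $t \ge 2$ since odd $k \ge 5$.

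Thus in either parity of $k$, the hypotheses of Lemma~\ref{LEMMA:KMVa-many-ell+1-shadow-cycle} are met, yielding $C_k^3 \subseteq \mathcal{H}_q \subseteq \mathcal{H} = \mathcal{K}_G$. By Fact~\ref{FACT:shadow-Turan} this contradicts the $C_k^{\triangle}$-freeness of $G$. Hence $|G''| \le \varepsilon_2 n^2$, and
\begin{align*}
    |G'| = |\partial\mathcal{H}_q| - |G''| \ge \left(\frac{1}{4} - \varepsilon_2\right)n^2 - \varepsilon_2 n^2 = \left(\frac{1}{4} - 2\varepsilon_2\right)n^2,
\end{align*}
as claimed. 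The only genuinely delicate point is the verification of hypothesis~(ii) for odd $k$, where both the $t$-fullness and the ``at most one low-codegree pair per edge'' property of the $(t,3k)$-superfull graph $\mathcal{H}_q$ are used simultaneously; everything else is a routine consequence of the definitions together with Claim~\ref{CLAIM:shadow-Hq-lower-bound}.
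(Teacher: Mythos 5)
Your proof is correct and follows the same route as the paper: write $G'' = \partial\mathcal{H}_q \setminus G'$, observe that $(t,3k)$-superfullness together with the termination condition of the Cleaning Algorithm forces every pair in $G''$ to have codegree at least $3k$, invoke Lemma~\ref{LEMMA:KMVa-many-ell+1-shadow-cycle} to conclude $|G''| \le \varepsilon_2 n^2$, and combine with Claim~\ref{CLAIM:shadow-Hq-lower-bound}. The paper compresses the application of the lemma into a single sentence (``it follows from Lemma~\ref{LEMMA:KMVa-many-ell+1-shadow-cycle} and the $(t,3k)$-superfullness of $\mathcal{H}_q$''), whereas you have carefully unpacked the verification of hypothesis~(ii) for odd $k$ — using $t$-fullness to guarantee $\min\{d(uw),d(vw)\} \ge t \ge 2$ and the ``at most one low-codegree pair per edge'' part of superfullness to guarantee $\max\{d(uw),d(vw)\} \ge 3k$ — which is exactly the step the paper leaves implicit. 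Nothing is missing or off.
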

       \begin{proof}
           Let $G^{\ast} := \partial\mathcal{H}_q \setminus G'$. 
           Since $\mathcal{H}_q$ is $(t,3k)$-superfull, we have $d_{\mathcal{H}_q}(e) \ge 3k$ for all $e \in G^{\ast}$. 
           Therefore, it follows from Lemma~\ref{LEMMA:KMVa-many-ell+1-shadow-cycle} and the $(t,3k)$-superfullness of $\mathcal{H}_q$ that $|G^{\ast}| \le \varepsilon_2 n^2$.
           Combined with Claim~\ref{CLAIM:shadow-Hq-lower-bound}, we obatin $|G'| \ge |\partial\mathcal{H}_q| - |G^{\ast}| \ge \left(1/4-2\varepsilon_2\right) n^2$. 
       \end{proof}
        Let $V_1 \cup V_2 = V(\mathcal{H})$ be a bipartition such that the number of edges (in $G'$) crossing $V_1$ and $V_2$ is maximized. 
        It follows from Claim~\ref{CLAIM:size-t-shadow-lower-bound} and Proposition~\ref{APPENDIX:PROP:Turan-cycle-shadow} that $|G'[V_1, V_2]| \ge |G'| - \varepsilon_2 n^2 \ge \left(1/4-3\varepsilon_2\right) n^2$. 
        Simple calculations show that 
        \begin{align}\label{equ:THM:GenTuran-Cycle-Stability-3}
            \frac{n}{2} - \sqrt{3\varepsilon_2}n 
            \le |V_i|
            \le \frac{n}{2} + \sqrt{3\varepsilon_2}n 
            \quad\text{for}\quad i\in \{1,2\}. 
        \end{align}
        Define
           \begin{align*}
               \mathcal{W}
               := \left\{(S, T) \in \binom{V_1}{3k}\times \binom{V_2}{3k} \colon G'[S,T] \cong K_{3k, 3k}\right\}. 
           \end{align*}
        Since each nonedge in $G'[V_1, V_2]$ contributes at most $\binom{|V_1|-1}{3k-1}\binom{|V_2|-1}{3k-1}$ elements in $\binom{V_1}{3k}\times \binom{V_2}{3k} - \mathcal{W}$, it follows from~\eqref{equ:THM:GenTuran-Cycle-Stability-3} that  
        \begin{align*}
            |\mathcal{W}|
            & \ge \binom{|V_1|}{3k}\binom{|V_2|}{3k}
                - 3\varepsilon_2 n^2 \binom{|V_1|-1}{3k-1}\binom{|V_2|-1}{3k-1} \\
            & = \binom{|V_1|}{3k}\binom{|V_2|}{3k} - 3\varepsilon_2 n^2 \frac{3k}{|V_1|}\frac{3k}{|V_2|} \binom{|V_1|}{3k}\binom{|V_2|}{3k} 
             \ge (1-144\varepsilon_2 k^2)\binom{|V_1|}{3k}\binom{|V_2|}{3k},  
        \end{align*}
        Here we used the identity $\binom{|V_i|-1}{3k-1} = \frac{3k}{|V_i|}\binom{|V_i|}{3k}$.  
        By averaging, there exists an edge $e\in G'[V_1, V_2]$ that is contained in at least 
        \begin{align*}
            \frac{(3k)^2|\mathcal{W}|}{|G'[V_1, V_2]|}
            \ge \frac{(3k)^2|\mathcal{W}|}{|V_1||V_2|}
            \ge (1-144\varepsilon_2 k^2)\binom{|V_1|-1}{3k-1}\binom{|V_2|-1}{3k-1}
        \end{align*}
        members in $\mathcal{W}$. 
        For every edge $f\in G'[V_1, V_2]$ that is disjoint from $e$, there are at most $\binom{|V_1|-2}{3k-2}\binom{|V_2|-2}{3k-2}$ copies of $K_{3k,3k}$ in $G'[V_1, V_2]$ containing both $e$ and $f$. 
        Hence, the set 
        \begin{align*}
            G'':= \left\{f \in G'[V_1, V_2] \colon \mathrm{\ there\ exists\ a \ } W\in \mathcal{W},\ \{e,f\} \subset W\right\}
        \end{align*}
        has size at least 
        \begin{align*}
           (3k-1)^2 \frac{(1-144\varepsilon_2 k^2)\binom{|V_1|-1}{3k-1}\binom{|V_2|-1}{3k-1}}{\binom{|V_1|-2}{3k-2}\binom{|V_2|-2}{3k-2}}
           & = (1-144\varepsilon_2 k^2)\left(|V_1|-1\right)\left(|V_2|-1\right) \\
           & \ge \left(\frac{1}{4}-\frac{\delta}{2t}\right) n^2, 
        \end{align*}
       where the last inequality follows from~\eqref{equ:THM:GenTuran-Cycle-Stability-3} and the assumption that $\varepsilon_2 \ll \delta$. 
       Let $L:= N_{\mathcal{H}_{q}}(e)$. 
       Then $|L| = t$ and 
       it follows from Lemma~\ref{LEMMA:KMVa-superfull-complete-bipartite-cycle} and the definitions of $\mathcal{W}, G''$ that 
       \begin{align*}
           N_{\mathcal{H}_{q}}(f) = N_{\mathcal{H}_{q}}(e) = L
           \quad\text{for all}\quad f\in G''. 
       \end{align*}
       This implies that $|G-L| \ge |G''| \ge \left(\frac{1}{4}-\frac{\delta}{2t}\right) n^2$ and $d_{G}(v) \ge |G''| \ge \left(\frac{1}{4}-\frac{\delta}{2t}\right) n^2$ for every $v\in L$. 
       The latter conclusion implies that the number of triangles in $G$ containing at least one vertex from $L$ is at least $t \left(\frac{1}{4}-\frac{\delta}{2t}\right) n^2$. 
       Combining this with Propositions~\ref{PROP:shadow-size} and~\ref{APPENDIX:PROP:Turan-cycle-shadow}, we obtain 
       \begin{align*}
           N(K_3, G-L)
           \le |\mathcal{H}| - t \left(\frac{1}{4}-\delta\right) n^2
           & \le t \cdot |G| + \varepsilon n^2 - t \left(\frac{1}{4}-\frac{\delta}{2t}\right) n^2 \\
           & \le t \cdot \left(\frac{1}{4}+o(1)\right) n^2 + \varepsilon n^2 - t \left(\frac{1}{4}-\frac{\delta}{2t}\right) n^2
           \le \delta n^2. 
       \end{align*}
       Finally, it follows from Proposition~\ref{PROP:shadow-size} that $G-L$ can be made bipartite by removing at most $\delta n^2$ edges. 
    This completes the proof of Theorem~\ref{THM:GenTuran-Cycle-Stability}.  
\end{proof}
%
\subsection{Proof of Theorem~\ref{THM:GenTuran-Cycle-Exact}}\label{SEC:Proof-GenTuran-even-cycle-exact}
In this subsection, we prove Theorem~\ref{THM:GenTuran-Cycle-Exact} by using Theorem~\ref{THM:GenTuran-Cycle-Stability}. 
There are slight distinctions between the proofs for the even and odd cases, despite their shared framework. 
Since the proof for the odd case is less technical, we omit it and refer the interested reader to the Appendix for more detail. 
%
\begin{proof}[Proof of Theorem~\ref{THM:GenTuran-Cycle-Exact} for even $k$]
    Fix $k = 2t+2 \ge 6$. 
    Let $C>0$ be a constant such that $\mathrm{ex}(N, C_{k}^3) \le C N^2$ holds for all integers $N \ge 0$. 
    The existence of such a constant $C$ is guaranteed by the theorem of Kostochka--Mubayi--Verstra\"{e}te~\cite{KMV15a}. 
    Let $0 < \delta \ll C^{-1}$ be sufficiently small and $n \gg C$ be sufficiently large. 
    Let $G$ be an $n$-vertex $C_{k}^{\triangle}$-free graph with 
    \begin{align*}
        N(K_3, G) = |\mathcal{S}^{+}_{\mathrm{bi}}(n,t)|. 
    \end{align*}
    We may assume that every edge in $G$ is contained in some triangle of $G$, since otherwise we can delete it from $G$ and this does not change the value of $N(K_3, G)$. 
    Our aim is to prove that $G \cong S^{+}_{\mathrm{bi}}(n,t)$. 

    Since $N(K_3, G) = t n^2/4 - o(n^2)$ and $n$ is large, it follows from Theorem~\ref{THM:GenTuran-Cycle-Stability} that there exists a $t$-set $L:= \{x_1, \ldots, x_{t}\} \subseteq V(G)$ such that 
    \begin{enumerate}[label=(\roman*)]
        \item $|G-L| \ge n^2/4 - \delta n^2$, 
        \item $N(K_3, G-L) \le \delta n^2$, 
        \item $G-L$ can be made bipartite by removing at most $\delta n^2$ edges, and
        \item $d_{G}(v) \ge (1-\delta)n$ for all $v\in L$. 
    \end{enumerate}
    Let $V := V(G)$, $V':= V-L$, and $V_1 \cup V_2 = V'$ be a bipartition such that the number of edges (in $G$) crossing $V_1$ and $V_2$ is maximized. 
    Define 
    \begin{align*}
        \mathcal{S} &:= \left\{e \in \binom{V}{3} \colon |e \cap L| \ge 1,\ |e\cap V_1| \le 1,\ |e\cap V_2| \le 1\right\}, \quad\text{and} \\
         \mathcal{S}' &:= \left\{e \in \binom{V}{3} \colon |e \cap L| = |e\cap V_1| = |e\cap V_2| = 1\right\} \subseteq \mathcal{S}.
    \end{align*}
    and let $\mathcal{H} := \mathcal{K}_{G}$, $\mathcal{B} := \mathcal{H}\setminus \mathcal{S}$, $\mathcal{M} := \mathcal{S}'\setminus  \mathcal{H} \subseteq \mathcal{S}\setminus  \mathcal{H}$. 
    Let $m:= |\mathcal{M}|$ and $b:= |\mathcal{B}|$. 
    It follows from Statements~(i) and~(iii) above that 
    \begin{align}\label{equ:GenTuran-cycle-G[V1,V2]}
        |G[V_1, V_2]| \ge \frac{n^2}{4} - 2\delta n^2. 
    \end{align}
    Combined with Statement~(iv), for every $x_i \in L$ the intersection of links $L_{\mathcal{H}}(x_i)$ and $L_{\mathcal{S}'}(x_i)$  satisfies 
    \begin{align*}
        |L_{\mathcal{H}}(x_i) \cap L_{\mathcal{S}'}(x_i)|
        = |L_{\mathcal{H}}(x_i) \cap G[V_1, V_2]|
        \ge |G[V_1, V_2]| - \delta n \times n
        \ge \frac{n^2}{4} - 3\delta n^2. 
    \end{align*}
    Therefore, 
    \begin{align}\label{equ:GenTuran-cycle-m-upper}
        |\mathcal{M}|
       &  = \sum_{x_i \in L}\left(|L_{\mathcal{S}'}(x_i)| - |L_{\mathcal{H}}(x_i) \cap L_{\mathcal{S}'}(x_i)|\right)  \notag \\
        & \le t \left(|V_1||V_2| - \left(\frac{n^2}{4} - 3\delta n^2\right)\right)
        \le 3\delta t n^2. 
    \end{align}
    Inequality~\eqref{equ:GenTuran-cycle-G[V1,V2]} with some simple calculations also imply that 
    \begin{align}\label{equ:GenTuran-cycle-Vi-size}
        \left(\frac{1}{2} - \sqrt{2\delta}\right)n 
            \le |V_i| 
            \le \left(\frac{1}{2} + \sqrt{2\delta}\right)n 
            \quad\text{for}\quad i\in \{1,2\}. 
        \end{align}
    Let 
    \begin{gather*}
        \tau := \frac{n}{200}, \quad
        D  := \left\{y\in V' \colon d_{\mathcal{H}}(y x_i) \ge \tau \text{ for all } x_i \in L \right\}, \quad \overline{D} := V'\setminus D.
    \end{gather*}
    Let $D_i  := D \cap V_i$, $\overline{D}_i := V_i \setminus D_i$ for $i\in \{1,2\}$. 
    We also divide $\mathcal{M}$ further by letting 
    \begin{align*}
        \mathcal{M}_1 := \left\{e\in \mathcal{M} \colon e\cap \overline{D} \neq\emptyset\right\}, \quad\text{and}\quad
        \mathcal{M}_2 := \mathcal{M}\setminus \mathcal{M}_1. 
    \end{align*}
    Let $m_1 := |\mathcal{M}_1|$ and $m_2:= |\mathcal{M}_2|$. 
    \begin{claim}\label{CLAIM:GenTuran-cycle-D-bar}
        We have $m_1 \ge 49 n|\overline{D}|/100$ and  $|\overline{D}| \le 8 \delta t n$. 
    \end{claim}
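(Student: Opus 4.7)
The plan for Claim~\ref{CLAIM:GenTuran-cycle-D-bar} is to prove the lower bound on $m_1$ by associating to each $y\in\overline{D}$ a large family of witness missing triples in $\mathcal{M}_1$, and then to derive the upper bound on $|\overline{D}|$ by combining the resulting estimate with the upper bound $m\le 3\delta t n^2$ from~\eqref{equ:GenTuran-cycle-m-upper}.

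For each $y\in\overline{D}$, the definition of $\overline{D}$ supplies an index $x_i(y)\in L$ with $d_{\mathcal{H}}(yx_i(y))<\tau=n/200$. Writing $\overline{D}_j:=\overline{D}\cap V_j$ and considering $y\in \overline{D}_j$, every triple $\{x_i(y),y,v\}$ with $v\in V_{3-j}$ lies in $\mathcal{S}'$; by~\eqref{equ:GenTuran-cycle-Vi-size} at least $|V_{3-j}|-\tau\ge (99/200-\sqrt{2\delta})n$ such triples fail to be triangles in $G$, and hence lie in $\mathcal{M}_1$.

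I aggregate these witness contributions by summing over $\overline{D}_1$ and $\overline{D}_2$ separately. Within a single $\overline{D}_j$ the witness triples are distinct because their $V_j$-vertex is exactly $y$, so the $\overline{D}_j$-sum produces at least $|\overline{D}_j|(99/200-\sqrt{2\delta})n$ elements of $\mathcal{M}_1$. A single triple $\{x_k,u,v\}$ can be counted once in each sum only if $u\in\overline{D}_1$, $v\in\overline{D}_2$, and $x_i(u)=x_k=x_i(v)$; this overlap is crudely bounded by $|\overline{D}_1|\cdot|\overline{D}_2|\le|\overline{D}|^2/4$. Combining,
\[ m_1\ge\left(\tfrac{99}{200}-\sqrt{2\delta}\right)n|\overline{D}|-\frac{|\overline{D}|^2}{4}. \]
A short bootstrap converts this into the stated bound. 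Using $|\overline{D}|\le n$ crudely in the quadratic term gives $m_1\ge (49/200-\sqrt{2\delta})n|\overline{D}|$; coupling this with $m_1\le m\le 3\delta t n^2$ from~\eqref{equ:GenTuran-cycle-m-upper} forces $|\overline{D}|\le 15\delta t n$. For $\delta$ sufficiently small this implies $|\overline{D}|\le (1/50-4\sqrt{2\delta})n$, i.e.\ $|\overline{D}|/4\le (1/200-\sqrt{2\delta})n$. Feeding this estimate back into the displayed inequality upgrades the coefficient to $49/100$, and then the same application of~\eqref{equ:GenTuran-cycle-m-upper} yields $|\overline{D}|\le 300\delta t n/49<8\delta t n$.

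The main technical point is the double-counting between $\overline{D}_1$- and $\overline{D}_2$-contributions, which \emph{a priori} only allows a factor $1/2$ in the lower bound on $m_1$. The fact that this overlap is quadratic in $|\overline{D}|$, rather than linear, is what enables the bootstrap: one first uses a weaker linear bound to confine $|\overline{D}|$ to $o(n)$ (using~\eqref{equ:GenTuran-cycle-m-upper}), at which point the quadratic term becomes negligible and the sharp constant $49/100$ drops out. Every constant is consistent because $\delta$ is chosen sufficiently small after $t$ is fixed.
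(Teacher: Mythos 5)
Your proof follows the paper's counting template: for each $y\in\overline{D}_j$ pick a witness $x_i(y)\in L$ with $d_{\mathcal{H}}(yx_i(y))<\tau$, observe that this pair lies in at least $|V_{3-j}|-\tau$ missing triples, and then invert~\eqref{equ:GenTuran-cycle-m-upper}. Where you are more careful than the paper is at the aggregation step. The paper writes $m_1\ge\left(\min\{|V_1|,|V_2|\}-\tau\right)\left(|\overline{D}_1|+|\overline{D}_2|\right)$ as though the witness families attached to $\overline{D}_1$ and to $\overline{D}_2$ were disjoint; as you correctly observe they need not be, since a triple $\{x,u,w\}$ with $u\in\overline{D}_1$, $w\in\overline{D}_2$, and $x_i(u)=x_i(w)=x$ is counted once for $u$ and once for $w$. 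Your bound on the overlap by $|\overline{D}_1|\cdot|\overline{D}_2|\le|\overline{D}|^2/4$, together with the two-pass bootstrap --- a crude first pass giving $m_1\ge(49/200-\sqrt{2\delta})n|\overline{D}|$ and hence $|\overline{D}|\le 15\delta t n$, then a refined pass in which $|\overline{D}|/4\le(1/200-\sqrt{2\delta})n$ so the quadratic error term is absorbed and the coefficient upgrades to $99/200-1/200=49/100$, yielding $|\overline{D}|\le 300\delta t n/49<8\delta t n$ --- is a clean and correct repair. So your argument fills in a small gap in the paper's stated proof (the displayed inequality there does not literally follow from the preceding sentences without addressing the overlap), while landing on exactly the claimed bounds; note that the paper's analogous Claim~\ref{CLAIM:Turan-size-D} handles this issue with the crude factor $1/3$, which is not available here since the target coefficient $49/100$ is sharp enough to require the bootstrap.
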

    \begin{proof}
        By the definition of $D$, 
        for every $i\in \{1,2\}$ and for every vertex $v\in \overline{D}_i$ there exists a vertex $x\in L$ (depending on $v$) such that 
        \begin{align*}
            |N_{\mathcal{H}}(vx) \cap V_{3-i}| \le \tau. 
        \end{align*}
        Hence, the pair $\{v,x\}$ contributes at least $|V_{3-i}| - \tau$ elements to $\mathcal{M}$. 
        Therefore, it follows from~\eqref{equ:GenTuran-cycle-Vi-size} that 
        \begin{align*}
            m_1 
             \ge \left(\min\{|V_1|, |V_2|\} - \tau \right) \left(|\overline{D}_1| + |\overline{D}_2|\right) 
             \ge \left(\frac{n}{2}-\sqrt{2\delta}n - \tau\right)|\overline{D}|
             \ge \frac{49}{100} n |\overline{D}|. 
        \end{align*}
        Combined with~\eqref{equ:GenTuran-cycle-m-upper}, we obtain $|\overline{D}|\le \frac{3\delta t n^2}{49 n/100}\le 8 \delta t n$.
    \end{proof}
    %
    
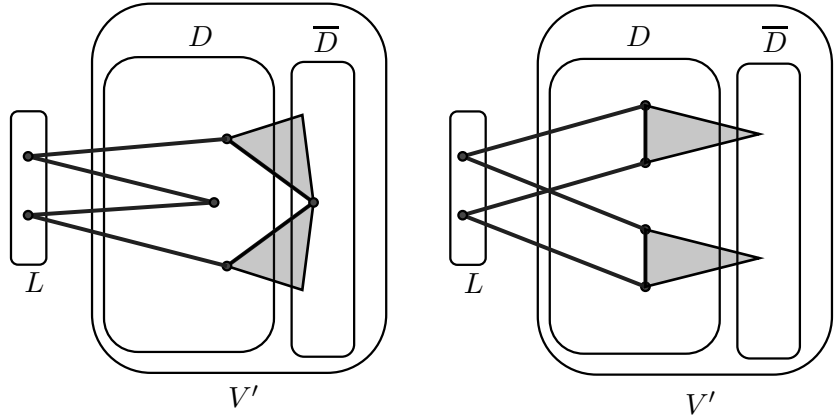
\begin{figure}[htbp]
\centering
\tikzset{every picture/.style={line width=0.85pt}} 

\begin{tikzpicture}[x=0.75pt,y=0.75pt,yscale=-1,xscale=1, scale=0.8]

\draw   (85.96,87.66) .. controls (88.4,87.66) and (90.37,89.63) .. (90.37,92.07) -- (90.37,179.71) .. controls (90.37,182.14) and (88.4,184.12) .. (85.96,184.12) -- (72.73,184.12) .. controls (70.3,184.12) and (68.32,182.14) .. (68.32,179.71) -- (68.32,92.07) .. controls (68.32,89.63) and (70.3,87.66) .. (72.73,87.66) -- cycle ;
\draw   (211.13,53.62) .. controls (222.84,53.62) and (232.33,63.11) .. (232.33,74.82) -- (232.33,217.8) .. controls (232.33,229.51) and (222.84,239) .. (211.13,239) -- (147.53,239) .. controls (135.82,239) and (126.33,229.51) .. (126.33,217.8) -- (126.33,74.82) .. controls (126.33,63.11) and (135.82,53.62) .. (147.53,53.62) -- cycle ;
\draw   (265.64,20) .. controls (285.9,20) and (302.33,36.43) .. (302.33,56.69) -- (302.33,215.46) .. controls (302.33,235.72) and (285.9,252.15) .. (265.64,252.15) -- (155.55,252.15) .. controls (135.29,252.15) and (118.86,235.72) .. (118.86,215.46) -- (118.86,56.69) .. controls (118.86,36.43) and (135.29,20) .. (155.55,20) -- cycle ;
\draw   (274.53,56.62) .. controls (278.84,56.62) and (282.33,60.11) .. (282.33,64.42) -- (282.33,234.2) .. controls (282.33,238.51) and (278.84,242) .. (274.53,242) -- (251.13,242) .. controls (246.83,242) and (243.33,238.51) .. (243.33,234.2) -- (243.33,64.42) .. controls (243.33,60.11) and (246.83,56.62) .. (251.13,56.62) -- cycle ;
\draw   (359.96,87.76) .. controls (362.4,87.76) and (364.37,89.73) .. (364.37,92.17) -- (364.37,179.81) .. controls (364.37,182.25) and (362.4,184.22) .. (359.96,184.22) -- (346.73,184.22) .. controls (344.3,184.22) and (342.32,182.25) .. (342.32,179.81) -- (342.32,92.17) .. controls (342.32,89.73) and (344.3,87.76) .. (346.73,87.76) -- cycle ;
\draw   (489.13,54.72) .. controls (500.84,54.72) and (510.33,64.21) .. (510.33,75.92) -- (510.33,218.9) .. controls (510.33,230.61) and (500.84,240.1) .. (489.13,240.1) -- (425.53,240.1) .. controls (413.82,240.1) and (404.33,230.61) .. (404.33,218.9) -- (404.33,75.92) .. controls (404.33,64.21) and (413.82,54.72) .. (425.53,54.72) -- cycle ;
\draw   (543.64,21.1) .. controls (563.9,21.1) and (580.33,37.53) .. (580.33,57.8) -- (580.33,216.56) .. controls (580.33,236.83) and (563.9,253.26) .. (543.64,253.26) -- (433.55,253.26) .. controls (413.29,253.26) and (396.86,236.83) .. (396.86,216.56) -- (396.86,57.8) .. controls (396.86,37.53) and (413.29,21.1) .. (433.55,21.1) -- cycle ;
\draw   (552.53,57.72) .. controls (556.84,57.72) and (560.33,61.21) .. (560.33,65.52) -- (560.33,235.3) .. controls (560.33,239.61) and (556.84,243.1) .. (552.53,243.1) -- (529.13,243.1) .. controls (524.83,243.1) and (521.33,239.61) .. (521.33,235.3) -- (521.33,65.52) .. controls (521.33,61.21) and (524.83,57.72) .. (529.13,57.72) -- cycle ;
\draw [line width=1.5pt,color=sqsqsq]   (203,105) -- (257,145) ;
\draw [line width=1.5pt,color=sqsqsq]   (203,185) -- (257,145) ;
\draw[line width=1pt, fill=sqsqsq,fill opacity=0.25]  (203,105) -- (257,145) -- (250,90) --cycle;
\draw[line width=1pt, fill=sqsqsq,fill opacity=0.25]  (203,185) -- (257,145) -- (250,200) --cycle;
\draw [fill=uuuuuu] (203,105) circle (2pt);
\draw [fill=uuuuuu] (257,145) circle (2pt);
\draw [fill=uuuuuu] (203,185) circle (2pt);
\draw [fill=uuuuuu] (464,84) circle (2pt);
\draw [fill=uuuuuu] (464,198) circle (2pt);
\draw [fill=uuuuuu] (464,120) circle (2pt);
\draw [fill=uuuuuu] (464,162) circle (2pt);
\draw  [line width=1.5pt,color=sqsqsq]  (79,116) -- (203,105) ;
\draw  [line width=1.5pt,color=sqsqsq]  (79,116) -- (195,145) ;
\draw  [line width=1.5pt,color=sqsqsq]  (79,153) -- (195,145) ;
%
\draw  [line width=1.5pt,color=sqsqsq]  (79,153) -- (203,185) ;
\draw [fill=uuuuuu] (79,116) circle (2pt);
\draw [fill=uuuuuu] (195,145) circle (2pt);
\draw [fill=uuuuuu] (79,153) circle (2pt);
%
\draw  [line width=1.5pt,color=sqsqsq]  (464,84) -- (464,120) ;
\draw  [line width=1.5pt,color=sqsqsq]  (464,198) -- (464,162) ;
%
\draw [line width=1.5pt,color=sqsqsq]   (350,116) -- (464,84) ;
\draw [line width=1.5pt,color=sqsqsq]   (350,116) -- (464,162) ;
\draw [line width=1.5pt,color=sqsqsq]   (350,153) --  (464,120) ;
\draw [line width=1.5pt,color=sqsqsq]   (350,153) -- (464,198) ;
\draw [fill=uuuuuu] (350,116) circle (2pt);
\draw [fill=uuuuuu] (350,153) circle (2pt);
\draw[line width=1pt, fill=sqsqsq,fill opacity=0.25]  (464,162) -- (535,180) -- (464,198) --cycle;
\draw[line width=1pt, fill=sqsqsq,fill opacity=0.25]  (464,84) -- (535,102) -- (464,120) --cycle;
\draw (74.73,187.12) node [anchor=north west][inner sep=0.75pt]   [align=left] {$L$};
\draw (255,32) node [anchor=north west][inner sep=0.75pt]   [align=left] {$\overline{D}$};
\draw (201.77,257.9) node [anchor=north west][inner sep=0.75pt]   [align=left] {$V'$};
\draw (176.95,32) node [anchor=north west][inner sep=0.75pt]   [align=left] {$D$};
\draw (348.73,187.22) node [anchor=north west][inner sep=0.75pt]   [align=left] {$L$};
\draw (535,32) node [anchor=north west][inner sep=0.75pt]   [align=left] {$\overline{D}$};
\draw (486.77,262) node [anchor=north west][inner sep=0.75pt]   [align=left] {$V'$};
\draw (450,32) node [anchor=north west][inner sep=0.75pt]   [align=left] {$D$};
\end{tikzpicture}
\caption{Embedding $C_6$ in two different cases.}
\label{fig:genturan-cycle-1}
\end{figure}
    
    \begin{claim}\label{CLAIM:GenTuran-cycle-P_2}
        The $3$-graph $\mathcal{B}[V']$ does not contain two edges $e_1$ and $e_2$ such that 
        \begin{enumerate}[label=(\roman*)]
            \item\label{equ:CLAIM:GenTuran-cycle-P_2} 
            $|e_1 \cap e_2| = 1$, $(e_1 \setminus e_2) \cap D \neq \emptyset$, and $(e_2 \setminus e_1) \cap D \neq \emptyset$, or 
            \item\label{equ:CLAIM:GenTuran-cycle-B1-two-edges}
                $\min\left\{|e_1 \cap D|,\ |e_2 \cap D|\right\} \ge 2$ and $e_1 \cap e_2 = \emptyset$.
        \end{enumerate}
    \end{claim}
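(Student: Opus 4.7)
The plan is to suppose for contradiction that edges $e_1,e_2$ satisfying~\ref{equ:CLAIM:GenTuran-cycle-P_2} or~\ref{equ:CLAIM:GenTuran-cycle-B1-two-edges} exist and construct from them a copy of $C_k^3$ inside $\mathcal{H}=\mathcal{K}_G$, which is impossible since $\mathcal{K}_G$ is $C_k^3$-free by Fact~\ref{FACT:shadow-Turan}. In both configurations the triangles $e_1,e_2$ will supply two of the $k$ edges of the core $C_k$, with the vertex of each triangle not on that edge serving as its expansion vertex; the remaining $k-2$ core edges will zigzag between $L$ and $D$ in such a way that every vertex of $L$ is used exactly once, and their expansion vertices will be chosen greedily. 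The key resources are that every pair in $L\times D$ has codegree at least $\tau=n/200$ in $\mathcal{H}$ and that $|D|\ge n-|\overline{D}|-|L|\ge n/2$ by Claim~\ref{CLAIM:GenTuran-cycle-D-bar}, both of which dwarf the constant $k$.

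For~\ref{equ:CLAIM:GenTuran-cycle-P_2}, write $\{v\}=e_1\cap e_2$, $e_1=\{v,a_1,a_2\}$ and $e_2=\{v,b_1,b_2\}$ with $a_1,b_1\in D$ (by the hypothesis that $(e_i\setminus e_{3-i})\cap D\neq\emptyset$). Fix any ordering $(x_{\pi(1)},\ldots,x_{\pi(t)})$ of $L$ and greedily pick distinct vertices $z_1,\ldots,z_{t-1}\in D\setminus(e_1\cup e_2)$. Traversed cyclically, the sequence
\[
v,\;a_1,\;x_{\pi(1)},\;z_1,\;x_{\pi(2)},\;z_2,\;\ldots,\;z_{t-1},\;x_{\pi(t)},\;b_1
\]
forms a $C_k$ on $2t+2=k$ distinct vertices, with closing edge $\{b_1,v\}$. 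Assign $a_2$ and $b_2$ as the expansion vertices of $\{v,a_1\}$ and $\{v,b_1\}$ respectively; each remaining cyclic edge lies in $L\times D$, hence has codegree $\ge\tau$ in $\mathcal{H}$, so an expansion vertex can be chosen greedily while avoiding the $O(k)$ previously used vertices, producing $C_k^3\subseteq\mathcal{H}$.

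For~\ref{equ:CLAIM:GenTuran-cycle-B1-two-edges}, recall that $k\ge 6$ even forces $t\ge 2$. Write $e_1=\{a_1,a_2,a_3\}$ with $a_1,a_2\in D$ and $e_2=\{b_1,b_2,b_3\}$ with $b_1,b_2\in D$. Use $\{a_1,a_2\}$ and $\{b_1,b_2\}$ as two (non-adjacent) core edges with expansion vertices $a_3$ and $b_3$, connect $a_2$ to $b_1$ through a single vertex $x_{\pi(1)}\in L$, and connect $b_2$ back to $a_1$ by interleaving the remaining $t-1$ vertices of $L$ with $t-2$ freshly chosen vertices of $D\setminus(e_1\cup e_2)$. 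This gives a $C_k$ on $2+2+2(t-1)=2t+2=k$ vertices, and the expansion vertices for the non-triangle cyclic edges are selected greedily exactly as before.

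The only substantive check is that the greedy selections (of the $z_i\in D$ and of the expansion vertices) never run out of candidates. At each step the number of forbidden vertices is $O(k)$, while the available pool is either $|D|-O(k)\ge n/3$ (when choosing a new $z_i$) or $d_{\mathcal{H}}(\{x,z\})-O(k)\ge\tau/2$ (when choosing an expansion vertex of an $L\times D$ edge); since $n$ is large and $k$ is a constant, both quantities remain strictly positive, so the construction goes through in both~\ref{equ:CLAIM:GenTuran-cycle-P_2} and~\ref{equ:CLAIM:GenTuran-cycle-B1-two-edges}, yielding the desired contradiction.
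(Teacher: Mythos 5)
Your proof is correct and follows essentially the same approach as the paper's: in each configuration you turn the two given triangles into two core edges of a $C_k$ (each with its third vertex as a pre-supplied expansion vertex), complete the cycle by zigzagging through $L$ and $D$ using the codegree condition $d_{\mathcal{H}}(L\times D)\ge\tau$, and then invoke Fact~\ref{FACT:partial-embedding} to lift the shadow $C_k$ to $C_k^3\subseteq\mathcal{K}_G$, contradicting $C_k^3$-freeness. Your vertex and edge counts in both cases are correct, and your disjointness checks for $a_2,b_2$ (resp.\ $a_3,b_3$) from the cycle vertices are the right ones; the only cosmetic difference from the paper is the labeling and the exact spot where the two triangle-edges sit on the cycle.
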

    \begin{proof}
        Suppose to the contrary that there exist two edges $e_1, e_2 \in \mathcal{B}[V']$ such that~\ref{equ:CLAIM:GenTuran-cycle-P_2} holds. 
        Let $\{v_0\} := e_1 \cap e_2$ and 
        fix $v_i \in \left(e_i \cap D\right) \setminus \{v_0\}$ for $i\in \{1,2\}$. 
        Let $D' := D\setminus\left(e_1 \cup e_2\right)$. 
        Choose any $(t-1)$-set $\{u_1, \ldots, u_{t-1}\} \subseteq D'$. 
        It follows from the definition of $D$ that the induced bipartite graph of $\partial\mathcal{H}$ on $L \cup D$ is complete. 
        Therefore, $F := v_1 x_1 u_1 x_2 u_2 \cdots x_{t-1} u_{t-1} x_t v_2$ is copy of $P_{k-2}$ in the bipartite graph $\partial\mathcal{H}[L, D']$. 
        This $P_{k-2}$ together with $v_1 v_0 v_2$ (a copy of $P_2$) form a copy of $C_k$ in $\partial\mathcal{H}$. 
        Since all edges in $F$ have codegree at least $\tau \ge 3k$ in $\mathcal{H}$, it follows from Fact~\ref{FACT:partial-embedding} that $C_{k}^3 \subseteq \mathcal{H}$ (see Figure~\ref{fig:genturan-cycle-1}), a contradiction. 

        Suppose to the contrary that there exist two disjoint edges $e_1, e_2 \in \mathcal{B}[V']$ such that~\ref{equ:CLAIM:GenTuran-cycle-B1-two-edges} holds. 
        Fix a $2$-set $\{v_i, v_i'\} \subseteq e_i \cap D$ for $i\in \{1,2\}$. 
        Choose a $(t-2)$-set $\{u_1, \ldots, u_{t-2}\} \subseteq D\setminus (e_1 \cup e_2)$. 
        Similar to the proof above, the graph $F := v_1 x_1 u_1 x_2  \cdots u_{t-2} x_{t-1} v_2 v_2' x_t v_1' v_1$ is a copy of $C_{k}$ in $\partial\mathcal{H}$. 
        Note that all edges but $v_1 v_1', v_2 v_2'$ in $F$ have codegree at least $\tau \ge 3k$ in $\mathcal{H}$. 
        So it follows from Fact~\ref{FACT:partial-embedding} that $C_{k}^{3} \subseteq \mathcal{H}$ (see Figure~\ref{fig:genturan-cycle-1}),  a contradiction. 
    \end{proof}
    \begin{claim}\label{CLAIM:GenTuran-cycle-Maxdeg}
        For every $v\in V' = D \cup \overline{D}$ we have 
        \begin{align*}
            \min\left\{|N_{G}(v) \cap D_1|, |N_{G}(v) \cap D_2|\right\} 
            \le \sqrt{3\delta}n. 
    \end{align*}
    \end{claim}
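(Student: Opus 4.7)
The plan is to argue by contradiction: suppose some $v\in V'$ has $|A_1|:=|N_G(v)\cap D_1|>\sqrt{3\delta}\,n$ and $|A_2|:=|N_G(v)\cap D_2|>\sqrt{3\delta}\,n$, and exhibit a copy of $C_k^3$ in $\mathcal{H}=\mathcal{K}_G$; by Fact~\ref{FACT:shadow-Turan} this contradicts the $C_k^{\triangle}$-freeness of $G$. Throughout I will take $k=2t+2$ and exploit the near-bipartite structure of $G-L$ encoded in \eqref{equ:GenTuran-cycle-G[V1,V2]}--\eqref{equ:GenTuran-cycle-Vi-size} and Claim~\ref{CLAIM:GenTuran-cycle-D-bar}.

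The first step is to count non-edges across the bipartition. Combining \eqref{equ:GenTuran-cycle-G[V1,V2]} with the AM--GM bound $|V_1||V_2|\le n^2/4$, the graph $G$ has at most $2\delta n^2$ non-edges between $V_1$ and $V_2$. A direct averaging then shows that the set of $u\in A_1$ with fewer than $3k$ neighbors in $A_2$ has size at most
$2\delta n^2/(|A_2|-3k)\le (2/\sqrt{3})\sqrt{\delta}\,n + o(\sqrt{\delta}\,n)$,
which is strictly less than $|A_1|>\sqrt{3\delta}\,n$ (the constant $\sqrt{3}$ is chosen so that $(\sqrt{3}-2/\sqrt{3})\sqrt{\delta}\,n>0$). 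Hence I can choose $u_1\in A_1$ with $|N_G(u_1)\cap A_2|\ge 3k$, and symmetrically $u_2\in A_2$ with $|N_G(u_2)\cap A_1|\ge 3k$.

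Next I build a cycle in $\partial\mathcal{H}$. Since $|D|\ge n-t-|\overline{D}|\ge n-9\delta t\, n$ by Claim~\ref{CLAIM:GenTuran-cycle-D-bar}, I greedily pick distinct vertices $w_1,\ldots,w_{t-1}\in D\setminus\{v,u_1,u_2\}$, and enumerate $L=\{x_1,\ldots,x_t\}$. The cycle
$$C\colon\ v,\ u_1,\ x_1,\ w_1,\ x_2,\ w_2,\ \ldots,\ x_{t-1},\ w_{t-1},\ x_t,\ u_2$$
uses $2t+2=k$ distinct vertices. Every edge of $C$ incident to $L$ has codegree at least $\tau=n/200\gg 3k$ in $\mathcal{H}$ by the definition of $D$, because each of $u_1,u_2,w_1,\ldots,w_{t-1}$ lies in $D$. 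For the two remaining edges, using $A_2\subseteq N_G(v)$ I obtain
$d_{\mathcal{H}}(vu_1)\ge |N_G(v)\cap N_G(u_1)|\ge |A_2\cap N_G(u_1)|\ge 3k$,
and symmetrically $d_{\mathcal{H}}(vu_2)\ge 3k$. Applying Fact~\ref{FACT:partial-embedding} to $F=C_k$ with the parameter $t$ there equal to $0$ (every edge already has codegree at least $3k$, so no pre-extensions are needed) produces $C_k^3\subseteq\mathcal{H}$, the desired contradiction.

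The main obstacle is controlling the two ``bad'' edges $vu_1$ and $vu_2$: unlike the other edges of $C$, neither sits in a pair of the form (vertex in $D$, vertex in $L$), so their codegrees in $\mathcal{H}$ are not automatic. The averaging step above is exactly what recovers these codegrees, and the threshold $\sqrt{3\delta}\,n$ in the claim statement is tuned so that the pool of ``good'' choices for $u_1$ and $u_2$ remains nonempty after discarding the few vertices in $A_1$ (resp.\ $A_2$) that have too many non-neighbors across the bipartition.
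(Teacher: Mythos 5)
Your argument is correct and produces a valid proof of the claim, but it takes a different route than the paper. The paper's proof is a two-line consequence of the already-established Claim~\ref{CLAIM:GenTuran-cycle-P_2}~(i): if both $N_i := N_G(v)\cap D_i$ had size at least $\sqrt{3\delta}\,n$, then any two disjoint edges $a_1b_1, a_2b_2$ of $G[N_1,N_2]$ would yield the two triangles $\{v,a_1,b_1\},\{v,a_2,b_2\}\in\mathcal{B}[V']$ meeting exactly in $v$ with legs in $D$, which Claim~\ref{CLAIM:GenTuran-cycle-P_2}~(i) forbids; hence $G[N_1,N_2]$ is a star, so $|G[N_1,N_2]|\le n$, and the count $|G[V_1,V_2]|\le |V_1||V_2| - |N_1||N_2| + |G[N_1,N_2]| \le n^2/4 - 3\delta n^2 + n$ contradicts \eqref{equ:GenTuran-cycle-G[V1,V2]}. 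You instead re-derive (a special case of) the contradiction inside Claim~\ref{CLAIM:GenTuran-cycle-P_2}~(i) from scratch: the averaging step to find $u_1,u_2$ with large codegree in the links of $v$ plays the role that the ``third vertices'' of the two triangles play in the paper's proof of Claim~\ref{CLAIM:GenTuran-cycle-P_2}~(i). This is wasteful within the paper's flow but is a perfectly valid and more self-contained argument, and it illustrates why any constant strictly bigger than $\sqrt{2}$ would work in the threshold.

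One small slip: Fact~\ref{FACT:partial-embedding} requires the parameter $t\in[m]$, so it cannot be taken equal to $0$. You should instead take $t=1$ (choose any edge of the cycle, say $u_1x_1$, and greedily find a witness outside $V(C_k)$ using $d_{\mathcal{H}}(u_1x_1)\ge\tau>k$), or take $t=2$ by first choosing witnesses for $vu_1$ and $vu_2$ from their codegree sets of size $\ge 3k$, which exceeds $k=|V(C_k)|$. This is cosmetic and does not affect the soundness of the construction.
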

    \begin{proof}
        Suppose to the contrary that there exists a vertex $v\in V'$ such that the set $N_i := N_{G}(v) \cap D_i$ has size at least $\sqrt{3\delta}n$ for both $i \in \{1,2\}$. 
        Observe from Claim~\ref{CLAIM:GenTuran-cycle-P_2}~\ref{equ:CLAIM:GenTuran-cycle-P_2} that the induced bipartite graph $G[N_1, N_2]$ does not contain two disjoint edges. 
        Hence, $|G[N_1, N_2]| \le n$ and consequently, 
        \begin{align*}
            |G[V_1, V_2]| 
            \le |V_1||V_2| - \left(|N_1||N_2| - |G[N_1, N_2]|\right)
            \le \frac{n^2}{4} - (3\delta n^2 - n)
            < \frac{n^2}{4} - 2\delta n^2, 
        \end{align*}
        contradicting~\eqref{equ:GenTuran-cycle-G[V1,V2]}. 
    \end{proof}
    \begin{claim}\label{CLAIM:GenTuran-bad-deg-max}
        For every $i\in \{1,2\}$ and for every $v\in D_i \cup \overline{D}_i$ we have $|N_{G}(v) \cap D_i| \le 9\sqrt{\delta}tn$. 
    \end{claim}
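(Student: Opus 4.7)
The plan is to derive this claim directly from the preceding two claims together with the max-cut property that defines the bipartition $V_1\cup V_2$ of $V'$; in particular I do not expect to need any new embedding of $C_k^3$, since the combinatorial bookkeeping already suffices. By symmetry it is enough to handle $i=1$, so fix an arbitrary $v\in V_1 = D_1\cup \overline{D}_1$ and set $M := |N_G(v)\cap D_1|$; I want to show $M\le 9\sqrt{\delta}tn$.

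First I would apply Claim~\ref{CLAIM:GenTuran-cycle-Maxdeg} to $v$, which yields $\min\{|N_G(v)\cap D_1|,|N_G(v)\cap D_2|\}\le \sqrt{3\delta}n$. If the minimum is attained on the $D_1$ side, then $M\le \sqrt{3\delta}n\le 9\sqrt{\delta}tn$ and we are done. Otherwise $|N_G(v)\cap D_2|\le \sqrt{3\delta}n$, so using $V_2 = D_2\cup \overline{D}_2$ together with the bound $|\overline{D}_2|\le |\overline{D}|\le 8\delta tn$ from Claim~\ref{CLAIM:GenTuran-cycle-D-bar} gives
\[
|N_G(v)\cap V_2|\;\le\;|N_G(v)\cap D_2|+|\overline{D}_2|\;\le\;\sqrt{3\delta}n+8\delta tn.
\]

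The second step is to feed this upper bound on $|N_G(v)\cap V_2|$ back into the bipartition property. Since $V_1\cup V_2$ was chosen to maximize the number of edges of $G$ crossing between $V_1$ and $V_2$ (within $V'$), switching $v$ from $V_1$ to $V_2$ cannot increase the cut, which forces $|N_G(v)\cap V_1|\le |N_G(v)\cap V_2|$. Consequently
\[
M\;=\;|N_G(v)\cap D_1|\;\le\;|N_G(v)\cap V_1|\;\le\;|N_G(v)\cap V_2|\;\le\;\sqrt{3\delta}n+8\delta tn,
\]
and for $\delta$ sufficiently small (which is guaranteed by $\delta\ll C^{-1}$ in the proof of Theorem~\ref{THM:GenTuran-Cycle-Exact}) the right-hand side is bounded by $9\sqrt{\delta}tn$, as $\sqrt{3}+8\sqrt{\delta}t\le 9t$ for any $t\ge 1$ once $\delta$ is small. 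The symmetric case $i=2$ follows by interchanging the roles of $V_1$ and $V_2$ and invoking max-cut in the opposite direction, so there is no substantive obstacle; the only thing to watch is that the appeal to max-cut uses edges within $V'$ only, which is fine because $D_1,D_2\subseteq V'$.
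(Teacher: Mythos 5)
Your proof is correct and follows essentially the same route as the paper's: both rely on the max-cut property of the bipartition $V_1\cup V_2$, Claim~\ref{CLAIM:GenTuran-cycle-Maxdeg}, and the bound $|\overline{D}|\le 8\delta tn$ from Claim~\ref{CLAIM:GenTuran-cycle-D-bar}. The paper phrases it as a proof by contradiction while you argue directly, but the underlying estimate is identical, and your remark that the max-cut only involves edges inside $V'$ correctly handles the one potential subtlety.
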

    \begin{proof}
        Suppose to the contrary that there exist $i\in \{1,2\}$ and $v\in D_i \cup \overline{D}_i$ such that $|N_{G}(v) \cap D_i| > 9\sqrt{\delta}tn$. 
        Then by the maximality of $G[V_1, V_2]$, we have $|N_{G}(v) \cap V_{3-i}| \ge |N_{G}(v) \cap V_i| \ge |N_{G}(v) \cap D_i| > 9\sqrt{\delta}tn$, since otherwise we can move $v$ from $V_i$ to $V_{3-i}$ and this will result in a large bipartite subgraph of $G$. 
        By Claim~\ref{CLAIM:GenTuran-cycle-D-bar}, this implies that $|N_{G}(v) \cap D_{3-i}| \ge |N_{G}(v) \cap V_{3-i}|- |\overline{D}| > 3\sqrt{\delta} n$, 
         contradicting Claim~\ref{CLAIM:GenTuran-cycle-Maxdeg}. 
    \end{proof}
    \begin{claim}\label{CLAIM:GenTuran-cycle-2-intersecting}
        We have $|\mathcal{B}[D]| \le n/2 + 11\sqrt{\delta}tn$. 
    \end{claim}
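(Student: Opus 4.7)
The plan is to show that $\mathcal{B}[D]$ is a $2$-intersecting $3$-graph, and then invoke Fact~\ref{FACT:2-intersecting} to reduce the problem to bounding the codegree of a single pair in $G[D]$.

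First, I would verify the $2$-intersecting property using the two forbidden configurations provided by Claim~\ref{CLAIM:GenTuran-cycle-P_2}. Let $e_1, e_2 \in \mathcal{B}[D]$ be distinct; since $D \cap L = \emptyset$, both $e_1$ and $e_2$ are simply triangles of $G[D]$. If $e_1 \cap e_2 = \emptyset$, then $|e_i \cap D| = 3 \ge 2$ for $i\in\{1,2\}$, contradicting Claim~\ref{CLAIM:GenTuran-cycle-P_2}\ref{equ:CLAIM:GenTuran-cycle-B1-two-edges}. If $|e_1 \cap e_2| = 1$, then $(e_i \setminus e_j) \cap D = e_i \setminus e_j$ is a nonempty subset of $D$ for each choice of $i \neq j$, contradicting Claim~\ref{CLAIM:GenTuran-cycle-P_2}\ref{equ:CLAIM:GenTuran-cycle-P_2}. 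Therefore every pair of edges of $\mathcal{B}[D]$ meets in exactly two vertices.

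Next, I would apply Fact~\ref{FACT:2-intersecting}. If $|\mathcal{B}[D]| \le 4$, the desired bound holds trivially. Otherwise, there exists a pair $\{u,v\} \subseteq D$ such that every edge of $\mathcal{B}[D]$ contains $\{u,v\}$, and hence
\[
    |\mathcal{B}[D]| = \left|N_{G}(u) \cap N_{G}(v) \cap D\right| \le |N_{G}(u) \cap D|.
\]
Without loss of generality assume $u \in D_1$. Then
\[
    |N_{G}(u) \cap D| \le |N_{G}(u) \cap D_1| + |D_2|.
\]
By Claim~\ref{CLAIM:GenTuran-bad-deg-max}, $|N_{G}(u) \cap D_1| \le 9\sqrt{\delta}\,tn$; and by~\eqref{equ:GenTuran-cycle-Vi-size}, $|D_2| \le |V_2| \le n/2 + \sqrt{2\delta}\,n$. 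Combining these gives $|\mathcal{B}[D]| \le n/2 + 11\sqrt{\delta}\,tn$, as required.

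I do not anticipate a genuine obstacle here: the two parts of Claim~\ref{CLAIM:GenTuran-cycle-P_2} were precisely designed to rule out the non-$2$-intersecting patterns, so the main step is a direct application. The only point requiring a small amount of care is that $u$ and $v$ must lie in $D$ (which follows automatically since they appear in edges of $\mathcal{B}[D]$), so Claim~\ref{CLAIM:GenTuran-bad-deg-max} and the bound on $|D_2|$ apply cleanly.
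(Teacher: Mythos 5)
Your proof is correct and takes essentially the same approach as the paper: verify that $\mathcal{B}[D]$ is $2$-intersecting via Claim~\ref{CLAIM:GenTuran-cycle-P_2}, apply Fact~\ref{FACT:2-intersecting} to find a common pair $\{u,v\}$, and bound the codegree using Claim~\ref{CLAIM:GenTuran-bad-deg-max} together with~\eqref{equ:GenTuran-cycle-Vi-size}. You spell out the $2$-intersecting verification and the $|N_G(u)\cap D_1|+|D_2|$ split a bit more explicitly than the paper (which additionally cites Claim~\ref{CLAIM:GenTuran-cycle-Maxdeg}, though that is not strictly needed here), but the underlying argument and the resulting bound are identical; the only nitpick is that the displayed equality $|\mathcal{B}[D]| = |N_G(u)\cap N_G(v)\cap D|$ should be $\le$, which does not affect anything.
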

    \begin{proof}
        First, observe from Claim~\ref{CLAIM:GenTuran-cycle-P_2} that the $3$-graph $\mathcal{B}[D]$ is $2$-intersecting.
        Suppose to the contrary that $|\mathcal{B}[D]| > n/2 + 11\sqrt{\delta}tn$. 
        Then, by Fact~\ref{FACT:2-intersecting}, there exists a pair $\{v_1, v_2\} \subseteq D$ such that all edges in $\mathcal{B}[D]$ containing $\{v_1, v_2\}$. 
        Then it follows from Claims~\ref{CLAIM:GenTuran-cycle-Maxdeg},~\ref{CLAIM:GenTuran-bad-deg-max}, and~\eqref{equ:GenTuran-cycle-Vi-size} that 
        \begin{align*}
            |\mathcal{B}[D]|
            \le \max\{d_{G[D]}(v_1),\ d_{G[D]}(v_2)\}
            \le \frac{n}{2}+\sqrt{2\delta}n + 9\sqrt{\delta}tn
            \le \frac{n}{2}+11\sqrt{\delta}tn. 
        \end{align*}
        This proves Claim~\ref{CLAIM:GenTuran-cycle-2-intersecting}. 
    \end{proof}
    For $i\in \{0, 1,2,3\}$ let 
    \begin{align*}
        \mathcal{B}_i 
        := \left\{e\in \mathcal{B} \colon |e \cap \overline{D}| = i\right\}. 
    \end{align*}
    Since $\mathcal{B}_3$ is $C_{k}^3$-free, it follows from the definition of $C$ that 
    \begin{align}\label{equ:GenTuran-cycle-B3}
        |\mathcal{B}_3| \le C |\overline{D}|^2. 
    \end{align}
    \begin{claim}\label{CLAIM:GenTuran-cycle-B_2'}
        We have $ |\mathcal{B}_2| \le {n |\overline{D}|}/{3}$. 
    \end{claim}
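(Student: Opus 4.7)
The plan is to partition $\mathcal{B}_2 = \mathcal{B}_2^L \cup \mathcal{B}_2^D$ according to where the unique vertex outside $\overline{D}$ lies, and then bound the two parts separately. A triangle $\{u,v,w\}\in\mathcal{B}_2$ belongs to $\mathcal{B}_2^L$ if that vertex is in $L$, and to $\mathcal{B}_2^D$ if it is in $D$.

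I first dispose of $\mathcal{B}_2^L$. Any triangle $\{u,v,w\}\in \mathcal{B}_2^L$ with $u,v\in\overline{D}$ and $w\in L$ must have $u,v\in\overline{D}_i$ for a common index $i$, since otherwise $|e\cap V_1|,|e\cap V_2|\le 1$ and $|e\cap L|=1$ would force $e\in\mathcal{S}$. Hence each such triangle arises from an edge of $G[\overline{D}]$ together with a vertex of $L$, giving $|\mathcal{B}_2^L|\le t\binom{|\overline{D}|}{2}$, which is $o(n|\overline{D}|)$ thanks to Claim~\ref{CLAIM:GenTuran-cycle-D-bar}.

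For $\mathcal{B}_2^D$, for each $u\in\overline{D}$ I view the link $L_{\mathcal{B}_2^D}(u)$ as a bipartite graph between $\overline{D}\setminus\{u\}$ and $D$. Two disjoint edges $v_1w_1,v_2w_2$ in this link would produce two triples $uv_1w_1,uv_2w_2\in\mathcal{B}[V']$ sharing only the vertex $u$ and each containing a vertex of $D$, contradicting Claim~\ref{CLAIM:GenTuran-cycle-P_2}\ref{equ:CLAIM:GenTuran-cycle-P_2}. Thus the matching number of $L_{\mathcal{B}_2^D}(u)$ is at most one; being bipartite, Fact~\ref{FACT:matching-1} then forces it to be a star. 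If the star is centered at some $w^\ast\in D$, then $d_{\mathcal{B}_2^D}(u)\le|\overline{D}|-1$; if it is centered at some $v^\ast\in\overline{D}$, then $d_{\mathcal{B}_2^D}(u)\le|N_G(u)\cap D|$, which for $u\in\overline{D}_i$ is at most $|N_G(u)\cap D_i|+|D_{3-i}|\le 9\sqrt{\delta}tn+(1/2+\sqrt{2\delta})n\le n/2+10\sqrt{\delta}tn$ by Claim~\ref{CLAIM:GenTuran-bad-deg-max} and~\eqref{equ:GenTuran-cycle-Vi-size}.

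Summing over $u\in\overline{D}$ yields $2|\mathcal{B}_2^D|=\sum_{u\in\overline{D}}d_{\mathcal{B}_2^D}(u) \le |\overline{D}|\bigl(n/2 + 10\sqrt{\delta}tn + |\overline{D}|\bigr)$; the bound $|\overline{D}|\le 8\delta tn$ from Claim~\ref{CLAIM:GenTuran-cycle-D-bar} absorbs the last two terms into $o(n|\overline{D}|)$, giving $|\mathcal{B}_2^D|\le n|\overline{D}|/4+o(n|\overline{D}|)$. Adding the bound on $|\mathcal{B}_2^L|$ and taking $\delta$ small enough yields the desired $|\mathcal{B}_2|\le n|\overline{D}|/3$. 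The main obstacle is the structural step: arguing via Claim~\ref{CLAIM:GenTuran-cycle-P_2}\ref{equ:CLAIM:GenTuran-cycle-P_2} and Fact~\ref{FACT:matching-1} that each link $L_{\mathcal{B}_2^D}(u)$ is a star, which reduces $d_{\mathcal{B}_2^D}(u)$ from the trivial product-type bound to essentially $\max\{|N_G(u)\cap D|,|\overline{D}|\}$; once this reduction is in place, the numerical estimates coming from the already-established degree bounds close the argument.
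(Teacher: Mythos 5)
Your proof is correct and follows the same skeleton as the paper's: split $\mathcal{B}_2$ according to whether the unique vertex outside $\overline{D}$ lies in $L$ (the paper's $\mathcal{B}_2''$) or in $D$ (the paper's $\mathcal{B}_2'$), dispose of the $L$-part by a crude $O(t|\overline{D}|^2)$ count, and exploit Claim~\ref{CLAIM:GenTuran-cycle-P_2}~\ref{equ:CLAIM:GenTuran-cycle-P_2} together with a degree bound to handle the $D$-part, with $|\overline{D}|\le 8\delta t n$ absorbing all lower-order terms. The genuine difference is how the structural obstruction from Claim~\ref{CLAIM:GenTuran-cycle-P_2}~\ref{equ:CLAIM:GenTuran-cycle-P_2} is packaged. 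The paper works on shadow pairs: it sets $F_2\subseteq\binom{\overline{D}}{2}$ to be the pairs of codegree at least two into $D$, shows $F_2$ is a matching of size at most $|\overline{D}|/2$, and bounds each such pair's codegree via Claim~\ref{CLAIM:GenTuran-cycle-Maxdeg}, treating the codegree-one pairs by a trivial $\binom{|\overline{D}|}{2}$ count. You work on vertex links: for each $u\in\overline{D}$ you show the bipartite link $L_{\mathcal{B}_2^D}(u)$ has matching number one, hence by Fact~\ref{FACT:matching-1} is a star, and then cap $d_{\mathcal{B}_2^D}(u)$ by either $|\overline{D}|-1$ or $|N_G(u)\cap D|\le n/2+O(\sqrt{\delta}tn)$ via Claim~\ref{CLAIM:GenTuran-bad-deg-max} and~\eqref{equ:GenTuran-cycle-Vi-size}, closing by double-counting $2|\mathcal{B}_2^D|=\sum_u d_{\mathcal{B}_2^D}(u)$. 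Both viewpoints deliver the same leading term $n|\overline{D}|/4$; your link-centric version avoids the explicit $F_1/F_2$ case split, while the paper's pair-centric version avoids needing Fact~\ref{FACT:matching-1}. Neither buys anything substantive — it is a matter of taste.
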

    \begin{proof}
        Let us partition $\mathcal{B}_2$ into two sets $\mathcal{B}_2'$ and $\mathcal{B}_2''$, where
        \begin{align*}
            \mathcal{B}_2' 
            := \left\{e\in \mathcal{B}_2 \colon e \cap D \neq \emptyset\right\}
            \quad\text{and}\quad 
            \mathcal{B}_2'' := \mathcal{B}_2\setminus \mathcal{B}_2'. 
        \end{align*}
        Since every edge in $\mathcal{B}_2''$ contains exactly one vertex in $L$, we have 
        \begin{align}\label{equ:GenTuran-cycle-B2''}
            |\mathcal{B}_2''|
            \le |L| \binom{|\overline{D}|}{2}
            \le t|\overline{D}|^2. 
        \end{align}
        Let 
        \begin{align*}
            F := \binom{\overline{D}}{2} \cap \partial\mathcal{B}_2', \quad
            F_1 
            := \left\{e\in F \colon d_{\mathcal{B}_2'}(e) = 1\right\}, 
            \quad\text{and}\quad
            F_2 := F\setminus F_1. 
        \end{align*}
        It is clear that $F_1$ contributes at most $\binom{|\overline{D}|}{2}$ edges to $\mathcal{B}_2'$. 
        On the other hand, 
        by Claim~\ref{CLAIM:GenTuran-cycle-P_2}~\ref{equ:CLAIM:GenTuran-cycle-P_2}, the graph $F_2$ is a matching. 
        Combined with Claim~\ref{CLAIM:GenTuran-cycle-Maxdeg}, we obtain 
        \begin{align*}
            |\mathcal{B}_2'|
            \le \binom{|\overline{D}|}{2} + \left(\max\{|D_1|,\ |D_2|\}+\sqrt{3\delta}n\right)|F_2|
            & \le |\overline{D}|^2+ \left(\frac{n}{2}+5\sqrt{\delta}n\right)\frac{|\overline{D}|}{2}. 
        \end{align*}
        Therefore, by Claim~\ref{CLAIM:GenTuran-cycle-D-bar}, 
        \begin{align*}
            |\mathcal{B}_2|
            = |\mathcal{B}_2''|+|\mathcal{B}_2'|
            & \le t|\overline{D}|^2+ |\overline{D}|^2+ \left(\frac{n}{2}+5\sqrt{\delta}n\right)\frac{|\overline{D}|}{2} \\
            & \le \left(t\times 8\delta t n + 8\delta t n + \frac{n}{4}+\frac{5}{2}\sqrt{\delta}n\right) |\overline{D}| 
             \le \frac{n |\overline{D}|}{3}.
        \end{align*}
        This proves Claim~\ref{CLAIM:GenTuran-cycle-B_2'}. 
     \end{proof}
    \begin{claim}\label{CLAIM:GenTuran-cycle-B1}
        We have 
        \begin{align*}
            |\mathcal{B}_1| 
            \le 10\sqrt{\delta} t^2 n  |\overline{D}|+
                \begin{cases}
                    0, & \quad\text{if}\quad \mathcal{B}[D] \neq \emptyset, \\
                 \frac{n}{2}+4\sqrt{\delta}n, & \quad\text{if}\quad \mathcal{B}[D] = \emptyset. 
                \end{cases}
        \end{align*}
        In particular, (by Claim~\ref{CLAIM:GenTuran-cycle-2-intersecting})
        \begin{align*}
            |\mathcal{B}_1| + |\mathcal{B}[D]|
            \le 10\sqrt{\delta} t^2 n  |\overline{D}| + \frac{n}{2}+11\sqrt{\delta}tn. 
        \end{align*}
    \end{claim}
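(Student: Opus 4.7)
The strategy is to split $\mathcal{B}_1$ according to whether an edge meets $L$. For an edge $e = \{a, b, c\} \in \mathcal{B}_1$ with $c \in \overline{D} \cap V_i$ and $a, b \in V \setminus \overline{D} = L \cup D$, the condition $e \notin \mathcal{S}$ forces either (a) $e \cap L = \emptyset$, in which case $a, b \in D$, or (b) $e \cap L = \{x\}$ for some $x \in L$ and the remaining $D$-vertex lies in the same part $D_i$ as $c$ (all other cases meet $\mathcal{S}$). Denote these two subfamilies by $\mathcal{B}_1^{\circ}$ and $\mathcal{B}_1^{\bullet}$ respectively.

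The family $\mathcal{B}_1^{\bullet}$ is controlled directly by Claim~\ref{CLAIM:GenTuran-bad-deg-max}: for each $c \in \overline{D}_i$ there are at most $|N_G(c) \cap D_i| \le 9\sqrt{\delta}tn$ choices for the $D$-vertex, and $|L| = t$ choices for the $L$-vertex, giving $|\mathcal{B}_1^{\bullet}| \le 9\sqrt{\delta}t^2 n |\overline{D}|$, which fits inside the desired $10\sqrt{\delta}t^2 n|\overline{D}|$ contribution with room to spare.

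The substance lies in $\mathcal{B}_1^{\circ}$. The key structural input is that $\mathcal{B}_1^{\circ}\cup \mathcal{B}[D] \subseteq \mathcal{B}[V']$ and every edge of this union has at least two vertices in $D$; Claim~\ref{CLAIM:GenTuran-cycle-P_2} then rules out both disjoint pairs and pairs meeting in exactly one vertex, so $\mathcal{B}_1^{\circ} \cup \mathcal{B}[D]$ is $2$-intersecting. If $\mathcal{B}[D] \neq \emptyset$, fix $e_0 \in \mathcal{B}[D]$: every $e \in \mathcal{B}_1^{\circ}$ shares exactly two vertices with $e_0$ (three is impossible since $|e \cap \overline{D}| = 1$ while $e_0 \subseteq D$), and the remaining $\overline{D}$-vertex of $e$ gives at most $|\overline{D}|$ choices, so $|\mathcal{B}_1^{\circ}| \le 3|\overline{D}|$, absorbed into $\sqrt{\delta}t^2 n|\overline{D}|$ for $n$ large.

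When $\mathcal{B}[D] = \emptyset$, the family $\mathcal{B}_1^{\circ}$ is itself $2$-intersecting and Fact~\ref{FACT:2-intersecting} gives either $|\mathcal{B}_1^{\circ}| \le 4$ or a common pair $\{u,v\}$. The expected main obstacle is squeezing the constant $n/2 + 4\sqrt{\delta}n$ in the case $u \in D$ and $v \in \overline{D}$: the third vertex $c$ of each edge lies in $D \cap N_G(v)$, and the plan is to bound $|N_G(v) \cap D|$ by combining Claim~\ref{CLAIM:GenTuran-cycle-Maxdeg}, which gives $\min\{|N_G(v)\cap D_1|,\, |N_G(v)\cap D_2|\} \le \sqrt{3\delta}n$, with \eqref{equ:GenTuran-cycle-Vi-size}, which gives $\max\{|D_1|, |D_2|\} \le n/2 + \sqrt{2\delta}n$, for a total of $\sqrt{3\delta}n + n/2 + \sqrt{2\delta}n \le n/2 + 4\sqrt{\delta}n$. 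The subcases $\{u,v\}\subseteq D$ and $|\mathcal{B}_1^{\circ}| \le 4$ contribute at most $|\overline{D}|$ and $4$ respectively, both dominated by the $\sqrt{\delta}t^2 n|\overline{D}|$ slack. Adding $|\mathcal{B}_1^{\bullet}|$ then yields the claimed bound, and the ``in particular'' statement is immediate from Claim~\ref{CLAIM:GenTuran-cycle-2-intersecting}.
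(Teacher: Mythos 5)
Your proof is correct and follows the same overall decomposition as the paper: split $\mathcal{B}_1$ according to whether an edge meets $L$, bound the $L$\nobreakdash-meeting part via Claim~\ref{CLAIM:GenTuran-bad-deg-max}, and exploit Claim~\ref{CLAIM:GenTuran-cycle-P_2} for the rest. The genuine difference is the tool used on the non-$L$\nobreakdash-meeting part $\mathcal{B}_1^\circ$. The paper examines the link $L_{\mathcal{B}_1''}(v)$ of each $v\in\overline{D}$, applies Fact~\ref{FACT:matching-1} to these graphs (whose matching number is at most one by Claim~\ref{CLAIM:GenTuran-cycle-P_2}(i)), and then needs a separate observation, via Claim~\ref{CLAIM:GenTuran-cycle-P_2}(ii), that at most one $v\in\overline{D}$ can have $|L_{\mathcal{B}_1''}(v)|\ge 4$. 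You instead note that $\mathcal{B}_1^\circ\cup\mathcal{B}[D]$ consists of triples in $\mathcal{B}[V']$ each having at least two vertices in $D$, hence is $2$\nobreakdash-intersecting by Claim~\ref{CLAIM:GenTuran-cycle-P_2}, and apply Fact~\ref{FACT:2-intersecting} once at the $3$\nobreakdash-graph level. This cleanly unifies the two cases ($\mathcal{B}[D]$ empty or not) and avoids the ``at most one large link'' step entirely; in the $\mathcal{B}[D]=\emptyset$ case it also gives the marginally sharper bound $n/2+4\sqrt{\delta}n$ without the additional $3|\overline{D}|$ term, though this makes no difference to the stated claim. Both routes are sound; yours is slightly more streamlined.
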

    \begin{proof}
        Let us partition $\mathcal{B}_1$ into two subsets $\mathcal{B}_1'$ and $\mathcal{B}_1''$, where
        \begin{align*}
            \mathcal{B}_1'
            := \left\{e\in \mathcal{B}_1 \colon e\cap L\neq \emptyset\right\}
            \quad\text{and}\quad
            \mathcal{B}_1'':= \mathcal{B}_1\setminus \mathcal{B}_1'. 
        \end{align*}
        It follows from Claim~\ref{CLAIM:GenTuran-bad-deg-max} that 
        \begin{align*}
            |\mathcal{B}_1'| 
            \le |\overline{D}| \times 9\sqrt{\delta}tn \times t 
            = 9\sqrt{\delta} t^2 n |\overline{D}|. 
        \end{align*}
        Now we consider $\mathcal{B}_1''$. 
        First, suppose that there exists an edge $e_1 \in \mathcal{B}[D]$. 
        If, in addition, there exists a vertex $v\in \overline{D}$ having degree at least four in $\mathcal{B}_1''$, then we can choose a pair $e_2' \in L_{\mathcal{H}}(v) \cap \binom{D}{2}$ such that $|e_2' \cap e_1| \le 1$. 
        However, edges $e_1$ and $e_2 := e_2' \cup \{v\} \in \mathcal{B}_1''$ contradict Claim~\ref{CLAIM:GenTuran-cycle-P_2}~\ref{equ:CLAIM:GenTuran-cycle-P_2} (if $|e_2' \cap e_1| = 1$) or~\ref{equ:CLAIM:GenTuran-cycle-B1-two-edges} (if $|e_2' \cap e_1| = 0$). 
        Therefore, every vertex in $\overline{D}$ has degree at most three in $\mathcal{B}_1''$. 
        Hence, $|\mathcal{B}_1''| \le 3 |\overline{D}|$, and 
        \begin{align*}
            |\mathcal{B}|
            = |\mathcal{B}_1'| + |\mathcal{B}_1''|
            \le 9\sqrt{\delta} t^2 n |\overline{D}| + 3|\overline{D}|
            \le 10\sqrt{\delta} t^2 n |\overline{D}|. 
        \end{align*}
        Here we used the fact that $\sqrt{\delta} t^2 n \gg 1$. 

        Next, suppose that $\mathcal{B}[D] = \emptyset$.
        Let $v\in \overline{D}$. 
        Observe that the link graph $L_{\mathcal{B}_1''}(v)$ is a subgraph of $G[D] \subseteq G[V']$. 
        Claim~\ref{CLAIM:GenTuran-cycle-P_2}~\ref{equ:CLAIM:GenTuran-cycle-P_2}  implies that the matching number of $L_{\mathcal{B}_1''}(v)$ is at most one. 
        By Fact~\ref{FACT:matching-1}, either $|L_{\mathcal{B}_1''}(v)|\le 3$ or there exists a vertex $v_1 \in D$ such that all edges in $L_{\mathcal{B}_1''}(v)$ containing $v_1$. 
        In the latter case, it follows from Claim~\ref{CLAIM:GenTuran-cycle-Maxdeg}
        and~\eqref{equ:GenTuran-cycle-Vi-size}
        that 
        \begin{align*}
            |L_{\mathcal{B}_1''}(v)|
            \le d_{G[D]}(v_1)
            \le \frac{n}{2}+\sqrt{2\delta}n + \sqrt{3\delta}n
            \le \frac{n}{2}+4\sqrt{\delta}n. 
        \end{align*}
        
        Similar to the proof above, 
        it follows from Claims~\ref{CLAIM:GenTuran-cycle-P_2}~\ref{equ:CLAIM:GenTuran-cycle-B1-two-edges} that the number of vertices $v\in \overline{D}$ with $|L_{\mathcal{B}_1''}(v)| \ge 4$ is at most one. 
        Therefore, 
        \begin{align*}
            |\mathcal{B}_1''|
            \le 3|\overline{D}| + \frac{n}{2}+4\sqrt{\delta}n. 
        \end{align*}
        Therefore, 
        \begin{align*}
            |\mathcal{B}_1|
            = |\mathcal{B}_1'| + |\mathcal{B}_1''| 
             \le 9\sqrt{\delta} t^2 n |\overline{D}|
                + 3|\overline{D}| + \frac{n}{2}+4\sqrt{\delta}n 
             \le 10\sqrt{\delta} t^2 n |\overline{D}|
                 + \left(\frac{n}{2}+4\sqrt{\delta}n\right). 
        \end{align*}
        This proves Claim~\ref{CLAIM:GenTuran-cycle-B1}. 
    \end{proof}

    To bound the size of $\mathcal{B}_0$ we divide each $D_i$ into two further smaller subsets. 
    More specifically, let 
        \begin{gather*}
            \tau' := \frac{n}{3}, 
            \quad\text{and}\quad
            D_i' := \left\{v\in D_i \colon |N_{G}(v)\cap V_{3-i}| \ge \tau'\right\}
            \quad\text{for}\quad i\in \{1,2\}. 
        \end{gather*}
        Let $D':= D_1'\cup D_2'$, $\overline{D'} := D\setminus D'$, and $\overline{D'}_i := D_i\setminus D_i'$ for $i\in \{1,2\}$.
        \begin{claim}\label{CLAIM:GenTuran-cycle-D'-in-D}
            We have $|\overline{D'}_i| \le 14\delta n$ for $i\in \{1,2\}$, and hence, 
            $|\overline{D'}| \le 28\delta n$.   
        \end{claim}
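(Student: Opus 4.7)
By symmetry, it suffices to prove the bound for $i=1$; fix $i=1$ and set $s := |\overline{D'}_1|$. The plan is to extract a contradiction from the fact that vertices in $\overline{D'}_1$ send unusually few edges across the bipartition $(V_1,V_2)$, while the total number of crossing edges is forced to be almost $n^2/4$.

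First, I would record the lower bound $|G[V_1,V_2]| \ge n^2/4 - 2\delta n^2$ that follows from Statements~(i) and~(iii) together with the maximality of $(V_1,V_2)$: any bipartition witnessing Statement~(iii) gives at least $|G-L|-\delta n^2 \ge n^2/4 - 2\delta n^2$ crossing edges, and $(V_1,V_2)$ is chosen to maximize this quantity. On the upper-bound side, since $|V_1|+|V_2| = n - t$, the AM--GM inequality gives $|V_1||V_2| \le ((n-t)/2)^2 \le n^2/4$.

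Next, I would estimate $|G[V_1,V_2]|$ by summing degrees from $V_1$ into $V_2$. Splitting $V_1 = \overline{D'}_1 \cup (V_1 \setminus \overline{D'}_1)$, every vertex in $\overline{D'}_1$ has at most $\tau' - 1 < n/3$ neighbors in $V_2$ (by definition of $D_1'$), whereas every other vertex of $V_1$ trivially has at most $|V_2|$ neighbors in $V_2$. Combined with~\eqref{equ:GenTuran-cycle-Vi-size}, this yields
\begin{align*}
    |G[V_1, V_2]|
    \,\le\, (|V_1|-s)\,|V_2| + s\cdot \frac{n}{3}
    \,=\, |V_1||V_2| - s\left(|V_2| - \frac{n}{3}\right)
    \,\le\, \frac{n^2}{4} - s\left(\frac{n}{6} - \sqrt{2\delta}\,n\right).
\end{align*}

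Finally, chaining this upper bound with the lower bound from the first paragraph gives
\begin{align*}
    s\left(\frac{n}{6} - \sqrt{2\delta}\,n\right) \,\le\, 2\delta n^2,
\end{align*}
and, since $\delta$ is sufficiently small, we obtain $s \le 14\delta n$, as claimed. The argument is entirely routine once the correct degree-sum decomposition is written down, so I do not foresee any genuine obstacle; the only place where care is needed is verifying that the $\sqrt{2\delta}\,n$ slack coming from~\eqref{equ:GenTuran-cycle-Vi-size} is absorbed into the $14\delta n$ bound, which holds comfortably for the choice $\delta \ll 1$ made at the start of the proof of Theorem~\ref{THM:GenTuran-Cycle-Exact}.
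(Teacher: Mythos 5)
Your proof is correct and follows essentially the same approach as the paper: both count the deficit in $|G[V_1,V_2]|$ caused by vertices in $\overline{D'}_i$ having fewer than $\tau' = n/3$ cross-neighbors, and compare this against the lower bound $|G[V_1,V_2]| \ge n^2/4 - 2\delta n^2$ (equation~\eqref{equ:GenTuran-cycle-G[V1,V2]}) using the size estimate~\eqref{equ:GenTuran-cycle-Vi-size}. The paper phrases it as a contradiction under the hypothesis $|\overline{D'}_i| > 14\delta n$, while you solve directly for the bound on $s = |\overline{D'}_1|$; this is only a presentational difference.
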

        \begin{proof}
            Let $i\in \{1,2\}$. 
            If $|\overline{D'}_i| > 14\delta n$, then, by~\eqref{equ:GenTuran-cycle-Vi-size}, the number of edges in $G[V_1, V_2]$ would satisfy 
            \begin{align*}
                |G[V_1, V_2]|
                \le |V_1||V_2| - 14\delta n \times \left(\frac{n}{2}- \sqrt{2\delta}n - \frac{n}{3}\right)
                < \frac{n^2}{4} - 2\delta n^2, 
            \end{align*}
            which contradicts~\eqref{equ:GenTuran-cycle-G[V1,V2]}. 
        \end{proof}
        \begin{claim}\label{CLAIM:GenTuran-cycle-GD1'D2'}
            We have $\left|G[D_1'] \cup G[D_2']\right| \le 1$. 
        \end{claim}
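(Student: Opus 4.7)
The plan is to argue by contradiction: suppose $G[D_1']\cup G[D_2']$ contains two distinct edges $e_1$ and $e_2$, and derive a copy of $C_k^{\triangle}=C_{2t+2}^{\triangle}$ in $G$, contradicting the hypothesis. Three ingredients from the setup suffice: (a) $L\times D\subseteq G$, since $d_{\mathcal{H}}(xd)\ge\tau$ forces $xd$ into a triangle of $G$; (b) Claims~\ref{CLAIM:GenTuran-cycle-D-bar} and~\ref{CLAIM:GenTuran-cycle-D'-in-D} guarantee that $|D|$ is almost $n$; (c) every pair $\{u,v\}\subseteq D_i'$ has at least $2\tau'-|V_{3-i}|\ge n/6-\sqrt{2\delta}n$ common neighbors inside $V_{3-i}$ by~\eqref{equ:GenTuran-cycle-Vi-size}.

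Writing $L=\{x_1,\ldots,x_t\}$, I would split into three cases (plus the symmetric ones in $D_2'$): (i) $e_1=\{u_1,v_1\}\subseteq D_1'$ and $e_2=\{u_2,v_2\}\subseteq D_2'$; (ii) disjoint $e_1=\{u_1,v_1\},\, e_2=\{u_2,v_2\}\subseteq D_1'$; (iii) $e_1=\{u,v_1\},\, e_2=\{u,v_2\}\subseteq D_1'$ sharing one vertex. For cases (i) and (ii) I build the base cycle
\[
u_1-v_1-x_1-d_1-x_2-d_2-\cdots-x_{t-1}-u_2-v_2-x_t-u_1,
\]
and for case (iii) the base cycle
\[
v_1-u-v_2-x_1-d_1-x_2-d_2-\cdots-x_{t-1}-d_{t-1}-x_t-v_1,
\]
where the $d_j$'s are chosen as distinct vertices of $D$ disjoint from all previously named vertices. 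A direct count confirms $2t+2$ vertices and $2t+2$ edges in each cycle, and every consecutive pair is an edge of $G$: the pairs $e_1, e_2$ by assumption, and all remaining pairs through the complete bipartite graph $L\times D\subseteq G$ from input (a).

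Finally, I extend the base cycle to $C_{2t+2}^{\triangle}\subseteq G$ by assigning each of its $2t+2$ edges a distinct triangle partner in $V(G)$. For edges of the form $\{x,d\}\in L\times D$ the codegree in $G$ is at least $\tau=n/200$, while $e_1$ and $e_2$ each have common neighborhood of size at least $n/6-\sqrt{2\delta}n$ in the opposite part by input (c). Since at each step I need to avoid only $O(t)$ previously fixed vertices, a greedy selection succeeds for $n$ large, producing a copy of $C_{2t+2}^{\triangle}$ inside $G$, which contradicts the $C_k^{\triangle}$-freeness of $G$.

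The main obstacle is not conceptual but combinatorial: matching the vertex and edge counts of the constructed cycle to $k=2t+2$ in each of the three cases, and keeping the $4t+4$ vertices of the final $C_{2t+2}^{\triangle}$ pairwise distinct. The complete bipartite structure $G[L,D]$ together with the ``within-part" codegree bound coming from the definition of $D_i'$ furnish all the flexibility needed; the rest is a short case check.
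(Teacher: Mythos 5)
Your proof is correct, but it takes a genuinely different route from the paper's. The paper's argument is much shorter because it factors through Claim~\ref{CLAIM:GenTuran-cycle-P_2}: it first observes, via inclusion--exclusion on $\tau'=n/3$ and~\eqref{equ:GenTuran-cycle-Vi-size}, that each edge $e_i'\in G[D_1']\cup G[D_2']$ has codegree $\ge n/100$ in $\mathcal{B}[V']$, so each lifts to a triangle $e_i\in\mathcal{B}[V']$ with $e_1\cap e_2 = e_1'\cap e_2'$; then it applies Claim~\ref{CLAIM:GenTuran-cycle-P_2}~\ref{equ:CLAIM:GenTuran-cycle-P_2} when $e_1'\cap e_2'\neq\emptyset$ and Claim~\ref{CLAIM:GenTuran-cycle-P_2}~\ref{equ:CLAIM:GenTuran-cycle-B1-two-edges} when $e_1'\cap e_2'=\emptyset$. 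You instead build the forbidden $C_{2t+2}^{\triangle}$ in $G$ directly, routing a $(2t+2)$-cycle through $L$, $D$, and the two $D'$-edges, and then extend greedily. Both proofs rest on the same ingredients --- $L\times D\subseteq G$ with codegree $\ge\tau$ there, and the large within-part codegree for $D_i'$-pairs --- and your construction essentially re-derives (inline, in the graph language) the cycle embeddings already packaged by Claim~\ref{CLAIM:GenTuran-cycle-P_2} in the $3$-graph language, since $C_k^{\triangle}\subseteq G$ iff $C_k^3\subseteq\mathcal{K}_G$. Your bookkeeping is sound: the three base cycles all have exactly $2t+2$ vertices and $2t+2$ edges, every consecutive pair is an $L\times D$ edge or one of $e_1,e_2$, and each edge has codegree $\Omega(n)\gg 4t+4$ so the greedy triangle-partner assignment succeeds. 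The paper's route buys brevity by reusing the prior lemma; yours buys self-containment at the cost of case analysis and length counting.
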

        \begin{proof}
            Suppose that $\{u,v\}$ is an edge in $G[D_i']$. 
            Then it follows from the definition of $D_i'$ that $\min\left\{|N_{G}(u)\cap V_{3-i}|, |N_{G}(v)\cap V_{3-i}|\right\} \ge n/3$. 
            Due to~\eqref{equ:GenTuran-cycle-Vi-size} and the Inclusion-exclusion principle, we have 
            \begin{align*}
                |V_{3-i} \cap N_{G}(u) \cap N_{G}(v)| 
                \ge 2\times \frac{n}{3} - \left(\frac{n}{2}+\sqrt{2\delta}n\right)
                \ge \frac{n}{100}. 
            \end{align*}
            This implies that the codegree of $uv$ in $\mathcal{B}[V']$ is at least $n/100$, since every vertex in $V_{3-i} \cap N_{G}(u) \cap N_{G}(v)$ forms a copy of $K_3$ with $\{u,v\}$.  
            
            Suppose to the contrary that there exist two distinct edges $e_1', e_2' \in G[D_1'] \cup G[D_2']$. Then the argument above shows that there exist two edges $e_1, e_2 \in \mathcal{B}[V']$ such that $e_i' \subseteq e_i$ for $i \in \{1,2\}$ and $e_1 \cap e_2 = e_1' \cap e_2'$. 
            If $e_1' \cap e_2' \neq \emptyset$, then $e_1$ and $e_2$ would contradict Claim~\ref{CLAIM:GenTuran-cycle-P_2}~\ref{equ:CLAIM:GenTuran-cycle-P_2}. 
            If $e_1' \cap e_2' = \emptyset$, then $e_1$ and $e_2$ would contradict Claim~\ref{CLAIM:GenTuran-cycle-P_2}~\ref{equ:CLAIM:GenTuran-cycle-B1-two-edges}. 
        \end{proof}
        Recall that every triple in $\mathcal{M}_2$ has empty intersection with $\overline{D}$. 
        \begin{claim}\label{CLAIM:GenTuran-even-cycle-M2}
            We have $m_2 \ge tn|\overline{D'}|/7$. 
        \end{claim}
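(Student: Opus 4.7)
\textbf{Proof plan for Claim~\ref{CLAIM:GenTuran-even-cycle-M2}.} The plan is to manufacture many triples in $\mathcal{M}_2$ directly from the defining deficiency of vertices in $\overline{D'}$. Fix $i \in \{1,2\}$ and $v \in \overline{D'}_i$. By the definition of $D_i'$, we have $|N_G(v) \cap V_{3-i}| < \tau' = n/3$. Combining this with $|V_{3-i}| \geq n/2 - \sqrt{2\delta}n$ from \eqref{equ:GenTuran-cycle-Vi-size}, with Claim~\ref{CLAIM:GenTuran-cycle-D-bar} ($|\overline{D}_{3-i}| \leq 8\delta t n$), and with Claim~\ref{CLAIM:GenTuran-cycle-D'-in-D} ($|\overline{D'}_{3-i}| \leq 14\delta n$), one finds
\[ |D_{3-i}' \setminus N_G(v)| \geq |V_{3-i}| - |\overline{D}_{3-i}| - |\overline{D'}_{3-i}| - \tau' \geq \frac{n}{6} - \sqrt{2\delta}n - 8\delta t n - 14\delta n \geq \frac{n}{7}, \]
provided $\delta$ is small enough (which holds since $\delta \ll C^{-1}$).

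Next I would observe that for every such non-neighbor $u \in D_{3-i}' \setminus N_G(v)$ and every $x \in L$, the triple $\{v,u,x\}$ lies in $\mathcal{M}_2$. Indeed, by construction it contains exactly one vertex in each of $L$, $V_1$, $V_2$, hence belongs to $\mathcal{S}'$; since $uv \notin G$ it cannot span a triangle, so $\{v,u,x\} \notin \mathcal{H}$ and $\{v,u,x\} \in \mathcal{M}$; and since $v \in D_i$, $u \in D_{3-i}$, $x \in L$ are all outside $\overline{D}$, we conclude $\{v,u,x\} \in \mathcal{M}_2$.

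The final step is to verify that these triples are not double counted. For fixed $i$, the pair $(v,u)$ is recovered from the triple as its unique vertices in $V_i$ and $V_{3-i}$, so distinct choices $(v,u,x)$ yield distinct triples. Moreover, a triple produced with index $i$ has its $V_{3-i}$-vertex in $D_{3-i}'$, which is disjoint from $\overline{D'}_{3-i}$, so the same triple cannot also arise with index $3-i$. Summing over $i \in \{1,2\}$, over $v \in \overline{D'}_i$, over the $\ge n/7$ non-neighbors $u \in D_{3-i}'$, and over the $t$ choices of $x \in L$, one obtains
\[ m_2 \geq \sum_{i=1}^{2}|\overline{D'}_i| \cdot \frac{n}{7} \cdot t = \frac{t n |\overline{D'}|}{7}, \]
as required.

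The only genuine subtlety is this last bookkeeping: if one counted non-neighbors of $v$ inside the larger set $D_{3-i}$ rather than $D_{3-i}'$, a triple with both its $V_1$- and $V_2$-vertex in $\overline{D'}$ could be witnessed from either endpoint, forcing a factor $1/2$ and yielding only $tn|\overline{D'}|/14$. Restricting to $D_{3-i}'$ removes this ambiguity while, thanks to Claim~\ref{CLAIM:GenTuran-cycle-D'-in-D}, still leaving at least $n/7$ non-neighbors — exactly matching the target bound.
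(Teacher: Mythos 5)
Your proof is correct and follows the same basic strategy as the paper: each vertex $v \in \overline{D'}_i$ has fewer than $\tau' = n/3$ neighbors in $V_{3-i}$, so together with any $x \in L$ and any non-neighbor $u$ in the opposite $D$-class it generates triples in $\mathcal{M}_2$, of which there are about $n/7$ per choice of $x$.

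One point worth highlighting: your version is actually more careful than the paper's about double counting. The paper takes the non-neighbor $u$ in $D_{3-i}$, observes that each $v \in \overline{D'}_i$ thereby "contributes" at least $t(|V_{3-i}| - |\overline{D}| - \tau')$ triples, and then sums these contributions over $v$. That sum counts a triple twice whenever both its $V_1$- and $V_2$-vertex lie in $\overline{D'}$ (since $\overline{D'}_{3-i} \subseteq D_{3-i}$), so the step $m_2 \ge \sum_i t(|V_{3-i}| - |\overline{D}| - \tau')|\overline{D'}_i|$ is not literally justified as written — it needs an extra observation that the overlap is only $O(\delta)tn|\overline{D'}|$ (using $\min_i |\overline{D'}_i| \le 14\delta n$), which still leaves a margin. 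Your restriction of $u$ to $D_{3-i}'$ removes this ambiguity entirely: a triple produced with index $i$ has its $V_{3-i}$-vertex in $D_{3-i}'$, disjoint from $\overline{D'}_{3-i}$, so it cannot also be produced with index $3-i$. The cost is an extra subtraction of $|\overline{D'}_{3-i}| \le 14\delta n$ inside the parenthesis, which is absorbed by the gap between $n/6$ and $n/7$ for small $\delta$. This is a genuine tightening of the bookkeeping, and the arithmetic you give checks out.
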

        \begin{proof}
            It follows from the definition of $\overline{D'}$ that for $i\in \{1,2\}$ every vertex $v\in \overline{D'}_i$ has at most $\tau'$ neighbors in $D_{3-i}$.
            Since every triple containing $v$, $x_i$, and a vertex in $D_{3-i}\setminus N_{G}(v)$ is a member in $\mathcal{M}_2$, 
            the vertex $v$  contributes at least $t\left(|V_{3-i}| - |\overline{D}| -\tau'\right)$ triples to $\mathcal{M}_2$. 
            So,  it follows from~\eqref{equ:GenTuran-cycle-Vi-size} and Claim~\ref{CLAIM:GenTuran-cycle-D-bar} that 
            \begin{align*}
                m_2 
                 \ge \sum_{i\in\{1,2\}} t\left(|V_{3-i}| - |\overline{D}| -\tau'\right)|\overline{D'}_i| 
                 \ge t\left(\frac{n}{2}-\sqrt{2\delta}n - 8\delta tn -\frac{n}{3}\right)|\overline{D'}| 
                 \ge \frac{tn|\overline{D'}|}{7}. 
            \end{align*}
            This proves Claim~\ref{CLAIM:GenTuran-even-cycle-M2}. 
        \end{proof}
        \begin{claim}\label{CLAIM:GenTuran-even-cycle-B0}
            We have $|\mathcal{B}_0\setminus\mathcal{B}[D]| \le 9\sqrt{\delta}t^2 n |\overline{D'}| + t$. 
        \end{claim}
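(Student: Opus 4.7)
The plan is to first pin down the precise shape of the edges in $\mathcal{B}_0 \setminus \mathcal{B}[D]$, then reduce the enumeration to counting edges of $G[D_1] \cup G[D_2]$, and finally estimate this quantity using Claims~\ref{CLAIM:GenTuran-cycle-GD1'D2'} and~\ref{CLAIM:GenTuran-bad-deg-max}.

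For the structural step, I would argue that every $e \in \mathcal{B}_0 \setminus \mathcal{B}[D]$ has the form $\{x, u, v\}$ with $x\in L$ and $\{u,v\} \subseteq D_i$ for some $i\in\{1,2\}$. Indeed, since $e$ has no vertex in $\overline{D}$ we have $e \subseteq L \cup D$, and since $e \notin \mathcal{B}[D]$ we have $|e \cap L| \ge 1$. If $|e \cap L| \in \{2,3\}$, then $|e \cap V_1| \le 1$ and $|e \cap V_2| \le 1$ follow automatically, forcing $e \in \mathcal{S}$ and contradicting $e \in \mathcal{B}$; similarly, $|e \cap L| = 1$ with the two remaining vertices split between $D_1$ and $D_2$ would place $e$ in $\mathcal{S}$. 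The only surviving case is the one stated, and then $\{u,v\} \in G$ because $e \in \mathcal{H} = \mathcal{K}_G$ is a triangle.

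For the counting step, for each edge $\{u,v\} \in G[D_1] \cup G[D_2]$ the number of triples $\{x,u,v\} \in \mathcal{B}_0 \setminus \mathcal{B}[D]$ equals $|L \cap N_{G}(u) \cap N_{G}(v)| \le |L| = t$, so
\begin{align*}
    |\mathcal{B}_0 \setminus \mathcal{B}[D]| \le t \cdot \left|G[D_1] \cup G[D_2]\right|.
\end{align*}
I would then split $|G[D_1] \cup G[D_2]|$ according to the refinement $D_i = D_i' \sqcup \overline{D'}_i$. Edges with both endpoints in $D' = D_1' \cup D_2'$ contribute at most $1$ by Claim~\ref{CLAIM:GenTuran-cycle-GD1'D2'}. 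For edges with at least one endpoint in $\overline{D'}$, each $u \in \overline{D'}_i \subseteq V_i$ satisfies $|N_{G}(u) \cap D_i| \le 9\sqrt{\delta}tn$ by Claim~\ref{CLAIM:GenTuran-bad-deg-max}, giving at most $9\sqrt{\delta}tn \cdot |\overline{D'}|$ such edges. Combining,
\begin{align*}
    |\mathcal{B}_0 \setminus \mathcal{B}[D]|
    \le t \left(1 + 9\sqrt{\delta}\, t n \cdot |\overline{D'}|\right)
    = 9\sqrt{\delta}\, t^2 n \cdot |\overline{D'}| + t,
\end{align*}
which matches the claim.

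I do not foresee any real obstacle. The quantitative input is already supplied by the earlier claims, and the only delicate piece is the case analysis ensuring that every triple in $\mathcal{B}_0 \setminus \mathcal{B}[D]$ indeed has the ``one $L$-vertex plus two vertices in a common $D_i$'' shape; once that is established, the rest is a straightforward double count.
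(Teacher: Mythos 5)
Your proof is correct and follows essentially the same route as the paper: identify that every triple in $\mathcal{B}_0\setminus\mathcal{B}[D]$ consists of one $L$-vertex together with an edge of $G[D_1]\cup G[D_2]$, then split according to whether that edge touches $\overline{D'}$, bounding the $D'$-internal part by Claim~\ref{CLAIM:GenTuran-cycle-GD1'D2'} and the rest by Claim~\ref{CLAIM:GenTuran-bad-deg-max}. The only cosmetic difference is that you bound $|G[D_1]\cup G[D_2]|$ first and multiply by $t$ at the end, whereas the paper carries the factor $t$ through each case directly.
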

        \begin{proof}
            It follows from the definition that every triple in $\mathcal{B}_0\setminus\mathcal{B}[D]$ contains one vertex from $L$ and two vertices from $D_i$ for some $i\in \{1,2\}$.
            By Claim~\ref{CLAIM:GenTuran-cycle-GD1'D2'}, the number of triples in $\mathcal{B}_0\setminus\mathcal{B}[D]$ that have empty intersection with $\overline{D'}$ is at most $t$. 
            On the other hand, it follows from Claim~\ref{CLAIM:GenTuran-bad-deg-max} that the number of triples in $\mathcal{B}_0\setminus\mathcal{B}[D]$ that have nonempty intersection with $\overline{D'}$ is at most $t\times 9\sqrt{\delta}tn \left(|\overline{D'}_1| + |\overline{D'}_2|\right) = 9\sqrt{\delta}t^2 n |\overline{D'}|$. 
        \end{proof}
    If $|\overline{D}| + |\overline{D'}| =0$, then $D' = D$ and every triple in $\mathcal{B}$ contains at least two vertices in $D_1'$ or $D_2'$. 
    Combined with Claim~\ref{CLAIM:GenTuran-cycle-GD1'D2'}, we see that $|\mathcal{B}| \le \max\{|V_1|,\ |V_2|\} + t$. 
    Then it follows from some simple calculations that 
    \begin{align*}
        |\mathcal{H}| 
         \le |\mathcal{S}| + |\mathcal{B}| 
          \le \binom{t}{3} + \binom{t}{2}(n-t) + t |V_1||V_2| + \max\{|V_1|,\ |V_2|\} + t
            \le |\mathcal{S}_{\mathrm{bi}}^{+}(n,t)|, 
    \end{align*}
    and equality holds iff $G \cong S^{+}(n,t)$. 
    So we may assume that $|\overline{D}| + |\overline{D'}| \ge 1$. 
    Then 
    it follows from~\eqref{equ:GenTuran-cycle-B3}, Claims~\ref{CLAIM:GenTuran-cycle-D-bar},~\ref{CLAIM:GenTuran-cycle-B_2'},~\ref{CLAIM:GenTuran-cycle-B1},~\ref{CLAIM:GenTuran-even-cycle-M2}~\ref{CLAIM:GenTuran-even-cycle-B0} that 
        \begin{align*}
            |\mathcal{H}|
            & \le |\mathcal{S}| 
                     + |\mathcal{B}_0\setminus\mathcal{B}[D]|+ |\mathcal{B}[D]|
                    + |\mathcal{B}_1| + |\mathcal{B}_2| + |\mathcal{B}_3| -|\mathcal{M}_1| - |\mathcal{M}_2| \\
            & \le |\mathcal{S}_{\mathrm{bi}}\left(n, t\right)| 
                    + 9\sqrt{\delta}t^2 n |\overline{D'}| + t
                    + 10\sqrt{\delta} t^2 n |\overline{D}| +\frac{n}{2}+11\sqrt{\delta}tn
                    + \frac{n |\overline{D}|}{3}
                    + C|\overline{D}|^2  \\
            & \quad  - \frac{49}{100} n |\overline{D}|
                    - \frac{tn|\overline{D'}|}{7}\\
            & \le |\mathcal{S}_{\mathrm{bi}}\left(n, t\right)| +   \left\lceil\frac{n-t}{2}\right\rceil 
            + 2t + 11\sqrt{\delta} t n - \left(\frac{49}{100} n -10\sqrt{\delta} t^2 n - \frac{n}{3} - C \times 10 \delta t n\right)|\overline{D}| \\
            & \quad - \left(\frac{tn}{7} - 9\sqrt{\delta}t^2 n\right)|\overline{D'}| \\
            & \le |\mathcal{S}^{+}_{\mathrm{bi}}\left(n, t\right)|
                + t + 11\sqrt{\delta} t n - \frac{n}{10}\left(|\overline{D}| + |\overline{D'}|\right)
            < |\mathcal{S}^{+}_{\mathrm{bi}}\left(n, t\right)|
        \end{align*}
        a contradiction.
        This completes the proof of Theorem~\ref{THM:GenTuran-Cycle-Exact} for even $k$. 
\end{proof}
\section{Concluding remarks}
$\bullet$ It is a natural open question to extend theorems concerning trees in the present paper to other classes of trees. In particular, for trees $T$ satisfying $\tau(T) = \sigma(T)$ and containing critical edges. 
The main (and probably the only) barrier would be to extend stability theorems (Theorem~\ref{THM:Hypergraph-Tree-Stability} and~\ref{THM:GenTuran-Tree-Stability}) to these trees. 

$\bullet$ Recall that one of the key ingredients in proofs of generalized Tur\'{a}n theorems is Kruskal--Katona type theorems (Propositions~\ref{PROP:shadow-bound-tree},~\ref{APPENDIX:PROP:Turan-cycle-shadow}) for $F$-free hypergraphs. It would be interesting to find applications of other Kruskal--Katona type theorems (see e.g.~\cite{Kr63,Ka68,MV15Survey,FJKMV19a,FJKMV20,LM21a,LM21b,LM23,FK23}) in generalized Tur\'{a}n problems. 

$\bullet$ By utilizing a similar framework as presented in this paper, along with methods and results of F\"{u}redi~\cite{F14tree}, F\"{u}redi--Jiang~\cite{FJ15cycle,FJ15}, and F\"{u}redi--Jiang--Seiver~\cite{FJS14path}, theorems in Sections~\ref{SUBSEC:Intro:antiRamsey} and~\ref{SUBSEC:Intro:GenTuran} can be extended to all values of $r \ge 4$. 
We plan to address this in future work. 

$\bullet$ Given an $r$-graph and an integer $i\in [r-2]$, the $i$-th shadow $\partial_i\mathcal{H}$ of $\mathcal{H}$ is 
\begin{align*}
    \partial_i\mathcal{H}
    := \left\{A\in \binom{V(\mathcal{H})}{r-i} \colon \text{there exists $E\in \mathcal{H}$ such that $A\subseteq E$}\right\}. 
\end{align*}
In general, one could consider the following generalized Tur\'{a}n problem.

Fix integers $r \ge 3$, $i \in [r-2]$, and an $r$-graph $F$. 
\begin{gather*}
    \text{What is the maximum number of edges in an $n$-vertex $r$-graph $\mathcal{H}$ with $\partial_i F \not\subseteq \partial_i\mathcal{H}$\ ?}
\end{gather*}
Note that results in Section~\ref{SUBSEC:Intro:GenTuran} answered this question when $(r,i) = (3,1)$ and $F$ is the expansion of a cycle or certain trees. 
We hope that the framework used in these proofs could find more applications in this direction.


$\bullet$ By applying the following result, which is a slight modification of Lemma~5.3 in~{\cite{KMV17b}} (and the proof is the same), it becomes straightforward to extend the theorems concerning trees presented in this paper to encompass (some special class of) forests as well. One of the simplest examples would be a matching (see e.g. \cite{Erdos65,BKE76,HLS12,Frankl13,OY13,FK19,LTYZ23,GLP23,TLG23,ZCGZ23}). 

\begin{proposition}
    Let $F$ be a $k$-vertex forest and $T_1, \ldots, T_{\ell}$ are connected components of $F$. Suppose that $(I_i, R_i)$ is a crosscut pair of $T_i$ for $i\in [\ell]$. 
    There exists a $k$-vertex tree $T$ such that $F \subseteq T$, $\sigma(T) = \sigma(F)$, and $\left(\bigcup_{i\in[\ell]}I_i, \bigcup_{i\in[\ell]}R_i\right)$ is a crosscut pair of $T$.  Moreover, if $e\in T_i$ is a critical edge in $T_i$ for some $i\in [\ell]$, then $e$ is also a critical edge in $T$. 
    In particular, 
    \begin{itemize}
        \item if $T_i$ satisfies $\sigma(T_i) = \tau_{\mathrm{ind}}(T_i)$ for all $i\in [\ell]$, and 
        \item $T_j$ contains a critical edge for some $j\in [\ell]$, 
    \end{itemize}
    then $T$ satisfies $\sigma(T) = \tau_{\mathrm{ind}}(T)$ and contains a critical edge. 
\end{proposition}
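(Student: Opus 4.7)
The strategy is a direct adaptation of the proof of~\cite[Lemma~5.3]{KMV17b}: build $T$ by joining the components of $F$ via $\ell - 1$ new edges, each chosen so that exactly one of its endpoints lies in $I := \bigcup_i I_i$. This choice simultaneously keeps $I$ independent in $T$ and forces $T - I = \bigcup_i R_i$, so that $\left(\bigcup_i I_i, \bigcup_i R_i\right)$ is a crosscut of $T$ of size $\sum_i \sigma(T_i) = \sigma(F)$. When $\ell \ge 2$ at least one $I_{i_0}$ must be nonempty (else $T - I = T$ would force $T = F$), so I fix such an $i_0$ together with a hub-vertex $w \in V(T_{i_0}) \setminus I_{i_0}$, and for each $j \neq i_0$ add either the edge $u_j w$ with $u_j \in I_j$ (when $I_j \neq \emptyset$) or an edge $v_j u$ with $v_j \in V(T_j)$ and $u \in I_{i_0}$ (otherwise).

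The equality $\sigma(T) = \sigma(F)$ then follows in both directions: the construction gives $\sigma(T) \le \sigma(F)$, and for the reverse inequality any crosscut $(I',R')$ of $T$ restricts to a crosscut of each component $T_i$, yielding $\sigma(T_i) \le |I' \cap V(T_i)| + |R' \cap T_i|$, which sums to $\sigma(F) \le |I'| + |R'|$.

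For the ``moreover'' clause, fix a critical edge $e \in T_{i_*}$ and a minimum crosscut $(\hat I, \hat R)$ of $T_{i_*} \setminus e$ of size at most $\sigma(T_{i_*}) - 1$. Substituting $(\hat I, \hat R)$ for $(I_{i_*}, R_{i_*})$ in the global crosscut produces a crosscut of $T \setminus e$ of size at most $\sigma(T) - 1$, provided the connecting edges at $T_{i_*}$ remain compatible with $\hat I$. I secure this compatibility by exercising the freedom in the construction: take $i_0 = i_*$ whenever $|I_{i_*}| = 1$ and its unique vertex is a critical leaf (so that the hub role absorbs the forced use of that leaf), and in the remaining cases choose $u_{i_*}$ outside the critical leaves of $T_{i_*}$, which is possible as long as $|I_{i_*}| \ge 2$. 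The residual configuration in which several problematic $K_2$-components are glued to the same hub vertex is handled by the ``center'' crosscut $(\{w\}, \{\text{now-uncovered pendant edges}\})$, whose size is again $\sigma(F) - 1$. The ``in particular'' statement is then immediate: under $\sigma(T_i) = \tau_{\mathrm{ind}}(T_i)$ each $I_i$ may be chosen as an independent vertex cover of $T_i$, so $I$ is an independent vertex cover of $T$ of size $\sigma(T)$, forcing $\tau_{\mathrm{ind}}(T) = \sigma(T)$, and the critical edge of some $T_j$ furnishes a critical edge of $T$.

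The main obstacle is the critical-edge step, specifically the subcase $|I_{i_*}| = 1$ with $u_{i_*}$ forced to be the critical leaf; the hub-selection rule above is precisely what is needed to sidestep this obstruction, mirroring the argument of~\cite[Lemma~5.3]{KMV17b}.
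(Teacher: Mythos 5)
Your overall strategy — add $\ell-1$ edges each with exactly one endpoint in $\bigcup_i I_i$, observe that any crosscut of $T$ restricts to crosscuts of the $T_i$'s to get $\sigma(T)\ge\sigma(F)$, and verify the constructed pair witnesses $\sigma(T)\le\sigma(F)$ — is correct, and that part matches what any proof must do. The gap is in the ``moreover'' step, where your argument does not actually prove what is claimed.

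The substitution argument you sketch (replace $(I_{i_*},R_{i_*})$ by a minimum crosscut $(\hat I,\hat R)$ of $T_{i_*}\setminus e$) produces a crosscut of $T\setminus e$ of size $\sigma(T)-1$ \emph{only} when the attachment vertex $u_{i_*}$ lies in $\hat I$; otherwise the new edge at $T_{i_*}$ falls into $R'$ and the bound degrades to $\sigma(T)$. You propose to fix this by avoiding ``critical leaves,'' but this does not suffice: take $T_{i_*}=P_3=a\text{--}b\text{--}c\text{--}d$ with $I_{i_*}=\{b,d\}$ and $u_{i_*}=b$ (which is not a leaf of any critical edge). For the critical edge $e=ab$, the minimum crosscut of $T_{i_*}\setminus e$ is $\{c\}$, which misses $b$, and no crosscut of $T_{i_*}\setminus e$ of size $\sigma(T_{i_*})-1=1$ contains $b$ at all. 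So your substitution cannot yield size $\sigma(T)-1$ here. In this example $e$ is in fact still critical in $T$, but this is witnessed by a crosscut involving the hub vertex $w$ that your argument never constructs; the ``center crosscut'' remark in your write-up gestures at such a fix but is not an argument. Since the proposition as written quantifies over \emph{every} critical $e$ of every $T_i$, you would need to show simultaneously, for a single choice of $T$, that all such crosscuts exist. That is a genuine missing step, and it is precisely the content deferred to~\cite[Lemma~5.3]{KMV17b}.

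Separately, you should note that the statement, read literally, is false when $F$ contains an isolated vertex: for $F=K_2\sqcup K_1$ with the only admissible crosscut pairs $(\{v_0\},\emptyset)$ and $(\emptyset,\emptyset)$, the requirement that $(\{v_0\},\emptyset)$ be a crosscut pair of $T$ forces $T\cong K_{1,2}$ centered at $v_0$, which has no critical edge even though $K_2$ does. Your ``take $i_0=i_*$'' rule then tries to hang the lone isolated vertex off $v_0\in I_{i_*}$, and removing the critical edge leaves a star at $v_0$ — exactly the failure. So either a hypothesis excluding trivial components is needed (which neither you nor the paper states), or the construction needs the flexibility to route $I_j=\emptyset$ components to a different $I_{i'}$ than the hub's, which your single-hub scheme does not provide. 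Because the paper itself gives no proof — it only cites~\cite[Lemma~5.3]{KMV17b} and asserts the proof is identical — I cannot tell you how the original handles these cases, but as written your argument does not close them.
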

Using arguments in~\cite{GLS13,BK14}, it seems not hard to extend theorems in the present paper further to a special class of graphs whose connected components are trees and cycles. We leave the details to interested readers. 

\section*{Acknowledgment}
We are very grateful to the referees for their careful reading of the manuscript and for providing many valuable comments.
%
\bibliographystyle{abbrv}
\bibliography{GeneralizedTuran}

\begin{thebibliography}{10}

\bibitem{AFKS00}
N.~Alon, E.~Fischer, M.~Krivelevich, and M.~Szegedy.
\newblock Efficient testing of large graphs.
\newblock {\em Combinatorica}, 20(4):451--476, 2000.

\bibitem{AS16}
N.~Alon and C.~Shikhelman.
\newblock Many {$T$} copies in {$H$}-free graphs.
\newblock {\em J. Combin. Theory Ser. B}, 121:146--172, 2016.

\bibitem{BKE76}
B.~Bollob\'{a}s, D.~E. Daykin, and P.~Erd\H{o}s.
\newblock Sets of independent edges of a hypergraph.
\newblock {\em Quart. J. Math. Oxford Ser. (2)}, 27(105):25--32, 1976.

\bibitem{BM76}
J.~A. Bondy and U.~S.~R. Murty.
\newblock {\em Graph theory with applications}.
\newblock American Elsevier Publishing Co., Inc., New York, 1976.

\bibitem{BK14}
N.~Bushaw and N.~Kettle.
\newblock Tur\'{a}n numbers for forests of paths in hypergraphs.
\newblock {\em SIAM J. Discrete Math.}, 28(2):711--721, 2014.

\bibitem{E62}
P.~Erd\H{o}s.
\newblock On the number of complete subgraphs contained in certain graphs.
\newblock {\em Magyar Tud. Akad. Mat. Kutat\'{o} Int. K\"{o}zl.}, 7:459--464,
  1962.

\bibitem{ES64}
P.~Erd\H{o}s.
\newblock Extremal problems in graph theory.
\newblock In {\em Theory of {G}raphs and its {A}pplications ({P}roc. {S}ympos.
  {S}molenice, 1963)}, pages 29--36. Publ. House Czech. Acad. Sci., Prague,
  1964.

\bibitem{Erdos65}
P.~Erd\H{o}s.
\newblock A problem on independent {$r$}-tuples.
\newblock {\em Ann. Univ. Sci. Budapest. E\"{o}tv\"{o}s Sect. Math.}, 8:93--95,
  1965.

\bibitem{ESS75}
P.~Erd\H{o}s, M.~Simonovits, and V.~T. S\'{o}s.
\newblock Anti-{R}amsey theorems.
\newblock In {\em Infinite and finite sets ({C}olloq., {K}eszthely, 1973;
  dedicated to {P}. {E}rd\H{o}s on his 60th birthday), {V}ol. {II},}, Colloq.
  Math. Soc. J\'{a}nos Bolyai, Vol. 10,, pages 633--643. ,, 1975.

\bibitem{Fox11}
J.~Fox.
\newblock A new proof of the graph removal lemma.
\newblock {\em Ann. of Math. (2)}, 174(1):561--579, 2011.

\bibitem{Frankl13}
P.~Frankl.
\newblock Improved bounds for {E}rd{\H o}s' matching conjecture.
\newblock {\em J. Combin. Theory Ser. A}, 120(5):1068--1072, 2013.

\bibitem{FF87}
P.~Frankl and Z.~F\"{u}redi.
\newblock Exact solution of some {T}ur\'{a}n-type problems.
\newblock {\em J. Combin. Theory Ser. A}, 45(2):226--262, 1987.

\bibitem{FK19}
P.~Frankl and A.~Kupavskii.
\newblock Two problems on matchings in set families---in the footsteps of
  {E}rd{\H o}s and {K}leitman.
\newblock {\em J. Combin. Theory Ser. B}, 138:286--313, 2019.

\bibitem{FMO10}
S.~Fujita, C.~Magnant, and K.~Ozeki.
\newblock Rainbow generalizations of {R}amsey theory: a survey.
\newblock {\em Graphs Combin.}, 26(1):1--30, 2010.

\bibitem{F14tree}
Z.~F\"{u}redi.
\newblock Linear trees in uniform hypergraphs.
\newblock {\em European J. Combin.}, 35:264--272, 2014.

\bibitem{FJ15cycle}
Z.~F\"{u}redi and T.~Jiang.
\newblock Hypergraph {T}ur\'{a}n numbers of linear cycles.
\newblock {\em J. Combin. Theory Ser. A}, 123:252--270, 2014.

\bibitem{FJ15}
Z.~F{\"u}redi and T.~Jiang.
\newblock {T}ur{\' a}n numbers of hypergraph trees.
\newblock {\em arXiv preprint arXiv:1505.03210}, 2015.

\bibitem{FJKMV19a}
Z.~F\"{u}redi, T.~Jiang, A.~Kostochka, D.~Mubayi, and J.~Verstra\"{e}te.
\newblock Hypergraphs not containing a tight tree with a bounded trunk.
\newblock {\em SIAM J. Discrete Math.}, 33(2):862--873, 2019.

\bibitem{FJKMV20}
Z.~F\"{u}redi, T.~Jiang, A.~Kostochka, D.~Mubayi, and J.~Verstra\"{e}te.
\newblock Tight paths in convex geometric hypergraphs.
\newblock {\em Adv. Comb.}, pages Paper No. 1, 14, 2020.

\bibitem{FJS14path}
Z.~F\"{u}redi, T.~Jiang, and R.~Seiver.
\newblock Exact solution of the hypergraph {T}ur\'{a}n problem for
  {$k$}-uniform linear paths.
\newblock {\em Combinatorica}, 34(3):299--322, 2014.

\bibitem{FK23}
Z.~F{\"u}redi and A.~Kostochka.
\newblock {T}ur{\' a}n number for bushes.
\newblock {\em arXiv preprint arXiv:2307.04932}, 2023.

\bibitem{GLS20}
R.~Gu, J.~Li, and Y.~Shi.
\newblock Anti-{R}amsey numbers of paths and cycles in hypergraphs.
\newblock {\em SIAM J. Discrete Math.}, 34(1):271--307, 2020.

\bibitem{GLS13}
R.~Gu, X.~Li, and Y.~Shi.
\newblock Hypergraph {T}ur{\' a}n numbers of vertex disjoint cycles.
\newblock {\em arXiv preprint arXiv:1305.5372}, 2013.

\bibitem{GLP23}
M.~Guo, H.~Lu, and X.~Peng.
\newblock Anti-{R}amsey {N}umber of {M}atchings in 3-uniform {H}ypergraphs.
\newblock {\em SIAM J. Discrete Math.}, 37(3):1970--1987, 2023.

\bibitem{Hall35}
P.~Hall.
\newblock On {R}epresentatives of {S}ubsets.
\newblock {\em J. London Math. Soc.}, 10(1):26--30, 1935.

\bibitem{HLS12}
H.~Huang, P.-S. Loh, and B.~Sudakov.
\newblock The size of a hypergraph and its matching number.
\newblock {\em Combin. Probab. Comput.}, 21(3):442--450, 2012.

\bibitem{Ka68}
G.~Katona.
\newblock A theorem of finite sets.
\newblock In {\em Theory of {G}raphs ({P}roc. {C}olloq., {T}ihany, 1966)},
  pages 187--207. Academic Press, New York-London, 1968.

\bibitem{Konig31}
D.~K{\" o}nig.
\newblock Graphs and matrices.
\newblock {\em Matematikai {\'e}s Fizikai Lapok}, 38:116--119, 1931.

\bibitem{KMV15a}
A.~Kostochka, D.~Mubayi, and J.~Verstra\"{e}te.
\newblock Tur\'{a}n problems and shadows {I}: {P}aths and cycles.
\newblock {\em J. Combin. Theory Ser. A}, 129:57--79, 2015.

\bibitem{KMV15c}
A.~Kostochka, D.~Mubayi, and J.~Verstra\"{e}te.
\newblock Tur\'{a}n problems and shadows {III}: expansions of graphs.
\newblock {\em SIAM J. Discrete Math.}, 29(2):868--876, 2015.

\bibitem{KMV17b}
A.~Kostochka, D.~Mubayi, and J.~Verstra\"{e}te.
\newblock Tur\'{a}n problems and shadows {II}: {T}rees.
\newblock {\em J. Combin. Theory Ser. B}, 122:457--478, 2017.

\bibitem{KST54}
T.~K\"{o}vari, V.~T. S\'{o}s, and P.~Tur\'{a}n.
\newblock On a problem of {K}. {Z}arankiewicz.
\newblock {\em Colloq. Math.}, 3:50--57, 1954.

\bibitem{Kr63}
J.~B. Kruskal.
\newblock The number of simplices in a complex.
\newblock In {\em Mathematical optimization techniques}, pages 251--278. Univ.
  California Press, Berkeley-Los Angeles, Calif., 1963.

\bibitem{LTYZ23}
T.~Li, Y.~Tang, G.~Yan, and W.~Zhou.
\newblock Rainbow {T}ur\'{a}n numbers of matchings and forests of hyperstars in
  uniform hypergraphs.
\newblock {\em Discrete Math.}, 346(9):Paper No. 113481, 9, 2023.

\bibitem{LM21a}
X.~Liu and D.~Mubayi.
\newblock The feasible region of hypergraphs.
\newblock {\em J. Combin. Theory Ser. B}, 148:23--59, 2021.

\bibitem{LM21b}
X.~Liu and D.~Mubayi.
\newblock Tight bounds for {K}atona's shadow intersection theorem.
\newblock {\em European J. Combin.}, 97:Paper No. 103391, 17, 2021.

\bibitem{LM23}
X.~Liu and S.~Mukherjee.
\newblock Stability theorems for some {K}ruskal-{K}atona type results.
\newblock {\em European J. Combin.}, 110:Paper No. 103666, 20, 2023.

\bibitem{LGHSTVZ22}
Z.~Lv, E.~Gy\H{o}ri, Z.~He, N.~Salia, C.~Tompkins, K.~Varga, and X.~Zhu.
\newblock Generalized {T}ur\'{a}n numbers for the edge blow-up of a graph.
\newblock {\em Discrete Math.}, 347(1):Paper No. 113682, 2024.

\bibitem{M06}
D.~Mubayi.
\newblock A hypergraph extension of {T}ur\'{a}n's theorem.
\newblock {\em J. Combin. Theory Ser. B}, 96(1):122--134, 2006.

\bibitem{MV15Survey}
D.~Mubayi and J.~Verstra\"{e}te.
\newblock A survey of {T}ur\'{a}n problems for expansions.
\newblock In {\em Recent trends in combinatorics}, volume 159 of {\em IMA Vol.
  Math. Appl.}, pages 117--143. Springer, [Cham], 2016.

\bibitem{OY13}
L.~\"{O}zkahya and M.~Young.
\newblock Anti-{R}amsey number of matchings in hypergraphs.
\newblock {\em Discrete Math.}, 313(20):2359--2364, 2013.

\bibitem{RS78}
I.~Z. Ruzsa and E.~Szemer\'{e}di.
\newblock Triple systems with no six points carrying three triangles.
\newblock In {\em Combinatorics ({P}roc. {F}ifth {H}ungarian {C}olloq.,
  {K}eszthely, 1976), {V}ol. {II}}, volume~18 of {\em Colloq. Math. Soc.
  J\'{a}nos Bolyai}, pages 939--945. North-Holland, Amsterdam-New York, 1978.

\bibitem{S68}
M.~Simonovits.
\newblock A method for solving extremal problems in graph theory, stability
  problems.
\newblock In {\em Theory of {G}raphs ({P}roc. {C}olloq., {T}ihany, 1966)},
  pages 279--319. ,, 1968.

\bibitem{S24}
J.~Song.
\newblock Triangles in graphs without the expansion of $4$-cycle.
\newblock submitted.

\bibitem{TLY22}
Y.~Tang, T.~Li, and G.~Yan.
\newblock Anti-{R}amsey number of expansions of paths and cycles in uniform
  hypergraphs.
\newblock {\em J. Graph Theory}, 101(4):668--685, 2022.

\bibitem{TLG23}
Y.~Tang, T.~Li, and G.~Yan.
\newblock Anti-{R}amsey number of disjoint union of star-like hypergraphs.
\newblock {\em Discrete Math.}, 347(4):Paper No. 113748, 9, 2024.

\bibitem{T41}
P.~Tur{\'a}n.
\newblock On an external problem in graph theory.
\newblock {\em Mat. Fiz. Lapok}, 48:436--452, 1941.

\bibitem{Yuan22}
L.-T. Yuan.
\newblock Extremal graphs for edge blow-up of graphs.
\newblock {\em J. Combin. Theory Ser. B}, 152:379--398, 2022.

\bibitem{ZCGZ23}
F.~Zhang, Y.~Chen, E.~Gy\H{o}ri, and X.~Zhu.
\newblock Maximum cliques in a graph without disjoint given subgraph.
\newblock {\em Discrete Math.}, 347(4):Paper No. 113863, 6, 2024.

\end{thebibliography}
\begin{appendix}
\section{Proof of Proposition~\ref{PROP:tree-decomposition}}\label{APPENDIX:SEC:PROP:tree-decomposition}
Recall the following statement of Proposition~\ref{PROP:tree-decomposition}. 
\begin{proposition}\label{APPENDIX:PROP:tree-decomposition}
    Suppose that $T$ is a tree and $(I, R)$ is a cross-cut pair of $T$. Then one of the following holds. 
    \begin{enumerate}[label=(\roman*)]
        \item There exists a vertex $v\in I$ such that all but one vertex in $N_T(v)$ are leaves in $T$. 
        \item There exists an edge $e\in R$ which is a pendant edge in $T$. 
    \end{enumerate} 
    In particular, if $I$ is maximum, then $(i)$ holds. 
\end{proposition}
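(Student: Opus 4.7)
Following the hint, I plan to study the set system $\mathcal{T} := R \cup \{N_T(v) : v \in I\}$ on the vertex set $U := V(T) \setminus I$, show that it is linear and acyclic as a hypergraph, and then extract from this a hyperedge whose translation back to $T$ yields (i) or (ii).

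First I will verify linearity by cases: two edges of $R$ share at most one vertex since $T$ is simple; an edge $e \in R$ and a neighborhood $N_T(v)$ sharing two vertices would produce a triangle in $T$; and two distinct $N_T(v), N_T(v')$ sharing two vertices would produce a $4$-cycle. For acyclicity, a hypergraph cycle $e_0, u_0, e_1, u_1, \ldots, e_{k-1}, u_{k-1}, e_0$ with $u_i \in e_i \cap e_{i+1}$ will be lifted to $T$ by replacing each hyperedge of the form $N_T(v_i)$ with the path $u_{i-1} v_i u_i$ and keeping each $R$-edge as an edge of $T$; since the $v_i$ are determined by the distinct hyperedges $e_i$, this yields a closed walk in $T$ with all vertices distinct, producing a cycle in $T$ and contradicting that $T$ is a tree.

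I will then extract a "leaf hyperedge" by a counting argument. Because $I$ is independent, every edge of $T$ is either in $R$ or has exactly one endpoint in $I$, so $|R| + \sum_{v\in I} d_T(v) = |T| = |V(T)|-1$, which rearranges to $\sum_{e\in\mathcal{T}}(|e|-1) = |U|-1$. Setting $L := \{u\in U : d_\mathcal{T}(u)=1\}$ and using that every $u \in U$ has $d_\mathcal{T}(u) \ge 1$ (as $T$ has no isolated vertices), double counting yields $\sum_e |e| = \sum_u d_\mathcal{T}(u) \ge |L| + 2(|U|-|L|)$, hence $|L| \ge |U|+1-|\mathcal{T}|$. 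If every $e\in\mathcal{T}$ satisfied $|e\cap L| \le |e|-2$, then we would also have $|L| = \sum_e |e\cap L| \le \sum_e(|e|-2) = |U|-1-|\mathcal{T}|$, an immediate contradiction. Hence there exists $e^*\in\mathcal{T}$ with at least $|e^*|-1$ of its vertices contained only in $e^*$.

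Finally I will translate back: a vertex $w\in U$ satisfies $d_\mathcal{T}(w)=d_T(w)$, so $w\in L$ is equivalent to $w$ being a leaf of $T$. If $e^*=\{u_1,u_2\}\in R$ with $u_1\in L$, then $e^*$ is a pendant edge of $T$, giving (ii); if $e^* = N_T(v)$ for some $v\in I$, then at least $|N_T(v)|-1$ vertices of $N_T(v)$ are leaves of $T$, giving (i). For the "in particular" part, if $I$ is maximum then the first case cannot occur: the set $I':= I\cup\{u_1\}$ would be independent (since the unique $T$-neighbor $u_2$ of the leaf $u_1$ lies outside $I$) and satisfy $|I'|+|R\setminus\{e^*\}| = (|I|+1)+(|R|-1) = \sigma(T)$, contradicting the maximality of $|I|$. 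The main delicate step I expect is the lifting argument for acyclicity, specifically verifying that the auxiliary vertices $v_i$ introduced along a putative cycle are pairwise distinct and disjoint from the $u_i$'s.
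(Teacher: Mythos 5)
Your proof is correct and uses the same set system $\mathcal{T} := R \cup \{N_T(v) \colon v \in I\}$ on $U := V(T)\setminus I$ as the paper does, establishing linearity and acyclicity by the same lifting arguments. The extraction step, however, genuinely differs: the paper picks a maximal path $E_1\cdots E_m$ in $\mathcal{T}$ and argues from maximality, linearity, and acyclicity that the terminal edge $E_m$ has at most one vertex of $\mathcal{T}$-degree at least two, whereas you double-count via the ``forest identity'' $\sum_{E\in\mathcal{T}}(|E|-1)=|U|-1$ together with $d_{\mathcal{T}}(u)\ge 1$ to lower-bound the number of degree-one vertices, then pigeonhole a hyperedge containing all but one of them. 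Your route is a bit more self-contained (no machinery about maximal paths in linear acyclic hypergraphs), and you also make the ``in particular'' clause explicit via the clean observation that a pendant $R$-edge lets you move its leaf endpoint into $I$ while preserving $|I|+|R|$, which the paper leaves implicit. One small caveat to state in a polished write-up: both $d_{\mathcal{T}}(u)=d_T(u)$ and the identity $\sum_E(|E|-1)=|U|-1$ tacitly treat the sets $N_T(v)$ as pairwise distinct elements of $\mathcal{T}$; this can fail only when two vertices of $I$ are leaves sharing a common neighbour, but then some $v\in I$ has $|N_T(v)|=1$ and (i) holds vacuously, so you may dispose of that case before starting the count.
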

\begin{proof}
    Define a set system $\mathcal{T}$ on $U:= V\setminus I$ by letting 
    \begin{align*}
        \mathcal{T} := R \cup \left\{N_T(v) \colon v\in I\right\}. 
    \end{align*}
    First we claim that $\mathcal{T}$ is linear, i.e. $|E_1 \cap E_2| \le 1$ for all distinct pairs $E_1, E_2 \in \mathcal{T}$.
    Indeed, suppose to the contrary that there exists a distinct pair $E_1, E_2 \in \mathcal{T}$ with $|E_1 \cap E_2| \ge 2$. Then from the definition of $R$, $E_1$ and $E_2$ cannot be both contained in $R$. So, by symmetry, we may assume that $E_1 = N_{T}(v)$ for some $v\in I$. If $E_2\in R$, then $|E_2|=2$, which implies that the induced subgraph of $T$ on $\{v\}\cup E_2$ is a copy of $K_3$, a contradiction. If $E_2 = N_{T}(v')$ for some $v'\in I\setminus \{v\}$, then the induced subgraph of $T$ on $\{v,v',u,u'\}$, where $\{u,u'\}\subseteq N_{T}(v) \cap N_{T}(v')$ are arbitrary two distinct vertices, is a copy of $C_4$, a contradiction. 
    This proves that $\mathcal{T}$ is linear. 

    Next, we claim that $\mathcal{T}$ is acyclic. 
    Suppose to the contrary that there exist distinct vertices $u_1, \ldots, u_{\ell} \subseteq U$ and distinct edges $E_1, \ldots, E_{\ell} \in \mathcal{T}$ such that $\{u_i, u_{i+1} \} \subseteq E_i$ for $i \in [\ell]$ (the indices are taken module $\ell$). 
    Define 
    \begin{align*}
        I_1 := \left\{i\in [\ell] \colon E_i \in \left\{N_T(v) \colon v\in I\right\}\right\}, 
        \quad\text{and}\quad
        I_2:= [\ell]\setminus I_1. 
    \end{align*}
    For every $i\in I_1$ we assume that $v_i$ is the vertex in $I$ such that $E_i = N_T(v_i)$. 
    Then it is easy to see that the induced subgraph of $T$ on $\{v_i \colon i\in I_1\} \cup \bigcup_{j\in I_2}E_j$ contains a cycle of length $2|I_1|+|I_2|$, a contradiction. This proves that $\mathcal{T}$ is cyclic. 

    Let $\mathcal{P} \subseteq \mathcal{T}$ be a maximal path.  Let us assume that $\mathcal{P} = E_1 \cdots E_{m}$ for some $m\ge 1$. Since $\mathcal{T}$ is linear and acyclic, the maximality of $\mathcal{P}$ implies that all but at most one vertex in $E_m$ have degree one in $\mathcal{T}$, meaning that either $(i)$ or $(ii)$ hold. This proves Proposition~\ref{PROP:tree-decomposition}.
\end{proof}
\section{Proof of Proposition~\ref{PROP:shadow-size}}\label{APPENDIX:SEC:PROP:shadow-size}
Recall the following statement of Proposition~\ref{PROP:shadow-size}. 
\begin{proposition}\label{APPENDIX:PROP:shadow-size}
    Let $F$ be a bipartite graph or an odd cycle. 
    For every $\delta>0$ there exists $\varepsilon>0$ such that for sufficiently large $n$, every $n$-vertex $F^{\triangle}$-free graph $G$ with at least $n^2/4 - \varepsilon n^2$ edges can be made bipartite by removing at most $\delta n^2$ edges.
    In particular, every $n$-vertex $F^{\triangle}$-free graph $G$ satisfies  
    \begin{align*}
        |G| \le \frac{n^2}{4} + o(n^2). 
    \end{align*} 
\end{proposition}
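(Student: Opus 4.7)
The plan is a routine combination of three tools already flagged in the statement: the K\"ovari--S\'os--Tur\'an theorem~\cite{KST54} to control the subgraph of edges with large codegree, the Triangle Removal Lemma to make $G$ triangle-free by deleting only $o(n^2)$ edges, and Simonovits' stability theorem~\cite{S68} (specialized to Mantel) to finish.

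First I would observe that by Fact~\ref{FACT:shadow-Turan} the $3$-graph $\mathcal{K}_G$ of triangles of $G$ is $F^3$-free, so the key step is to show $N(K_3,G)=|\mathcal{K}_G|=o(n^3)$. When $F=C_{2k+1}$ is an odd cycle, this is immediate from Proposition~\ref{APPENDIX:PROP:Turan-cycle-shadow} applied to $\mathcal{K}_G$, which yields $|\mathcal{K}_G|\le(\sigma(F)-1)|\partial\mathcal{K}_G|+o(n^2)=O(n^2)$. When $F$ is bipartite with $k=|V(F)|$, I would fix a sufficiently large constant $C=C(F)\ge|V(F^{\triangle})|$ and let $G^{\ast}\subseteq G$ be the subgraph of edges $uv$ whose codegree $|N_G(u)\cap N_G(v)|$ is at least $C$. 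A short greedy argument shows that any copy of $F$ in $G^{\ast}$ can be extended to a copy of $F^{\triangle}$ in $G$ by picking, one edge at a time, an apex vertex from the $\ge C$ common neighbours while avoiding vertices already used. Hence $G^{\ast}$ must be $F$-free, so K\"ovari--S\'os--Tur\'an applied to $F\subseteq K_{k,k}$ gives $|G^{\ast}|\le\mathrm{ex}(n,F)=o(n^2)$. Summing codegrees then yields
\begin{align*}
3\,N(K_3,G)=\sum_{uv\in G}|N_G(u)\cap N_G(v)|\le n\,|G^{\ast}|+C\,|G\setminus G^{\ast}|\le n\cdot o(n^2)+C\binom{n}{2}=o(n^3).
\end{align*}

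Given $N(K_3,G)=o(n^3)$, the Triangle Removal Lemma produces a triangle-free spanning subgraph $G'\subseteq G$ with $|G\setminus G'|=o(n^2)$; Mantel's theorem then immediately gives $|G|\le|G'|+o(n^2)\le n^2/4+o(n^2)$, which is the ``in particular'' conclusion. For the main stability claim I would fix $\delta>0$, choose $\varepsilon>0$ and $n_0$ large enough that the removal lemma gives $|G\setminus G'|\le\delta n^2/2$, and note that if $|G|\ge n^2/4-\varepsilon n^2$ then $|G'|\ge n^2/4-\varepsilon n^2-\delta n^2/2$; Simonovits' stability for Mantel's theorem then says $G'$ can be made bipartite by deleting at most $\delta n^2/2$ further edges, and the two deletions combine to show that $G$ itself can be made bipartite by removing at most $\delta n^2$ edges. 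The only non-routine point is the reduction $N(K_3,G)=o(n^3)$ for bipartite $F$, and it is handled by the codegree/KST argument above; no real obstacle is anticipated.
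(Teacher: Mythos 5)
Your proof is correct and follows the same overall skeleton as the paper's: establish $N(K_3,G)=o(n^3)$, invoke the Triangle Removal Lemma to obtain a triangle-free subgraph $G'$ with $|G\setminus G'|=o(n^2)$, then apply Simonovits' stability theorem for Mantel's theorem. The one point of genuine divergence is in how $N(K_3,G)=o(n^3)$ is established. The paper gets $|\mathcal{K}_G|=o(n^3)$ by citing~{\cite[Proposition~1.1]{KMV15c}} (the general bound $|\mathcal{H}|\le n\cdot\mathrm{ex}(n,F)+c(F)n^2$ for $F^3$-free $3$-graphs $\mathcal{H}$), combined with K\"ovari--S\'os--Tur\'an and Simonovits' edge-critical theorem to control $\mathrm{ex}(n,F)$. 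You avoid that external citation by splitting into cases: for odd cycles you apply the paper's own Proposition~\ref{APPENDIX:PROP:Turan-cycle-shadow} to get $|\mathcal{K}_G|=O(n^2)$, and for bipartite $F$ you give a self-contained codegree argument --- the subgraph $G^{\ast}$ of edges with codegree at least $C:=|V(F^{\triangle})|$ must be $F$-free (greedy apex-picking extends any copy of $F$ in $G^{\ast}$ to a copy of $F^{\triangle}$ in $G$), so $|G^{\ast}|\le\mathrm{ex}(n,F)=O(n^{2-\beta})$ by K\"ovari--S\'os--Tur\'an, and summing codegrees over $G^{\ast}$ and $G\setminus G^{\ast}$ yields $3N(K_3,G)=o(n^3)$. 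Your codegree route is somewhat more elementary and self-contained, whereas the paper's version is more compressed but leans on the auxiliary result from~\cite{KMV15c}. Both are valid, and the downstream use of the removal lemma and Mantel stability is identical.
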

\begin{proof}
    Fix a constant $\delta>0$. 
    Let $\varepsilon>0$ be sufficiently small and $n$ be sufficiently large. 
    We may assume that $\varepsilon \le \delta/4$.
    Let $G$ be an $n$-vertex $F^{\triangle}$-free graph with at least $n^2/4 - \varepsilon n^2$ edges. 
    Since $F$ is a bipartite graph or an odd cycle and the associated $3$-graph $\mathcal{K}_{G}$ is $F^{3}$-free, 
    it follows from the well-known K\"{o}vari--S\'{o}s--Tur\'{a}n Theorem~\cite{KST54} the well-known edge-critical theorem of Simonovits~\cite{S68} and~{\cite[Proposition~1.1]{KMV15c}} that $|\mathcal{K}_{G}| \le n \cdot \mathrm{ex}(n,F) + \left(|F|+v(F)\right)n^2 \le n^{3-\alpha}$, where $\alpha >0$ is some constant depending only on $F$.
    This implies that $N(K_3, G) = |\mathcal{K}_{G}| \le n^{3-\alpha}$. 
    Therefore, by the Triangle Removal Lemma (see e.g.~\cite{RS78,AFKS00,Fox11}), $G$ contains a $K_3$-free subgraph $G'$ with $|G'| \ge |G| - \varepsilon n^2 \ge n^2/4 - 2\varepsilon n^2$. 
    Since $\varepsilon$ is sufficiently small, it follows from the Stability theorem of Simonovits~\cite{S68} that $G'$ contains a bipartite subgraph $G''$ with $|G''| \ge |G'| - \delta n^2/4 \ge |G| - \delta n^2$. 
    This completes the proof of Proposition~\ref{PROP:shadow-size}. 
\end{proof}
\section{Proof of Lemma~\ref{LEMMA:KMVa-lambda-sparse-tree}}\label{APPENDIX:Proof-LEMMA:KMVa-lambda-sparse-tree}
%
%
%
\begin{proposition}\label{APPENDIX:PROP:large-linear-subgraph}
    Let $r \ge 2$ and $i \in [r]$ be integers. 
    Every $r$-graph $\mathcal{H}$ contains a subgraph $\mathcal{H}'$ satisfying 
    \begin{align}\label{equ:APPENDIX:PROP:large-linear-subgraph}
        \Delta_i(\mathcal{H}') \le 1 
        \quad\text{and}\quad
        |\mathcal{H}'| \ge \frac{|\mathcal{H}|}{\binom{r}{i} \Delta_i(\mathcal{H})}. 
    \end{align}
\end{proposition}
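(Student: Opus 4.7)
The plan is to build $\mathcal{H}'$ via a simple greedy procedure. Fix an arbitrary ordering $e_1, \ldots, e_m$ of the edges of $\mathcal{H}$, set $\mathcal{H}_0 := \emptyset$, and for each $j = 1, \ldots, m$ in turn, put $\mathcal{H}_j := \mathcal{H}_{j-1} \cup \{e_j\}$ if no $i$-subset of $e_j$ is contained in any edge of $\mathcal{H}_{j-1}$, and $\mathcal{H}_j := \mathcal{H}_{j-1}$ otherwise. Let $\mathcal{H}' := \mathcal{H}_m$. By construction, every $i$-subset of $V(\mathcal{H})$ is contained in at most one edge of $\mathcal{H}'$, which is exactly the condition $\Delta_i(\mathcal{H}') \le 1$, so the first half of~\eqref{equ:APPENDIX:PROP:large-linear-subgraph} is immediate.

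To prove the size bound, I would double-count pairs $(e, e')$ with $e \in \mathcal{H} \setminus \mathcal{H}'$, $e' \in \mathcal{H}'$, and $|e \cap e'| \ge i$. Each edge $e \in \mathcal{H} \setminus \mathcal{H}'$ was rejected at its turn because some previously chosen $e' \in \mathcal{H}'$ already contained one of its $i$-subsets; hence every rejected edge contributes at least one such pair. Conversely, for a fixed $e' \in \mathcal{H}'$, any edge $e \neq e'$ with $|e \cap e'| \ge i$ must contain at least one of the $\binom{r}{i}$ many $i$-subsets of $e'$, and for each of those subsets there are at most $\Delta_i(\mathcal{H}) - 1$ edges of $\mathcal{H}$ distinct from $e'$ containing it. Therefore each $e' \in \mathcal{H}'$ participates in at most $\binom{r}{i}(\Delta_i(\mathcal{H}) - 1)$ such pairs.

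Combining the two counts gives
\[
|\mathcal{H}| - |\mathcal{H}'| \;\le\; \binom{r}{i}\bigl(\Delta_i(\mathcal{H})-1\bigr)\,|\mathcal{H}'|,
\]
which rearranges to
\[
|\mathcal{H}'| \;\ge\; \frac{|\mathcal{H}|}{1 + \binom{r}{i}\bigl(\Delta_i(\mathcal{H})-1\bigr)} \;\ge\; \frac{|\mathcal{H}|}{\binom{r}{i}\,\Delta_i(\mathcal{H})},
\]
as claimed. No step here presents a genuine obstacle; the only care needed is in the blocking bound, where one must remember to exclude $e'$ itself from the count of edges through each $i$-subset of $e'$. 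An equivalent route that yields the same estimate is to form the auxiliary graph $G$ on vertex set $\mathcal{H}$ with two hyperedges joined whenever they share an $i$-subset, observe that $\Delta(G) \le \binom{r}{i}(\Delta_i(\mathcal{H})-1)$, and invoke the trivial bound $\alpha(G) \ge |V(G)|/(\Delta(G)+1)$; an independent set in $G$ is precisely a subgraph of $\mathcal{H}$ satisfying $\Delta_i \le 1$.
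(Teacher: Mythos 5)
Your proof is correct, and your primary argument (greedy selection followed by a double count of blocking pairs) is a slightly different presentation from the paper's, which directly forms the auxiliary graph $G$ on vertex set $\mathcal{H}$ and invokes the greedy independence-number bound — the alternative route you sketch at the end. The two are morally equivalent, but your bookkeeping is a bit more careful in two respects. First, you correctly subtract $e'$ itself when counting edges through each $i$-subset of a chosen $e'$, so each chosen edge blocks at most $\binom{r}{i}(\Delta_i(\mathcal{H})-1)$ rejected ones; carrying this through gives $|\mathcal{H}'| \ge |\mathcal{H}|/\bigl(1+\binom{r}{i}(\Delta_i(\mathcal{H})-1)\bigr)$, which is slightly sharper than the stated bound (and dominates it since $\binom{r}{i}\ge 1$). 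The paper instead uses the looser degree bound $\Delta(G)\le\binom{r}{i}\Delta_i(\mathcal{H})$ together with $\alpha(G)\ge |V(G)|/\Delta(G)$; the latter should really read $|V(G)|/(\Delta(G)+1)$, but the two imprecisions cancel and the claimed bound still holds. Second, you implicitly define blocking by $|e\cap e'|\ge i$, not $=i$; this is the right condition, since two hyperedges overlapping in more than $i$ vertices still share an $i$-subset and cannot both survive if $\Delta_i(\mathcal{H}')\le 1$ is to hold. The paper's $G$ is defined with $|e\cap e'|=i$, which as literally stated does not make independent sets in $G$ correspond to subgraphs with $\Delta_i\le 1$ when $i<r$; your formulation avoids this wrinkle.
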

\begin{proof}
    Define an auxiliary graph $G$ whose vertex set is $\mathcal{H}$ and a pair $\{e, e'\} \subseteq \mathcal{H}$ is an edge in $G$ iff $|e \cap e'| = i$. 
    Since every edge in $\mathcal{H}$ has exactly $\binom{r}{i}$ subsets of size $i$ and each $i$-subset is contained in at most $\Delta_i(\mathcal{H})$ edges in $\mathcal{H}$, the maximum degree of $G$ satisfies $\Delta(G) \le \binom{r}{i} \Delta_i(\mathcal{H})$. 
    A simply greedy argument shows that $G$ contains an independent set of size at least $\frac{|V(G)|}{\Delta(G)} \ge \frac{|\mathcal{H}|}{\binom{r}{i} \Delta_i(\mathcal{H})}$. 
    Since every independent set in $G$ corresponds to a subgraph of $\mathcal{H}$ whose maximum $i$-degree is at most one, we can choose the largest independent set $\mathcal{H}'$, and it satisfies~\eqref{equ:APPENDIX:PROP:large-linear-subgraph}. 
\end{proof}

Recall the following statement of Lemma~\ref{LEMMA:KMVa-lambda-sparse-tree}. 
\begin{lemma}\label{APPENDIX:LEMMA:KMVa-lambda-sparse-tree}
    Fix integers $k \ge 3$ and $C \ge 1$. 
    Let  $T_k \in \mathcal{T}_k$. 
    If $\mathcal{H}$ is an $n$-vertex $T_k^{3}$-free $3$-graph with $\Delta_2(\mathcal{H}) \le C$, then $|\mathcal{H}| \le 6 k C n = o(n^2)$.
\end{lemma}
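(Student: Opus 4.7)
My plan is to prove the contrapositive: if $|\mathcal{H}| > 6kCn$ then $\mathcal{H}$ contains a copy of $T_k^3$. The overall strategy is to pass to a linear sub-$3$-graph (where every pair sits in at most one edge), boost its minimum vertex-degree, and then greedily embed $T_k^3$ edge by edge.

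\textbf{Step 1 (reduce to a linear subgraph).} Applying Proposition~\ref{APPENDIX:PROP:large-linear-subgraph} with $r=3$ and $i=2$ produces a subgraph $\mathcal{H}'\subseteq\mathcal{H}$ with $\Delta_2(\mathcal{H}')\le 1$ and
\[
|\mathcal{H}'|\ \ge\ \frac{|\mathcal{H}|}{\binom{3}{2}\Delta_2(\mathcal{H})}\ \ge\ \frac{|\mathcal{H}|}{3C}\ >\ 2kn.
\]

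\textbf{Step 2 (high minimum vertex-degree).} Iteratively delete vertices of degree less than $2k$ from $\mathcal{H}'$. Each deletion removes at most $2k-1$ edges, so the process removes fewer than $2kn$ edges in total, and hence the resulting subgraph $\mathcal{H}''$ is nonempty; by construction $\delta(\mathcal{H}'')\ge 2k$, and $\mathcal{H}''$ inherits linearity from $\mathcal{H}'$. Linearity forces, for every vertex $v\in V(\mathcal{H}'')$, the link $L_{\mathcal{H}''}(v)$ to be a matching (otherwise two link-edges would force a pair $\{v,\cdot\}$ to be in two hyperedges), and this matching has size $\ge 2k$.

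\textbf{Step 3 (greedy embedding of $T_k^3$).} Order the edges $e_1,\ldots,e_k$ of $T_k$ in a BFS fashion from an arbitrary root, so that each $e_i$ with $i\ge 2$ shares exactly one endpoint with $\bigcup_{j<i}e_j$. For $e_1=\{x,y\}$, pick any hyperedge $\{u_1,u_2,w_1\}\in\mathcal{H}''$ and embed $x\mapsto u_1$, $y\mapsto u_2$, using $w_1$ as the expansion vertex of $e_1$. Inductively, suppose $e_1,\ldots,e_{i-1}$ have been embedded, and let $e_i=\{x,y\}$ with $x$ already embedded as $u$ and $y$ new. We need a pair $\{u',w\}$ of \emph{fresh} vertices with $\{u,u',w\}\in\mathcal{H}''$; equivalently, a matching edge of $L_{\mathcal{H}''}(u)$ whose two endpoints are unused. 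After $i-1$ steps, at most $2(i-1)+1\le 2k-1$ vertices of $\mathcal{H}''$ have been used, and since the matching edges are pairwise disjoint, each used vertex blocks at most one matching edge. Thus at least $2k-(2k-1)=1$ matching edge is fresh, so the required pair $\{u',w\}$ exists. Iterating to $i=k$ produces $T_k^3\subseteq\mathcal{H}''\subseteq\mathcal{H}$, contradicting $T_k^3$-freeness and proving $|\mathcal{H}|\le 6kCn$.

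The only delicate point is the vertex-accounting in Step 3: we must ensure that a fresh matching edge is available at every greedy step, which is precisely why we require the minimum-degree threshold $2k$ (rather than merely $k$) in Step 2. Everything else is bookkeeping; in particular $|\mathcal{H}|\le 6kCn = o(n^2)$ follows immediately.
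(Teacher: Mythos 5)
Your proposal is correct and follows essentially the same route as the paper's own proof: apply Proposition~\ref{APPENDIX:PROP:large-linear-subgraph} to pass to a linear subgraph of size at least $2kn$, iteratively delete low-degree vertices to obtain a nonempty subgraph of minimum degree at least $2k$, and then greedily embed $T_k^3$ using the fact that links in a linear $3$-graph are matchings. You have simply filled in the bookkeeping (the BFS edge ordering and the count of at most $2i-1$ used vertices versus a matching of size $\ge 2k$) that the paper compresses into the phrase ``a simple greedy argument.''
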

\begin{proof}
    Suppose to the contrary that there exists an $n$-vertex $T_k^{3}$-free $3$-graph $\mathcal{H}$ with $\Delta_2(\mathcal{H}) \le C$ and $|\mathcal{H}| \ge 6 k C n$. 
    By Proposition~\ref{APPENDIX:PROP:large-linear-subgraph}, 
    there exist  a linear subgraph $\mathcal{H}' \subset \mathcal{H}$ of size $\frac{|\mathcal{H}|}{3C} \ge 2kn$. 
    By removing vertices one by one, it is easy to see that $\mathcal{H}'$ contains an induced subgraph $\mathcal{H}''$ with minimum degree at least $2k$. 
    A simple greedy argument shows that $T_{k}^{3} \subseteq \mathcal{H}''$, a contradiction. 
\end{proof}
\section{Proof of Lemma~\ref{LEMMA:Cleaning-Algo-tree}}\label{APPENDIX:SEC:Cleaning-Algo-tree}
Recall the following statement of Lemma~\ref{LEMMA:Cleaning-Algo-tree}.
\begin{lemma}\label{APPENDIX:LEMMA:Cleaning-Algo-tree}
    Let $T$ be a fixed tree with $\sigma(T) = \tau_{\mathrm{ind}}(T) =: t+1$ and $k:= |T|$.
    Fix constant $\varepsilon >0$ and let $n$ be sufficiently large. 
    Let $\mathcal{H}$ be an $n$-vertex $T^3$-free $3$-graph and $\mathcal{H}_q$ be the outputting $3$-graph of Cleaning Algorithm with input $(\mathcal{H}, k, t)$ as defined here.
    If $|\mathcal{H}| \ge t |\partial\mathcal{H}| - \varepsilon n^2$,  
    then $\mathcal{H}_q$ is $(t, 3k)$-superfull, and moreover, 
    \begin{align*}
        q 
        \le 12k \varepsilon n^2, \quad 
        |\mathcal{H}_q| 
        \ge |\mathcal{H}| - 48k^2 \varepsilon n^2, \quad\text{and}\quad
        |\partial\mathcal{H}_q| \ge |\partial\mathcal{H}| - 50k^2\varepsilon n^2. 
    \end{align*}
\end{lemma}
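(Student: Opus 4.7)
The plan is to adapt the proof of \cite[Theorem~6.2]{KMV15a} verbatim, using Lemma~\ref{LEMMA:l+1-shadows} in place of the cycle-specific ingredient. A preprocessing bound comes first: $\mathcal{H}^{\ast}$ is a $T^3$-free $3$-graph with $\Delta_2(\mathcal{H}^{\ast}) \le 3k$, so Lemma~\ref{LEMMA:KMVa-lambda-sparse-tree} gives $|\mathcal{H}^{\ast}| \le 18 k^2 n = o(\varepsilon n^2)$ and hence $|\mathcal{H}_0| \ge |\mathcal{H}| - 18 k^2 n$. The $(t, 3k)$-superfullness of $\mathcal{H}_q$ then follows by reading off the termination condition: absence of type-1 shadow edges forces $\mathcal{H}_q$ to be $t$-full; absence of type-3 pushes every codegree in $\partial \mathcal{H}_q$ to be either exactly $t$ or at least $3k$; and absence of type-2 forbids two codegree-$t$ pairs from coexisting inside an edge of $\mathcal{H}_q$, so each edge has at most one pair of codegree below $3k$.

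The heart of the argument is bounding $q$. Introduce the potential $\Phi(\mathcal{H}') := |\mathcal{H}'| - t|\partial \mathcal{H}'|$. The hypothesis gives $\Phi(\mathcal{H}_0) \ge -\varepsilon n^2 - 18 k^2 n$, while Proposition~\ref{PROP:shadow-bound-tree} applied to the still-$T^3$-free graph $\mathcal{H}_q$ gives $\Phi(\mathcal{H}_q) \le \varepsilon n^2$. A per-step check yields $\Delta \Phi \ge 1$ for type-1, $\Delta \Phi \ge 0$ for type-2, and $\Delta \Phi \ge -(3k - 1 - t)$ for type-3, so, writing $q_1, q_2, q_3$ for the number of each, summation gives $q_1 - 3 k q_3 \le 3 \varepsilon n^2$ for $n$ large. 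Lemma~\ref{LEMMA:l+1-shadows} applied to $\mathcal{H}_i \subseteq \mathcal{H}$ shows that $E_i := \{f \in \partial \mathcal{H}_i : d_{\mathcal{H}_i}(f) \ge t + 1\}$ satisfies $|E_i| < \varepsilon n^2$; since codegrees never increase along the algorithm the sets $E_i$ are nested, and every type-3 step strictly removes one of its elements from the shadow, yielding $q_3 \le |E_0| < \varepsilon n^2$ and then $q_1 \le 4 k \varepsilon n^2$. Each type-2 step has a codegree-$t$ companion $f'$ which, after the step, either leaves the shadow (the $t = 1$ case, where that type-2 step itself satisfies $\Delta \Phi \ge 1$) or becomes a fresh type-1 pair that must be cleared before any subsequent type-2 step by the priority order; assigning companions gives an injection, so $q_2 \le q_1$. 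Thus $q \le 2 q_1 + q_3 \le 12 k \varepsilon n^2$.

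The remaining inequalities are clean consequences. Each step removes at most $3k-1$ edges from the hypergraph, so $|\mathcal{H}_q| \ge |\mathcal{H}_0| - (3k-1)q \ge |\mathcal{H}| - 18k^2 n - 36 k^2 \varepsilon n^2 \ge |\mathcal{H}| - 48 k^2 \varepsilon n^2$ for $n$ large. Combining the upper bound $|\mathcal{H}_q| \le t|\partial \mathcal{H}_q| + \varepsilon n^2$ from Proposition~\ref{PROP:shadow-bound-tree} with the hypothesis $|\mathcal{H}| \ge t|\partial \mathcal{H}| - \varepsilon n^2$ then yields $|\partial \mathcal{H}_q| \ge |\partial \mathcal{H}| - 50 k^2 \varepsilon n^2$ after routine arithmetic. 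The most delicate point of the argument is the bookkeeping around the priority ordering of types, specifically the injection from type-2 steps into later type-1 steps; the key enabler is Lemma~\ref{LEMMA:l+1-shadows}, whose proof leans on $\sigma(T) = \tau_{\mathrm{ind}}(T)$ to rule out a quadratic accumulation of pairs with codegree at least $\sigma(T)$, keeping the type-3 budget linear in $\varepsilon n^2$.
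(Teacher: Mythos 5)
Your proposal is correct and follows essentially the same route as the paper's appendix proof: preprocess via Lemma~\ref{LEMMA:KMVa-lambda-sparse-tree}, bound the type-3 steps via Lemma~\ref{LEMMA:l+1-shadows}, pair type-2 steps with the type-1 steps they immediately trigger, and close the loop with Proposition~\ref{PROP:shadow-bound-tree}; your potential $\Phi(\mathcal{H}') = |\mathcal{H}'| - t|\partial\mathcal{H}'|$ is a transparent repackaging of the paper's chain $|\mathcal{H}_q| \ge |\mathcal{H}_0| - (t-1)|I_1| - t|I_2| - 3k|I_3| \ge t(|\partial\mathcal{H}_q|+q) - \cdots$. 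One remark worth keeping: the paper's terse observation ``$j \in I_2 \Rightarrow j+1 \in I_1$'' implicitly assumes $t \ge 2$ (for $t=1$ the companion $f'$ drops out of the shadow entirely, so $I_1 = \emptyset$); your explicit note that for $t=1$ the type-2 step itself has $\Delta\Phi \ge 1$ cleanly closes that corner case, and also for $t\ge 2$ the simpler injection $j \mapsto j+1$ (next step must be type-1 by priority) suffices in place of your more indirect companion-clearing argument.
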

\begin{proof}
    We keep using notations in the Cleaning Algorithm. 
    First, it follows from Lemma~\ref{LEMMA:KMVa-lambda-sparse-tree} that $|\mathcal{H}^{\ast}| \le \varepsilon n^2$.  
    Hence we have 
    \begin{align*}
        |\mathcal{H}_0| 
            \ge t|\partial\mathcal{H}| - 2\varepsilon n^2. 
    \end{align*}
    Note from the definition of the Cleaning Algorithm that $|\partial\mathcal{H}_q| \le |\partial\mathcal{H}|-q$. 
    Combined with the inequality above, we obtain 
    \begin{align}\label{equ:APPENDIX:LEMMA:Cleaning-Algo-tree-2}
        |\mathcal{H}_0| 
        = |\mathcal{H}| - |\mathcal{H}^{\ast}|
            \ge t \left(|\partial\mathcal{H}_q|+q\right) - 2\varepsilon n^2.  
    \end{align}
    For $i\in \{1,2,3\}$ define
    \begin{align*}
        I_i := \left\{j \in [q] \colon \text{$f_j$ is of type-i in $\mathcal{H}_{j-1}$}\right\}. 
    \end{align*}
    It follows from the definition of type-3 and Lemma~\ref{LEMMA:l+1-shadows} that $|I_3| \le \varepsilon n^2$. 
           
    Observe from the definition of types-1 and 2 that if $j \in I_2$, then $j+1 \in I_1$. Therefore, $|I_2| \le |I_1|$, and combined with~\eqref{equ:APPENDIX:LEMMA:Cleaning-Algo-tree-2}, we obtain  
           \begin{align*}
               |\mathcal{H}_q|
               \ge |\mathcal{H}_0| - (t-1)|I_1| -t|I_2| - 3k |I_3|
               & \ge |\mathcal{H}_0| - \left(t-\frac{1}{2}\right)\left(|I_1|+|I_2|\right) - 3k |I_3| \\
               & \ge t \left(|\partial\mathcal{H}_q|+q\right) - 2\varepsilon n^2 - \left(t-\frac{1}{2}\right)q - 3k |I_3| \\
               & = t |\partial\mathcal{H}_q| + \frac{q}{2} - 5k\varepsilon n^2. 
           \end{align*}
    Combined with Proposition~\ref{PROP:shadow-bound-tree}, we obtain $q/2 - 5k\varepsilon n^2 \le \varepsilon' n^2$, which implies that $q \le 12k\varepsilon n^2$. 
    Hence, it follows from the definition of the Cleaning Algorithm that  
    \begin{align*}
        |\mathcal{H}_q| 
            \ge |\mathcal{H}_0| - 3kq 
            \ge |\mathcal{H}| - 48k^2 \varepsilon n^2. 
    \end{align*}
    Combined with $t|\partial\mathcal{H}_q| \ge |\mathcal{H}_q| - \varepsilon n^2$ (by Proposition~\ref{PROP:shadow-bound-tree}) and $|\mathcal{H}| \ge t|\partial\mathcal{H}| - \varepsilon n^2$ (by assumption), we obtain that $|\partial\mathcal{H}_q| \ge |\partial\mathcal{H}| - 50k^2\varepsilon n^2$.
\end{proof}
\section{Proof of Lemma~\ref{LEMMA:Cleaning-Algo-cycle}}\label{APPENDIX:SEC:Cleaning-Algo-cycle}
Recall the following statement of Lemma~\ref{LEMMA:Cleaning-Algo-cycle}. 
\begin{lemma}\label{APPENDIX:LEMMA:Cleaning-Algo-cycle}
    Let $k \ge 4$ and $t = \floor*{(k-1)/2}$ be integers.
    Fix a constant $\varepsilon >0$ and let $n$ be sufficiently large. 
    Let $\mathcal{H}$ be an $n$-vertex $C_{k}^3$-free $3$-graph and $\mathcal{H}_q$ be the outputting $3$-graph of the Cleaning Algorithm with input $(\mathcal{H}, k, t)$.
    If $|\mathcal{H}| \ge t |\partial\mathcal{H}| - \varepsilon n^2$,  
    then $\mathcal{H}_q$ is $(t, 3k)$-superfull, and moreover, 
    \begin{align*}
        q 
        \le 12k \varepsilon n^2, \quad 
        |\mathcal{H}_q| 
        \ge |\mathcal{H}| - 48k^2 \varepsilon n^2, \quad\text{and}\quad
        |\partial\mathcal{H}_q| \ge |\partial\mathcal{H}| - 50k^2\varepsilon n^2. 
    \end{align*}
\end{lemma}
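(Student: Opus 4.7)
The plan is to mirror the proof of Lemma~\ref{APPENDIX:LEMMA:Cleaning-Algo-tree} from the appendix essentially verbatim, replacing each tree-specific ingredient by its cycle analogue: Lemma~\ref{LEMMA:KMVa-lambda-sparse-tree} becomes Lemma~\ref{LEMMA:KMVa-lambda-sparse-cycle}, Lemma~\ref{LEMMA:l+1-shadows} becomes Lemma~\ref{LEMMA:KMVa-many-ell+1-shadow-cycle}, and Proposition~\ref{PROP:shadow-bound-tree} becomes Proposition~\ref{APPENDIX:PROP:Turan-cycle-shadow}. Since the cleaning algorithm and all the quantitative bookkeeping are identical, the exposition transfers with only cosmetic changes.

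Concretely, I would first apply Lemma~\ref{LEMMA:KMVa-lambda-sparse-cycle} to the subgraph $\mathcal{H}^{\ast} \subseteq \mathcal{H}$, which is $C_k^3$-free and has $\Delta_2(\mathcal{H}^{\ast}) \le 3k$, to conclude $|\mathcal{H}^{\ast}| = o(n^2) \le \varepsilon n^2$ for $n$ large. Combined with the hypothesis $|\mathcal{H}| \ge t|\partial\mathcal{H}| - \varepsilon n^2$ and the trivial inequality $|\partial\mathcal{H}_q| \le |\partial\mathcal{H}| - q$, this gives
\[
|\mathcal{H}_0| \;\ge\; t\bigl(|\partial\mathcal{H}_q|+q\bigr) - 2\varepsilon n^2.
\]
Next I would partition the cleaning steps as $[q] = I_1 \sqcup I_2 \sqcup I_3$ by the type of the removed shadow edge. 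The identical combinatorial observation as in the tree case applies: every type-$2$ step at iteration $j$ forces a type-$1$ step at iteration $j+1$, hence $|I_2| \le |I_1|$; and each step of type $1,2,3$ removes at most $t-1$, $t$, $3k-1$ edges of $\mathcal{H}_{j-1}$ respectively. Plugging these bounds into the lower bound above yields $|\mathcal{H}_q| \ge t|\partial\mathcal{H}_q| + q/2 - O(k\varepsilon n^2) - 3k|I_3|$, which combined with Proposition~\ref{APPENDIX:PROP:Turan-cycle-shadow} (an upper bound $|\mathcal{H}_q| \le t|\partial\mathcal{H}_q| + \varepsilon n^2$) forces $q \le 12k\varepsilon n^2$. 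The advertised bounds on $|\mathcal{H}_q|$ and $|\partial\mathcal{H}_q|$ then follow immediately from the definition of the cleaning algorithm. The superfullness of $\mathcal{H}_q$ is automatic from the halting condition.

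The only nontrivial step, and the main obstacle, is bounding $|I_3| \le \varepsilon n^2$. Set $E_3 := \{f_j : j \in I_3\} \subseteq \partial\mathcal{H}$; each $uv \in E_3$ satisfies $d_{\mathcal{H}}(uv) \ge d_{\mathcal{H}_{j-1}}(uv) \ge t+1 = \sigma(C_k)$. For \emph{even} $k$ this is exactly hypothesis~(i) of Lemma~\ref{LEMMA:KMVa-many-ell+1-shadow-cycle}, so $|E_3| \ge \varepsilon n^2$ immediately produces $C_k^3 \subseteq \mathcal{H}$, contradicting the $C_k^3$-freeness of $\mathcal{H}$. For \emph{odd} $k$ one has to additionally verify hypothesis~(ii): for each $uv \in E_3$, we must exhibit $w$ with $uvw \in \mathcal{H}$ such that $\min\{d_{\mathcal{H}}(uw),d_{\mathcal{H}}(vw)\} \ge 2$ and $\max\{d_{\mathcal{H}}(uw),d_{\mathcal{H}}(vw)\} \ge 3k$. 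Since $uv$ is a type-$3$ shadow edge in $\mathcal{H}_{j-1}$, there is at least one triple $uvw \in \mathcal{H}_{j-1} \subseteq \mathcal{H}_0$. Membership in $\mathcal{H}_0 = \mathcal{H} \setminus \mathcal{H}^{\ast}$ means some pair of $uvw$ has $\mathcal{H}$-codegree exceeding $3k$; because the type-$3$ cap $d_{\mathcal{H}_{j-1}}(uv) \le 3k-1$ can be arranged (by passing to a slightly smaller $E_3$ if necessary, using that only $o(n^2)$ pairs in $\partial\mathcal{H}$ have $d_{\mathcal{H}} > 3k$ and $d_{\mathcal{H}_{j-1}} \le 3k-1$) to force the heavy pair to be $uw$ or $vw$, the other pair automatically has codegree $\ge 2$ via the same triple $uvw$. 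This then triggers Lemma~\ref{LEMMA:KMVa-many-ell+1-shadow-cycle}(ii) and delivers the contradiction. With $|I_3| \le \varepsilon n^2$ in hand, the remaining arithmetic is identical to the tree case.
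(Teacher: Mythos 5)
Your plan faithfully mirrors the paper's proof of Lemma~\ref{LEMMA:Cleaning-Algo-tree} and substitutes the cycle-specific ingredients in exactly the places the paper does, so the overall architecture (clean $\mathcal{H}^{\ast}$ via Lemma~\ref{LEMMA:KMVa-lambda-sparse-cycle}, split $[q]=I_1\sqcup I_2\sqcup I_3$, use $|I_2|\le |I_1|$ and a bound on $|I_3|$, close with Proposition~\ref{APPENDIX:PROP:Turan-cycle-shadow}) is identical to the paper's.

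The one place where you deviate, and where your argument has a genuine gap, is the odd-$k$ bound on $|I_3|$. You correctly identify that $\widehat{e}=uvw\in\mathcal{H}_{j-1}\subseteq\mathcal{H}_0$ has some pair of $\mathcal{H}$-codegree exceeding $3k$, and that the problem is this heavy pair might be $uv$ itself. Your fix is to discard from $E_3$ the pairs with $d_{\mathcal{H}}(uv)>3k$, asserting that only $o(n^2)$ such pairs exist; but this is not justified and does not follow from the type-$3$ cap, which bounds $d_{\mathcal{H}_{j-1}}(uv)$ rather than $d_{\mathcal{H}}(uv)$. Edges removed in the initialisation and in earlier steps can drive the codegree of many pairs down from above $3k$ to at most $3k-1$, and the natural counting bound (total codegree drop $\lesssim 3|\mathcal{H}^{\ast}| + 9kq$) involves $q$, which is exactly the quantity being bounded; so the claim that these bad pairs are $o(n^2)$ is circular as stated. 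The paper's own treatment of this point is brief (it invokes that every edge of $\mathcal{H}_0$ has a pair of $\mathcal{H}$-codegree $\ge 3k+1$ and that the other two pairs of $\widehat{e}$ have codegree $\ge t\ge 2$, and defers the remaining details to the proof of KMV's Theorem~6.2), but it does not rest on the unproved ``$o(n^2)$ bad pairs'' assertion that your patch introduces. So: same approach as the paper, but the specific repair you propose for the odd-$k$ case needs a correct justification or should simply follow the paper and defer to \cite{KMV15a}.
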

\begin{proof}[Proof of Lemma~\ref{APPENDIX:LEMMA:Cleaning-Algo-cycle}]
    We keep using notations in the Cleaning Algorithm. 
    First, it follows from Lemma~\ref{LEMMA:KMVa-lambda-sparse-cycle} that $|\mathcal{H}^{\ast}| \le \varepsilon n^2$.  
    Hence we have 
    \begin{align*}
        |\mathcal{H}_0| 
            \ge t|\partial\mathcal{H}| - 2\varepsilon n^2. 
    \end{align*}
    Note from the definition of the Cleaning Algorithm that $|\partial\mathcal{H}_q| \le |\partial\mathcal{H}|-q$. 
    Combined with the inequality above, we obtain 
    \begin{align}\label{equ:APPENDIX:LEMMA:Cleaning-Algo-cycle-2}
        |\mathcal{H}_0| 
            \ge t \left(|\partial\mathcal{H}_q|+q\right) - 2\varepsilon n^2.  
    \end{align}
    For $i\in \{1,2,3\}$ define
    \begin{align*}
        I_i := \left\{j \in [q] \colon \text{$f_j$ is of type-i in $\mathcal{H}_{j-1}$}\right\}. 
    \end{align*}
    If $k$ is even,  then it follows from the definition of type-3 and Lemma~\ref{LEMMA:KMVa-many-ell+1-shadow-cycle}~(i) that $|I_3| \le \varepsilon n^2$. 
    Suppose that $k$ is odd. 
    Note that for every $i\in I_3$, the pair $e_i$ is contained in an edge $\widehat{e} \in \mathcal{H}_i$ with the property that every pair of vertices in $\widehat{e}$ has codegree more than $t \ge 2$.  
    Since $d_{\mathcal{H_0}}(e_i) \ge d_{\mathcal{H}_i}(e_i) \ge t+1$ and every edge in $\mathcal{H_0}$ contains a pair of vertices with codegree at least $3k+1$ in $\mathcal{H}$, it follows from Lemma~\ref{LEMMA:KMVa-many-ell+1-shadow-cycle}~(ii) that $|I_3| \le \varepsilon n^2$.
           
    Observe from the definition of types-1 and 2 that if $j \in I_2$, then $j+1 \in I_1$. Therefore, $|I_2| \le |I_1|$, and combined with~\eqref{equ:APPENDIX:LEMMA:Cleaning-Algo-cycle-2}, we obtain  
           \begin{align*}
               |\mathcal{H}_q|
               \ge |\mathcal{H}_0| - (t-1)|I_1| -t|I_2| - 3k |I_3|
               & \ge |\mathcal{H}_0| - \left(t-\frac{1}{2}\right)\left(|I_1|+|I_2|\right) - 3k |I_3| \\
               & \ge t \left(|\partial\mathcal{H}_q|+q\right) - 2\varepsilon n^2 - \left(t-\frac{1}{2}\right)q - 3k |I_3| \\
               & = t |\partial\mathcal{H}_q| + \frac{q}{2} - 5k\varepsilon n^2. 
           \end{align*}
    Combined with Proposition~\ref{APPENDIX:PROP:Turan-cycle-shadow}, we obtain $q/2 - 5k\varepsilon n^2 \le \varepsilon' n^2$, which implies that $q \le 12k\varepsilon n^2$. 
    Hence, it follows from the definition of the Cleaning Algorithm that  
    \begin{align*}
        |\mathcal{H}_q| 
            \ge |\mathcal{H}_0| - 3kq 
            \ge |\mathcal{H}| - 48k^2 \varepsilon n^2. 
    \end{align*}
    Combined with $t|\partial\mathcal{H}_q| \ge |\mathcal{H}_q| - \varepsilon n^2$ (by Proposition~\ref{APPENDIX:PROP:Turan-cycle-shadow}) and $|\mathcal{H}| \ge t|\partial\mathcal{H}| - \varepsilon n^2$ (by assumption), we obtain that $|\partial\mathcal{H}_q| \ge |\partial\mathcal{H}| - 50k^2\varepsilon n^2$.  
\end{proof}
\section{Proof of Lemma~\ref{LEMMA:KMVa-superfull-complete-bipartite-cycle}}\label{APPENDIX:SEC:LEMMA:KMVa-superfull-complete-bipartite-cycle}
\begin{proposition}\label{PROP:vario-cycle-length}
    Let $\mathcal{H}$ be a $3$-graph and $t \ge 1$ is an integer. 
    \begin{enumerate}[label=(\roman*)]
        \item It there exists a cycle $w_1 \cdots w_t \subseteq \partial\mathcal{H}$ of length $t$ and distinct vertices $v_1, \ldots, v_t \in V(\mathcal{H})\setminus\{w_1, \ldots, w_t\}$ such that 
        \begin{align*}
            \{w_{i}, v_{i+1}, w_{i+1}\} \in \mathcal{H},
            \quad\text{and}\quad 
            \min\left\{d_{\mathcal{H}}(w_i, v_{i+1}),\ d_{\mathcal{H}}(v_{i+1}, w_{i+1})\right\} \ge 6t+6
        \end{align*}
        for $i\in [t]$, where the indices are taken modulo $t$.
        Then $C_{\ell}^3 \subseteq\mathcal{H}$ for all integers $\ell\in [t,2t]$. 
        \item It there exists a path $w_1 \cdots w_{t+1} \subseteq \partial\mathcal{H}$ of length $t$ and distinct vertices $v_1, \ldots, v_t \in V(\mathcal{H})\setminus\{w_1, \ldots, w_t\}$ such that 
        \begin{align*}
            \{w_{i}, v_{i+1}, w_{i+1}\} \in \mathcal{H},
            \quad\text{and}\quad 
            \min\left\{d_{\mathcal{H}}(w_i, v_{i+1}),\ d_{\mathcal{H}}(v_{i+1}, w_{i+1})\right\} \ge 6t+6
        \end{align*}
        for $i\in [t]$.
        Then $P_{\ell}^3 \subseteq\mathcal{H}$ for all integers $\ell\in [t,2t]$. 
    \end{enumerate}
\end{proposition}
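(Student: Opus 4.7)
The plan is to start from the linear cycle (resp.\ path) of length $t$ already packaged in the hypotheses and stretch an arbitrary subset of its hyperedges to produce linear cycles (resp.\ paths) of every length in $[t, 2t]$. Set $s := \ell - t$. For the base case $\ell = t$ the conclusion is immediate: the $t$ hyperedges $e_i := \{w_i, v_{i+1}, w_{i+1}\}$ are pairwise intersecting only in the shared $w_{i+1}$ of consecutive edges (since the $v_j$'s and $w_j$'s are all distinct by hypothesis), hence together they form a copy of $C_t^3$ in case (i), and analogously $P_t^3$ in case (ii) using the shadow path $w_1 \cdots w_{t+1}$.

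For $\ell \in (t, 2t]$, fix an arbitrary $s$-subset $S \subseteq [t]$ and, for each $i \in S$, replace the hyperedge $e_i$ by the pair
\[
    e_i' := \{w_i, v_{i+1}, x_i\}, \qquad e_i'' := \{v_{i+1}, w_{i+1}, y_i\},
\]
where $x_i, y_i$ are fresh vertices chosen one by one via a greedy procedure. At the moment of picking $x_i$ (resp.\ $y_i$), the forbidden set consists of the $w_j$'s, the $v_j$'s, and all earlier choices of $x$'s and $y$'s, whose total size is at most $2t + 2t + 2s \le 6t$. Since $d_{\mathcal{H}}(w_i v_{i+1}) \ge 6t + 6$ and $d_{\mathcal{H}}(v_{i+1} w_{i+1}) \ge 6t + 6$, at each step there is room to make a valid selection with $\{w_i, v_{i+1}, x_i\} \in \mathcal{H}$ and $\{v_{i+1}, w_{i+1}, y_i\} \in \mathcal{H}$.

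The resulting collection $\{e_j : j \notin S\} \cup \{e_i', e_i'' : i \in S\}$ contains exactly $t + s = \ell$ hyperedges. Walking around the cyclic (resp.\ linear) order, every pair of consecutive hyperedges meets in exactly one vertex: a shared $w_{i+1}$ at an unsplit junction, and a shared $v_{i+1}$ at a split junction $e_i' \cap e_i''$. Non-consecutive hyperedges are disjoint because the auxiliary $x_i, y_i$ were chosen fresh and the original vertex labels are distinct. This verifies the intersection pattern of $C_\ell^3$ (resp.\ $P_\ell^3$), completing the proof. Part (ii) is entirely analogous to part (i), using the path $w_1 \cdots w_{t+1}$ in place of the cycle.

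The argument is essentially combinatorial bookkeeping: the ``hard'' step, if any, is checking that the splitting operation preserves linearity of the intersection pattern, which is routine. The codegree bound $6t + 6$ provides all the slack needed for the greedy selection, so no deeper structural input is required.
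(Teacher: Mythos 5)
Your proof is correct and takes essentially the same approach as the paper: both arguments split a chosen set of $s=\ell-t$ hyperedges $\{w_i,v_{i+1},w_{i+1}\}$ into two new hyperedges meeting in $v_{i+1}$, and use the codegree bound $6t+6 \ge 3\ell$ to greedily select fresh expansion vertices while assigning the remaining $v_j$'s as expansion vertices for the unsplit edges. The only differences are cosmetic — the paper outsources the greedy step to Fact~\ref{FACT:partial-embedding} and splits the first $k$ indices, whereas you write out the greedy choice directly and allow an arbitrary $s$-subset (your forbidden-set count of $2t+2t+2s$ slightly over-counts, but the true bound $\le 4t$ is still below $6t+6$, so nothing is affected).
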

\textbf{Remark.} In fact, $6t+6$ in the assumptions above can be replaced by $4t$. 
%
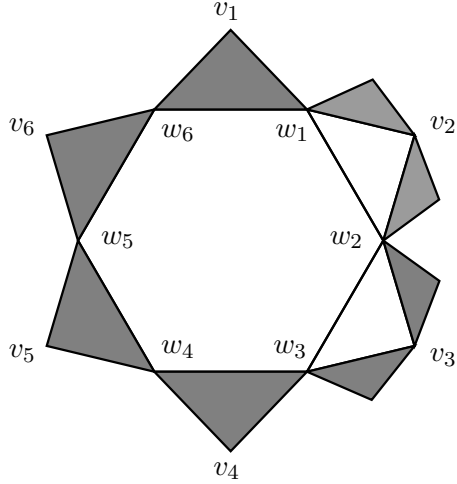
\begin{figure}[htbp]
\centering
\tikzset{every picture/.style={line width=0.85pt}} 

\begin{tikzpicture}[x=0.75pt,y=0.75pt,yscale=-1,xscale=1]
\draw   (398.33,141.17) -- (360.25,207.13) -- (284.08,207.13) -- (246,141.17) -- (284.08,75.2) -- (360.25,75.2) -- cycle ;
\draw  [fill={rgb, 255:red, 128; green, 128; blue, 128 }  ,fill opacity=1 ] (230.4,88.19) -- (284.08,75.2) -- (246,141.17) -- cycle ;
\draw  [fill={rgb, 255:red, 128; green, 128; blue, 128 }  ,fill opacity=1 ] (322.17,35.2) -- (360.25,75.2) -- (284.08,75.2) -- cycle ;
\draw   (413.93,88.19) -- (398.33,141.17) -- (360.25,75.2) -- cycle ;
\draw   (413.93,194.15) -- (360.25,207.13) -- (398.33,141.17) -- cycle ;
\draw  [fill={rgb, 255:red, 128; green, 128; blue, 128 }  ,fill opacity=1 ] (322.17,247.13) -- (284.08,207.13) -- (360.25,207.13) -- cycle ;
\draw  [fill={rgb, 255:red, 128; green, 128; blue, 128 }  ,fill opacity=1 ] (230.4,194.15) -- (246,141.17) -- (284.08,207.13) -- cycle ;
\draw  [fill={rgb, 255:red, 155; green, 155; blue, 155 }  ,fill opacity=1 ] (393.02,60.17) -- (413.93,88.19) -- (360.25,75.2) -- cycle ;
\draw  [fill={rgb, 255:red, 155; green, 155; blue, 155 }  ,fill opacity=1 ] (426.18,120.52) -- (398.33,141.17) -- (413.93,88.19) -- cycle ;
\draw  [fill={rgb, 255:red, 128; green, 128; blue, 128 }  ,fill opacity=1 ] (426.32,161.47) -- (413.93,194.15) -- (398.33,141.17) -- cycle ;
\draw  [fill={rgb, 255:red, 128; green, 128; blue, 128 }  ,fill opacity=1 ] (392.54,221.3) -- (360.25,207.13) -- (413.93,194.15) -- cycle ;

\draw (343,81) node [anchor=north west][inner sep=0.75pt]   [align=left] {$w_1$};
\draw (370,135) node [anchor=north west][inner sep=0.75pt]   [align=left] {$w_2$};
\draw (342,190) node [anchor=north west][inner sep=0.75pt]   [align=left] {$w_3$};
\draw (286,190) node [anchor=north west][inner sep=0.75pt]   [align=left] {$w_4$};
\draw (256,135) node [anchor=north west][inner sep=0.75pt]   [align=left] {$w_5$};
\draw (286,81) node [anchor=north west][inner sep=0.75pt]   [align=left] {$w_6$};
\draw (420,77) node [anchor=north west][inner sep=0.75pt]   [align=left] {$v_2$};
\draw (420,194) node [anchor=north west][inner sep=0.75pt]   [align=left] {$v_3$};
\draw (312,251) node [anchor=north west][inner sep=0.75pt]   [align=left] {$v_4$};
\draw (210,192) node [anchor=north west][inner sep=0.75pt]   [align=left] {$v_5$};
\draw (210,78) node [anchor=north west][inner sep=0.75pt]   [align=left] {$v_6$};
\draw (312,20) node [anchor=north west][inner sep=0.75pt]   [align=left] {$v_1$};
\end{tikzpicture}
\caption{Varying the length of a cycle.}
\label{fig:cycle-vari}
\end{figure}
\begin{proof}[Proof of Proposition~\ref{PROP:vario-cycle-length}]
    The proofs for both statements are essentially the same, so we will prove Statement~(i) only. 

    Let $w_1, \ldots, w_t, v_1, \ldots, v_t$ be vertices in $V(\mathcal{H})$ satisfying the assumptions in Statement~(i). 
    Fix $\ell \in [t,2t]$ and let $k:= \ell-t$. 
    Observe that $w_1v_2w_2 \cdots w_{k}v_{k+1}w_{k+1}w_{k+2}\cdots w_{t}w_1$ is a cycle of length $\ell$ in $\partial\mathcal{H}$ and it satisfies the assumptions in Fact~\ref{FACT:partial-embedding}. 
    Therefore, $C_{\ell}^3 \subseteq \mathcal{H}$ (see Figure~\ref{fig:cycle-vari}). 
\end{proof}

Recall the following statement of Lemma~\ref{LEMMA:KMVa-superfull-complete-bipartite-cycle}. 
\begin{lemma}\label{APPENDIX:LEMMA:KMVa-superfull-complete-bipartite-cycle}
    Let $k \ge 4$, $t = \left\lfloor(k-1)/2\right\rfloor$, and $\mathcal{H}$ be a $(t,3k)$-superfull $3$-graph containing two disjoint sets $W_1, W_2 \subseteq V(\mathcal{H})$ of size at least $3k$ such that every pair of vertices $(w_1, w_2) \in W_1 \times W_2$ has codegree exactly $t$ in $\mathcal{H}$. 
    If $C_k^3 \not\subseteq \mathcal{H}$, then there exists a $t$-set $L\subseteq V(\mathcal{H}) \setminus (W_1\cup W_2)$ such that $N_{\mathcal{H}}(w_1w_2) = L$ for all $(w_1, w_2) \in W_1 \times W_2$. 
\end{lemma}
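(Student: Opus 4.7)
The plan is to assume toward contradiction that the conclusion fails and then produce a copy of $C_k^3$ in $\mathcal{H}$. Fix an anchor pair $(\bar w_1,\bar w_2) \in W_1 \times W_2$ and set $L := N_{\mathcal{H}}(\bar w_1 \bar w_2)$, which has size exactly $t$ by hypothesis. The two things to prove are (i) that $L \cap (W_1 \cup W_2) = \emptyset$, and (ii) that $N_{\mathcal{H}}(w_1 w_2) = L$ for every other crossing pair $(w_1,w_2) \in W_1 \times W_2$. Both will be established by the same contradiction scheme: any ``wild'' vertex witnessing either failure can be woven into a $C_k$ in $\partial\mathcal{H}$ that, together with the rich crossing structure, lifts to a copy of $C_k^3$.

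First I would handle (ii) by comparing neighborhoods one coordinate at a time. Assume there exists $w_1' \in W_1 \setminus \{\bar w_1\}$ and $v \in N_{\mathcal{H}}(w_1' \bar w_2) \setminus L$ (the symmetric case on the $W_2$-side is identical). Fix any $t-1$ vertices $\ell_1,\dots,\ell_{t-1}\in L$ and, since $|W_1|,|W_2|\ge 3k$, greedily pick fresh distinct vertices $a_1,b_1,a_2,b_2,\dots \in W_1 \cup W_2$ (alternating sides and avoiding already used vertices) so that the sequence
\[
\bar w_1,\; \bar w_2,\; w_1',\; v,\; a_1,\; b_1,\;\dots,\; a_r,\; b_r,\; \bar w_1
\]
forms a $C_k$ in $\partial\mathcal{H}$; here every consecutive pair either lies in $W_1 \times W_2$ (hence has codegree $t\ge 1$ in $\mathcal{H}$) or is of the form $\{w_1', v\}$ or $\{v, a_1\}$ coming from the edge $w_1' v \bar w_2 \in \mathcal{H}$ and its $(t,3k)$-superfull companion pair, which therefore has codegree $\ge 3k$. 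To accommodate the parity of $k$, I would use Proposition~\ref{PROP:vario-cycle-length}: build a slightly shorter closed walk in $\partial\mathcal{H}$ meeting the codegree-$\ge 3k$ hypothesis there, and invoke it to obtain $C_k^3$ for the desired $k$. Applying Fact~\ref{FACT:partial-embedding} after the walk is ready completes the expansion to $C_k^3 \subseteq \mathcal{H}$, a contradiction.

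For part (i), suppose $v \in L \cap W_1$ (the case $v \in L \cap W_2$ is symmetric). Then $\bar w_1 v \bar w_2 \in \mathcal{H}$ is an edge with $\bar w_1,v \in W_1$. Because every pair in $\{v\} \times W_2$ has codegree exactly $t$ and every pair in $\{\bar w_1\}\times W_2$ likewise, one can now alternate as above between $W_1$ and $W_2$ starting from $v, w_2^{(1)}, w_1^{(1)}, w_2^{(2)}, \dots$ and closing back through $\bar w_1$, again feeding the resulting $C_k$ into Fact~\ref{FACT:partial-embedding} (and Proposition~\ref{PROP:vario-cycle-length} when $k$ is odd) to recover $C_k^3 \subseteq \mathcal{H}$, another contradiction. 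Once (i) and (ii) are both established, the set $L$ from the anchor pair is the required common neighborhood disjoint from $W_1 \cup W_2$.

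The main obstacle I expect is the combined bookkeeping: the ``wild'' vertex $v$ can sit in $W_1$, in $W_2$, or in neither, and $k$ can be either even or odd, giving a small casework tree. In each case one must verify that along the chosen closed walk every expansion pair really has codegree at least $3k$ (for the greedy embedding to produce $k$ distinct expansion vertices) and that the walk length matches $k$. The hypotheses $|W_1|,|W_2|\ge 3k$ and $(t,3k)$-superfullness are exactly what makes the greedy step never run out of fresh vertices or fresh codegree-$3k$ pairs, and Proposition~\ref{PROP:vario-cycle-length} is the tool that reconciles the parity of $k$ with the alternating $W_1 \leftrightarrow W_2$ structure of the walk.
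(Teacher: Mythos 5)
There is a genuine gap in part (ii). After you fix the wild vertex $v \in N_{\mathcal{H}}(w_1'\bar w_2)\setminus L$, your cycle
\[
\bar w_1,\ \bar w_2,\ w_1',\ v,\ a_1,\ b_1,\ \dots,\ a_r,\ b_r,\ \bar w_1
\]
requires the pair $\{v,a_1\}$ to lie in $\partial\mathcal{H}$, but nothing in the hypotheses provides this: the only shadow pairs at $v$ you can extract are $\{w_1',v\}$ and $\{v,\bar w_2\}$, both coming from the single hyperedge $w_1'v\bar w_2$, so $v$ can only sit \emph{between} $w_1'$ and $\bar w_2$ on a cycle, not between $w_1'$ and a fresh vertex $a_1$. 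The vertices $\ell_1,\dots,\ell_{t-1}\in L$ are equally stuck: since $L=N_{\mathcal{H}}(\bar w_1\bar w_2)$ and you do not yet know that other crossing pairs have the same neighborhood (that is exactly the statement you are proving), each $\ell_i$ can serve as a shortcut vertex only for the one pair $\{\bar w_1,\bar w_2\}$; you cannot distribute them to distinct $W_1\times W_2$ cycle edges. Since every $W_1\times W_2$ cycle edge has codegree only $t$, which may be far below $3k$, Fact~\ref{FACT:partial-embedding} (or Proposition~\ref{PROP:vario-cycle-length}) requires each of those edges to be given its own expansion vertex, and all of these must be \emph{distinct} --- a constraint your plan never addresses.

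The missing idea is a system of distinct representatives. The paper's proof builds an alternating cycle $e_1e_2\cdots e_{t+1}$ (or $e_{t+2}$ when $t$ is even) of pairs in $W_1\times W_2$, arranged so that $N_{\mathcal{H}}(e_1)\neq N_{\mathcal{H}}(e_2)$, and then invokes Hall's theorem: each $|N_{\mathcal{H}}(e_i)|=t$ while $|\bigcup_i N_{\mathcal{H}}(e_i)|\ge t+1$, so Hall's condition holds and yields $t+1$ distinct shortcut vertices $v_i\in N_{\mathcal{H}}(e_i)$, which are then fed into Proposition~\ref{PROP:vario-cycle-length}. Your argument for part (i) is also an unnecessary detour: the paper's version is immediate from the definition of $(t,3k)$-superfull. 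If $w\in W_2$ with $w_1w_2w\in\mathcal{H}$, then since $d_{\mathcal{H}}(w_1w_2)=t<3k$ the superfullness of the edge $w_1w_2w$ forces $d_{\mathcal{H}}(w_1w)\ge 3k$, contradicting the assumed codegree $t$ of the $W_1\times W_2$ pair $\{w_1,w\}$; no cycle needs to be constructed.
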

\begin{proof}
    Let $V':= V(\mathcal{H})\setminus(W_1\cup W_2)$. 
    Let $(w_1, w_2) \in W_1 \times W_2$. 
    First, we prove that $N_{\mathcal{H}}(w_1w_2) \subseteq V(\mathcal{H}) \setminus W$. 
    Suppose to the contrary that there exists $w\in W_1 \cup W_2$ such that $w_1w_2w\in \mathcal{H}$. 
    By symmetry, we may assume that $w\in W_2$. 
    Since $\mathcal{H}$ is $t$-superfull and $d_{\mathcal{H}}(w_1w_2) = t$, it follows from the definition that $d_{\mathcal{H}}(w_1w) \ge 3k$, which contradicts the assumption that every pair of vertices $(w_1', w_2') \in W_1 \times W_2$ has codegree exactly $t$ in $\mathcal{H}$. Therefore, $N_{\mathcal{H}}(w_1w_2) \subseteq V(\mathcal{H}) \setminus W$. 

    Next we prove that there exists a $t$-set $L\subseteq V(\mathcal{H}) \setminus (W_1\cup W_2)$ such that $N_{\mathcal{H}}(w_1w_2) = L$ for all $(w_1, w_2) \in W_1 \times W_2$. 
    Suppose to the contrary that there exist two distinct pairs $(w_1, w_2), (w_1', w_2') \in W_1 \times W_2$ such that $N_{\mathcal{H}}(w_1w_2) \neq N_{\mathcal{H}}(w_1'w_2')$. 
    Since $\min\{|W_1|, |W_2|\} \ge 3k$, there exists $e_1\in W_1 \times W_2$ sharing exactly one vertex with $(w_1, w_2)$ and $(w_1', w_2')$ respectively, and moreover, $N_{\mathcal{H}}(e_1) \neq N_{\mathcal{H}}(w_1w_2)$ or $N_{\mathcal{H}}(e_1) \neq N_{\mathcal{H}}(w_1'w_2')$. 
    By symmetry, we may assume that $N_{\mathcal{H}}(e_1) \neq N_{\mathcal{H}}(w_1w_2)$. Let $e_2:= \{w_1, w_2\}$. 
 
    Suppose that $t \ge 2$ and $t$ is odd.
    Assume that $e_1 = \{w_1, w_{t+1}\}$. 
    Choose distinct vertices $w_{2i+1} \in W_1\setminus \{w_1\}$ for $1 \le i \le (t-1)/2$ and choose distinct vertices $w_{2i+2} \in W_2\setminus\{w_2,w_{t+1}\}$ for $1 \le i \le (t-3)/2$.
    Let $e_i := \{w_{i-1}, w_{i}\}$ for $3 \le i \le t+1$.
    It is clear that $e_1e_2 \ldots e_{t+1}$ is a cycle of length $t+1$. 
    Define an auxiliary bipartite graph $A[V_1, V_2]$, where $V_1:= \{e_1, \ldots, e_{t+1}\}$ and $V_2:= \bigcup_{i\in [t]} N_{\mathcal{H}}(e_i)$ and a pair $(e_i, v) \in V_1 \times V_2$ is an edge (in $A$) iff $e_i \cup \{v\} \in \mathcal{H}$. 
    Since $|N_{\mathcal{H}}(e_i)| = t$ and $\left|\bigcup_{i\in [t]} N_{\mathcal{H}}(e_i)\right| \ge t+1$ (as $N_{\mathcal{H}}(e_1) \neq N_{\mathcal{H}}(e_2)$), Hall's condition is satisfied. 
    Hence, $A$ contains a matching of size $t+1$, which implies that there exist  $t+1$ distinct vertices $v_1, \ldots, v_{t+1} \in V_2 \subseteq V'$ such that $e_i \cup \{v_i\} \in \mathcal{H}$ for all $i\in [t+1]$. 
    Since $\mathcal{H}$ is $(t,3k)$-superfull, $\min\left\{d_{\mathcal{H}}(w_{i-1} v_i),\ d_{\mathcal{H}}(w_{i} v_i)\right\} \ge 3k$ for all $i \in [t+1]$. 
    Therefore, by Proposition~\ref{PROP:vario-cycle-length}~(i), $C_{2t+1}^{3} \subseteq \mathcal{H}$ and $C_{2t+2}^{3} \subseteq \mathcal{H}$. 
    
    Suppose that $t \ge 2$ and $t$ is even. 
    Assume that $e_1 = \{w_1, w_{t+2}\}$. 
    Choose distinct vertices $w_{2i+1} \in W_1\setminus \{w_1\}$ for $1 \le i \le t/2$ and choose distinct vertices $w_{2i+2} \in W_2\setminus\{w_2,w_{t+2}\}$ for $1 \le i \le (t-2)/2$.
    Let $e_i := \{w_{i-1}, w_{i}\}$ for $3 \le i \le t+2$.
    It is clear that $e_1e_2 \ldots e_{t+2}$ is a cycle of length $t+2$.
    Consider the first $t+1$ edges. Similar to the argument above, it follows from Hall's theorem that there exist distinct vertices $v_1, \ldots, v_{t+1} \subseteq V'$ such that $e_i \cup \{v_i\} \in \mathcal{H}$ for all $i\in [t+1]$. 
    If there exists a vertex $v_{t+2} \in V' \setminus \{v_1, \ldots, v_{t+1}\}$ such that $e_{t+2} \cup \{v_{t+2}\} \in \mathcal{H}$, then it follows from Proposition~\ref{PROP:vario-cycle-length}~(i) that $C_{2t+1}^{3} \subseteq \mathcal{H}$ and $C_{2t+2}^{3} \subseteq \mathcal{H}$.
    So we may assume that $N_{\mathcal{H}}(e_{t+2}) \subseteq \{v_1, \ldots, v_{t+1}\}$. 
    Since $|N_{\mathcal{H}}(e_{t+2})| = t$, we have $N_{\mathcal{H}}(e_{t+2}) \cap \{v_1, v_{t+1}\} \neq\emptyset$. By symmetry, we may assume that $v_{t+1} \in N_{\mathcal{H}}(e_{t+2})$. 
    Observe that $w_1v_2w_2\cdots w_{t}v_{t+1}w_{t+1}w_{t+2}w_1$ is a copy of $C_{2t+2}$ in $\partial\mathcal{H}$. 
    Similarly, it follows from Proposition~\ref{PROP:vario-cycle-length}~(i) that $C_{2t+1}^{3} \subseteq \mathcal{H}$ and $C_{2t+2}^{3} \subseteq \mathcal{H}$.

    Suppose that $t=1$ and $k = 4$. 
    Assume that $e_1 = \{w_1, w_4\}$.
    Choose a vertex $w_3 \in W_1\setminus\{w_1\}$. 
    Let $e_3 = \{w_2, w_3\}$ and $e_4 = \{w_3, w_4\}$. 
    Since $N_{\mathcal{H}}(e_1) \neq N_{\mathcal{H}}(e_2)$, 
    there exist distinct vertices $v_1, v_2 \in V'$ such that $e_1 \cup \{v_1\}, e_2 \cup \{v_2\} \in \mathcal{H}$.
    If there exist distinct vertices $v_3, v_4 \in V\setminus\{v_1, v_2\}$ such that $e_3 \cup \{v_3\}, e_4 \cup \{v_4\} \in \mathcal{H}$, then we are done. 
    If $\{w_2, w_3, v_1\} \in \mathcal{H}$ (or $\{w_3, w_4, v_2\} \in \mathcal{H}$), then $v_1w_1v_2w_2v_1$ (or $v_1w_1v_2w_4v_1$) is a copy of $C_4$ and it follows from Fact~\ref{FACT:partial-embedding} that this $C_4$ can be extended to be a copy of $C_4^3$ in $\mathcal{H}$.
    If $\{w_2, w_3, v_2\} \in \mathcal{H}$ and there exists a vertex $v_4\in V'\setminus\{v_1, v_2\}$ such that $\{w_3, w_4, v_4\} \in \mathcal{H}$, then $w_1v_2w_3w_4w_1$ is a copy of $C_4$ and it follows from Fact~\ref{FACT:partial-embedding} that this $C_4$ can be extended to be a copy of $C_4^3$ in $\mathcal{H}$.
    By symmetry, the case $\{w_3, w_4, v_1\} \in \mathcal{H}$ and there exists a vertex $v_3\in V'\setminus\{v_1, v_2\}$ such that $\{w_2, w_3, v_3\} \in \mathcal{H}$ is also not possible. 
    If $\{w_2, w_3, v_2\} \in \mathcal{H}$ and $\{w_3, w_4, v_1\} \in \mathcal{H}$, then $v_1w_1v_2w_3v_1$ is a copy of $C_4$ and it follows from Fact~\ref{FACT:partial-embedding} that this $C_4$ can be extended to be a copy of $C_4^3$ in $\mathcal{H}$.
    This completes the proof of Lemma~\ref{LEMMA:KMVa-superfull-complete-bipartite-cycle}. 
\end{proof}
\section{Proof of Proposition~\ref{PROP:Turan-augmentation}}\label{APPENDIX:SEC:HypergraphTuran-Tree-Aug}
Recall the following statement of Proposition~\ref{PROP:Turan-augmentation}. 
\begin{proposition}\label{APPENDIX:PROP:Turan-augmentation}
        Suppose that $T$ is a tree and $F$ is obtained from $T$ by adding one edge. 
        Then $\mathrm{ex}(n, F^3) \le 3\left(|T|+1\right) n^2$ holds for sufficiently large $n$. 
\end{proposition}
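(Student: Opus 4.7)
The plan is to prove the contrapositive: if $\mathcal{H}$ is a $3$-graph on $n$ vertices with $|\mathcal{H}|>3(|T|+1)n^2$ (and $n$ is sufficiently large), then $F^3\subseteq \mathcal{H}$. Writing $m:=|F|=|T|+1$, I would first apply Lemma~\ref{LEMMA:KMVa-d-full-subgraph} with $d:=3m$ to extract a $(3m+1)$-full subgraph $\mathcal{H}'\subseteq \mathcal{H}$; the trivial bound $|\partial\mathcal{H}|\le n^2/2$ yields
\begin{align*}
    |\mathcal{H}'| \;\ge\; |\mathcal{H}| - 3m\cdot\tfrac{n^2}{2} \;>\; \tfrac{3m\,n^2}{2}\;>\;0,
\end{align*}
so $\mathcal{H}'$ is nonempty. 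The task then reduces to showing that every nonempty $(3m+1)$-full $3$-graph on at least $|V(F)|+m$ vertices contains $F^3$.

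The basic leverage is that in any $(3m+1)$-full $3$-graph $\mathcal{H}'$, every non-isolated vertex $v$ satisfies $d_{\partial\mathcal{H}'}(v)\ge 3m+1$: picking any pair $vx\in \partial\mathcal{H}'$, its $\ge 3m+1$ codegree-witnesses are distinct shadow-neighbors of $v$. Using this, I would embed $F$ as a graph into $\partial\mathcal{H}'$. When $F$ is acyclic -- i.e. either a tree (if the new edge $e_*$ has at least one endpoint outside $V(T)$) or the disjoint union of $T$ with an isolated edge -- this is immediate from a greedy vertex-by-vertex extension, or from Lemma~\ref{LEMMA:KMVa-3k-full-subgraph-tree} applied to the tree part together with a pigeonhole to place the remaining disjoint triple (using $|\mathcal{H}'|\gg n^2$ against the $O(n^2)$ triples hitting the used vertex set). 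When $F$ is unicyclic with cycle $C_\ell$, I would start from any edge $\{a,b,c\}\in\mathcal{H}'$, whose shadow already contains the triangle $\{ab,bc,ca\}$, and then iteratively \emph{stretch} the current cycle: replace a shadow-edge $uv$ on the cycle by the $2$-path $u,w,v$ for a fresh witness $w\in N_{\mathcal{H}'}(uv)$. Since $d_{\mathcal{H}'}(uv)\ge 3m+1>|V(F)|$, such a $w$ always exists, and iterating gives $C_\ell\subseteq \partial\mathcal{H}'$. The remaining tree-branches of $F$ hanging off $C_\ell$ are then attached greedily, edge by edge, again using the lower bound $3m+1>|V(F)|$ on shadow-degrees.

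With $F\subseteq \partial\mathcal{H}'$ in place, I would conclude via Fact~\ref{FACT:partial-embedding}: every edge of the embedded $F$ has codegree at least $3m+1>3m$ in $\mathcal{H}'$, so condition~(ii) of the Fact holds for all $m$ edges of $F$; any positive codegree supplies an initial witness $w_1$ validating condition~(i) with $t=1$, and the Fact then produces pairwise distinct extension vertices $w_e\notin V(F)$ for each $e\in F$. This embeds $F^3$ into $\mathcal{H}'\subseteq\mathcal{H}$, contradicting the $F^3$-freeness of $\mathcal{H}$.

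The main obstacle is the unicyclic shadow-embedding step. Growing a \emph{path} in $\partial\mathcal{H}'$ is easy from the degree condition, but greedily \emph{closing} it into a cycle of prescribed length is problematic, because the final closing edge is not controlled by the greedy choices. The triangle-stretching trick sidesteps this difficulty: an arbitrary edge of $\mathcal{H}'$ already furnishes a triangle in $\partial\mathcal{H}'$, so one never needs to close a grown path -- only to stretch an existing cycle by safe, local codegree-based moves. Once this step is in hand, every other part of the argument is routine greedy extension powered by the strong minimum shadow-degree inherited from $(3m+1)$-fullness.
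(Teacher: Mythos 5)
Your proposal is correct, and it takes a genuinely different route from the paper's proof. The paper peels off pendant edges to reduce $F$ to its unique cycle $C_\ell$, invokes the Kostochka--Mubayi--Verstra\"ete theorem on linear cycles to find $C_\ell^3\subseteq\mathcal{H}'$ as the base case, and then re-attaches expanded pendant edges one at a time via a two-step codegree argument in the full hypergraph ($\psi(u)\psi(w)v'\in\mathcal{H}'$ followed by $\psi(u)v'v''\in\mathcal{H}'$). In other words, the paper builds $F^3$ directly inside $\mathcal{H}'$. You instead embed the \emph{graph} $F$ into $\partial\mathcal{H}'$ first, and only then expand all $m$ edges at once via Fact~\ref{FACT:partial-embedding}. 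The decisive new idea on your side is the triangle-stretching trick: any edge of $\mathcal{H}'$ hands you a shadow triangle for free, and you elongate it to $C_\ell$ by local codegree-driven replacements, so you never need to close a long path into a prescribed cycle and you never need the KMV cycle theorem as a black box. Your handling of the acyclic cases (greedy, or Lemma~\ref{LEMMA:KMVa-3k-full-subgraph-tree} plus a counting/pigeonhole argument for the disjoint $K_2$) is likewise more self-contained than the paper's appeal to the full KMV tree theorem. Both proofs ultimately rest on the same engine --- a large $d$-full subgraph obtained from Lemma~\ref{LEMMA:KMVa-d-full-subgraph}, which endows every shadow pair with codegree $\gg|V(F)|$ --- but you organize the embedding at the level of the shadow graph while the paper works at the level of the expansion. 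Your route buys elementarity and independence from the heavier black boxes; the paper's route buys a slightly shorter write-up by outsourcing the cycle base case. As a side remark, the paper's citation of Lemma~\ref{LEMMA:KMVa-ell+1-full} in the opening step is a typo for Lemma~\ref{LEMMA:KMVa-d-full-subgraph}, which is the lemma you correctly cite.
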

\begin{proof}
    Let $T$ be a tree and $F$ be a graph obtained from $T$ by adding one edge. 
    Let $k:= |F| = |T|+1$. 
    Let $n$ be a sufficiently large integer and $\mathcal{H}$ be an $n$-vertex $3$-graph with $3k n^2$ edges. 
    Let $V:= V(\mathcal{H})$ and $\mathcal{H}' \subseteq \mathcal{H}$ be a maximum $3k$-full subgraph. 
    It follows from Lemma~\ref{LEMMA:KMVa-ell+1-full} that $|\mathcal{H}'| \ge |\mathcal{H}| - 3k \binom{n}{2} \ge 3k n^2/2$.
    We will show that $F^3 \subseteq \mathcal{H}'$. 

    If $F$ is a forest, then it follows from the theorem of  Kostochka--Mubayi--Verstra\"{e}te~\cite{KMV17b} (or Theorem~\ref{THM:Hypergraph-Tree-Stability}) that $F^3 \subseteq \mathcal{H}$. 
    So we may assume that $F$ is not a forest. 
    Then $F$ contains a unique cycle of length $\ell$ for some integer $\ell \le k$. 
    Let $p := k-\ell$. 
    By sequentially removing pendant edges we obtain a sequence of subgraphs 
    \begin{align*}
        F =: F_0 \supseteq F_1 \cdots \supseteq F_{p} \cong C_{\ell}, 
    \end{align*}
    where $|F_{i}| = |F| - i$ for $i \in [0,p]$ and $F_p$ is the unique cycle in $F$.
    In other words, for $i \in [0,p-1]$ we choose a pendant edge  $e_i \in F_i$ and let $F_{i+1} := F_i - e_i$. 
    We will prove by a backward induction on $i$ such that $F_i^{3} \subseteq \mathcal{H}'$. 
 
    The base case $i=p$ follows from the theorem of  Kostochka--Mubayi--Verstra\"{e}te~\cite{KMV15a} on linear cycles. 
    So we may assume that $i\in [p-1]$. 
    Suppose that there exists an embedding $\psi \colon F_i^3 \to \mathcal{H}'$.
    We want to extend $\psi$ to be an embedding of $F_{i-1}^3$ to $\mathcal{H}'$. 
    To achieve this, let $e_{i-1} := \{u,v\}$ denote the edge in $F_{i-1}\setminus F_i$ and assume that $v$ is a leaf in $F_{i-1}$. 
    Observe from the definition that each $F_i$ is connected, so there exists $w \in V(F_i)$ such that $uw$ is an edge in $F_i$. 
    This implies that $\psi(uw) \in \partial\mathcal{H}'$. 
    Since $\mathcal{H}'$ is $3k$-full, there exists a vertex $v' \in V \setminus \psi(V(F_i^3))$  such that $\{\psi(u), \psi(w), v'\} \in \mathcal{H}'$. 
    In particular, $\{\psi(u), v'\} \in \partial\mathcal{H}'$. 
    Using the $3k$-fullness again, there exists a vertex $v'' \in V \setminus \psi(V(F_i^3))$ such that $\{\psi(u), v', v''\} \in \mathcal{H}'$. 
    Observe that $\psi(F_{i}^3) \cup \{\{\psi(u), v', v''\}\}$ is a copy of $F_{i-1}^3$ in $\mathcal{H}'$.  
    This completes the proof for the inductive step, and hence, completes the proof of Proposition~\ref{PROP:Turan-augmentation}. 
\end{proof}

\section{Proof of Theorem~\ref{THM:GenTuran-Tree-Stability}}\label{APPENDIX:SEC:GenTuran-tree-stability}
Recall the following statement of Theorem~\ref{THM:GenTuran-Tree-Stability}. 
\begin{theorem}\label{APPENDIX:THM:GenTuran-Tree-Stability}
    Let $T$ be a tree with $\sigma(T) = \tau_{\mathrm{ind}}(T)$. 
    For every $\delta>0$ there exist $\varepsilon>0$ and $n_0$ such that the following holds for all $n\ge n_0$. 
    Suppose that $G$ is an $n$-vertex $T^{\triangle}$-free graph with
    \begin{align*}
        N(K_{3}, G)
        \ge \left(\frac{\sigma(T)-1}{4}-\varepsilon\right)n^2.   
    \end{align*}
    Then $G$ is $\delta$-close to $S(n,\sigma(T)-1)$. 
\end{theorem}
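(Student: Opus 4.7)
\textbf{Proof plan for Theorem~\ref{APPENDIX:THM:GenTuran-Tree-Stability}.}
The idea is to mimic the proof of Theorem~\ref{THM:Hypergraph-Tree-Stability} on the triangle $3$-graph $\mathcal{H} := \mathcal{K}_{G}$, exploiting the extra constraint that $\partial\mathcal{H}\subseteq G$ is near-bipartite. Set $t := \sigma(T)-1$ and $k := |T|$. By Fact~\ref{FACT:shadow-Turan}, $\mathcal{H}$ is $T^{3}$-free, and the hypothesis gives $|\mathcal{H}| = N(K_{3},G) \ge (t/4 - \varepsilon)n^{2}$. Proposition~\ref{PROP:shadow-size} forces $|G|\le n^{2}/4 + o(n^{2})$, and combining Proposition~\ref{PROP:shadow-bound-tree} with $\partial\mathcal{H} \subseteq G$ yields
\begin{align*}
    \frac{n^{2}}{4} - O(\varepsilon)n^{2}
    \le |\partial\mathcal{H}|
    \le |G|
    \le \frac{n^{2}}{4} + o(n^{2}).
\end{align*}
Thus the shadow is near its extremal size $n^{2}/4$, rather than $n^{2}/2$ as in the hypergraph setting; the entire argument will be rescaled by a factor of $1/2$.

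Next, I would feed $(\mathcal{H},k,t)$ into the Cleaning Algorithm, obtaining via Lemma~\ref{LEMMA:Cleaning-Algo-tree} a $(t,3k)$-superfull subgraph $\mathcal{H}_{q}$ with $|\partial\mathcal{H}_{q}| \ge n^{2}/4 - O(\varepsilon)n^{2}$. From here the ``light link'' apparatus used in Section~\ref{SUBSEC:Proof-Hypergraph-Turan-Stability} can be reproduced verbatim, with every occurrence of $n^{2}/2$ replaced by $n^{2}/4$: define $G' := \{e\in\partial\mathcal{H}_{q}\colon d_{\mathcal{H}_{q}}(e)=t\}$ and $G'' := \{e\in\partial\mathcal{H}_{q}\colon d_{\mathcal{H}_{q}}(e)\ge 3k\}$ (so $|G'|\ge n^{2}/4 - \varepsilon_{1}n^{2}$), form the auxiliary $3$-graph $\mathcal{G}$ and its light-link decomposition into iterative sets $Z_{0}\supseteq Z_{1}\supseteq\cdots\supseteq Z_{k}$ with $\binom{|Z_{k}|}{t}\le\sqrt{n}$, run the Cleaning Algorithm on $\mathcal{G}_{k}$, and finally apply Proposition~\ref{PROP:embed-T3-two-t-sets} (together with Claim~\ref{CLAIM:Turan-disjoint-support}) to rule out two $t$-subsets of $Z_{k}$ both having large common link. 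This isolates a unique $t$-set $L \in \binom{Z_{k}}{t}$ whose common link in $\mathcal{H}$ has size at least $(1/4 - \delta/(2t))n^{2}$, i.e. $d_{\mathcal{K}_{G}}(v) \ge (1/4 - \delta/(2t))n^{2}$ for every $v\in L$.

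The main (mild) obstacle is translating this hypergraph bound into the required graph bound $d_{G}(v)\ge (1-\delta)n$. The naive inequality $d_{\mathcal{K}_{G}}(v) \le \binom{d_{G}(v)}{2}$ only yields $d_{G}(v)\ge n/\sqrt{2}$, which is too weak. The sharpening is that $G[N_{G}(v)]$ is itself $T^{\triangle}$-free, so Proposition~\ref{PROP:shadow-size} applied to this subgraph gives $d_{\mathcal{K}_{G}}(v) = |G[N_{G}(v)]| \le d_{G}(v)^{2}/4 + o(n^{2})$, from which $d_{G}(v) \ge (1-\delta)n$ follows. Given this, the remaining three closeness conditions are routine: the upper bound $N(K_{3},G) \le tn^{2}/4 + o(n^{2})$ (from Theorem~\ref{THM:GenTuran-Tree-Asymp}, or directly from $|\mathcal{H}|\le t|\partial\mathcal{H}|+o(n^{2})$) combined with $\sum_{v\in L}d_{\mathcal{K}_{G}}(v) \ge t(1/4 - \delta/(2t))n^{2}$ yields $N(K_{3},G-L)\le \delta n^{2}$; the lower bound $|G-L| \ge |G| - tn \ge n^{2}/4 - \delta n^{2}$ is immediate; and the stability half of Proposition~\ref{PROP:shadow-size}, applied to the $T^{\triangle}$-free graph $G-L$, shows it can be made bipartite by removing at most $\delta n^{2}$ edges.
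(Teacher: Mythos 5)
Your plan contains a genuine gap, and it sits exactly where you claim the argument transfers ``verbatim, with every occurrence of $n^{2}/2$ replaced by $n^{2}/4$.'' In the hypergraph stability proof, the existence of a single dominant $t$-set $T_{\ast}$ is extracted from the two inequalities $\sum_{T\in\mathcal{Z}} \binom{|\mathrm{Supp}_T|}{2} \ge \left(\tfrac{1}{2}-o(1)\right)n^{2}$ and $\sum_{T\in\mathcal{Z}}|\mathrm{Supp}_T| \le n$; since one term alone can barely reach $\binom{n}{2}\approx n^{2}/2$, one support must be near $n$. If you merely halve the right-hand side to $\left(\tfrac{1}{4}-o(1)\right)n^{2}$ while keeping $L'(T)\subseteq\binom{\mathrm{Supp}_T}{2}$, the conclusion disappears: two disjoint supports each of size $\approx n/2$ already satisfy $\sum_T\binom{|\mathrm{Supp}_T|}{2}\approx n^{2}/4$ together with $\sum_T|\mathrm{Supp}_T|\le n$, so the argument no longer isolates a unique dominant $T_{\ast}$ (nor a large common link for any single $T$).

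The paper's proof avoids this by introducing a step you omit. Before building the light links, it takes a max-cut bipartition $V_1\cup V_2$ of $V(G)$ and replaces $G'$ by its bipartite part $G'_{\mathrm{bi}}:=G'[V_1,V_2]$; all of $\mathcal{G}$, $\widehat{L}(v)$, the $Z_i$, and eventually $L'(T)$ are then built inside $G'_{\mathrm{bi}}$. The payoff is that $L'(T)$ is a bipartite graph on $\mathrm{Supp}_T$, so $|L'(T)|\le |\mathrm{Supp}_T|^2/4$ rather than $\binom{|\mathrm{Supp}_T|}{2}$. The rescaled inequality $\sum_T |\mathrm{Supp}_T|^2/4 \ge \left(\tfrac{1}{4}-o(1)\right)n^2$, i.e. $\sum_T |\mathrm{Supp}_T|^2 \ge (1-o(1))n^{2}$, together with $\sum_T|\mathrm{Supp}_T|\le n$, does force $\max_T|\mathrm{Supp}_T|\ge(1-o(1))n$. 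This is the structural ingredient your plan is missing; merely observing that $G$ is near-bipartite (as you do at the outset) is not enough --- the bipartition must be threaded through the light-link apparatus itself.

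Two further remarks. Your device for converting the hypergraph degree bound into the graph degree bound --- applying Proposition~\ref{PROP:shadow-size} to the induced subgraph $G[N_G(v)]$, which is also $T^{\triangle}$-free, to get $|G[N_G(v)]| \le d_G(v)^2/4 + o(n^2)$ and hence $d_G(v)\ge(1-\delta)n$ --- is correct and is actually a step the paper's appendix leaves implicit. But this does not repair the gap above. Finally, note that Proposition~\ref{PROP:shadow-bound-tree} bounds $|\mathcal{H}|$ by $(\sigma(T)-1)|\partial\mathcal{H}|+\varepsilon n^2$, not by $(\sigma(T)-1)|G|$ directly; you need the extra observation $\partial\mathcal{H}\subseteq G$ (which you do state) and the asymptotic bound on $|G|$ to close the chain $|\partial\mathcal{H}|\ge n^2/4 - O(\varepsilon)n^2$.
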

\begin{proof}[Proof of Theorem~\ref{APPENDIX:THM:GenTuran-Tree-Stability}]
    Let $T$ be a tree with $\sigma(T) = \tau_{\mathrm{ind}}(T)$.
    Fix $\delta>0$.  
    Let $\varepsilon>0$ be sufficiently small and $n$ be sufficiently large. 
    Let $k := |T|$ and $t := \sigma(T) - 1$. 
    Let $G$ be an $n$-vertex $T^{\triangle}$-free graph with $N(K_3, G) \ge \left(t/4-\varepsilon\right)n^2$. 
    We may assume that every edge in $G$ is contained in some triangle in $G$, since otherwise we can remove it from $G$ and this does not affect the value of $N(K_3, G)$. 
        Let $\mathcal{H} := \mathcal{K}_{G}$ and recall that 
        \begin{align*}
            \mathcal{K}_{G} 
            := \left\{e \in \binom{V(G)}{3} \colon G[e] \cong K_3\right\}. 
        \end{align*}
        Let $\mathcal{H}_q$ denote the outputting $3$-graph of the Cleaning Algorithm with input $\left(\mathcal{H}, k, t\right)$ as defined here. 
        Recall from Fact~\ref{FACT:shadow-Turan} that $\mathcal{H}$ is $T^3$-free. 
        In addition, it follows from Proposition~\ref{PROP:shadow-size} that 
        \begin{align}\label{equ:APPENDIX:THM:GenTuran-Tree-Stability-0}
            |\mathcal{H}|
            = N(K_{3}, G)
            \ge t \frac{n^2}{4}  -\varepsilon n^2
            \ge t \left(|G|-\varepsilon n^2\right) -\varepsilon n^2
            = t |\partial\mathcal{H}| - \varepsilon_1 n^2, 
        \end{align}
        where $\varepsilon_1 := (t+1)\varepsilon$. 
        Therefore, by Lemma~\ref{LEMMA:Cleaning-Algo-tree}, 
        \begin{align}\label{equ:APPENDIX:THM:GenTuran-Tree-Stability-1}
            q 
            \le 12k \delta_1 n^2, \quad 
            |\mathcal{H}_q| 
            \ge |\mathcal{H}| - 48k^2 \delta_1 n^2, \quad\text{and}\quad
            |\partial\mathcal{H}_q| \ge |\partial\mathcal{H}| - 50k^2 \delta_1 n^2.
        \end{align}
    First, observe from Proposition~\ref{PROP:shadow-bound-tree} that 
    \begin{align}\label{equ:APPENDIX:THM:GenTuran-Tree-Stability-2}
        |\partial\mathcal{H}|
        \ge \frac{|\mathcal{H}|-\varepsilon n^2}{t}
        \ge \frac{n^2}{4} - 2\varepsilon n^2. 
    \end{align}
    Let $\mathcal{H}_q$ be the output $3$-graph of the Cleaning Algorithm with input $(\mathcal{H}, k, t)$ as defined here. 
    It follows from~\eqref{equ:APPENDIX:THM:GenTuran-Tree-Stability-0} and Lemma~\ref{LEMMA:Cleaning-Algo-tree} that $\mathcal{H}_q$ is $(t, 3k)$-superfull and
    \begin{align}\label{equ:APPENDIX:THM:GenTuran-Tree-Stability-3}
        q 
            \le 12k \varepsilon_1 n^2, \quad 
        |\mathcal{H}_q| 
            \ge |\mathcal{H}| - 48k^2 \varepsilon_1 n^2,  \quad\text{and}\quad
        |\partial\mathcal{H}_q| 
            \ge |\partial\mathcal{H}| - 50k^2\varepsilon_1 n^2.
    \end{align}
    Define graphs 
    \begin{align*}
        G' 
            := \left\{e\in \partial\mathcal{H}_q \colon d_{\mathcal{H}_q}(e) = t\right\}, 
            \quad\text{and}\quad
        G''
            := \left\{e\in \partial\mathcal{H}_q \colon d_{\mathcal{H}_q}(e) \ge 3k\right\}. 
    \end{align*}
    Since $\mathcal{H}_q$ is $(t, 3k)$-superfull, we have $G' \cup G'' = \partial\mathcal{H}_q$ and every edge in $\mathcal{H}_q$ contains at least two elements from $G''$.
    Further, let $V_1 \cup V_2 = V(G)$ be a bipartition such that the number of edges in $G'$ crossing $V_1$ and $V_2$ is maximized. 
    \begin{claim}\label{APPENDIX:CLAIM:GenTuran-tree-l-shadow}
        We have $|G''| \le kn$ and $|G'[V_1, V_2]| \ge \left(1/4 - \varepsilon_2\right)n^2$. 
    \end{claim}
    \begin{proof} 
        It follows from Fact~\ref{FACT:partial-embedding} that $G''$ is $T$-free, Therefore, $|G''| \le kn$. 
        In addition, by~\eqref{equ:APPENDIX:THM:GenTuran-Tree-Stability-2} and the third inequality in~\eqref{equ:APPENDIX:THM:GenTuran-Tree-Stability-3}, we obtain 
        \begin{align*}
            |G'| 
            = |\partial\mathcal{H}_q|- |G''| 
            \ge \frac{n^2}{4}-2\varepsilon n^2 -  50k^2\varepsilon_1 n^2 - \varepsilon n^2
            \ge \frac{n^2}{4}-\frac{\varepsilon_2}{2} n^2. 
        \end{align*}
        By Proposition~\ref{PROP:shadow-size}, $G$ contains a bipartite graph $G_{\mathrm{bi}}$ such that $|G\setminus G_{\mathrm{bi}}| \le \varepsilon n^2$. 
        Therefore, $G'$ contains a bipartite graph $G'_{\mathrm{bi}}$ with 
        \begin{align*}
            |G'_{\mathrm{bi}}|
            = |G'| - |G'\setminus G'_{\mathrm{bi}}|
            \ge |G'| - |G\setminus G_{\mathrm{bi}}|
            \ge \frac{n^2}{4}-\frac{\varepsilon_2}{2} n^2 - \varepsilon n^2
            \ge \frac{n^2}{4}-\varepsilon_2 n^2. 
        \end{align*}
        This proves Claim~\ref{APPENDIX:CLAIM:GenTuran-tree-l-shadow}. 
    \end{proof}
    For convenience, let $G'_{\mathrm{bi}} := G'[V_1, V_2]$. 
    Define a $3$-graph $\mathcal{G} \subseteq \mathcal{H}_q$ as follows:
    \begin{align*}
        \mathcal{G} := \left\{e\in \mathcal{H}_q \colon \left|\binom{e}{2} \cap G'_{\mathrm{bi}}\right| = 1 \right\}. 
    \end{align*}
    Since every pair in $G'_{\mathrm{bi}}$ contributes exactly $t$ triples to $\mathcal{G}$, 
    it follows from the proof of Claim~\ref{APPENDIX:CLAIM:GenTuran-tree-l-shadow} that 
    \begin{align}\label{equ:APPENDIX:THM:GenTuran-Tree-Stability-4}
        |\mathcal{G}|
            \ge t |G'_{\mathrm{bi}}|
            \ge \left(\frac{1}{4} - \varepsilon_2\right)tn^2. 
    \end{align}
    For every vertex $v\in V$ define the \textbf{light link} of $v$ as 
    \begin{align*}
        \widehat{L}(v) := \left\{e\in G'_{\mathrm{bi}} \colon e\cup \{v\} \in \mathcal{H}_q\right\}. 
    \end{align*}
    It is clear from the definitions that 
    \begin{align}\label{equ:APPENDIX:THM:GenTuran-Tree-Stability-5}
        |\mathcal{G}| = \sum_{v\in V} |\widehat{L}(v)|, \quad\text{and}\quad
        \widehat{L}(v) \subseteq \binom{N_{G''}(v)}{2} \quad\text{for all}\quad v\in V(G). 
    \end{align}
    Let $V:= V(\mathcal{H})$, and for every integer $i\ge 0$ define 
    \begin{align*}
        Z_i := \left\{v\in V \colon |\widehat{L}(v)| \ge \frac{n^{2-2^{-i}}}{(\log n)^2}\right\}, 
        \quad U_i:= V\setminus Z_i, 
        \quad\text{and}\quad 
        \mathcal{G}_i := \left\{e\in \mathcal{G} \colon |e\cap Z_i|=1\right\}. 
    \end{align*}
    Observe that for all $i\ge 1$, 
    \begin{align*}
        Z_0 \supseteq Z_1 \supseteq \cdots \supseteq Z_{i}, \quad
        U_0 \subseteq U_1 \subseteq \cdots \subseteq U_i, \quad\text{and}\quad
        \mathcal{G}_0 \supseteq \mathcal{G}_1 \supseteq \cdots \supseteq \mathcal{G}_i. 
    \end{align*}
    \begin{claim}\label{APPENDIX:CLAIM:GenTuran-tree-Z_k}
        We have $|Z_i| \le 2kn^{2^{-i-1}} \log n$ and $|\mathcal{G}_i|\ge |\mathcal{G}| - \frac{2(i+1)n^2}{(\log n)^2}$ for all $i \ge 0$. 
        In particular, 
        \begin{align*}
            \binom{|Z_k|}{t} \le \sqrt{n} 
            \quad\text{and}\quad
            |\mathcal{G}_k| 
                \ge \left(\frac{1}{4} - 2\varepsilon_2\right)tn^2. 
        \end{align*}
    \end{claim}
    \begin{proof}
        We prove it by induction on $i$. 
        For the base case $i=0$, 
        observe from~\eqref{equ:APPENDIX:THM:GenTuran-Tree-Stability-5} that 
        for every $v\in Z_0$ we have $d_{G''}(v) \ge \sqrt{|\widehat{L}(v)|} \ge n^{1/2}/\log n$ (note that the worse case is that $\widehat{L}(v)$ is a complete graph on $d_{G''}(v)$ vertices). 
        Therefore, by Claim~\ref{APPENDIX:CLAIM:GenTuran-tree-l-shadow}, 
        \begin{align*}
            2kn
            \ge 2|G''| 
            \ge \sum_{v\in Z_0}d_{G''}(v) 
            \ge \sum_{v\in Z_0}\sqrt{|\widehat{L}(v)|} 
            \ge |Z_0|\frac{n^{1/2}}{\log n}, 
        \end{align*}
        which implies that $|Z_0| \le 2kn^{1/2}\log n$. 
        Consequently, the number of edges in $G'$ that have nonempty intersection with $Z_0$ is at most $|Z_0|n \le 2kn^{3/2}\log n$.
        Observe that every edge in $\mathcal{G}$ with at least two vertices in $Z_0$ must contain an edge in $G'$ with (at least) one endpoint in $Z_0$. Therefore, the number of edges in $\mathcal{G}$ with at least two vertices in $Z_0$ is at most $2k t n^{3/2}\log n$ (recall that each edge in $G'$ contributes $t$ triples in $\mathcal{G}$). 
        Therefore, 
        \begin{align*}
            |\mathcal{G}_0| 
                 \ge  |\mathcal{G}| - \sum_{v \in V\setminus Z_0}|\widehat{L}(v)| -2k t n^{3/2}\log n 
                 & \ge |\mathcal{G}| - n \times \frac{n}{(\log n)^2} - 2k t n^{3/2}\log n \\
                 & \ge |\mathcal{G}| - \frac{2n^2}{(\log n)^2}. 
        \end{align*}
        Now suppose that $|Z_i| \le n^{2^{-i-1}}\log n$ holds for some $i \ge 0$.
        Repeating the argument above, we obtain 
        \begin{align*}
            2kn
            \ge 2|G''| 
            \ge \sum_{v\in Z_{i+1}}d_{G''}(v) 
            \ge \sum_{v\in Z_{i+1}}\sqrt{|\widehat{L}(v)|} 
            \ge |Z_{i+1}|\frac{n^{1-2^{-i-2}}}{\log n}, 
        \end{align*}
        which implies that $|Z_{i+1}| \le 2kn^{2^{-i-2}}\log n$.
        So it follows from the inductive hypothesis that 
        \begin{align*}
            |\mathcal{G}_{i+1}| 
                 \ge  |\mathcal{G}_i| - \sum_{v \in Z_i\setminus Z_{i+1}}|\widehat{L}(v)| -t|Z_{i+1}|n
                 & \ge |\mathcal{G}_i| - |Z_i|\times \frac{n^{2-2^{-i-1}}}{(\log n)^2} -  2ktn^{1+2^{-i-2}}\log n\\
                 & \ge |\mathcal{G}_i| - \frac{2n^2}{(\log n)^2}
                 \ge |\mathcal{G}| - \frac{2(i+2)n^2}{(\log n)^2}. 
        \end{align*}
        Here, $t|Z_{i+1}|n$ is an upper bound for the number of edges in $\mathcal{G}_{i}$ with at least two vertices in $Z_{i+1}$. 
    \end{proof}
    %
    Let $\mathcal{G}_k'$ denote the output $3$-graph of the Cleaning Algorithm with input $\left(\mathcal{G}_k, k, t\right)$ as defined here. 
    Since $|\mathcal{G}_k| \ge t n^2/4 - 2\varepsilon_2 tn^2 \ge  t|G| - 3\varepsilon_2 tn^2 \ge t|\partial\mathcal{G}_k| - 3\varepsilon_2 tn^2$, it follows from Lemma~\ref{LEMMA:Cleaning-Algo-tree} that 
    \begin{align*}
        |\mathcal{G}_k'|
        \ge \left(\frac{t}{4} - \varepsilon_3\right) n^2. 
    \end{align*}
    Recall from the definition that every edge in $\mathcal{G}_k'$ contains exactly one vertex in $Z_{k}$. 
    Similar to the proof of Claim~\ref{APPENDIX:CLAIM:GenTuran-tree-l-shadow}, the number of pairs in $\binom{U_k}{2}$ that have codegree exactly $t$ in $\mathcal{G}_k'$ is at least $(1/4-\varepsilon_3)n^2 - |Z_k|n \ge (1/4-2\varepsilon_3)n^2$. 
    Therefore, the $3$-graph 
    \begin{align*}
        \mathcal{G}_k''
            := \left\{e \in \mathcal{G}_k' \colon \binom{e}{2} \cap \binom{U_k}{2} \text{ has codegree exactly $t$ }\right\}
    \end{align*}
    satisfies 
    \begin{align}\label{Appendix:equ:THM:Hypergraph-Tree-Stability-5}
        |\mathcal{G}_k''|
        \ge \left(\frac{1}{4}-2\varepsilon_3\right) t n^2. 
    \end{align}
    Define 
    \begin{align*}
        \mathcal{Z} := \left\{T\in \binom{Z_k}{t} \colon |L_{\mathcal{G}_k''}(T)| \ge 4kn\right\},  
    \end{align*}
    where $L_{\mathcal{G}_k''}(T) := \bigcap_{v\in T}L_{\mathcal{G}_k''}(v)$. 
    For every $T\in \mathcal{Z}$ let $L'(T)$ be a maximum induced subgraph of $L_{\mathcal{G}_k''}(T)$ with minimum degree at least $3k$. 
    By greedily removing vertex with degree less than $3k$ we have  $|L'(T)| \ge |L_{\mathcal{G}_k''}(T)| - 3kn >0$.

    %
    Let $\mathrm{Supp}_T$ denote the set of vertices in the graph $L'(T)$ with positive degree.
    Using Proposition~\ref{PROP:embed-T3-two-t-sets}, we obtain the following claim. 
    \begin{claim}\label{Appendix:CLAIM:Turan-disjoint-support}
        We have $\mathrm{Supp}_T \cap \mathrm{Supp}_{T'}= \emptyset$ for all distinct sets $T, T' \in \mathcal{Z}$. 
    \end{claim}

    Since $L'(T)$ is a bipartite graph on $\mathrm{Supp}_T$ for all $T \in \mathcal{Z}$ and $\sum_{T\in \mathcal{Z}}|L_{\mathcal{G}_k''}(T)| = |\mathcal{G}_k''|/t$, it follows from~\eqref{Appendix:equ:THM:Hypergraph-Tree-Stability-5} that 
    \begin{align*}
        \sum_{T\in \mathcal{Z}} \frac{|\mathrm{Supp}_T|^2}{4}
        \ge \sum_{T\in \mathcal{Z}}|L'(T)| 
        \ge \sum_{T\in \mathcal{Z}}\left(|L_{\mathcal{G}_k''}(T)|-3kn\right)
        & \ge \left(\frac{1}{4}-2\varepsilon_3\right) n^2 - 3kn\binom{|Z_{k}|}{t} \\
        & \ge \left(\frac{1}{4}-3\varepsilon_3\right) n^2. 
    \end{align*}
    It follows from Claim~\ref{CLAIM:Turan-disjoint-support} that $\sum_{T\in \mathcal{Z}} |\mathrm{Supp}_T| \le |U_k| \le n$. 
    Therefore, the inequality above and some simple calculations show that 
    \begin{align*}
        \max\left\{|\mathrm{Supp}_T| \colon T \in \mathcal{Z}\right\} 
        \ge \left(1-\sqrt{6\varepsilon_3}\right)n. 
    \end{align*}
    Let $T_{\ast} \in \mathcal{Z}$ be the unique $t$-set with $|\mathrm{Supp}_{T_{\ast}}|\ge \left(1-\sqrt{6\varepsilon_3}\right)n$. 
    Then, by Claim~\ref{Appendix:CLAIM:Turan-disjoint-support}, 
    \begin{align*}
        |L_{\mathcal{G}_k''}(T_{\ast})|
        = |\mathcal{G}_{k}''|/t - \sum_{T\in \mathcal{Z}-T_{\ast}}|L_{\mathcal{G}_k''}(T)|
        & \ge \left(\frac{1}{4}-2\varepsilon_3\right) n^2 - \binom{\sqrt{6\varepsilon_3}n}{2} \\
        & \ge \left(\frac{1}{4}-\delta \right) n^2. 
    \end{align*}
    Here we used the inequality that $\sum_{i}x_i^2 \le \left(\sum_{i}x_i\right)^2$. 
    In other words, the number of edges in $\mathcal{G}_k'' \subseteq \mathcal{H}$ with exactly one vertex in $T_{\ast}$ is at least $\left(1/4-\delta \right) n^2$. 
    This completes the proof of  Theorem~\ref{THM:GenTuran-Tree-Stability}. 
\end{proof}
\section{Proofs of Theorems~\ref{THM:GenTuran-Tree-Exact} and~\ref{THM:GenTuran-Tree-Asymp}}\label{APPENDIX:SEC:GenTuran-tree-exact}
Recall the following statement of Theorem~\ref{THM:GenTuran-Tree-Asymp}.
\begin{theorem}\label{APPENDIX:THM:GenTuran-Tree-Asymp}
    For every tree $T$ we have 
    \begin{align*}
        \mathrm{ex}(n, K_{3}, T^{\triangle})
        \le  \left|\mathcal{S}_{\mathrm{bi}}\left(n,\sigma(T)-1\right)\right| + o(n^2). 
    \end{align*}
\end{theorem}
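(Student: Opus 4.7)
The plan is to combine three tools already established in the paper: the shadow-size bound for $F^{\triangle}$-free graphs (Proposition~\ref{PROP:shadow-size}), the Kruskal--Katona-type bound for $T^3$-free $3$-graphs (Proposition~\ref{PROP:shadow-bound-tree}), and the correspondence between triangles in $G$ and edges in $\mathcal{K}_G$ (Fact~\ref{FACT:shadow-Turan}). The asymptotic computation
\[
    |\mathcal{S}_{\mathrm{bi}}(n,\sigma(T)-1)| = (\sigma(T)-1)\left\lfloor (n-\sigma(T)+1)/2\right\rfloor \left\lceil (n-\sigma(T)+1)/2\right\rceil + O(n) = \frac{\sigma(T)-1}{4}n^2 + o(n^2)
\]
reduces the target inequality to proving $N(K_3,G) \le \frac{\sigma(T)-1}{4}n^2 + o(n^2)$ for every $n$-vertex $T^{\triangle}$-free graph $G$.

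First, let $G$ be an $n$-vertex $T^{\triangle}$-free graph and set $\mathcal{H}:=\mathcal{K}_{G}$. By Fact~\ref{FACT:shadow-Turan}, $\mathcal{H}$ is $T^3$-free, and $N(K_3, G) = |\mathcal{H}|$. Next, since $T$ is bipartite (being a tree), Proposition~\ref{PROP:shadow-size} applied to $G$ yields
\[
    |G| \le \frac{n^2}{4} + o(n^2).
\]
Clearly $\partial\mathcal{H} \subseteq G$, so $|\partial\mathcal{H}| \le |G| \le \tfrac{n^2}{4} + o(n^2)$.

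Now I apply Proposition~\ref{PROP:shadow-bound-tree} to the $T^3$-free $3$-graph $\mathcal{H}$: for any fixed $\varepsilon>0$ and all sufficiently large $n$,
\[
    |\mathcal{H}| \le (\sigma(T)-1)|\partial\mathcal{H}| + \varepsilon n^2 \le (\sigma(T)-1)\left(\frac{n^2}{4} + o(n^2)\right) + \varepsilon n^2 = \frac{\sigma(T)-1}{4}n^2 + o(n^2) + \varepsilon n^2.
\]
Since $\varepsilon > 0$ is arbitrary, this gives $N(K_3,G) = |\mathcal{H}| \le \frac{\sigma(T)-1}{4}n^2 + o(n^2) = |\mathcal{S}_{\mathrm{bi}}(n,\sigma(T)-1)| + o(n^2)$, which is what we wanted.

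No genuine obstacle is expected: the theorem is designed precisely as the asymptotic counterpart of Theorem~\ref{THM:KMV-tree} obtained by dividing through by $2$ via the shadow bound. The only step worth double-checking is the asymptotic evaluation of $|\mathcal{S}_{\mathrm{bi}}(n,\sigma(T)-1)|$, but since $\sigma(T)-1$ is a constant depending only on $T$, the contributions from triangles with two or three vertices in the clique part $K_{\sigma(T)-1}$ are all $O(n)$, hence negligible.
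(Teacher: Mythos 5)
Your proof is correct and takes exactly the same route as the paper, which states the result follows immediately from Propositions~\ref{PROP:shadow-size} and~\ref{PROP:shadow-bound-tree}. You have simply spelled out the chain $N(K_3,G)=|\mathcal{K}_G|\le(\sigma(T)-1)|\partial\mathcal{K}_G|+\varepsilon n^2\le(\sigma(T)-1)(n^2/4+o(n^2))+\varepsilon n^2$ (together with Fact~\ref{FACT:shadow-Turan} and the elementary asymptotic for $|\mathcal{S}_{\mathrm{bi}}(n,\sigma(T)-1)|$) that the paper leaves implicit.
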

\begin{proof}[Proof of Theorem~\ref{APPENDIX:THM:GenTuran-Tree-Asymp}]
    It follows immediately from Propositions~\ref{PROP:shadow-size} and~\ref{PROP:shadow-bound-tree}.  
\end{proof}
Recall the following statement of Theorem~\ref{THM:GenTuran-Tree-Exact}.
\begin{theorem}\label{APPENDIX:THM:GenTuran-Tree-Exact}
    Suppose that $T$ is a strongly edge-critical tree. Then  for sufficiently large $n$, 
    \begin{align*}
        \mathrm{ex}(n, K_{3}, T^{\triangle})
        = \left|\mathcal{S}_{\mathrm{bi}}\left(n,\sigma(T)-1\right)\right|. 
    \end{align*}
\end{theorem}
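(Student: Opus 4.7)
The plan is to mimic the proof of Theorem~\ref{THM:Hypergraph-Tree-Exact} from Section~\ref{SUBSEC:Proof-Hypergraph-Turan-Exact}, with Theorem~\ref{THM:GenTuran-Tree-Stability} replacing Theorem~\ref{THM:Hypergraph-Tree-Stability} as the stability ingredient. Let $T$ be strongly edge-critical, set $t:=\sigma(T)-1$, and let $(I,J)$ be a partition of $V(T)$ with $I$ a minimum independent vertex cover. By Proposition~\ref{PROP:tree-R-empty}, there is a pendant critical edge $e_{\ast}=\{u_0,v_0\}$ with $v_0\in I$ a leaf. Choose a constant $C$ such that $\mathrm{ex}(N,K_3,T^{\triangle})\le \mathrm{ex}(N,T^3)\le CN^2$ by Fact~\ref{FACT:shadow-Turan} and Theorem~\ref{THM:KMV-tree}, let $0<\delta\ll C^{-1}$ be sufficiently small, and let $n$ be large. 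Take an $n$-vertex $T^{\triangle}$-free graph $G$ with $N(K_3,G)=|\mathcal{S}_{\mathrm{bi}}(n,t)|$, and observe we may delete any edge not in a triangle. Then $\mathcal{H}:=\mathcal{K}_G$ is $T^3$-free (Fact~\ref{FACT:shadow-Turan}) and $|\mathcal{H}|=N(K_3,G)=\bigl(\tfrac{t}{4}-o(1)\bigr)n^2$, so by Theorem~\ref{THM:GenTuran-Tree-Stability} there exists a $t$-set $L=\{x_1,\ldots,x_t\}\subseteq V(G)$ such that $d_G(x_i)\ge (1-\delta)n$ for all $i$, $N(K_3,G-L)\le\delta n^2$, $|G-L|\ge n^2/4-\delta n^2$, and $G-L$ is within $\delta n^2$ edges of bipartite. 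Fix a max-cut bipartition $V_1\cup V_2=V':=V(G)\setminus L$ and check that $|V_i|=n/2\pm O(\sqrt{\delta})n$ and $|G[V_1,V_2]|\ge n^2/4-2\delta n^2$.

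Next I would introduce the expected family
\[
\mathcal{S}:=\Bigl\{e\in\binom{V(G)}{3}\colon |e\cap L|\ge 1,\ |e\cap V_1|\le 1,\ |e\cap V_2|\le 1\Bigr\},
\]
and the sub-family $\mathcal{S}'\subseteq\mathcal{S}$ of triples meeting each of $L,V_1,V_2$ in one vertex, which index triangles of $S(n,t)$. Set $\mathcal{B}:=\mathcal{K}_G\setminus\mathcal{S}$, $\mathcal{M}:=\mathcal{S}'\setminus\mathcal{K}_G$, $b:=|\mathcal{B}|$ and $m:=|\mathcal{M}|$. With $\tau:=n/200$, put $D:=\{y\in V'\colon d_G(yx_i)\ge\tau \text{ for all }x_i\in L\}$ and $\overline{D}:=V'\setminus D$, and split $D=D_1\cup D_2$ with $D_i:=D\cap V_i$. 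A bipartite-deficit calculation analogous to Claims~\ref{CLAIM:Turan-size-D} and~\ref{CLAIM:GenTuran-cycle-D-bar} yields $m\gtrsim n|\overline{D}|/2$ and hence $|\overline{D}|=O(\delta t n)$, together with similarly small sets $\overline{D'}$ of vertices inside $D_i$ with few cross-neighbours.

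The heart of the argument is to show that any triangle in $\mathcal{B}$ together with the $L\cup D$ structure enables an embedding of $T^{\triangle}$. Concretely, for each $e\in\mathcal{B}$ with $e\cap D\neq\emptyset$ I would fix $u_0'\in e\cap D$ and $v_0'\in e\setminus\{u_0'\}$ and, using that the bipartite link of each $x_i\in L$ on $D_1\cup D_2$ is almost complete (because $d_G(x_i)\ge(1-\delta)n$ plus $|G[V_1,V_2]|\ge n^2/4-O(\delta)n^2$), apply Proposition~\ref{PROP:embed-T3-two-t-sets} with $S_1=L$ and an appropriate second $t$-set obtained from a triangle in $D_1\cup D_2$ supported on a neighbour of $v_0'$ — this embeds $T-v_0$ into $\partial\mathcal{H}[L,D]$ with $\psi(u_0)=u_0'$, and the critical edge property guarantees $\psi$ extends to $T^3\subseteq\mathcal{H}$ with $\psi(e_{\ast}^3)=e$, contradicting $T^3$-freeness. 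The chief technical point — and the step I expect to be the main obstacle — is verifying in each geometric case (triangle fully in $V'$, triangle with one $L$-vertex but two in the same $V_i$, triangle containing $\overline{D}$) that the required codegree and disjointness hypotheses of Proposition~\ref{PROP:embed-T3-two-t-sets} can be met; this closely parallels the two-disjoint-edge analysis in Claim~\ref{CLAIM:GenTuran-cycle-P_2} but is cleaner because $T$ is a tree (no cycle-length variation lemma needed). From this one deduces that $\mathcal{B}\setminus\mathcal{B}[\overline{D}]$ is negligible and $|\mathcal{B}[\overline{D}]|\le C|\overline{D}|^2$ by the crude bound. Combined with $|\overline{D}|\le 3\delta t n$ one gets $b\le C|\overline{D}|^2\le (n/6)|\overline{D}|<m$ whenever $|\overline{D}|\ge 1$, so
\[
|\mathcal{S}_{\mathrm{bi}}(n,t)|=|\mathcal{K}_G|=|\mathcal{S}_{\mathrm{bi}}(n,t)|+b-m<|\mathcal{S}_{\mathrm{bi}}(n,t)|,
\]
a contradiction unless $\overline{D}=\emptyset$ and $\mathcal{B}=\emptyset$, which forces $G\cong S(n,t)$ and completes the proof.
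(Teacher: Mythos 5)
Your high-level plan — apply Theorem~\ref{THM:GenTuran-Tree-Stability} to obtain $L$, pass to $\mathcal{H}:=\mathcal{K}_G$ which is $T^3$-free, set up $\mathcal{S},\,\mathcal{S}',\,\mathcal{B},\,\mathcal{M}$ relative to a max-cut bipartition of $V\setminus L$, and show $b<m$ — is indeed the paper's strategy. But the closing chain of inequalities and the embedding step are not right as stated, and the gaps are exactly where the generalized Tur\'{a}n setting differs from the pure hypergraph setting.

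\textbf{The bound $b\le C|\overline{D}|^2$ is false.} In the hypergraph proof (Claim~\ref{CLAIM:Turan-size-b-1}) the analogous conclusion $\mathcal{B}\subseteq\mathcal{B}[\overline{D}]$ is available because $\mathcal{B}=\mathcal{H}[V']$ contains no triples meeting $L$. Here, however, $\mathcal{B}=\mathcal{K}_G\setminus\mathcal{S}$ also contains triangles of the form $\{x,u,v\}$ with $x\in L$ and $u,v$ on the same side $V_i$; these can have $|e\cap\overline{D}|\in\{0,1,2\}$ and are \emph{not} confined to $\overline{D}$. The paper therefore partitions $\mathcal{B}$ by $|E\cap\overline{D}|$ into $\mathcal{B}_0,\ldots,\mathcal{B}_3$ and bounds each piece separately: $|\mathcal{B}_3|\le C|\overline{D}|^2$ as you say, but $|\mathcal{B}_2|\le t|\overline{D}|^2$, and $|\mathcal{B}_1|\le 9\sqrt{\delta}\,t^2 n\,|\overline{D}|$ is \emph{linear} in $|\overline{D}|$ and requires the max-cut degree bound (the analogue of Claim~\ref{CLAIM:GenTuran-bad-deg-max}). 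Your stated inequality $b\le C|\overline{D}|^2\le(n/6)|\overline{D}|$ silently assumes $\mathcal{B}=\mathcal{B}[\overline{D}]$, which fails, and also uses the hypergraph-proof constant $|\overline{D}|\le 3\delta t n$ in place of the cruder $8\delta tn$ one actually gets here. The conclusion $b<m$ is still true after summing the correctly separated terms (the linear ones are beaten by $m\ge (49/100)n|\overline{D}|$ for $\delta$ small), but not by the route you wrote.

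\textbf{The $\mathcal{B}_0$ case is not resolved by the argument you sketch.} For a triangle $e=\{x,u,v\}$ with $x\in L$ and $u,v\in D_i$ (so $e\cap\overline{D}=\emptyset$), your plan — embed $T-v_0$ with $\psi(I')=L$, $\psi(J)\subseteq D$, then extend to $\hat\psi$ with $\hat\psi(e_\ast^3)=e$ — does not produce an injection: the expansion vertex of $e_\ast$ must land on a vertex of $e$ not used by $\psi$, but $x\in L$ is already the image of some vertex of $I'$ (as $|I'|=t=|L|$), and $u,v$ are the images of $u_0,v_0$. The only escape is $d_{\mathcal{H}}(uv)>1$, which you do not argue, and indeed the codegree of $uv$ is controlled nowhere in your write-up (the definition of $D$ only constrains codegrees with $L$, not inside $V'$). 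Your fallback — invoking Proposition~\ref{PROP:embed-T3-two-t-sets} with $S_1=L$ and ``a second $t$-set obtained from a triangle in $D_1\cup D_2$'' — does not parse: a triangle provides $3$ vertices, not a $t$-set with the required common link structure, and that proposition is used in the paper for the \emph{stability} proof, not for the exact step. Handling $\mathcal{B}_0$ requires either a genuinely different embedding of $T^3$ (not of the form $I\to L$, $J\to D$) or a separate counting argument using the almost-completeness of $G[V_1,V_2]$ to guarantee $d_{\mathcal{H}}(uv)$ is large; neither appears in the proposal. This, together with the bounding issue above, is where your plan needs real work before it becomes a proof.
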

\begin{proof}[Proof of Theorem~\ref{APPENDIX:THM:GenTuran-Tree-Exact}]
    Let $T$ be a strongly edge-critical tree.  
    Let $I\cup J = V(T)$ be a partition such that $I$ is a minimum independent vertex cover of $T$. 
    Let $e_{\ast} := \{u_0, v_0\}$ be a pendant critical edge such that $v_0 \in I$ is a leaf.  
    Let $\delta >0$ be sufficiently small and $n$ be sufficiently large. 
    Let $t := \sigma(T) - 1$, $q := |\mathcal{S}_{\text{bi}}(n,t)|$, and $G$ be an $n$-vertex $T^{\triangle}$-free graph with $N(K_3, G) = q$. 
    Note that we may assume that every edge in $G$ is contained in some triangle of $G$, since otherwise we may delete it from $G$ and this does not change the value of $N(K_3, G)$. 
    Recall that our aim is to prove that $G \cong S(n,t)$. 

    Since $N(K_3, G) = (t/4-o(1))n^2$, it follows from Theorem~\ref{THM:GenTuran-Tree-Stability} that there exists a $t$-set $L:= \{x_1, \ldots, x_{t}\} \subseteq V(G)$ such that 
    \begin{enumerate}[label=(\roman*)]
        \item $|G-L| \ge \left(1/4-\delta\right)n^2$,  
        \item $N(K_3, G-L) \le \delta n^2$, 
        \item $G-L$ can be made bipartite by removing at most $\delta n^2$ edges, and
        \item $d_{G}(v) \ge (1-\delta)n$ for all $v\in L$. 
    \end{enumerate}
    Let $V := V(G)$, $V':= V-L$, and $V_1 \cup V_2 = V'$ be a bipartition of $V\setminus L$ such that the number of crossing edges between $V_1$ and $V_2$ is maximized. 
    Define 
    \begin{align*}
        \mathcal{S} &:= \left\{E \in \binom{V}{3} \colon |E \cap L| \ge 1,\ |E\cap V_1| \le 1,\ |E\cap V_2| \le 1\right\}, \\
        \mathcal{H} & := \mathcal{K}_{G} = \left\{E \in \binom{V}{3} \colon G[E] \cong K_3\right\}, \quad
        \mathcal{B} := \mathcal{H} \setminus  \mathcal{S}, \quad\mathrm{and}\quad
        \mathcal{M} := \mathcal{S}\setminus  \mathcal{H}. 
    \end{align*}
    %
    It follows from Statements~(i) and~(iii) above that 
    \begin{align}\label{equ:APPENDIX:THM:GenTuran-Tree-Exact-1}
        |G[V_1, V_2]| \ge \frac{n^2}{4} - 2\delta n^2. 
    \end{align}
    Combined with Statement~(iv), for every $x_i \in L$ the intersection of links $L_{\mathcal{H}}(x_i)$ and $L_{\mathcal{S}'}(x_i)$  satisfies 
    \begin{align*}
        |L_{\mathcal{H}}(x_i) \cap L_{\mathcal{S}'}(x_i)|
        = |L_{\mathcal{H}}(x_i) \cap G[V_1, V_2]|
        \ge |G[V_1, V_2]| - \delta n \times n
        \ge \frac{n^2}{4} - 3\delta n^2. 
    \end{align*}
    Therefore, 
    \begin{align}\label{APPENDIX:THM:GenTuran-Tree-Exact-2}
        |\mathcal{M}|
        = \sum_{x_i \in L}\left(|L_{\mathcal{S}'}(x_i)| - |L_{\mathcal{H}}(x_i) \cap L_{\mathcal{S}'}(x_i)|\right)
        \le t \left(|V_1||V_2| - \left(\frac{n^2}{4} - 3\delta n^2\right)\right)
        \le 3\delta t n^2. 
    \end{align}
    Inequality~\eqref{equ:APPENDIX:THM:GenTuran-Tree-Exact-1} with some simple calculations also imply that 
    \begin{align}\label{APPENDIX:THM:GenTuran-Tree-Exact-3}
        \left(\frac{1}{2} - \sqrt{2\delta}\right)n 
            \le |V_i| 
            \le \left(\frac{1}{2} + \sqrt{2\delta}\right)n 
            \quad\text{for}\quad i\in \{1,2\}. 
        \end{align}
    Let 
    \begin{gather*}
        \tau := 3k,\quad 
        D  := \left\{y\in V' \colon d_{\mathcal{H}}(y x_i) \ge \tau \text{ for all } x_i \in L \right\}, \quad \overline{D} := V'\setminus D,  \\
        D_i  := D \cap V_i \quad\text{and}\quad
        \overline{D}_i := V_i \setminus D_i \quad\text{for}\quad i\in \{1,2\}.  \quad
    \end{gather*}
    \begin{claim}\label{APPENDIX:CLAIM:GenTuran-tree-D-bar}
        We have $m \ge 49 n|\overline{D}|/100$ and  $|\overline{D}| \le 8 \delta t n$. 
    \end{claim}
    \begin{proof}
        By the definition of $D$, 
        for every $i\in \{1,2\}$ and for every vertex $v\in \overline{D}_i$ there exists a vertex $x\in L$ (depending on $v$) such that 
        \begin{align*}
            |N_{\mathcal{H}}(vx) \cap V_{3-i}| \le \tau. 
        \end{align*}
        Note that this pair $\{v,x\}$ contributes at least $|V_{2-i}| - \tau$ elements to $\mathcal{M}$. 
        Therefore, it follows from~\eqref{APPENDIX:THM:GenTuran-Tree-Exact-3} that 
        \begin{align*}
            m 
             \ge \left(\min\{|V_1|, |V_2|\} - \tau \right) \left(|\overline{D}_1| + |\overline{D}_2|\right) 
             \ge \left(\frac{n}{2}-\sqrt{2\delta}n - \tau\right)|\overline{D}|
             \ge \frac{49}{100} n |\overline{D}|. 
        \end{align*}
        In addition, we obtain 
        \begin{align*}
            |\overline{D}|
            \le \frac{3\delta t n^2}{49 n/100}
            \le 8 \delta t n. 
        \end{align*}
        This proves Claim~\ref{APPENDIX:CLAIM:GenTuran-tree-D-bar}. 
    \end{proof}
    \begin{claim}\label{APPENDIX:CLAIM:GenTuran-tree-bad}
        We have $\mathcal{B}[V'] \setminus \mathcal{B}\left[\overline{D}\right] = \emptyset$. 
    \end{claim}
    \begin{proof}
        The proof is the same as the proof of Claim~\ref{CLAIM:Turan-size-b-1}. 
    \end{proof}
    \begin{claim}\label{APPENDIX:CLAIM:GenTuran-cycle-Maxdeg}
        For every $v\in V'$ we have 
        \begin{align*}
            \min\left\{|N_{G}(v) \cap D_1|, |N_{G}(v) \cap D_2|\right\} 
            \le \sqrt{3\delta}n. 
    \end{align*}
    \end{claim}
    \begin{proof}
        The proof is the same as the proof of Claim~\ref{CLAIM:GenTuran-cycle-Maxdeg}. 
    \end{proof}
    \begin{claim}\label{APPENDIX:CLAIM:GenTuran-bad-deg-max}
        For every $i\in \{1,2\}$ and for every $v\in V_i$ we have $|N_{G}(v) \cap D_i| \le 9\sqrt{\delta}n$. 
    \end{claim}
    \begin{proof}
        The proof is the same as the proof of Claim~\ref{CLAIM:GenTuran-bad-deg-max}. 
    \end{proof}
    For $i\in \{1,2,3\}$ let 
    \begin{align*}
        \mathcal{B}_i 
        := \left\{E\in \mathcal{B} \colon |E \cap \overline{D}| = i\right\}. 
    \end{align*}
    Since $\mathcal{B}_3$ is $T^3$-free, it follows from the definition of $C$ that 
    \begin{align}\label{APPENDIX:THM:GenTuran-Tree-Exact-4}
        |\mathcal{B}_3| \le C |\overline{D}|^2. 
    \end{align}
    By Claim~\ref{APPENDIX:CLAIM:GenTuran-tree-bad}, every edge in $\mathcal{B}_2$ must contain one vertex in $L$ and two vertices in $\overline{D}$. 
    Therefore, 
    \begin{align}\label{APPENDIX:THM:GenTuran-Tree-Exact-5}
        |\mathcal{B}_2| \le t |\overline{D}|^2. 
    \end{align}
    By Claim~\ref{APPENDIX:CLAIM:GenTuran-tree-bad} again, every edge in $\mathcal{B}_1$ must contain one vertex in $L$, one vertex in $D_i$, and one vertex in $\overline{D}_{i}$ for some $i\in \{1,2\}$.  
    So by the caculation of $|{\cal B}'|$ in  the proof of Claim~\ref{CLAIM:GenTuran-cycle-2-intersecting}  
    \begin{align}\label{APPENDIX:THM:GenTuran-Tree-Exact-6}
        |\mathcal{B}_1| 
        \le t \times 9\sqrt{\delta}n \left(|\overline{D}_1|+|\overline{D}_2|\right)
        = 9\sqrt{\delta} t^2n |\overline{D}|. 
    \end{align}
    By Claim~\ref{APPENDIX:CLAIM:GenTuran-tree-D-bar},~\eqref{APPENDIX:THM:GenTuran-Tree-Exact-4},~\eqref{APPENDIX:THM:GenTuran-Tree-Exact-5}, and~\eqref{APPENDIX:THM:GenTuran-Tree-Exact-6}, we obtain 
    \begin{align*}
        |\mathcal{L}|
        & = |\mathcal{S}| + |\mathcal{B}_1| + |\mathcal{B}_2|+ |\mathcal{B}_3|+ |\mathcal{M}|  \\
        & \le |\mathcal{S}_{\mathrm{bi}}(n,t)| + C |\overline{D}|^2 + t |\overline{D}|^2+  9\sqrt{\delta} t^2n |\overline{D}| - \frac{49}{100}n|\overline{D}| \\
        & = |\mathcal{S}_{\mathrm{bi}}(n,t)| - \left(\frac{49}{100}n - C\times 8\delta n - t \times 8\delta n - 9\sqrt{\delta} t^2n\right) |\overline{D}| \\
        & \le |\mathcal{S}_{\mathrm{bi}}(n,t)|.  
    \end{align*}
    Note that equality holds iff $\overline{D} = \emptyset$, which means that $G \cong S(n,t)$. 
    This completes the proof of Theorem~\ref{APPENDIX:THM:GenTuran-Tree-Exact}. 
\end{proof}

\section{Proof of Theorem~\ref{THM:GenTuran-Even-Path-Exact}}\label{APPENDIX:SEC:GenTuran-even-path-exact}
Recall the following statement of Theorem~\ref{THM:GenTuran-Even-Path-Exact}. 
\begin{theorem}\label{APPENDIX:THM:GenTuran-Even-Path-Exact}
    Suppose that $t \ge 1$ is a fixed integer and $n$ is sufficiently large. 
    Then 
    \begin{align*}
        \mathrm{ex}(n, K_{3}, P_{2t+2}^{\triangle}) 
        = \left|\mathcal{S}^{+}_{\mathrm{bi}}\left(n,t\right)\right|. 
    \end{align*}
\end{theorem}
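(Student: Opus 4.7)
The plan is to follow the proof of Theorem~\ref{THM:GenTuran-Cycle-Exact} for even $k$ essentially verbatim, replacing $C_{2t+2}^{\triangle}$ by $P_{2t+2}^{\triangle}$ throughout. The key enabling observation is that $P_{2t+2}$ is a tree with $\sigma(P_{2t+2}) = \tau_{\mathrm{ind}}(P_{2t+2}) = t+1$ (take every other vertex starting from the second as the independent vertex cover), so Theorem~\ref{THM:GenTuran-Tree-Stability} supplies the stability input I need. For the lower bound I would verify that $S^{+}(n,t) = K_t \uproduct T^{+}(n-t)$ is $P_{2t+2}^{\triangle}$-free: in any embedding $\phi \colon P_{2t+2}^{\triangle} \to S^{+}(n,t)$, call a spine vertex \emph{good} if $\phi$ sends it into $L := K_t$ and \emph{bad} otherwise. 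Since the only non-bipartite edge in $T^{+}(n-t)$ is the one extra edge, at most one spine edge can have two bad endpoints, so the bad vertices form an almost-independent set in $P_{2t+2}$ of size at most $t+2$; but one needs at least $2t+3 - t = t+3$ bad vertices to accommodate the $t$-bound on good vertices, a contradiction.

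For the upper bound I would let $G$ be an $n$-vertex $P_{2t+2}^{\triangle}$-free graph with $N(K_3,G) = |\mathcal{S}^{+}_{\mathrm{bi}}(n,t)|$ (assuming every edge lies in a triangle), apply Theorem~\ref{THM:GenTuran-Tree-Stability} to obtain a $t$-set $L$ satisfying the four $\delta$-closeness conditions, and then set up $V_1 \cup V_2$, $\mathcal{S}$, $\mathcal{B}$, $\mathcal{M}$, $\mathcal{M}_1$, $\mathcal{M}_2$, $D$, $\overline{D}$, $D_i$, $\overline{D}_i$, $D'$, $\overline{D'}$ exactly as in the even-cycle proof. Each of Claims~\ref{CLAIM:GenTuran-cycle-D-bar}--\ref{CLAIM:GenTuran-even-cycle-B0} and the concluding arithmetic are pure counting/stability consequences that transfer without change, \emph{provided} a path-embedding analog of Claim~\ref{CLAIM:GenTuran-cycle-P_2} is available.

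Supplying that analog is the main step. I would show that $\mathcal{K}_G[V']$ contains neither (i) two edges $e_1, e_2$ with $|e_1 \cap e_2| = 1$ and $(e_i \setminus e_{3-i}) \cap D \ne \emptyset$ for $i \in \{1,2\}$, nor (ii) two disjoint edges each meeting $D$ in at least two vertices. In configuration (i), writing $\{v_0\} = e_1 \cap e_2$ and picking $v_1 \in (e_1 \setminus e_2) \cap D$, $v_2 \in (e_2 \setminus e_1) \cap D$, the sequence
\[
v_1\, v_0\, v_2\, x_1\, u_1\, x_2\, u_2\, \cdots\, x_t\, u_t,
\]
where $\{x_1, \ldots, x_t\} = L$ and $u_1, \ldots, u_t \in D$ are distinct and chosen to avoid the third vertices $v_0' \in e_1\setminus\{v_0,v_1\}$ and $v_2'\in e_2\setminus\{v_0,v_2\}$ (both lying in $V'$ since $e_1, e_2 \subseteq V'$), is a copy of $P_{2t+2}$ in $\partial\mathcal{K}_G$. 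Its first two edges $v_1v_0$ and $v_0v_2$ are realized in $\mathcal{K}_G$ by the pre-assigned vertices $v_0'$ and $v_2'$, and each of the remaining $2t$ bipartite $(L,D)$-edges has codegree $\ge \tau \gg 3(2t+2)$ in $\mathcal{K}_G$ by definition of $D$. Fact~\ref{FACT:partial-embedding} then lifts this to $P_{2t+2}^{\triangle} \subseteq \mathcal{K}_G$, contradicting the $P_{2t+2}^{\triangle}$-freeness of $G$. Configuration (ii) is strictly easier: two disjoint triangles give four endpoints in $D$, two at each end of an alternating $(L,D)$-spine of length $2t$, producing $P_{2t+2}$ in $\partial\mathcal{K}_G$ and then $P_{2t+2}^{\triangle}$ in $\mathcal{K}_G$ by Fact~\ref{FACT:partial-embedding}.

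The main obstacle is the bookkeeping to ensure all chosen vertices are distinct and that the pre-assigned third vertices $v_0', v_2'$ do not collide with the $u_i$'s, which is routine since $|D| = (1-o(1))n$ while $k$ is fixed. Everything else — the stability setup, the analysis of $|\mathcal{B}_0|, |\mathcal{B}_1|, |\mathcal{B}_2|, |\mathcal{B}_3|$, the estimates on $|\overline{D}|$ and $|\overline{D'}|$, and the final inequality against $|\mathcal{S}^{+}_{\mathrm{bi}}(n,t)|$ — is identical to the even-cycle case, with equality forcing $G \cong S^{+}(n,t)$.
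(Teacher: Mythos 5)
Your upper-bound argument is the same as the paper's. The appendix proof reduces everything to the even-$k$ cycle proof and replaces Claim~\ref{CLAIM:GenTuran-cycle-P_2} by exactly the path analog you state: $\mathcal{B}[V']$ contains neither (i) two edges sharing exactly one vertex with the remaining $D$-vertices as you describe, nor (ii) two disjoint edges each having two vertices in $D$. Your path constructions $v_1\,v_0\,v_2\,x_1\,u_1\cdots x_t\,u_t$ for (i) and $v_1'\,v_1\,x_1\,u_1\cdots u_{t-1}\,x_t\,v_2\,v_2'$ for (ii), lifted via Fact~\ref{FACT:partial-embedding}, are precisely what the paper uses, and the bookkeeping (choosing the $u_i$ in $D\setminus(e_1\cup e_2)$) is handled the same way.

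There is, however, a gap in your verification that $S^{+}(n,t)$ is $P_{2t+2}^{\triangle}$-free. The claim that ``at most one spine edge can have two bad endpoints'' does not follow from $T^{+}(n-t)$ having only one non-bipartite edge: a bad--bad spine edge can be mapped to any bipartite edge of $T^{+}(n-t)$, with its two endpoints on opposite sides, so nothing bounds the number of such spine edges, and the ``almost-independent bad spine set'' conclusion collapses. The correct bottleneck is the \emph{triangles}, not the spine edges. A triangle of $P_{2t+2}^{\triangle}$ whose entire image lies in $T^{+}(n-t)$ must use the unique non-bipartite edge of $T^{+}(n-t)$; since distinct triangles in a $\triangle$-blowup share at most one vertex while two such triangles would have to share the two endpoints of that edge, at most one of the $2t+2$ triangles can map entirely into $T^{+}(n-t)$. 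Hence at least $2t+1$ triangles contain a vertex whose image is in $L$. But $|L|=t$, and each vertex of $P_{2t+2}^{\triangle}$ lies in at most two of its $2t+2$ triangles, so the $t$ preimages of $L$ cover at most $2t<2t+1$ triangles --- a contradiction (this is just $\tau(P_{2t+2})=t+1>t$ applied to the blowup). The conclusion you want is true and easy; only your intermediate step fails.
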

    The proof for Theorem~\ref{APPENDIX:THM:GenTuran-Even-Path-Exact} is almost identical to the proof of Theorem~\ref{THM:GenTuran-Cycle-Exact} for $k = 2t+2\ge 6$.
    The only difference is the proof for the following claim. 
    \begin{claim}\label{APPENDIX:THM:GenTuran-Even-Path-Exact-claim-1}
         The $3$-graph $\mathcal{B}[V']$ does not contain two edges $e_1$ and $e_2$ such that 
        \begin{enumerate}[label=(\roman*)]
            \item\label{APPENDIX:equ:CLAIM:GenTuran-path-P_2} 
            $|e_1 \cap e_2| = 1$, $(e_1 \setminus e_2) \cap D \neq \emptyset$, and $(e_2 \setminus e_1) \cap D \neq \emptyset$, or 
            \item\label{APPENDIX:equ:CLAIM:GenTuran-path-B1-two-edges}
                $\min\left\{|e_1 \cap D|,\ |e_2 \cap D|\right\} \ge 2$.  
        \end{enumerate}
    \end{claim}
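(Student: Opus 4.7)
The strategy is to mimic the proof of Claim~\ref{CLAIM:GenTuran-cycle-P_2} in the cycle case, except that the target configuration to embed into $\mathcal{H}=\mathcal{K}_G$ is now the linear path $P_{2t+2}^3$ rather than the linear cycle $C_{2t+2}^3$. Since a path does not need to be ``closed up,'' the construction is strictly easier than its cycle analogue. By Fact~\ref{FACT:shadow-Turan}, exhibiting $P_{2t+2}^3\subseteq\mathcal{H}$ contradicts the $P_{2t+2}^{\triangle}$-freeness of $G$, so in each case the plan is to assume the stated pair $e_1,e_2$ exists and build such an embedding directly.

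For case~\ref{APPENDIX:equ:CLAIM:GenTuran-path-P_2}, let $\{v_0\}=e_1\cap e_2$, let $v_i\in(e_i\setminus e_{3-i})\cap D$ for $i\in\{1,2\}$, and let $w_1$ denote the remaining vertex of $e_1$. After choosing distinct auxiliary vertices $u_1,\dots,u_t\in D\setminus(e_1\cup e_2)$, I would consider the shadow path
\[
w_1\,v_0\,v_2\,x_1\,u_1\,x_2\,u_2\,\cdots\,x_t\,u_t\subseteq\partial\mathcal{H}
\]
of length exactly $2t+2$. Its first two edges $w_1v_0$ and $v_0v_2$ come from the triples $e_1$ and $e_2$, with $v_1$ and the third vertex of $e_2$ supplying the two pre-assigned extensions required by Fact~\ref{FACT:partial-embedding}; the remaining $2t$ edges live in the complete bipartite shadow $\partial\mathcal{H}[L,D]$ whose pairs have codegree at least $\tau\geq 3(2t+2)$. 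An application of Fact~\ref{FACT:partial-embedding} therefore lifts the shadow path to a copy of $P_{2t+2}^3$ in $\mathcal{H}$, a contradiction.

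For case~\ref{APPENDIX:equ:CLAIM:GenTuran-path-B1-two-edges}, which (matching the corresponding cycle statement, and consistent with the fact that the $|e_1\cap e_2|=2$ configuration already appears in the extremal graph $S^+(n,t)$) is to be read under the additional assumption $e_1\cap e_2=\emptyset$, pick $\{v_i,v_i'\}\subseteq e_i\cap D$ and let $w_i$ be the remaining vertex of $e_i$ for $i\in\{1,2\}$. After selecting distinct $u_1,\dots,u_{t-1}\in D\setminus(e_1\cup e_2)$, I would use the shadow path
\[
v_1'\,v_1\,x_1\,u_1\,x_2\,u_2\,\cdots\,u_{t-1}\,x_t\,v_2\,v_2'\subseteq\partial\mathcal{H},
\]
whose $2t+2$ edges consist of the two ``endpoint'' edges $v_1'v_1$ and $v_2v_2'$ (provided by $e_1$ and $e_2$, with $w_1$ and $w_2$ serving as pre-assigned extensions) together with $2t$ interior edges in the complete bipartite shadow $\partial\mathcal{H}[L,D]$. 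Disjointness of $e_1$ and $e_2$ guarantees $w_1\neq w_2$ and that both avoid the $u_j$'s and $v_j,v_j'$'s; a second application of Fact~\ref{FACT:partial-embedding} then yields $P_{2t+2}^3\subseteq\mathcal{H}$.

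The only point that requires care is that every greedy extension used to lift an interior shadow edge must be chosen outside the entire path and the previously committed extensions; since $|D|$ is linear in $n$ while only $O(k)$ vertices are ever touched, this bookkeeping is automatic and is in fact already packaged inside Fact~\ref{FACT:partial-embedding}. Nothing in the argument is genuinely harder than in the cycle case -- there is no closing-up step, and no additional case analysis beyond the two above.
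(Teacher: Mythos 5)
Your proposal is correct and follows essentially the same approach as the paper: in each part one exhibits a shadow copy of $P_{2t+2}$ whose two offending edges come from $e_1, e_2$ and whose interior lies in the complete bipartite shadow $\partial\mathcal{H}[L,D]$, then lifts it via Fact~\ref{FACT:partial-embedding}, with your case-(i) path differing from the paper's only in placing $w_1$ rather than $v_1$ at the endpoint, an immaterial choice. You are also right to read part~(ii) with the extra hypothesis $e_1\cap e_2=\emptyset$; the paper's own proof opens with ``two disjoint edges,'' and the $|e_1\cap e_2|=2$ configuration is realized inside $S^{+}(n,t)$, so without that hypothesis the statement would be false.
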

    \begin{proof}
        Suppose to the contrary that there exist two edges $e_1, e_2 \in \mathcal{B}[V']$ such that~\ref{APPENDIX:equ:CLAIM:GenTuran-path-P_2} holds. 
        Let $\{v_0\} := e_1 \cap e_2$ and 
        fix $v_i \in \left(e_i \cap D\right) \setminus \{v_0\}$ for $i\in \{1,2\}$. 
        Let $D' := D\setminus\left(e_1 \cup e_2\right)$. 
        Choose any $t$-set $\{u_1, \ldots, u_{t}\} \subseteq D'$. 
        It follows from the definition of $D$ that the induced bipartite graph of $\partial\mathcal{H}$ on $L \cup D$ is complete. 
        Therefore, $F := v_2 x_1 u_1 x_2 u_2 \cdots x_{t} u_{t}$ is copy of $P_{k-2}$ in the bipartite graph $\partial\mathcal{H}[L, D']$. 
        This $P_{k-2}$ together with $v_1 v_0 v_2$ (a copy of $P_2$) form a copy of $P_k$ in $\partial\mathcal{H}$. 
        Since all edges in $F$ have codegree at least $\tau \ge 3k$ in $\mathcal{H}$, it follows from Fact~\ref{FACT:partial-embedding} that $P_{k}^3 \subseteq \mathcal{H}$, a contradiction. 

        Suppose to the contrary that there exist two disjoint edges $e_1, e_2 \in \mathcal{B}[V']$ such that~\ref{APPENDIX:equ:CLAIM:GenTuran-path-B1-two-edges} holds. 
        Fix a $2$-set $\{v_i, v_i'\} \subseteq e_i \cap D$ for $i\in \{1,2\}$. 
        Choose a $(t-1)$-set $\{u_1, \ldots, u_{t-1}\} \subseteq D\setminus (e_1 \cup e_2)$. 
        Similar to the proof above, the graph $F := v_1' v_1 x_1 u_1 x_2  \cdots u_{t-1} x_{t} v_2 v_2'$ is a copy of $P_{k}$ in $\partial\mathcal{H}$. 
        Note that all edges but $v_1 v_1', v_2 v_2'$ in $F$ have codegree at least $\tau \ge 3k$ in $\mathcal{H}$. 
        So it follows from Fact~\ref{FACT:partial-embedding} that $P_{k}^{3} \subseteq \mathcal{H}$, a contradiction.
    \end{proof}
\section{Proof of Theorem~\ref{THM:GenTuran-Cycle-Exact} for the odd case}\label{APPENDIX:SEC:GenTuran-odd-cycle-exact}
Recall the following statement of Theorem~\ref{THM:GenTuran-Cycle-Exact} for the odd case. 
\begin{theorem}\label{APPENDIX:THM:GenTuran-odd-Cycle-Exact}
    Suppose that $t \ge 2$ is a fixed integer and $n$ is sufficiently large. 
    Then 
    \begin{align*}
        \mathrm{ex}(n, K_{3}, C_{2t+1}^{\triangle})
         = \left|\mathcal{S}_{\mathrm{bi}}\left(n,t\right)\right|. 
     \end{align*}
\end{theorem}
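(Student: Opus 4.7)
The plan is to follow the framework of the even-case proof in Section~\ref{SEC:Proof-GenTuran-even-cycle-exact}, exploiting the observation that closing an odd cycle $C_{2t+1}$ in the expansion requires only a single bad edge in $V'$ (rather than the pair of bad edges used in the even case). Fix $k = 2t+1$ with $t\ge 2$, and let $G$ be an $n$-vertex $C_k^{\triangle}$-free graph attaining $N(K_3, G) = |\mathcal{S}_{\mathrm{bi}}(n,t)|$, where we assume without loss of generality that every edge lies in a triangle. Theorem~\ref{THM:GenTuran-Cycle-Stability} produces a $t$-set $L = \{x_1, \ldots, x_t\}$ with $d_G(v) \ge (1-\delta)n$ for every $v \in L$, and $G - L$ almost bipartite. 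Take $V_1 \cup V_2 = V(G) \setminus L$ maximizing the number of crossing edges, and set $\tau := n/200$, $D := \{y \in V' \colon d_{\mathcal{H}}(y x_i) \ge \tau \text{ for all } x_i \in L\}$, $\overline{D} := V' \setminus D$, with $D_i, \overline{D}_i$ analogous. Define $\mathcal{H} := \mathcal{K}_G$, together with $\mathcal{S}, \mathcal{S}', \mathcal{B}, \mathcal{M}$ exactly as in Section~\ref{SEC:Proof-GenTuran-even-cycle-exact}. The arguments of~\eqref{equ:GenTuran-cycle-G[V1,V2]}--\eqref{equ:GenTuran-cycle-Vi-size} and Claim~\ref{CLAIM:GenTuran-cycle-D-bar} transfer verbatim, yielding $|\mathcal{M}| \ge 49n|\overline{D}|/100$ and $|\overline{D}| \le 8\delta t n$.

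The key structural claim replacing Claim~\ref{CLAIM:GenTuran-cycle-P_2} is the following: \emph{no edge $e \in \mathcal{B}[V']$ satisfies $|e \cap D| \ge 2$.} Given such $e = \{v_1, v_2, w\}$ with $\{v_1, v_2\} \subseteq D$, choose distinct $u_1, \ldots, u_{t-1} \in D \setminus (e \cup L)$. Then
\[
    F := v_1\, v_2\, x_1\, u_1\, x_2\, u_2\, \cdots\, x_{t-1}\, u_{t-1}\, x_t\, v_1
\]
is a cycle of length $2t+1 = k$ in $\partial\mathcal{H}$: the edge $v_1 v_2$ lies in $\partial\mathcal{H}$ because $w$ completes it to a triangle in $G$, while every other edge of $F$ lies in the bipartite shadow $\partial\mathcal{H}[L, D]$ with codegree at least $\tau \ge 3k$ in $\mathcal{H}$. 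Since $w \notin V(F)$, Fact~\ref{FACT:partial-embedding} (with $w$ as the single witness for $v_1 v_2$ and codegree $\ge 3k$ on all remaining edges) embeds $C_k^3$ into $\mathcal{H}$, contradicting the $C_k^{\triangle}$-freeness of $G$ via Fact~\ref{FACT:shadow-Turan}.

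Immediate consequences include $\mathcal{B}[D] = \emptyset$ and the analog of Claim~\ref{CLAIM:GenTuran-cycle-Maxdeg} bounding $\min\{|N_G(v) \cap D_1|, |N_G(v) \cap D_2|\}$ for $v \in V'$. Defining $D'_i$ via the $\tau'$-threshold as in Claim~\ref{CLAIM:GenTuran-cycle-D'-in-D}, the analog of Claim~\ref{CLAIM:GenTuran-cycle-GD1'D2'} strengthens to $G[D'_1] \cup G[D'_2] = \emptyset$ (rather than $\le 1$): any edge $uv \in G[D'_i]$ has at least $2\tau' - |V_{3-i}| \ge n/6$ common neighbors in $V_{3-i}$, hence at least one in $D_{3-i}$ since $|\overline{D}| \ll n$, producing a triangle in $\mathcal{B}[V']$ with two vertices in $D$ and contradicting the main claim. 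Splitting $\mathcal{B} = \mathcal{B}_0 \cup \mathcal{B}_1 \cup \mathcal{B}_2 \cup \mathcal{B}_3$ by $|e \cap \overline{D}|$ and rerunning the estimates of Claims~\ref{CLAIM:GenTuran-cycle-B_2'},~\ref{CLAIM:GenTuran-cycle-B1},~\ref{CLAIM:GenTuran-even-cycle-M2}, and~\ref{CLAIM:GenTuran-even-cycle-B0} drops both the additive $n/2$ correction appearing in the even-case bound on $|\mathcal{B}_1|$ (since $\mathcal{B}[D] = \emptyset$) and the additive $+t$ in the bound on $|\mathcal{B}_0|$ (since $G[D'_1] \cup G[D'_2] = \emptyset$). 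Combined with $m_2 \ge tn|\overline{D'}|/7$ from the analog of Claim~\ref{CLAIM:GenTuran-even-cycle-M2}, the final arithmetic gives $|\mathcal{H}| \le |\mathcal{S}_{\mathrm{bi}}(n,t)| - (n/10)(|\overline{D}| + |\overline{D'}|)$, forcing $|\overline{D}| = |\overline{D'}| = 0$ and hence $G - L$ is the complete bipartite graph $T(n-t)$; matching the triangle count $|\mathcal{S}_{\mathrm{bi}}(n,t)|$ then forces $L$ to induce $K_t$ and every $x_i \in L$ to be adjacent to all of $V'$, so $G \cong S(n,t)$.

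The main technical obstacle is upgrading Claim~\ref{CLAIM:GenTuran-cycle-GD1'D2'} from "at most one" to "zero": because the main claim above only forbids triangles inside $V'$ with two $D$-vertices, one must carefully arrange for the common neighbor of $u, v \in D'_i$ produced in the argument to lie in $D_{3-i}$, avoiding both $L$ (where the main claim does not apply) and $\overline{D}$ (which is small but nonzero). This is exactly the point at which the two-tier stratification $D \supseteq D'$ from the even-case proof becomes indispensable.
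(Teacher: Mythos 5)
Your overall framework and the single-edge claim are correct and match the paper's Appendix~J proof, but there is a genuine gap in the treatment of $\mathcal{B}_2$. You assert that the single-edge claim (\emph{no} $e\in\mathcal{B}[V']$ with $|e\cap D|\ge 2$) \emph{replaces} Claim~\ref{CLAIM:GenTuran-cycle-P_2}. It does imply part~\ref{equ:CLAIM:GenTuran-cycle-B1-two-edges} of that claim, but it does \emph{not} imply part~\ref{equ:CLAIM:GenTuran-cycle-P_2}: the configuration $e_1=\{v_0,a,w_1\}$, $e_2=\{v_0,b,w_2\}$ with $v_0,a,b\in\overline{D}$, $w_1,w_2\in D$, and $e_1\cap e_2=\{v_0\}$ has only one $D$-vertex per edge, so your claim leaves it untouched. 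This pattern is exactly what must be excluded to show that the auxiliary graph $F_2\subseteq\binom{\overline{D}}{2}$ of pairs with codegree at least $2$ is a matching in the bound on $|\mathcal{B}_2'|$; without that, the estimate degenerates to roughly $\binom{|\overline{D}|}{2}\cdot n \gg n|\overline{D}|/3$ and the final accounting fails. Thus ``rerunning Claim~\ref{CLAIM:GenTuran-cycle-B_2'}'' is not available to you as stated.

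The missing ingredient is the odd-case analogue of Claim~\ref{CLAIM:GenTuran-cycle-P_2}\ref{equ:CLAIM:GenTuran-cycle-P_2} (Claim~\ref{APPENDIX:CLAIM:GenTuran-odd-cycle-bad-edge-2} in the paper), and its proof is genuinely different from your single-edge argument. Because the cycle is odd, closing with the path $v_1 v_0 v_2$ (length $2$) together with an $(L,D)$-alternating path of length $2t-1$ produces a cycle of even length; instead one must glue the $P_3$ $u_1 v_1 v_0 v_2$ (length $3$) inside $V'$ onto a $P_{2t-2}$ of the form $u_1 x_2 u_2\cdots u_{t-1} x_t v_2$ alternating between $L$ and $D$, where $u_1\in D$ is chosen so that $\{v_1,u_1,x_1\}\in\mathcal{H}$; the vertex $x_1$ then serves as the witness for the shadow edge $v_1 u_1$ while the remaining edges have codegree $\ge\tau$. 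This choice is possible because $v_1\in D$, but it is a distinct construction that your proposal does not supply. Once you add this claim (and its proof), the rest of your outline — including the strengthening of Claim~\ref{CLAIM:GenTuran-cycle-GD1'D2'} to $G[D_1']\cup G[D_2']=\emptyset$ and the removal of the $n/2$ and $+t$ corrections — goes through and coincides with the paper's argument.
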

\begin{proof}[Proof of Theorem~\ref{APPENDIX:THM:GenTuran-odd-Cycle-Exact}]
    Fix $k = 2t+1 \ge 5$. 
    Let $C>0$ be a constant such that $\mathrm{ex}(N, C_{k}^3) \le C N^2$ holds for all integers $N \ge 0$. 
    The existence of such a constant $C$ is guaranteed by the theorem of Kostochka--Mubayi--Verstra\"{e}te~\cite{KMV15a}. 
    Let $0 < \delta \ll C^{-1}$ be sufficiently small and $n \gg C$ be sufficiently large. 
    Let $G$ be an $n$-vertex $C_{k}^{\triangle}$-free graph with 
    \begin{align*}
        N(K_3, G) = |\mathcal{S}_{\mathrm{bi}}(n,t)|. 
    \end{align*}
    We may assume that every edge in $G$ is contained in some triangle of $G$, since otherwise we can delete it from $G$ and this does not change the value of $N(K_3, G)$. 
    We aim to prove that $G \cong S_{\mathrm{bi}}(n,t)$.
    
    Since $N(K_3, G) = t n^2/4 - o(n^2)$ and $n$ is large, it follows from Theorem~\ref{THM:GenTuran-Cycle-Stability} that there exists a $t$-set $L:= \{x_1, \ldots, x_{t}\} \subseteq V(G)$ such that 
    \begin{enumerate}[label=(\roman*)]
        \item $|G-L| \ge n^2/4 - \delta n^2$, 
        \item $N(K_3, G-L) \le \delta n^2$, 
        \item $G-L$ can be made bipartite by removing at most $\delta n^2$ edges, and
        \item $d_{G}(v) \ge (1-\delta)n$ for all $v\in L$. 
    \end{enumerate}
    Let $V := V(G)$, $V':= V-L$, and let $V_1 \cup V_2 = V'$ be a bipartition such that the number of edges (in $G$) crossing $V_1$ and $V_2$ is maximized. 
    Define 
    \begin{align*}
        \mathcal{S} &:= \left\{e \in \binom{V}{3} \colon |e \cap L| \ge 1,\ |e\cap V_1| \le 1,\ |e\cap V_2| \le 1\right\}, \\
        \mathcal{S}' &:= \left\{e \in \binom{V}{3} \colon |e \cap L| = |e\cap V_1| = |e\cap V_2| = 1\right\}, \\
        \mathcal{H} &:= \mathcal{K}_{G},\quad
        \mathcal{B} := \mathcal{H}\setminus \mathcal{S}, \quad\mathrm{and}\quad
        \mathcal{M} := \mathcal{S}'\setminus  \mathcal{H}. 
    \end{align*}
    Let $m:= |\mathcal{M}|$ and $b:= |\mathcal{B}|$. 
    It follows from Statements~(i) and~(iii) above that 
    \begin{align}\label{APPENDIX:equ:GenTuran-odd-cycle-G[V1,V2]}
        |G[V_1, V_2]| \ge \frac{n^2}{4} - 2\delta n^2. 
    \end{align}
    Combined with Statement~(iv), for every $x_i \in L$ the intersection of links $L_{\mathcal{H}}(x_i)$ and $L_{\mathcal{S}'}(x_i)$  satisfies 
    \begin{align*}
        |L_{\mathcal{H}}(x_i) \cap L_{\mathcal{S}'}(x_i)|
        = |L_{\mathcal{H}}(x_i) \cap G[V_1, V_2]|
        \ge |G[V_1, V_2]| - \delta n \times n
        \ge \frac{n^2}{4} - 3\delta n^2. 
    \end{align*}
    Therefore, 
    \begin{align}\label{APPENDIX:equ:GenTuran-odd-cycle-m-upper}
        |\mathcal{M}|
        = \sum_{x_i \in L}\left(|L_{\mathcal{S}'}(x_i)| - |L_{\mathcal{H}}(x_i) \cap L_{\mathcal{S}'}(x_i)|\right)
        \le t \left(|V_1||V_2| - \left(\frac{n^2}{4} - 3\delta n^2\right)\right)
        \le 3\delta t n^2. 
    \end{align}
    Inequality~\eqref{APPENDIX:equ:GenTuran-odd-cycle-G[V1,V2]} with some simple calculations also imply that 
    \begin{align}\label{APPENDIX:equ:GenTuran-odd-cycle-Vi-size}
        \left(\frac{1}{2} - \sqrt{2\delta}\right)n 
            \le |V_i| 
            \le \left(\frac{1}{2} + \sqrt{2\delta}\right)n 
            \quad\text{for}\quad i\in \{1,2\}. 
        \end{align}
    Let 
    \begin{gather*}
        \tau := \frac{n}{200},\quad 
        D  := \left\{y\in V' \colon d_{\mathcal{H}}(y x_i) \ge \tau \text{ for all } x_i \in L \right\}, \quad \overline{D} := V'\setminus D,  \\
        D_i  := D \cap V_i \quad\text{and}\quad
        \overline{D}_i := V_i \setminus D_i \quad\text{for}\quad i\in \{1,2\}.  \quad
    \end{gather*}
    We also divide $\mathcal{M}$ further by letting 
    \begin{align*}
        \mathcal{M}_1 := \left\{e\in \mathcal{M} \colon e\cap \overline{D} \neq\emptyset\right\}, \quad\text{and}\quad
        \mathcal{M}_2 := \mathcal{M}\setminus \mathcal{M}_1. 
    \end{align*}
    Let $m_1 := |\mathcal{M}_1|$ and $m_2:= |\mathcal{M}_2|$. 
    \begin{claim}\label{APPENDIX:CLAIM:GenTuran-odd-cycle-D-bar}
        We have $m_1 \ge 49 n|\overline{D}|/100$ and  $|\overline{D}| \le 8 \delta t n$. 
    \end{claim}
    \begin{proof}
        Same as the proof of Claim~\ref{CLAIM:GenTuran-cycle-D-bar}.
    \end{proof}
    \begin{claim}\label{APPENDIX:CLAIM:GenTuran-odd-cycle-bad-edge-1}
        The $3$-graph $\mathcal{B}[V']$ does not contain an edge $e$ with $|e\cap D| \ge 2$. 
    \end{claim}
    \begin{proof}
        Suppose to the contrary that there exists an edge $\{v_1, v_2, v_3\} \in \mathcal{B}[V']$ such that $\{v_1, v_2\} \subset D$. 
        Let $D' := D\setminus \{v_1, v_2, v_3\}$. 
        Choose $t-1$ vertices $u_1, \ldots, u_{t-1} \in D'$. 
        It is easy to see that $v_1 x_1 u_1 x_2 \cdots u_{t-1} x_t v_2 v_1$ is a copy of $C_{k}$ in $\partial\mathcal{H}$. 
        Since all edges but $v_1v_2$ in this $C_{k}$ has codegree at least $\tau \ge 3k$ in $\mathcal{H}$, it follows from Fact~\ref{FACT:partial-embedding} that $C_{k}^{3} \subseteq \mathcal{H}$, a contradiction. 
    \end{proof}
    \begin{claim}\label{APPENDIX:CLAIM:GenTuran-odd-cycle-bad-edge-2}
        The $3$-graph $\mathcal{B}[V']$ does not contain two edges $e_1$ and $e_2$ such that 
        \begin{align}\label{equ:APPENDIX:CLAIM:GenTuran-odd-cycle-bad-edge-2}
            |e_1 \cap e_2| = 1, \quad 
            (e_1 \setminus e_2) \cap D \neq \emptyset, \quad\text{and}\quad
            (e_2 \setminus e_1) \cap D \neq \emptyset. 
        \end{align}
    \end{claim}
    \begin{proof}
        Suppose to the contrary that there exist two edges $e_1, e_2 \in \mathcal{B}[V']$ such that~\eqref{equ:APPENDIX:CLAIM:GenTuran-odd-cycle-bad-edge-2} holds. 
        Let $\{v_0\} := e_1 \cap e_2$ and 
        fix $v_i \in \left(e_i \cap D\right) \setminus \{v_0\}$ for $i\in \{1,2\}$. 
        Let $D' := D\setminus\left(e_1 \cup e_2\right)$. 
        Since $\tau = n/200 > |\overline{D}|+t+5$, it follows from the definition of $D$ that there exists a vertex $u_1 \in D'$ such that $\{v_1, u_1, x_1\} \in \mathcal{H}$. 
        Choose any $(t-2)$-set $\{u_2, \ldots, u_{t-1}\} \subseteq D'\setminus \{u_1\}$.
        It follows from the definition of $D$ again that $F:= u_1 x_2 \cdots u_{t_1} x_{t} v_2$ is a copy of $P_{2t-2}$ in $\partial\mathcal{H}$. 
        This $P_{2t-2}$ together with $u_1v_1v_0v_2$ (a copy of $P_3$) forms a copy of $C_{k}$ in $\partial\mathcal{H}$. 
        Since every edge in $F$ has codegree at least $t \ge 3k$ in $\mathcal{H}$, it follows from Fact~\ref{FACT:partial-embedding} that $C_{k}^3 \subseteq \mathcal{H}$, a contradiction.
    \end{proof}
    \begin{claim}\label{APPENDIX:CLAIM:GenTuran-odd-cycle-Maxdeg}
        For every $v\in V'$ we have 
        \begin{align*}
            \max\left\{|N_{G}(v) \cap D_1|, |N_{G}(v) \cap D_2|\right\} 
            \le \sqrt{3\delta}n. 
    \end{align*}
    \end{claim}
    \begin{proof}
        Same as the proof of Claim~\ref{CLAIM:GenTuran-cycle-Maxdeg}. 
    \end{proof}
    \begin{claim}\label{APPENDIX:CLAIM:GenTuran-odd-cycle-bad-deg-max}
        For every $i\in \{1,2\}$ and for every $v\in V_i$ we have $|N_{G}(v) \cap D_i| \le 9\sqrt{\delta}tn$. 
    \end{claim}
    \begin{proof}
        Same as the proof of Claim~\ref{CLAIM:GenTuran-bad-deg-max}. 
    \end{proof}
    For $i\in \{1,2,3\}$ let 
    \begin{align*}
        \mathcal{B}_i 
        := \left\{E\in \mathcal{B} \colon |E \cap \overline{D}| = i\right\}. 
    \end{align*}
    Since $\mathcal{B}_3$ is $C_{k}^3$-free, it follows from the definition of $C$ that 
    \begin{align}\label{APPENDIX:equ:GenTuran-odd-cycle-B3}
        |\mathcal{B}_3| \le C |\overline{D}|^2. 
    \end{align}
    \begin{claim}\label{APPENDIX:CLAIM:GenTuran-odd-cycle-B_2}
        We have $ |\mathcal{B}_2| \le {n |\overline{D}|}/{3}$.
    \end{claim}
    \begin{proof}
        It follows from the proof of Claim~\ref{CLAIM:GenTuran-cycle-B_2'}. 
    \end{proof}
    \begin{claim}\label{APPENDIX:CLAIM:GenTuran-odd-cycle-B_1}
        We have $|\mathcal{B}_1| \le 9\sqrt{\delta} t^2 n |\overline{D}|$. 
    \end{claim}
    \begin{proof}
        It follows from Claim~\ref{APPENDIX:CLAIM:GenTuran-odd-cycle-bad-edge-1} that every triple in $\mathcal{B}_1$ contains exactly one vertex from each set in $\{D_{i}, \overline{D}_i, L\}$ for some $i\in \{1,2\}$.  
        Then, by Claim~\ref{APPENDIX:CLAIM:GenTuran-odd-cycle-bad-deg-max}, we have 
        \begin{align*}
            |\mathcal{B}_1|
            \le t \times 9\sqrt{\delta}t n \times \left(|\overline{D}_1| + |\overline{D}_2|\right)
            = 9\sqrt{\delta} t^2 n |\overline{D}|, 
        \end{align*}
        which proves Claim~\ref{APPENDIX:CLAIM:GenTuran-odd-cycle-B_1}. 
    \end{proof}
    Let 
        \begin{gather*}
            \tau' := \frac{n}{3}
            \quad\text{and}\quad
            D_i' := \left\{v\in D_i \colon |N_{G}(v)\cap V_{3-i}| \ge \tau'\right\}
            \quad\text{for}\quad i\in \{1,2\}. 
        \end{gather*}
    Let $\overline{D'} := D\setminus D'$ and $\overline{D'}_i := D_i\setminus D_i'$ for $i\in \{1,2\}$.
        \begin{claim}\label{APPENDIX:CLAIM:GenTuran-odd-cycle-D'-in-D}
            We have $|\overline{D'}_i| \le 14\delta n$ for $i\in \{1,2\}$.   
        \end{claim}
        \begin{proof}
            Same as the proof of Claim~\ref{CLAIM:GenTuran-cycle-D'-in-D}.  
        \end{proof}
        \begin{claim}\label{APPENDIX:CLAIM:GenTuran-odd-cycle-GD1'D2'}
            We have $\left|G[D_1'] \cup G[D_2']\right| = 0$. 
        \end{claim}
        \begin{proof}
            It follows from Claim~\ref{APPENDIX:CLAIM:GenTuran-odd-cycle-bad-edge-1} and the proof of Claim~\ref{CLAIM:GenTuran-cycle-GD1'D2'}.
        \end{proof}
        \begin{claim}\label{APPENDIX:CLAIM:GenTuran-odd-cycle-B0}
            We have $|\mathcal{B}_0| \le 9\sqrt{\delta}t^2n |\overline{D'}|$. 
        \end{claim}
        \begin{proof}
            It follows from Claim~\ref{APPENDIX:CLAIM:GenTuran-odd-cycle-bad-edge-1} and the proof of Claim~\ref{CLAIM:GenTuran-even-cycle-B0}.
        \end{proof}
        \begin{claim}\label{APPENDIX:CLAIM:GenTuran-odd-cycle-M2}
            We have $m_2 \ge tn|\overline{D'}|/7$. 
        \end{claim}
        \begin{proof}
           Same as the proof of Claim~\ref{CLAIM:GenTuran-even-cycle-M2}. 
        \end{proof}
        By~\eqref{APPENDIX:equ:GenTuran-odd-cycle-B3}, Claims~\ref{APPENDIX:CLAIM:GenTuran-odd-cycle-B_2},~\ref{APPENDIX:CLAIM:GenTuran-odd-cycle-B_1},~\ref{APPENDIX:CLAIM:GenTuran-odd-cycle-B0}, and~\ref{APPENDIX:CLAIM:GenTuran-odd-cycle-M2}, we have 
            \begin{align*}
                |\mathcal{H}|
                & = |\mathcal{S}_{\mathrm{bi}}(n,t)| 
                + \sum_{i=0}^{4}|\mathcal{B}_i| 
                - |\mathcal{M}| -|\mathcal{M}'| \\
                & \le |\mathcal{S}_{\mathrm{bi}}(n,t)|  
                + 9\sqrt{\delta}t^2 n |\overline{D'}|
                + 9\sqrt{\delta}t^2n |\overline{D}|
                + \frac{n}{3}|\overline{D}| + C|\overline{D}|^2
                - \frac{49}{100}n|\overline{D}|
                - \frac{t}{7}n |\overline{D'}|\\
                & \le |\mathcal{S}_{\mathrm{bi}}(n,t)|
                - \left(\frac{tn}{7} - \sqrt{3\delta}tn\right) |\overline{D'}| 
                - \left(\frac{49}{100}n - 9\sqrt{\delta}t^2n 
                - \frac{n}{3}- C\times 8\delta tn\right)
                |\overline{D}| \\
                & \le |\mathcal{S}_{\mathrm{bi}}(n,t)|. 
            \end{align*}
            Note that the equality holds only if $|\overline{D}| = 0$ and $|\overline{D'}|=0$. 
            Further (simple) calculations show that equality holds iff $G\cong S(n,t)$, which proves Theorem~\ref{APPENDIX:THM:GenTuran-odd-Cycle-Exact}. 
\end{proof}
\end{appendix}
%
\end{document}